\theoremstyle{plain}
\newtheorem{theorem}{Theorem}[section]
\newtheorem{corollary}[theorem]{Corollary}
\newtheorem{lemma}[theorem]{Lemma}
\newtheorem{principle}{Principle}
\theoremstyle{definition}
\newtheorem{definition}[theorem]{Definition}
\newtheorem{example}[theorem]{Example}
\theoremstyle{remark}
\newtheorem*{notation}{Notation}
\numberwithin{equation}{section}
\numberwithin{figure}{section}
\numberwithin{table}{section}
\newcommand{\texorpdfstring}[2]{#1}
\newcommand{\defequal}{\mathbin{\underset{\text{def}}{=}}}
\renewcommand{\to}{\longrightarrow}
\newcommand{\mapto}{\longmapsto}
\newcommand{\namedto}[1]{\xrightarrow{#1}}
\newcommand{\parens}[1]{{\left({#1}\right)}}
\newcommand{\bracket}[1]{\left\langle{#1}\right\rangle}
\newcommand{\eqclass}[1]{\left[{#1}\right]}
\newcommand{\pair}[2]{{\parens{#1, #2}}}
\newcommand{\triple}[3]{{\parens{#1, #2, #3}}}
\newcommand{\set}[1]{\left\lbrace{#1}\right\rbrace}
\newcommand{\sequence}[2]{\parens{{#1}_{#2}}}
\newcommand{\sequenced}[1]{\parens{#1}}
\newcommand{\family}[2]{\parens{{#1}_{#2}}}
\newcommand{\cover}[2]{\set{{#1}_{#2}}}
\newcommand{\abs}[1]{\left\lvert{#1}\right\rvert}
\newcommand{\norm}[1]{\left\lVert{#1}\right\rVert}
\newcommand{\inner}[2]{\bracket{{#1}, {#2}}}
\newcommand{\inverse}[1]{{{#1}^{-1}}}
\newcommand{\power}[2]{{{#1}^{#2}}}
\newcommand{\exponential}[2]{{{#1}^{#2}}}
\newcommand{\quotient}{\mathbin{/}}
\newcommand{\restriction}[2]{{#1}\lvert_{#2}}
\newcommand{\Natural}{\mathbb{N}}               
\newcommand{\Real}{\mathbb{R}}                  
\newcommand{\field}{\mathbb{K}}                 
\DeclareMathOperator{\Set}{\mathbf{Set}}            
\DeclareMathOperator{\Bool}{\mathbf{Bool}}          
\DeclareMathOperator{\Meas}{\mathbf{Meas}}          
\DeclareMathOperator{\Vect}{\mathbf{Vect}}          
\DeclareMathOperator{\CHaus}{\mathbf{CHaus}}        
\DeclareMathOperator{\BoolTop}{\mathbf{BoolTop}}    
\DeclareMathOperator{\Cobord}{\mathbf{Cobord}}      
\DeclareMathOperator{\Ban}{\mathbf{Ban}}            
\DeclareMathOperator{\Banc}{\Ban_{c}}               
\DeclareMathOperator{\BanAlg}{\mathbf{BAlg}}        
\DeclareMathOperator{\Hilb}{\mathbf{Hilb}}          
\DeclareMathOperator{\PShv}{\mathbf{PShv}}          
\DeclareMathOperator{\PCoShv}{\mathbf{PCoShv}}      
\DeclareMathOperator{\Shv}{\mathbf{Shv}}            
\DeclareMathOperator{\CoShv}{\mathbf{CoShv}}        
\DeclareMathOperator{\Bundle}{\mathbf{Bun}}         
\DeclareMathOperator{\Cat}{\mathbf{Cat}}            
\newcommand{\homset}[3]{{#1}\parens{#2, #3}}
\newcommand{\objectmatrix}[3]{{\eqclass{{#1}_{#2}^{#3}}}}
\newcommand{\homsetmatrix}[3]{{{#1}_{#2}^{#3}}}
\newcommand{\isomorphic}{\mathbin{\cong}}       
\newcommand{\equivalent}{\mathbin{\simeq}}      
\newcommand{\universal}[1]{\widehat{#1}}        
\newcommand{\zero}{\boldsymbol{0}}              
\newcommand{\id}{1}                             
\newcommand{\composition}{\mathbin{\circ}}      
\DeclareMathOperator{\evaluation}{ev}           
\newcommand{\diagonal}{\Delta}                  
\newcommand{\characteristic}{\chi}              
\newcommand{\slice}{\mathbin{/}}
\newcommand{\covariant}[1]{{{#1}_{\ast}}}
\newcommand{\contravariant}[1]{{{#1}^{\ast}}}
\newcommand{\pullback}[1]{{{#1}^{\ast}}}            
\newcommand{\opposite}[1]{{{#1}^{\ast}}}            
\newcommand{\dual}[1]{{{#1}^{\ast}}}                
\newcommand{\bidual}[1]{{{#1}^{\ast\ast}}}          
\DeclareMathOperator*{\Lim}{Lim}                    
\DeclareMathOperator*{\Colim}{Colim}                
\DeclareMathOperator*{\Lan}{Lan}                    
\DeclareMathOperator{\Yoneda}{Y}                    
\DeclareMathOperator{\Free}{F}                      
\DeclareMathOperator{\LAdjoint}{L}                  
\DeclareMathOperator{\RAdjoint}{R}                  
\newcommand{\subsets}[1]{{\two^{#1}}}           
\newcommand{\dirimage}[1]{{{#1}_{\ast}}}        
\newcommand{\invimage}[1]{{{#1}^{\ast}}}        
\newcommand{\ordinal}[1]{\boldsymbol{#1}}       
\newcommand{\icomplement}{c}                        
\newcommand{\relcomplement}{\mathbin{\setminus}}    
\newcommand{\two}{{\ordinal{2}}}                    
\DeclareMathOperator{\ideal}{\mathcal{I}}           
\DeclareMathOperator{\sieve}{S}                     
\DeclareMathOperator{\atoms}{\mathbf{atoms}}        
\DeclareMathOperator{\partitions}{\mathbf{parts}}   
\DeclareMathOperator{\Stone}{S}                     
\DeclareMathOperator{\Baire}{\mathfrak{Ba}}         
\DeclareMathOperator{\Borel}{\mathfrak{Bo}}         
\DeclareMathOperator{\simple}{\mathbf{S}}           
\DeclareMathOperator{\additive}{\mathbf{A}}         
\DeclareMathOperator{\badditive}{\mathbf{BA}}       
\DeclareMathOperator{\bvadditive}{\mathbf{BVA}}     
\DeclareMathOperator{\ladditive}{\mathbf{LA}}       
\DeclareMathOperator{\bsadditive}{\mathbf{BAS}}     
\DeclareMathOperator{\rmeasure}{\mathbf{RM}}        
\DeclareMathOperator{\dimension}{dim}               
\DeclareMathOperator{\Null}{\mathcal{N}}            
\DeclareMathOperator{\open}{open}               
\DeclareMathOperator{\clopen}{clopen}           
\DeclareMathOperator{\supp}{supp}               
\newcommand{\converges}{\rightarrow}            
\DeclareMathOperator{\ball}{ball}               
\DeclareMathOperator{\sphere}{S}                
\newcommand{\aeequal}
    {\mathbin{\underset{\text{a.e.}}{=}}}       
\newcommand{\differential}{\mathrm{d}}
\newcommand{\integral}[3]{\int_{#1}{#2}\,\differential{#3}}
\newcommand{\sumnorm}[1]{{\norm{#1}_{1}}}           
\newcommand{\hilbertnorm}[1]{\norm{#1}_{2}}         
\newcommand{\supnorm}[1]{\norm{#1}_{\infty}}        
\newcommand{\variation}[1]{\norm{#1}}               
\newcommand{\semivariation}[1]{\norm{#1}_{\infty}}  
\newcommand{\lipschitz}[1]{\norm{#1}_{L}}           
\newcommand{\projnorm}[1]
    {\norm{#1}_{\hat{\phantom{a}}}}                 
\DeclareMathOperator{\nuclear}{\mathcal{N}}         
\newcommand{\projotimes}{\mathbin{\hat{\otimes}}}   
\newcommand{\injotimes}{\mathbin{\breve{\otimes}}}  
\DeclareMathOperator{\Integrable}{\mathbf{L}}       
\DeclareMathOperator{\Continuous}{\mathbf{C}}       
\DeclareMathOperator{\Bounded}{\mathbf{B}}          
\DeclareMathOperator{\CBounded}{\Continuous_{b}}    
\newcommand{\annihilator}[1]{{{#1}^{\bot}}}         
\newcommand{\orthogonal}[1]{{\annihilator{#1}}}     
\DeclareMathOperator{\wk}{wk}
\DeclareMathOperator{\wkstar}{\wk\!\ast}
\DeclareMathOperator{\wkstarContinuous}{{\Continuous^{\wkstar}}}
\newcommand{\stalk}[2]{{{#1}_{#2}}}                 
\newcommand{\etalification}[1]{{\widetilde{#1}}}    
\newcommand{\cosheafification}[1]{{{#1}_{\ast}}}    
\DeclareMathOperator{\SBundle}{\simple}             
\DeclareMathOperator{\IBundle}{\Integrable}         
\newcommand{\bitensor}{\mathbin{\hat{\otimes}}}     
\newcommand{\bicatname}[1]{\ensuremath{2--{#1}}}
\DeclareMathOperator{\BanCat}{\mathbf{BanCat}}      
\DeclareMathOperator{\BanModCat}{\bicatname{\Ban}}  
\begin{document}

\title{Categorifying measure theory: a roadmap}
\author{G.~Rodrigues}

\address{Centro de An\'{a}lise Matem\'{a}tica, Geometria e Sistemas Din\^{a}micos, Departamento de Matem\'atica, Instituto Superior T\'ecnico (Universidade T\'{e}cnica de Lisboa), Av. Rovisco Pais, 1049-001 Lisboa, Portugal.}
\email{grodrigues.math@gmail.com}

\thanks{This work was supported by the \emph{Programa Operacional Ci\^{e}ncia e Inova\c{c}\~{a}o 2010}, financed by the \emph{Funda\c{c}\~{a}o para a Ci\^{e}ncia e a Tecnologia} (FCT / Portugal) and cofinanced by the European Community fund FEDER,
in part through the research project Quantum Topology POCI / MAT / 60352 / 2004.}


\begin{abstract}
A program for \emph{categorifying measure theory} is outlined.
\end{abstract}

\maketitle
\tableofcontents

\newpage
\section{Introduction}
\label{section:introduction}

\begin{verse}
Begin, ephebe, by perceiving the idea\\
Of this invention, this invented world,\\
The inconceivable idea of the sun.\\

\medskip
You must become an ignorant man again\\
And see the sun again with an ignorant eye\\
And see it clearly in the idea of it.\\

\medskip
Never suppose an inventing mind as source\\
Of this idea nor for that mind compose\\
A voluminous master folded in his fire.\\

\medskip
How clean the sun when seen in its idea,\\
Washed in the remotest cleanliness of a heaven\\
That has expelled us and our images\ldots\\
--- Wallace Stevens, from \emph{Notes Toward a Supreme Fiction}.
\end{verse}
\medskip


The word ``categorification'' was first coined in \cite{crane-frenkel:4dtqft-hopf-categories-canonical-bases1994}. The context of the paper is the construction of four-dimensional \emph{topological quantum field theories} (TQFT's for short) via state sums. TQFT's were first rigorously defined by M.~Atiyah in \cite{atiyah:tqft1988}, modelled on G.~Segal's definition of conformal field theory (\cite{segal:definition-conformal-field-theory}), with the intention of formalizing E.~Witten's work \cite{witten:tqft} on $4$-manifold invariants coming from the gauge-theoretic Donaldson theory. It was the critical observation of the authors of \cite{crane-frenkel:4dtqft-hopf-categories-canonical-bases1994} that (at least in low dimensions) there is a deep connection between on one side, how the $(n + 1)$-Pachner moves relate to the $n$-Pachner moves and on the other, how the algebraic structures in $n$-categories that provide TQFT's get ``categorified'' in $(n + 1)$-categories (the so-called ``categorical ladder''). Besides \cite{crane-frenkel:4dtqft-hopf-categories-canonical-bases1994}, we refer the reader to the introduction of \cite{mackaay:finite-groups-spherical-2categories-4manifold-invariants2000} for a very lucid account of the story. Here, it suffices to say that in two dimensions, the construction of manifold invariants takes as input a \emph{semisimple algebra} (\cite{fukuma-hosono-kawai:lattice-topological-field-theory-2dimensions1994}). The combinatorics of the Pachner moves relating equivalent triangulations of the same surface exactly match the algebraic laws of associative algebras: the $2$-$2$ Pachner move corresponds to associativity of the multiplication and the $1$-$3$ move (the ``triangulation refinement'' move) corresponds to semi-simplicity, and in essence it is a cutoff, finiteness condition. The construction of $3$-manifold invariants takes as algebraic input a certain kind of monoidal category (\cite{barrett-westbury:invariants-pl-3manifolds1996}) and these arise naturally as categories of representations of quantum groups (\cite{turaev:quantum-invariants-knots-3manifolds1994}). The situation in four dimensions is more mysterious (in more senses than one), but following the categorification cue of \cite{crane-frenkel:4dtqft-hopf-categories-canonical-bases1994}, \cite{mackaay:spherical-2categories-4manifold-invariants1999} constructed $4$-manifold invariants via state sums that take as algebraic input certain kinds of monoidal $2$-categories. We currently lack non-trivial examples of such $2$-categories\footnote{Non-triviality of the $2$-category here can be taken to mean that when fed as algebraic input into the state sum machinery, it is able to detect more than just the homotopy type of the manifold.}, but once again following the categorification hint, it is expected that they arise as monoidal $2$-categories of representations.


\smallskip
The question we are interested in answering is \emph{representations in what}? Quantum groups are represented in finite-dimensional linear spaces so we want some sort of linear $2$-spaces. Since linear spaces are free, one plausible candidate for the categorified analogues of linear spaces are categories equivalent to $\exponential{\Vect}{n}$ where $\Vect$ is the category of finite-dimensional $\field$-linear spaces with $\field$ a field\footnote{In concrete constructions, $\field$ is usually the complex numbers but at the level of generality we are working, it makes no difference whatsoever what base field one takes.}. This is precisely the definition of linear $2$-space advanced in \cite{kapranov-voevodsky:2categories-Zamolodchikov-equations1994}. These categories can be characterized intrinsically by abelianness plus semi-simplicity (see \cite{yetter:categorical-linear-algebra}). The crucial fact is that just as the linear spaces $\field^{n}$ have a canonical basis, the categories $\exponential{\Vect}{n}$ come equipped with a canonical basis of objects $e_{i}$ defined by:
\begin{equation*}
    e_{i}(j)\defequal
    \begin{cases}
        \field &\text{if $i= j$,} \\
        \zero &\text{otherwise.}
    \end{cases}
\end{equation*}

There is now a canonical decomposition with each $V\in \exponential{\Vect}{n}$ isomorphic to a direct sum of $e_{i}$ tensored with linear spaces $V_{i}\in \Vect$:
\begin{equation} \label{isomorphism:canonical-decomposition-2spaces}
    V\isomorphic \bigoplus_{i}V_{i}\otimes e_{i}
\end{equation}

Another important fact is that the categorification $T\colon \exponential{\Vect}{n}\to \exponential{\Vect}{n}$ of the notion of linear map can be intrinsically defined, but turns out to be equivalent to a square matrix of linear spaces $\objectmatrix{T}{j}{i}$. The action of $T$ on an object $V= \family{V}{i}\in \exponential{\Vect}{n}$ is the vector whose jth component is
\begin{equation*}
    T(V)(j)\isomorphic \bigoplus_{i}T_{j}^{i}\otimes V_{i}
\end{equation*}

Composition is given by the usual matrix composition formula by replacing sums with direct sums $\oplus$ and products by tensor products $\otimes$. Note that with these formulas, composition is only associative up to \emph{unique, canonical linear isomorphism}. This means that we end up with a \emph{bicategory} of $2$-spaces, morphisms $T$ and $2$-morphisms between them. The latter can be identified with matrices of linear maps $\tau_{j}^{i}\colon T_{j}^{i}\to S_{j}^{i}$ with composition being pointwise composition of matrices.


\smallskip
Up to now, these are all features one could reasonably expect from a categorification of linear algebra. But we soon realize that these categorified linear spaces exhibit some unexpected behavior. The gist is that the notion of basis allowing the decomposition \eqref{isomorphism:canonical-decomposition-2spaces} is subtler than appears to be at a first glance. For example, it implies that every self-equivalence $\exponential{\Vect}{n}\to \exponential{\Vect}{n}$ amounts to a permutation of the canonical basis $\family{e}{i}$ of $\exponential{\Vect}{n}$. As first observed in \cite{barrett-mackaay:categorical-representations-categorical-groups2006}, since Lie groups $G$ tend to have few permutation representations, this entails that if the group object of a $2$-group $\mathcal{G}$ is such a Lie group, then there are few representations of $\mathcal{G}$ in $\exponential{\Vect}{n}$.


\smallskip
This indicates that to obtain non-trivial representation $2$-categories, one must look around for monoidal categories with larger groupal groupoids of self-equi\-va\-lences. If we think about the construction of $\exponential{\Vect}{n}$ one immediate possibility is to replace $n$ with infinite sets $X$. Since the category $\exponential{\Vect}{X}$ is equivalent to the category of \emph{bundles of linear spaces over $X$}, the next logical step is to impose some structure on the base set $X$ (a topology, a measurable structure, etc.) and then consider the category of corresponding (continuous, measurable, etc.) bundles. At this point, it may even come to the reader's mind the analogous difficulties that one faces when representing locally compact groups: finite-dimensional spaces are not enough and we have to enlarge the representing category to include infinite-dimensional ones. And just as measure theory is the natural place to find such infinite-dimensional spaces large enough to yield non-trivial representations, we expect some sort of categorified measure theory to be a natural place to find infinite-dimensional analogues of $\exponential{\Vect}{n}$ that (may) yield non-trivial representations of $2$-groups. Note the emphasis here: what we are really after are infinite-dimensional categories, and more generally, \emph{categorified functional analysis}. Measure theory is playing the role of the middle man, providing us with a whole class of natural examples.


\smallskip
This is the point of departure for \cite{yetter:measurable-categories2005} and its concomitant suggestion of looking at categories whose objects are measurable fields of Hilbert spaces and with morphisms the measurable fields of bounded linear maps. These are classical analytical objects of study and their theory is explained in several textbooks of which we can cite \cite{mackey:theory-unitary-group-representations1976}, \cite{dixmier:von-neumann-algebras1981} and \cite{takesaki:theory-operator-algebrasI2002}. An important ingredient is that the canonical decomposition \eqref{isomorphism:canonical-decomposition-2spaces} is replaced by a \emph{direct integral decomposition} in the form,
\begin{equation} \label{isomorphism:isomorphism:integral-decomposition-measurable-spaces}
    \xi\isomorphic \int_{X}^{\oplus}H_{x}\differential\mu
\end{equation}
where $H_{x}$ is a measurable field on $X$ and $\mu$ a scalar measure.

\smallskip
A problem with these categories of measurable fields is that while in the previous discrete situation the right notion of categorified linear map is more or less immediate and can even be intrinsically characterized, the case is not so clear-cut for what is the right notion of categorified (bounded) linear map between them. Nevertheless, a solution was found in \cite{yetter:measurable-categories2005}, even if needing somewhat obscure, technical hypotheses on the base measure spaces.


\smallskip
The integral notation for the object on the right hand side of \eqref{isomorphism:isomorphism:integral-decomposition-measurable-spaces} is a cute but suggestive notation for a space of square-integrable sections of a bundle. In this paper we intend to take the notation \eqref{isomorphism:isomorphism:integral-decomposition-measurable-spaces} seriously as a \emph{categorified integral}, or in other words, to argue that the constructions of \cite{yetter:measurable-categories2005} (and such followups as \cite{baez-baratin-freidel-wise:infinite-dimensional-representations-2groups2008}) can be interpreted as \emph{categorifying measure theory}. To motivate and explain such an interpretation is a long voyage with a couple of false starts, the inevitable detours and a good deal of backtracking. In the words of W.~Stevens' intense poem, we must become an ignorant man again and perceive anew the inconceivable idea of a bright sun. I direct the reader to the end of this section for a brief description of the contents of the paper and end this introduction with a motley collection of remarks.


\smallskip
First, the ambitions of the paper are far more modest than what the title may indicate. Although certain reflections of wider significance on the scope and conceptual meaning of categorification will be made in due course, I will not attempt to be systematic or construct any sort of coherent framework. Other categorifications of measure theory exhibiting other properties may very well exist, and although at times I may hint at possible generalizations or further avenues of research, the choices and comments are always guided and framed by the desire to interpret the specific set of constructions in \cite{yetter:measurable-categories2005} as a categorification, not the categorification, of measure theory. Also, since the main concern is with conceptual issues, no applications will be given. The ragtag reflections on other, incidental matters, are relegated to the footnotes, where their small print is not able to distract innocent readers, cause a fuss or otherwise do any harm.


\smallskip
Throughout, I have concentrated on general ideas, not on the specific details of which there are too many to be crammed in a paper with anything resembling a reasonable size. The basic results will be expounded in the stout handbook definition-theorem-corollary format, but the proofs are only sketched, if at all. Preposterous as it may sound, there is a real possibility that these so-called theorems are not theorems at all and I have botched things horribly somewhere along the way. Any lingering doubts in the reader's mind will only be alleviated in future papers (\cite{rodrigues:banach-2spaces} and \cite{rodrigues:categorified-measure-theory}), where I intend to provide all the missing details.


\smallskip
I have also endeavored to make the paper accessible to the widest audience possible, by surveying, even if sketchily, the necessary background material, so that the reader not acquainted with it may at least be able to pick up the basic ideas. There are many mathematical fields that will be touched upon and for what cannot be reviewed within a reasonable space, I have tried to provide references to the relevant literature. Naturally enough, some knowledge of the two central pillars, functional analysis and measure theory on one side and category theory on the other, is required. Despite first appearances, the amount of functional analysis and measure theory needed is actually very modest (it is all that this author knows anyway). For functional analysis, our basic reference is \cite{conway:course-functional-analysis1990} and for measure theory it is \cite{halmos:measure-theory1974}. As far as category theory is concerned, the situation is different and some sophisticated artillery will be deployed. For any undefined terms, the reader is referred to \cite{maclane:categories-working-mathematician1971} or the two volumes \cite{borceux:handbook-categorical-algebra11994} and \cite{borceux:handbook-categorical-algebra21994}. For monoidal categories, consult \cite[chapter VII]{maclane:categories-working-mathematician1971}, while $2$-categories and their weaker siblings, bicategories, are considered in \cite[chapter 7]{borceux:handbook-categorical-algebra11994}. More specific accounts can be found in \cite{kelly-street:elements-2categories1974} or in the more recent \cite{lack:2categories-companion} and references therein.


\smallskip
It is traditional in mathematical papers to put the contents in perspective, relate them to earlier work and then demonstrate in a positive and assertive way that mathematics has just taken a quantum leap forward. Surveying the paper, the reader will have opportunity to observe that all the major ideas in it (and most of the minor ones) have been borrowed (adapted, stolen, whatever) from previous work, scattered throughout numerous sources. Once the major conceptual hurdles have been overcome and the correct framework identified, it is basically a matter of cranking the categorial machinery and let it operate its magic. Although at times I have been forced to baptize some object with a more descriptive name (e.g.~ \emph{Banach $2$-space} instead of the uninspiring and opaque \emph{cocomplete $\Banc$-enriched category}), there are no essentially new concepts. The train of ideas leading to the present paper was put in motion by an off-hand comment of L.~Crane made during a minicourse on unitary representations of locally compact groups (Lisbon, 1998-1999), to the effect that measurable fields of Hilbert spaces are a ``sort of sheaves.'' They were developed intermittently during a long period, in isolation and at a time of great personal difficulty. At some point, I have lost track of what was originally my own labor and what was borrowed\footnote{Dr.~Johnson, the mortal god of my imaginings, famously retorted to an author: ``Your manuscript is both good and original; but the parts that are good are not original, and the parts that are original are not good.''}. I have tried to provide as many references as possible (which is basically everything connected with this work that I have laid my eyes upon, and then some more), which accounts for the inordinate length of the bibliography, but have swerved away from settling matters of precedency. If the reader feels that I am being too cavalier about an issue of such a paramount importance, I am all too happy to acknowledge that Someone Else is to be blamed for this skein of thorns.


\smallskip
A final note on style is in order. This paper being just an initial mapping stage, I have allowed myself more freedom in writing it. Some readers may find the occasional quirkiness distasteful, others may dislike the display of literary pedantry. I will not apologize for it, but if it is any consolation I remark that initial versions featured such precious tidbits as a goofy limerick; Biblical passages with accompanying theological commentary; several jokes in poor taste and an even larger number of failed puns. Whatever remained either slipped by the Censor's vigilant watch or is so inextricably tied to the matter that it could not be deleted without injuring the integrity of the whole. Such as the following quotation:
\smallskip
\begin{quote}
THE Necessity of this Digression, will easily excuse the Length; and I have chosen for it as proper a Place as I could readily find. If the judicious Reader can assign a fitter, I do here empower him to remove it into any other Corner he pleases. And so I return with great Alacrity to pursue a more important Concern.\\
--- Jonathan Swift, from the section \emph{A Digression in Praise of Digressions} of \emph{A Tale of a Tub}.
\end{quote}

\subsection*{Contents} In section \ref{section:measurable-bundles-hilbert-spaces}, the fundamental concept of measurable field of Hilbert spaces is discussed. Sections \ref{section:towards-categorified-measure-theory} and \ref{section:from-hilbert-banach-spaces} form the heart and soul of the paper. In them, I try out a preliminary description intended to yield a model of categorified measure theory. The discussion is somewhat long-winded, even repetitious, but I feel it necessary if the certitude and precision of the model is to be established. In section \ref{section:towards-categorified-measure-theory}, measurable fields of Hilbert spaces and the foundational paper \cite{yetter:measurable-categories2005} are revisited in order to extract the most important principles of categorified measure theory, and then in section \ref{section:from-hilbert-banach-spaces} the most important obstructions are faced, namely the need to include Banach spaces and the failure of the Radon-Nikodym theorem in infinite dimensions. Having raised the dust and \emph{then} swept it out, we proceed in the next three sections with whatever unguents of revelation we may possess and flesh out the basic principles expounded in sections \ref{section:towards-categorified-measure-theory} and \ref{section:from-hilbert-banach-spaces}. In section \ref{section:measure-algebras-integrals} we enter in review mode and anchor the required measure theory on measure algebras instead of measure spaces. This will not only entail some technical simplifications\footnote{The reader will find no mention of descriptive set theory and such concepts as analytic set or Luzhin space, that were needed in \cite{yetter:measurable-categories2005} to define the correct notion of categorified linear map.}, but it will open important new vistas on the categorification of measure theory. Subsection \ref{subsection:banach-algebra-linfty} constructs the Banach algebra $\Integrable_{\infty}(\Omega)$, the universal home for all measures on $\Omega$ and subsection \ref{subsection:bochner-integral} constructs the Bochner integral for any vector measure $\nu$ on a measure algebra and discusses some of the basic theorems of measure theory. Section \ref{section:banach-2spaces} starts by discussing very briefly Banach $2$-spaces, the categorified analogue of Banach spaces. The main bulk of the section is subsection \ref{subsection:presheaf-categories} that is devoted to reviewing basic concepts of enriched category theory like Kan extensions and coends. This is then applied in subsection \ref{subsection:free-banach-2spaces-sets} to a treatment of the Banach $2$-spaces of Banach bundles and in subsection \ref{subsection:categorified-measure-theory-discrete-case} to a complete treatment of categorified measure theory in the discrete $\sigma$-finite case, that is, the case of the atomic complete Boolean algebras $\subsets{X}$ for a countable set $X$. In the final section \ref{section:categorified-measures-integrals} we give some of the details of the fundamental constructions of categorified measure theory. The first subsection \ref{subsection:banach-sheaves} starts by defining the three Grothendieck topologies on a Boolean algebra and ends with the fundamental theorem \ref{theorem:banach-sheaves-finite-topology} that identifies sheaves for the finite topology with the internal Banach spaces of the topos $\Shv(\Stone(\Omega))$. The next three subsections are devoted to cosheaves. Subsection \ref{subsection:cosheaves-inverters} shows that $\Shv(\Omega)$ is the Banach $2$-space birepresenting cosheaves, subsection \ref{subsection:cosheafification-functor} constructs the cosheafification functor and subsection \ref{subsection:spectral-measure-cosheaf} constructs the spectral measure of a cosheaf. In this final subsection, the Radon-Nikodym property characterizing direct integrals is finally disclosed.

\subsection*{Acknowledgements} Several participants of the math newsgroup sci.math have kindly answered my questions and helped me sort out some of the technical details. A special thanks to G.~A.~Edgar (measurability in Banach spaces and the Radon-Nikodym property), A.~N.~Niel (measurability) and M.~Olshok (density of functors). Many many thanks are owed to R.~Picken for his generous encouragement, his help in tracking down some important papers and for his comments on initial versions of this paper that helped improving it enormously.

\section{Measurable bundles of Hilbert spaces}
\label{section:measurable-bundles-hilbert-spaces}


The paper \cite{yetter:measurable-categories2005} is founded upon the classical analytical tool of \emph{measurable fields of Hilbert spaces}, so we begin at the beginning and describe these objects, even if in a sketchy way. For more information, we refer the reader to \cite[chapter II]{dixmier:von-neumann-algebras1981} or \cite[chapter 2]{mackey:theory-unitary-group-representations1976}.


\smallskip
Let $\triple{X}{\Omega}{\mu}$ be a \emph{measure space} (or simply\footnote{As is customary, we denote structured sets by their carriers. Ironically, in this case this is actually a bad choice as will be seen in section \ref{section:measure-algebras-integrals}.} $X$), where $\pair{X}{\Omega}$ is a measurable space (a set $X$ together with a $\sigma$-algebra of subsets of $X$) and $\mu$ is a (positive, $\sigma$-additive) measure on $\Omega$. To dispel any possible confusions, the term $\sigma$-algebra as we employ it \emph{always} carries with it the implicit assumption that the whole space $X$ is measurable. We will also assume that $\mu$ is a \emph{complete probability}. The completeness condition can always be arranged for by adding to $\Omega$ the $\sigma$-ideal of subsets of $\mu$-null sets. Being a probability measure is not as restrictive as it sounds, because in the class of $\sigma$-finite spaces this can also be arranged for by the following device: let $\mathcal{E}$ be a \emph{partition of unity}, that is, a countable set of pairwise disjoint measurable sets of non-zero finite measure such that $X= \bigcup_{E_{n}\in \mathcal{E}}E_{n}$. Define the sequence of functions $\sequence{f}{n}$ by,
\begin{equation*}
    f_{n}\defequal \dfrac{\characteristic(E_{n})}{\mu(E_{n})2^{n}}
\end{equation*}
where $\characteristic(E_{n})$ is the characteristic function of $E_{n}$, and put $g_{n}\defequal \sum_{m= 1}^{n}f_{m}$. Then $g_{n}$ is an integrable positive function with $\sumnorm{,}$-norm equal to
\begin{equation*}
    \sum_{m= 1}^{n}\integral{X}{f_{m}}{\mu}= \sum_{m= 1}^{n} \dfrac{1}{2^{m}}
\end{equation*}

An application of Lebesgue's dominated convergence theorem (\cite[theorem D, page 110]{halmos:measure-theory1974}) tells us that the sequence $\sequence{g}{n}$ converges in $\Integrable_{1}(X)$ to an almost everywhere\footnote{From now on, we will use a.e.~as an abbreviation for \emph{almost everywhere}.} strictly positive function $g$ with $\sumnorm{g}= 1$. Taking the measure,
\begin{equation*}
    \nu(E)\defequal \integral{E}{g}{\mu}
\end{equation*}
we obtain a probability space such that the identity induces an isomorphism with $X$ in the category of measurable spaces and measurable maps.

\smallskip
With these preliminaries out of the way, we can define the notion of \emph{measurable field of Hilbert spaces}. Analogous to the situation in measure theory, measurability of fields is the intermediate concept allowing us to define their \emph{direct integral}.


\smallskip
View the set $X$ as a discrete category. Then a functor $X\to \Hilb$ is the same thing as a family $\family{H}{x}$ of Hilbert spaces parameterized by $x\in X$. These functors can also be described as \emph{bundles}. We use the word \emph{bundle} instead of the more classical terminology of \emph{field} to emphasize their geometric kinship.

\begin{definition} \label{definition:bundles-bundle-maps}
A \emph{bundle} $\xi$ over a set $X$ is a surjective map,
\begin{equation*}
    \pi\colon P\to X
\end{equation*}
such that the inverse image\footnote{We denote the inverse image of a set $F\subseteq Y$ along a map $f\colon X\to Y$ by $\invimage{f}(F)$. This is at odds with the more usual $\inverse{f}(F)$ but it has the convenient advantage of having one and the same notation for both the inverse image and the evaluation at $F$ of the inverse image function $\invimage{f}\colon \subsets{Y}\to \subsets{X}$.} $\invimage{\pi}(x)$ is a Hilbert space for every $x\in X$. The set $P$ is the \emph{total space} of the bundle and the Hilbert spaces $\xi_{x}= \invimage{\pi}(x)$ are the \emph{fibers} over $x\in X$. The map $\pi$ is the \emph{bundle projection map}.

A \emph{map $\tau\colon \xi\to \zeta$ between $X$-bundles $\xi$ and $\zeta$} is a map between the total spaces $\tau\colon P_{\xi}\to P_{\zeta}$ such that diagram \ref{diagram:fiber-preserving-condition} is commutative and for every $x\in X$, the restriction $\restriction{\tau}{\xi_{x}}\colon\xi_{x}\to \zeta_{x}$ to the fiber $\xi_{x}$ is a bounded linear map.
\begin{figure}[htbp]
    \begin{equation*}
    \xymatrix{
        P_{\xi} \ar[dr]_{\pi} \ar[rr]^{\tau} & & P_{\zeta} \ar[dl]^{\pi} \\
            & X &
    }
    \end{equation*}
\caption{Fiber-preserving condition for bundle maps.}
\label{diagram:fiber-preserving-condition}
\end{figure}
\end{definition}

There is an equivalence between the category $\Bundle(X)$ of bundles over $X$ as defined in \ref{definition:bundles-bundle-maps} and the category $\exponential{\Hilb}{X}$ of functors $X\to \Hilb$ given by sending each bundle $\xi$ to the functor $x\mapto \xi_{x}$. From now on, we will routinely identify these two categories via this equivalence.

\begin{definition} \label{definition:section-bundles}
A \emph{section} of an $X$-bundle $\xi$ is a map $s\colon X\to P$ such that $\pi s= \id_{X}$.
\end{definition}


If $\xi$ is the constant bundle $x\mapto \field$ then the set of sections of $\xi$ is isomorphic to the set of functions $X\to \field$ and we can define the Hilbert space $\Integrable_{2}(\xi)$ of \emph{square-integrable sections} as the space $\Integrable_{2}(X)$ of square-integrable functions $X\to \field$. The \emph{direct integral} is the extension of this construction to a functor on bundles. But just as in ordinary integration theory we do not (and by the axiom of choice, cannot) expect to integrate an arbitrary function, we should not expect to be able to assign a ``Hilbert space of square-integrable sections'' to every bundle. The crux of the matter is that since the fibers $\xi_{x}$ vary from point to point, there is no notion of measurability available and thus no way to form the \emph{linear subspace of measurable sections}. The naive solution of taking the set of sections $s\colon X\to P$ such that the function $x\mapto \norm{s(x)}$ is measurable does not work, because this set is not closed for addition. The following example of \cite[chapter 2, page 90]{mackey:theory-unitary-group-representations1976} shows this.

\begin{example} \label{example:mackey}
Let $E$ be a non-measurable subset of $X$ and let $\xi$ be the constant bundle $x\mapto \field$. Define the section $s$ of $\xi$ by,
\begin{equation*}
    s(x)\defequal
    \begin{cases}
       1 &\text{if $x\in E$,} \\
       -1 &\text{otherwise.}
    \end{cases}
\end{equation*}
and let $t$ be a second section such that $x\mapto \norm{t(x)}$ is measurable. Then $t + st= (1 + s)t$ and from $\norm{s(x)t(x)}= \norm{t(x)}$ we conclude that $\norm{s(x)t(x)}$ is measurable. But now note that,
\begin{equation*}
    \norm{(t + st)(x)}=
    \begin{cases}
       2\norm{t(x)} &\text{if $x\in E$,} \\
       0 &\text{otherwise.}
    \end{cases}
\end{equation*}
so that $\norm{(t + st)(x)}$ is not measurable.
\end{example}

By suitably modifying the above example, the reader can come up with two natural transformations $\tau$, $\phi$ such that both functions $\norm{\tau_{x}}$ and $\norm{\phi_{x}}$ are measurable and yet the norm of their composition is not. It is clear that whatever measurable condition we impose on natural transformations $\tau$, the measurability of $\norm{\tau_{x}}$ is certainly a \emph{necessary} one. G.~Mackey's example shows that it is not \emph{sufficient}. The failure exposed by it is clear: there is no measurable structure on the total space of the bundle and hence no way to form the linear subspace of measurable sections.


\smallskip
What we need is a criterion that tells us that the fibers $\xi_{x}$ vary ``in a measurable way'' from point to point. The classical solution (see for example \cite[chapter 2]{mackey:theory-unitary-group-representations1976}) is given by the notion of \emph{pervasive sequence}. The idea behind the definition is that instead of fiddling with measurable structures directly, we describe the corresponding modules of measurable sections\footnote{The description of topological objects via modules over appropriate function algebras is at the heart of modern topology and geometry. The Serre-Swan theorem establishing an equivalence between the category of vector bundles over a topological space $X$ and the category of projective modules over the algebra $\Continuous(X)$ of continuous functions is certainly a major example, but the whole field of non-commutative geometry can be seen as a vast elaboration and extension of these ideas.}.

\begin{definition} \label{definition:measurable-bundle}
Let $X$ be a measure space. A \emph{measurable bundle over $X$} is a pair $\pair{\xi}{\sequence{s}{n}}$ where $\xi$ is a bundle over $X$ and $\sequence{s}{n}$ is a sequence of sections $X\to P$ that is \emph{pervasive}:
\begin{enumerate}
  \item \label{def-enum:measurable-bundle1}
  For a.e. $x\in X$, the set $\set{s_{n}(x)}$ is dense in the fiber $\xi_{x}$.

  \item \label{def-enum:measurable-bundle2}
  For every $n, m\in \Natural$ the function $x\mapto \inner{s_{n}(x)}{s_{m}(x)}$ is measurable.
\end{enumerate}
\end{definition}


Given a measurable bundle $\pair{\xi}{\sequence{s}{n}}$, a section $s\colon X\to P$ is \emph{measurable} if for every $n$, the function $x\mapto \inner{s(x)}{s_{n}(x)}$ is measurable. Now, if $s$ and $t$ are two measurable sections then the function $x\mapto \inner{s(x)}{t(x)}$ is measurable\footnote{This is most easily shown with the aid of theorem \ref{theorem:pervasive-pointwise-orthonormal} below.}. In particular, the norm function $x\mapto \norm{s(x)}$ of any measurable section is measurable.

\smallskip
If $X$ is a measure space, taking the integral of the inner product,
\begin{equation*}
    \inner{s}{t}\defequal \integral{X}{\inner{s(x)}{t(x)}}{\mu}
\end{equation*}
we obtain a semi-inner product on the linear space of measurable sections of square-integrable norm. Quotienting by the linear subspace of sections a.e.~equal to $0$, we obtain the inner product space $\Integrable_{2}(\xi)$ which, by copying the classical proof, is easily shown to be complete. Since the pervasive sequence is a countable dense set, the space $\Integrable_{2}(\xi)$ is \emph{separable}. The space $\Integrable_{2}(\xi)$ is also a Banach module over the Banach algebra $\Integrable_{\infty}(X)$ where the action is scalar multiplication
\begin{equation*}
    \pair{f}{s}\mapto \parens{fs\colon x\mapto f(x)s(x)}
\end{equation*}

The association $\xi\mapto \Integrable_{2}(\xi)$ defines the object part of a functor on measurable bundles. To define it on morphisms we need to impose one more condition on a bundle morphism.


\begin{definition} \label{definition:measurable-bundle-morphism}
A morphism $\tau\colon \xi\to \zeta$ is \emph{measurable} if it preserves the measurability of sections: for every measurable section $s$ of $\xi$, the section $\tau(s)$ is measurable.
\end{definition}

To illustrate the fundamental character of the measurability condition, let $\tau$ be a measurable bundle morphism $\xi\to \zeta$ and $\sequence{s}{n}$ the pervasive sequence of $\xi$. We start by cutting off each $s_{n}$ away from the unit ball. Define the sets
\begin{equation*}
    E_{n}\defequal \set{x\in X\colon \norm{s_{n}(x)}\leq 1}
\end{equation*}

By measurability of $s_{n}$, each $E_{n}$ is measurable. Define,
\begin{equation*}
    t_{n}\defequal \characteristic(E_{n})s_{n}
\end{equation*}

The sections $t_{n}$ are all measurable and clearly, the sequence $\sequenced{t_{n}(x)}$ is dense in the unit ball of the fiber $\xi_{x}$. Therefore:
\begin{equation} \label{equation:norm-supremum-norm-functions}
    \norm{\tau_{x}}= \sup\set{\norm{\tau_{x}(t_{n}(x))}\colon n\in \Natural}
\end{equation}

By measurability of $\tau$, each $\tau(t_{n})$ is measurable and in particular, the norm function $x\mapto \norm{\tau_{x}(t_{n}(x))}$ is measurable. Since the supremum on the right-hand side of \eqref{equation:norm-supremum-norm-functions} is taken over a \emph{countable} set, the next theorem follows.

\begin{theorem} \label{theorem:measurability-norm-morphism}
If $\tau\colon \xi\to \zeta$ is a measurable bundle morphism, the function $x\mapto \norm{\tau_{x}}$ is measurable.
\end{theorem}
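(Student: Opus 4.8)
The plan is essentially to read off the statement from the computation that immediately precedes it in the text, so the proof will be short. First I would set up the data: let $\tau\colon \xi \to \zeta$ be a measurable bundle morphism, let $\pair{\xi}{\sequence{s}{n}}$ be the measurable bundle structure, so $\sequence{s}{n}$ is a pervasive sequence for $\xi$. The goal is to exhibit $x \mapto \norm{\tau_x}$ as a countable supremum of measurable functions, since a countable supremum of measurable real-valued functions is again measurable.

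The key step is the density/truncation argument already sketched before the statement. Define $E_n \defequal \set{x \in X \colon \norm{s_n(x)} \leq 1}$; each $E_n$ is measurable because $s_n$ is a measurable section and hence $x \mapto \norm{s_n(x)}$ is measurable. Put $t_n \defequal \characteristic(E_n)s_n$. Then each $t_n$ is again a measurable section (it is $s_n$ cut off to the set where it lands in the unit ball, and multiplication by a characteristic function of a measurable set preserves measurability of sections — this uses that $\Integrable_2(\xi)$ is an $\Integrable_\infty(X)$-module, or can be checked directly from the defining condition $x \mapto \inner{t_n(x)}{s_m(x)}$ measurable). For a.e. $x$, since $\set{s_n(x)}$ is dense in $\xi_x$, the truncated values $\set{t_n(x)}$ form a dense subset of the closed unit ball of $\xi_x$. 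Therefore, for a.e. $x$,
\begin{equation*}
    \norm{\tau_x} = \sup\set{\norm{\tau_x(t_n(x))} \colon n \in \Natural},
\end{equation*}
this being equation \eqref{equation:norm-supremum-norm-functions}. Now invoke measurability of $\tau$: each $\tau(t_n)$ is a measurable section of $\zeta$, hence the function $x \mapto \norm{\tau_x(t_n(x))} = \norm{\tau(t_n)(x)}$ is measurable. A supremum over the countable index set $\Natural$ of measurable functions is measurable, so $x \mapto \norm{\tau_x}$ is measurable off a null set; adjusting on that null set (which lies in $\Omega$ by completeness of $\mu$) gives measurability on all of $X$.

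The only point requiring the slightest care — and the place I expect a careless proof to stumble — is the claim that $\set{t_n(x)}$ is dense in the unit ball for a.e. $x$, rather than merely for those $x$ lying in all (or cofinitely many) of the $E_n$. The resolution is that for a fixed $x$ where $\set{s_n(x)}$ is dense in $\xi_x$, any vector $v$ in the open unit ball of $\xi_x$ is a limit of a subsequence $s_{n_k}(x) \to v$, and for $k$ large $\norm{s_{n_k}(x)} \leq 1$, so $s_{n_k}(x) = t_{n_k}(x)$; thus $v$ is approximated by values of the $t_n$, and the closed unit ball is the closure of the open one. Everything else — measurability of $E_n$, measurability of $t_n$, the supremum formula, closure under countable suprema — is routine, and the genuinely substantive input, namely that measurability of $\tau$ forces each $\tau(t_n)$ to be measurable, is exactly the hypothesis.
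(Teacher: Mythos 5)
Your proof is correct and follows essentially the same route as the paper: the paper's argument preceding the theorem is exactly the truncation $t_{n}= \characteristic(E_{n})s_{n}$, the a.e.~density of $\set{t_{n}(x)}$ in the unit ball of $\xi_{x}$, the supremum formula \eqref{equation:norm-supremum-norm-functions}, and closure of measurability under countable suprema. Your extra care in justifying the density claim (approximating open-ball vectors by eventually-truncated terms of the pervasive sequence) only fills in a detail the paper labels as clear.
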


Denote by $\covariant{\tau}$ the linear map $\Integrable_{2}(\xi)\to \Integrable_{2}(\zeta)$ given by $s\mapto \tau(s)$. On the hypothesis that $x\mapto \norm{\tau_{x}}$ is essentially bounded, we have the equality
\begin{equation} \label{equation:norm-equality-measurable-morphisms}
    \norm{\covariant{\tau}}= \supnorm{\tau_{x}}
\end{equation}

For the proof, we refer the reader to \cite[chapter II, section 2, proposition 2]{dixmier:von-neumann-algebras1981}. These constructions yield the morphism part of the $\Integrable_{2}$-functor. Its domain is the category\footnote{The reader should note that we have dropped the subscript $2$ from the category $\IBundle(X)$ of measurable bundles. For now, this has the advantage of avoiding any possible confusion with the Hilbert spaces $\Integrable_{2}(X)$. Later on, we will be deliberate and bend our notation so as to maximize the chances of confusion with the ordinary decategorified situation. The absence of the subscript will then be explained away by the fact that in the world of the categories we will consider such concepts as bounded, summable, etc. are either vacuous or simply make no sense.} $\IBundle(X)$ of measurable bundles and a.e.~equal equivalence classes of essentially bounded, measurable morphisms.

\section{Towards categorified measure theory}
\label{section:towards-categorified-measure-theory}


In section \ref{section:measurable-bundles-hilbert-spaces} we introduced the category of measurable bundles $\IBundle(X)$ and the functor assigning to each measurable bundle $\xi$ its Hilbert space of square-integrable sections $\Integrable_{2}(\xi)$. This space is also called the \emph{direct integral} of $\xi$ and is denoted classically by
\begin{equation} \label{notation:direct-integral}
    \int_{X}^{\oplus}\xi\differential\mu
\end{equation}

If we drop the $\oplus$ from \eqref{notation:direct-integral}, we end up with ordinary integral notation, and it is our intention to take this seriously. This entails interpreting the constructions of section \ref{section:measurable-bundles-hilbert-spaces} as a \emph{categorified integral}. If we surmise the evidence from \cite{yetter:measurable-categories2005}, we can gather that the bundles are over a measurable space and that the $\Integrable_{2}$-space of sections is denoted by an integral sign. The paper \cite{baez-baratin-freidel-wise:infinite-dimensional-representations-2groups2008} presents in section 3.3 a table of categorified analogues for the basic ingredients of linear algebra. The last two rows of the table give measurable bundles and the direct integral as the categorified analogues of respectively, measurable functions and the ordinary integral, but the obvious analogy with measure theory is not pursued any further.


\smallskip
To quote almost verbatim from the introduction to \cite{baez-dolan:categorification1998}, at its heart, categorification is based on an analogy between sets and categories as presented in table \ref{table:sets-categorified-analogues}.
\begin{table}[htbp]
  \begin{tabular}{|c|c|}
    \hline
    Sets $X$ & Categories $\mathcal{X}$ \\
    \hline
    \hline
    Elements & Objects \\
    $x\in X$ & $x\in \mathcal{X}$ \\
    \hline
    Equations & Isomorphisms \\
    $x= y$ & $x\isomorphic y$ \\
    \hline
    functions & Functors \\
    $f\colon X\to Y$ & $F\colon \mathcal{X}\to \mathcal{Y}$ \\
    \hline
    Set $\homset{\Set}{X}{Y}$ of & Category $\homset{\Cat}{\mathcal{X}}{\mathcal{Y}}$ of \\
    functions $X\to Y$ & functors $\mathcal{X}\to \mathcal{Y}$ \\
    \hline
  \end{tabular}
  \smallskip
  \caption{Analogy between sets and categories}
  \label{table:sets-categorified-analogues}
\end{table}

Building on table \ref{table:sets-categorified-analogues}, we adduce table \ref{table:categorified-analogues-ordinary-integrals} that presents the categorified analogues of some basic measure-theoretical concepts.
\begin{table}[htbp]
  \begin{tabular}{|c|c|}
    \hline
    Ordinary integrals & Direct integrals \\
    \hline
    \hline
    Measurable functions &  Measurable bundles \\
    $f\colon X\to \field$ & $\xi\colon X\to \Hilb$ \\
    \hline
    Integral & Direct integral \\
    $\integral{X}{f}{\mu}\in \field$ & $\integral{X}{\xi}{\mu}\in \Hilb$ \\
    \hline
    Integral map & Integral functor \\
    $\int_{X}\differential\mu\colon \Integrable_{2}(X)\to \field$ & $\int_{X}\differential\mu\colon \IBundle(X)\to \Hilb$ \\
    \hline
  \end{tabular}
  \smallskip
  \caption{Categorified analogues of ordinary integrals}
  \label{table:categorified-analogues-ordinary-integrals}
\end{table}


The entries of table \ref{table:categorified-analogues-ordinary-integrals} are imposed on us by the constructions of section \ref{section:measurable-bundles-hilbert-spaces}, but they leave a lot of questions unanswered. For one, while the codomain $\field$ is categorified to $\Hilb$, the domain (a measure space) remains untouched. Similarly, looking at the second row of \ref{table:categorified-analogues-ordinary-integrals} we see that while the integrand of a direct integral is a categorified function, a bundle, the measure is still the same uncategorified scalar measure $\mu$. Some of these questions will be answered once we start clearing up the fog, but for now just take table \ref{table:categorified-analogues-ordinary-integrals} as a starting point (with emphasis on the \emph{starting}).


\smallskip
In the rest of the section, the two main issues to be tackled are the \emph{definition of measurability} and the notion of \emph{categorified integral}. We will go through the constructions of section \ref{section:measurable-bundles-hilbert-spaces} and ask how and in what sense they can be interpreted as the categorification of the familiar notions of measure theory. In the spirit of an inductive investigation, the findings will be crystalized in ten principles that will then serve as a guide to the construction of a proper, fully-fledged theory of categorified measures and integrals. The quest will be long, but we hope that in the end it will be clear that a comprehensive, conceptual picture of categorified measure theory has emerged. Instead of a mishmash of loose analogies, more or less felicitous guesses, juxtapositions and flights of fancy, we will have at our disposal a conceptual framework in which it is possible to systematically answer the many questions that arise. I am also aware that at every step of the argument there are some genuine problems to solve and gaps to fill, so I am compelled to ask the benign reader to overlook, not in the sense of ignoring but of seeing past, whatever strikes him as inadequate or simply wrong.

\subsection{\texorpdfstring{$\Hilb$}{\textbf{Hilb}} as a categorified ring}
\label{subsection:hilb-categorified-ring}

Per the third row of table \ref{table:categorified-analogues-ordinary-integrals} and the constructions of section \ref{section:measurable-bundles-hilbert-spaces}, the integral is a functor
\begin{equation*}
    \int_{X}\differential\mu\colon \IBundle(X)\to \Hilb
\end{equation*}

Let us leave aside for the moment the categorial structure of $\IBundle(X)$ and in what sense it can be considered a categorified Hilbert space, and instead concentrate on the categorification of the codomain $\field$. Ideally, an integral is not just a functor on some category of bundles, but a functor satisfying some sort of \emph{linearity and continuity properties}, the same as ordinary integrals. In order to formulate these linearity properties, the first task is to identify what are the analogues of the basic operations needed to do integration, sum and multiplication, that is, we need the notion of \emph{categorified ring}. We advance with table \ref{table:categorified-analogues-ring-operations} for the specific case of the category $\Hilb$ of Hilbert spaces.
\begin{table}[htbp]
  \begin{tabular}{|c|c|}
    \hline
    Commutative ring & Categorified commutative ring \\
    \hline
    \hline
    Field $\field$ & Category $\Hilb$ \\
    Sum $+$ & Direct sum $\oplus$ \\
    Multiplication $\times$ & Tensor product $\otimes$ \\
    Additive zero $0$ & Zero object $\zero$ \\
    Multiplicative unit $1$ & Tensor unit $\field$ \\
    \hline
  \end{tabular}
  \smallskip
  \caption{Categorified analogues of the ring operations}
  \label{table:categorified-analogues-ring-operations}
\end{table}


\smallskip
In the course of this work, we will have opportunity to comment on this and other similar tables; for the moment, we just refer the reader to the first section of \cite{baez:hda2-2-hilbert-spaces1997} (from which the table was borrowed) for a proper motivation and instead move on to recall the construction of \emph{direct sums} in $\Hilb$. If $\family{H}{x}$ is a family of Hilbert spaces, take the linear direct sum $\bigoplus_{x}H_{x}$ with the pointwise-sum inner product
\begin{equation} \label{equation:coproduct-inner-product}
    \inner{\family{h}{x}}{\family{k}{x}}\defequal \sum_{x\in X}\inner{h_{x}}{k_{x}}
\end{equation}

If the indexing set is infinite, the space $\bigoplus_{x}H_{x}$ is not complete. The direct sum $\sum_{x\in X}H_{x}$ is the completion of $\bigoplus_{x}H_{x}$ under the inner product \eqref{equation:coproduct-inner-product}.

\smallskip
Denote by $i_{x}$ the inclusion $H_{x}\to \sum_{x\in X}H_{x}$ assigning to $h\in H_{x}$ the element $i_{x}(h)\in \sum_{x\in X}H_{x}$ given by
\begin{equation*}
    {i_{x}(h)}_{y}\defequal
    \begin{cases}
        h &\text{if $x= y$,} \\
        0 &\text{otherwise.}
    \end{cases}
\end{equation*}

The universal property of the space $\sum_{x\in X}H_{x}$ is described and proved in the next theorem in the case where $X$ is finite.

\begin{theorem} \label{theorem:existence-coproducts-hilb}
Let $X$ be a finite set and $\family{H}{x}$ an $X$-parameterized family of Hilbert spaces or a bundle over $X$. Then for every cone $\family{T}{x}$ of bounded linear maps\footnote{A \emph{cone from a functor $F$ to an object $a$} is simply a natural transformation $F\to \diagonal(a)$ with $\diagonal(a)$ the constant functor with value $a$. In the case at hand, since the domain is a discrete category, a cone is just a family of bounded linear maps all with the same codomain. Cones to a functor are defined similarly; for reasons of euphony we refrain from using such terms as cocone.} $H_{x}\to H$ there is a unique bounded linear map $T\colon \sum_{x\in X}H_{x}\to H$ such that for every $x\in X$ the triangle \ref{diagram:universal-property-direct-sum} is commutative.
\begin{figure}[htbp]
    \begin{equation*}
    \xymatrix{
        \sum_{x\in X}H_{x} \ar@{-->}[r]^-{T} & H \\
        H_{x} \ar[u]^{i_{x}} \ar[ur]_{T_{x}} &
    }
    \end{equation*}
    \caption{Universal property of $\sum_{x\in X}H_{x}$.}
    \label{diagram:universal-property-direct-sum}
\end{figure}

The map $\family{T}{x}\mapto T$ is a contractive linear bijection,
\begin{equation} \label{isomorphism:existence-coproducts-hilb}
    \sum_{x\in X}\homset{\Hilb}{H_{x}}{H}\equivalent \homset{\Hilb}{\sum_{x\in X}H_{x}}{H}
\end{equation}
where the Banach space on the left-hand side is endowed with the Hilbertian $2$-norm.
\end{theorem}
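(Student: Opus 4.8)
The plan is to reduce the statement to the explicit description of $\sum_{x\in X}H_x$ together with one norm estimate. Since $X$ is finite, the algebraic direct sum $\bigoplus_{x\in X}H_x$ is already complete, so $\sum_{x\in X}H_x$ is just $\bigoplus_{x\in X}H_x$ equipped with the inner product \eqref{equation:coproduct-inner-product}, and every element of it has a unique decomposition $\sum_{x\in X}i_x(h_x)$ with $h_x\in H_x$. Given a cone $\family{T}{x}\colon H_x\to H$, I would first dispatch existence and uniqueness of the factorization purely formally: any linear $T$ with $T\composition i_x=T_x$ for all $x$ is, by linearity and the unique-decomposition remark, forced to equal $T\parens{\family{h}{x}}=\sum_{x\in X}T_x(h_x)$; and conversely this formula does define a linear map for which every triangle \ref{diagram:universal-property-direct-sum} commutes. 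So the real content is the boundedness of this $T$ and the norm behaviour of the assignment.

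For boundedness I would combine the triangle inequality with the Cauchy--Schwarz inequality: for $h=\family{h}{x}\in\sum_{x\in X}H_x$,
\begin{equation*}
    \norm{T(h)}=\norm{\sum_{x\in X}T_x(h_x)}\leq\sum_{x\in X}\norm{T_x}\norm{h_x}\leq\parens{\sum_{x\in X}\norm{T_x}^2}^{1/2}\parens{\sum_{x\in X}\norm{h_x}^2}^{1/2}=\parens{\sum_{x\in X}\norm{T_x}^2}^{1/2}\norm{h},
\end{equation*}
so $T$ is bounded with $\norm{T}\leq\parens{\sum_{x\in X}\norm{T_x}^2}^{1/2}$; that is, $\family{T}{x}\mapto T$ is contractive for the Hilbertian $2$-norm on $\sum_{x\in X}\homset{\Hilb}{H_x}{H}$. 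Linearity of $\family{T}{x}\mapto T$ is immediate from the formula. For bijectivity I would exhibit the inverse explicitly as $T\mapto\parens{T\composition i_x}_{x\in X}$: that this reproduces the cone inducing $T$ is again the unique-decomposition computation, while $\norm{T\composition i_x}\leq\norm{i_x}\norm{T}\leq\norm{T}$ (each $i_x$ being an isometric embedding, by \eqref{equation:coproduct-inner-product}) gives $\parens{\sum_{x\in X}\norm{T\composition i_x}^2}^{1/2}\leq\abs{X}^{1/2}\norm{T}$, so the inverse is bounded too. A pair of mutually inverse bounded linear maps is precisely the isomorphism of Banach spaces \eqref{isomorphism:existence-coproducts-hilb}.

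I do not expect a genuine obstacle here --- the statement is the universal property of a finite biproduct plus some norm bookkeeping --- but the one point deserving care is the choice of estimate: the bare triangle inequality only yields the $\ell^1$-bound $\sum_{x\in X}\norm{T_x}$, and it is the Cauchy--Schwarz step that upgrades this to the Hilbertian $2$-norm and makes the forward map contractive as asserted. It is worth noting explicitly that this map is merely contractive and not isometric --- already for $X=\set{1,2}$, $H_1=H_2=\field$, $H=\field^2$ with $T_1,T_2$ the two coordinate inclusions one has $\norm{T}=1<\sqrt{2}=\parens{\norm{T_1}^2+\norm{T_2}^2}^{1/2}$ --- so \eqref{isomorphism:existence-coproducts-hilb} is an equivalence of Banach spaces ($\equivalent$) but not an isometric isomorphism ($\isomorphic$). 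Finiteness of $X$ enters only to make $\bigoplus_{x\in X}H_x$ complete and to keep $\abs{X}^{1/2}$ finite; for infinite families the completion in the definition of $\sum_{x\in X}H_x$ becomes essential and the correct universal property (and whether the object is a genuine coproduct in $\Hilb$) has to be treated separately.
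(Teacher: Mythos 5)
Your proposal is correct and follows essentially the same route as the paper's proof: define $T$ by $\family{h}{x}\mapto \sum_{x\in X}T_{x}(h_{x})$, use Cauchy--Schwarz to get $\norm{T}\leq \hilbertnorm{\family{T}{x}}$, and verify bijectivity via the inverse $T\mapto \parens{Ti_{x}}$ with the bound $\sum_{x\in X}\norm{Ti_{x}}^{2}\leq n\norm{T}^{2}$ coming from the $i_{x}$ being isometries. Your explicit counterexample to isometry only confirms the remark the paper itself makes right after the theorem.
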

\begin{proof}
The crucial fact is that given a cone $\family{T}{x}$ there is a unique linear map $T\colon \sum_{x\in X}H_{x}\to H$ closing the triangle \ref{diagram:universal-property-direct-sum} defined by:
\begin{equation*}
    b= \family{b}{x}\mapto \sum_{x\in X}T_{x}b_{x}
\end{equation*}

The boundedness of $T$ is now a simple application of the CBS inequality (\cite[chapter I, section 1]{conway:course-functional-analysis1990}):
\begin{equation*}
    \norm{T(b)}= \norm{\sum_{x\in X}T_{x}b_{x}}\leq \sum_{x\in X}\norm{T_{x}}\norm{b_{x}}\leq \hilbertnorm{\family{T}{x}}\hilbertnorm{\family{b}{x}}
\end{equation*}

This implies that $T$ is bounded with $\norm{T}\leq \hilbertnorm{\family{T}{x}}$. It is easy to see that the map is a bijection: just note that if $T$ is a bounded linear map $\sum_{x\in X}H_{x}\to H$ then we have a cone $\parens{Ti_{x}}$ and, since the $i_{x}$ are isometries,
\begin{equation*}
    \sum_{x\in X}\norm{Ti_{x}}^{2}\leq \sum_{x\in X}\norm{T}^{2}= n\norm{T}^{2}
\end{equation*}
with $n$ the cardinality of $X$.
\end{proof}


The reader should note two things in theorem \ref{theorem:existence-coproducts-hilb}. First, that the representability isomorphism \eqref{isomorphism:existence-coproducts-hilb} is \emph{not} an isometry and there is no \emph{natural} norm on the space of cones $\sum_{x\in X}\homset{\Hilb}{H_{x}}{H}$ that makes it so. In this context, the choice of the Hilbertian norm for the space of cones ends up being a tad arbitrary\footnote{We could have endowed $\bigoplus_{x\in X}H_{x}$ with any $p$-norm, with $p\in [1, \infty]$. Since $X$ is finite, these norms are all equivalent and almost all the basic theorems involving the Hilbertian norm would still be true by replacing the Hilbertian norm by any other $p$-norm.}. Second, and more importantly for what follows, is the inevitable appearance of Banach spaces via spaces of bounded linear maps.

\smallskip
Recalling the definition of coproducts (\cite[chapter III, section 3]{maclane:categories-working-mathematician1971}), we have the following immediate corollary.

\begin{corollary} \label{corollary:finite-coproducts-hilb}
The category $\Hilb$ has all finite coproducts.
\end{corollary}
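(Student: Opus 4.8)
The plan is to read Theorem~\ref{theorem:existence-coproducts-hilb} for what it is: a verification, for \emph{every} finite set $X$ and every $X$-indexed family $\family{H}{x}$ of Hilbert spaces, of the universal property defining the coproduct $\sum_{x\in X}H_{x}$. Recall (\cite[chapter III, section 3]{maclane:categories-working-mathematician1971}) that a category has all finite coproducts precisely when every finite family of its objects admits a coproduct, equivalently when it possesses an initial object together with binary coproducts. Since Theorem~\ref{theorem:existence-coproducts-hilb} already supplies coproducts for arbitrary finite families, the corollary is essentially a matter of unwinding definitions.

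First I would observe that the cone $\family{T}{x}$ of bounded linear maps $H_{x}\to H$ appearing in the theorem is, per the footnote to Theorem~\ref{theorem:existence-coproducts-hilb}, nothing but a cocone from the discrete diagram $\family{H}{x}$ to the object $H$; and the assertion that there is a \emph{unique} bounded linear $T\colon \sum_{x\in X}H_{x}\to H$ closing each triangle~\ref{diagram:universal-property-direct-sum} is verbatim the statement that $\parens{\sum_{x\in X}H_{x},\family{i}{x}}$ is the coproduct of $\family{H}{x}$ in $\Hilb$. Thus for each finite $X$ the coproduct exists and is carried by the completed direct sum $\sum_{x\in X}H_{x}$ with the canonical isometric inclusions $\family{i}{x}$. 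The extra quantitative content of the theorem --- that the representability bijection~\eqref{isomorphism:existence-coproducts-hilb} is contractive for the Hilbertian $2$-norm --- is irrelevant to the present existence claim and is simply discarded.

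It then only remains to note the boundary cases. For $X=\emptyset$ the direct sum degenerates to the zero Hilbert space $\zero$, and the universal property reduces to the existence of a unique bounded linear map $\zero\to H$ for every $H$, i.e.\ $\zero$ is an initial object of $\Hilb$; for $X$ a two-element set one obtains binary coproducts $H_{0}\oplus H_{1}$; and the general finite case follows either directly from the theorem or by an obvious induction from these two. The only point requiring the slightest care is the bookkeeping identification of the theorem's ``cone'' with a categorical cocone, together with the resolve not to be sidetracked by the non-isometric nature of~\eqref{isomorphism:existence-coproducts-hilb}; beyond that there is no genuine obstacle.
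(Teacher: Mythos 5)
Your proposal is correct and follows the paper's own route: the paper likewise treats the corollary as an immediate consequence of Theorem~\ref{theorem:existence-coproducts-hilb}, whose unique-factorization statement is precisely the coproduct universal property, with the norm estimate in~\eqref{isomorphism:existence-coproducts-hilb} playing no role. Your explicit handling of the empty family and the cone/cocone bookkeeping merely spells out what the paper leaves to ``recalling the definition of coproducts.''
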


The restriction to finite $X$ is necessary. The proof of theorem \ref{theorem:existence-coproducts-hilb} breaks down if $X$ is infinite and later in section \ref{section:from-hilbert-banach-spaces} we will give the simple argument that proves that $\Hilb$ does \emph{not} have infinite (co)products. This is a very serious drawback of $\Hilb$ as will be seen in the continuation. On the other hand, since $\Hilb$ is additive it follows that it has all products and they are in fact, biproducts (see \cite[chapter VIII, section 2]{maclane:categories-working-mathematician1971} for the definition and basic properties of additive categories).

\smallskip
If coproducts give categorified sums, tensor products give the categorified multiplication. Recall that the tensor product of Hilbert spaces is the representing object for the bifunctor of bounded bilinear maps. This universal property is not as slick as that of theorem \ref{theorem:existence-coproducts-hilb}, because although $\Hilb$ is a symmetric monoidal category, it is \emph{not} closed. This is the second serious drawback of $\Hilb$. We note however that the tensor bifunctor, being linear in each variable, automatically preserves biproducts up to linear homeomorphism which are the isomorphisms in $\Hilb$ by the open mapping theorem (\cite[chapter III, section 12]{conway:course-functional-analysis1990}). The next theorem improves on this a little bit.

\begin{theorem} \label{theorem:commutativity-direct-sums-tensors}
Let $X$ be a finite set, $H_{x}$ and $K$ Hilbert spaces. The canonical map
\begin{equation}\label{isomorphism:commutativity-direct-sums-tensors}
    \sum_{x\in X}H_{x}\otimes K\to \parens{\sum_{x\in X}H_{x}}\otimes K
\end{equation}
is an \emph{isometric isomorphism}.
\end{theorem}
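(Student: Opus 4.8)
The plan is to reduce the statement to an inner-product computation on elementary tensors and then argue by density. Since $X$ is finite, the left-hand side $\sum_{x\in X}H_{x}\tensor K$ is the finite biproduct $\bigoplus_{x\in X}\parens{H_{x}\tensor K}$ carrying the Hilbertian norm, so $\norm{\family{\omega}{x}}^{2}= \sum_{x\in X}\norm{\omega_{x}}^{2}$, while $\parens{\sum_{x\in X}H_{x}}\tensor K$ is the completed Hilbert tensor product, i.e. the completion of the algebraic tensor product of $\sum_{y}H_{y}$ with $K$. The canonical map \eqref{isomorphism:commutativity-direct-sums-tensors} is the one assembled, via the universal property of theorem \ref{theorem:existence-coproducts-hilb}, from the maps $i_{x}\tensor \id_{K}\colon H_{x}\tensor K\to \parens{\sum_{y}H_{y}}\tensor K$ built from the inclusions $i_{x}\colon H_{x}\to \sum_{y}H_{y}$; concretely it sends an element $\family{\omega}{x}$ to $\sum_{x\in X}(i_{x}\tensor \id_{K})(\omega_{x})$.

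The step carrying actual content is showing this map is norm-preserving, and for that I would establish two facts. First, each $i_{x}\tensor \id_{K}$ is an isometry onto a closed subspace $M_{x}$: on elementary tensors $\inner{i_{x}(h)\tensor k}{i_{x}(h')\tensor k'}= \inner{i_{x}(h)}{i_{x}(h')}\inner{k}{k'}= \inner{h}{h'}\inner{k}{k'}$ since the $i_{x}$ are isometries (as in the proof of theorem \ref{theorem:existence-coproducts-hilb}), and bilinear extension followed by completion finishes it. Second, the $M_{x}$ are pairwise orthogonal: for $x\neq x'$ and $h\in H_{x}$, $h'\in H_{x'}$ one has $\inner{h}{h'}= 0$ in $\sum_{y}H_{y}$, hence $\inner{i_{x}(h)\tensor k}{i_{x'}(h')\tensor k'}= 0$, which again extends bilinearly and by continuity. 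The Pythagorean theorem then gives $\norm{\sum_{x\in X}(i_{x}\tensor \id_{K})(\omega_{x})}^{2}= \sum_{x\in X}\norm{\omega_{x}}^{2}$, so the map is an isometry. Its range contains every elementary tensor of $\parens{\sum_{y}H_{y}}\tensor K$ — here finiteness of $X$ is used, so that each element of $\sum_{y}H_{y}$ is a finite sum $\sum_{x}i_{x}(h_{x})$ — hence contains the dense algebraic tensor product, and being the isometric image of a complete space it is closed; therefore the map is onto, and \eqref{isomorphism:commutativity-direct-sums-tensors} is an isometric isomorphism.

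There is no genuine obstacle here. The text has already noted that the additive bifunctor $-\tensor-$ preserves finite biproducts up to linear homeomorphism; the only extra content of the theorem is that this homeomorphism is \emph{isometric}, which is forced by the orthogonality of the summands $M_{x}$ together with the fibrewise isometry. The mild points of care are keeping the algebraic and completed tensor products distinct, and observing that the $\ell^{2}$-combination of the $\norm{\omega_{x}}$ is precisely the norm that the construction of $\sum_{x\in X}H_{x}\tensor K$ imposes on the left-hand side; finiteness of $X$ removes any convergence worries.
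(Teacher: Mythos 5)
Your proof is correct. The paper itself offers no proof of this theorem (it is one of those stated with the proof omitted); the only hint given is the preceding remark that additivity of the tensor bifunctor yields preservation of finite biproducts up to linear homeomorphism, the theorem's content being precisely the isometric refinement. Your argument supplies exactly that missing content in the standard way: the maps $i_{x}\otimes \id_{K}$ are isometries because the inner product of elementary tensors is multiplicative and the $i_{x}$ are isometric, their images are pairwise orthogonal, and the Pythagorean identity matches the Hilbertian norm that the construction of $\sum_{x\in X}\parens{H_{x}\otimes K}$ imposes; surjectivity follows from density of the algebraic tensor product together with closedness of the isometric image of a complete space. Two trivial quibbles: the extension from elementary tensors is by \emph{sesquilinearity} rather than bilinearity, and finiteness of $X$ is not really what saves surjectivity (density of finite sums would do even in the completed infinite direct sum) -- it is only needed so that the left-hand side exists as a coproduct in $\Hilb$ in the sense of theorem \ref{theorem:existence-coproducts-hilb}. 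Neither affects the validity of the argument.
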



One important detail that table \ref{table:categorified-analogues-ring-operations} manages to hide is that the algebraic laws of rings get promoted in the categorified setting to specified isomorphisms that satisfy equations of their own, the so-called \emph{coherence laws}. These coherence laws are the more mysterious and fascinating part of any categorification enterprise. There is a vast literature on the subject, from which we can cite \cite{laplaza:coherence-natural-distributivity1972} and \cite{kelly-laplaza:coherence-compact-closed-categories1980} that introduced and formalized the notion of \emph{symmetric ring category}. For the most part, it is gobbledygook completely irrelevant to our purposes. In a wonderful flash of insight, \cite{yetter:categorical-linear-algebra} observed that all the operations in table \ref{table:categorified-analogues-ring-operations} are defined by \emph{universal properties} and we can avoid entirely the discussion of their coherence laws.

\smallskip
The notion of universal property is, in its various guises, the central concept of category theory and D.~Yetter's observation strikes us as so important that we elect it as our first guiding principle\footnote{Users of the Python programming language will readily recognize the source for the formulation of principle \ref{principle:universal-properties-coherence-laws}.}.

\begin{principle} \label{principle:universal-properties-coherence-laws}
Universal properties are one honking great idea -- let's do more of those!
\end{principle}

Principle \ref{principle:universal-properties-coherence-laws} is the overarching organizational principle of this paper and its consequences will be followed through doggedly. For example, we mentioned above that we can avoid a discussion of coherence laws because all the entries in table \ref{table:categorified-analogues-ring-operations} are defined by universal properties. This was something of a rhetorical exaggeration made in order to drive a point home. The actual truth is that tensor products are problematic, precisely because $\Hilb$ is not closed. If we restricted ourselves to finite dimensional Hilbert spaces, then we would have a symmetric monoidal closed category, meaning that there is a natural isometric isomorphism
\begin{equation} \label{isomorphism:hilb-monoidal-closed}
    \homset{\Hilb}{H\otimes K}{L}\isomorphic \homset{\Hilb}{H}{\homset{\Hilb}{K}{L}}
\end{equation}

Closedness is an extremely useful property. It not only allows a decent theory of enriched categories but it turns tensor products into \emph{tensors} (\cite[chapter 3, section 7]{kelly:enriched-category-theory2005}). Now, it just happens that tensors are a special example of \emph{weighted colimits} and thus it is obvious what it means for a functor to preserve them -- and again no need to discuss any coherence laws. As things stand we will need to have to recourse to the notion of module functors (\cite[section 1]{yetter:categorical-linear-algebra}). The upshot of these contortions is simple: $\Hilb$ is the ``wrong category''\footnote{One of the more striking teachings of modern algebraic geometry a la A.~Grothendieck is that it is far more preferable to have a well-behaved category of (possibly) badly behaved objects, than well-behaved objects but constituting a badly behaved category.}. There are two cogent reasons why this remark is not erected into a principle; one, that we are not yet quite finished with badmouthing $\Hilb$; the other, that at this point it is not clear what categories would even count as ``right''.

\subsection{Measurability of bundles}
\label{subsection:measurability-bundles}

Let $X$ be a measurable space. Recall that a function $f\colon X\to \field$ is \emph{measurable} if the inverse image $\invimage{f}(E)$ of every (Borel) measurable set $E\subseteq \field$ is measurable. Measurability is a \emph{property}: a function either is or is not measurable. Measurability for bundles as in definition \ref{definition:measurable-bundle} is an \emph{extra structure} on the bundle, the \emph{choice} of a pervasive sequence. This, sure enough, is a common theme in categorification. Potentially, there are many ways to make a category into, say, a symmetric ring category and each choice is an extra structure piled on the categorial structure. But as discussed in the preceding subsection, the categorified algebraic operations we are interested in are given by universal properties (even colimits), and a given category either has them or not. We will argue that definition \ref{definition:measurable-bundle} of measurable bundle is similarly misguided.


\smallskip
First, there is the lurking suspicion that the choice of pervasive sequence is fairly arbitrary, almost irrelevant. This is best seen by starting at the end: if $\sequence{h}{n}$ is a countable dense sequence of $\Integrable_{2}(\xi)$, then we obtain a pervasive sequence for $\xi$ that for all purposes is equivalent to the original pervasive sequence, that is, the respective spaces of square-integrable sections are canonically isomorphic\footnote{If this last point is not clear, consult theorem \ref{theorem:measurability-countable-coproduct} below and the comments surrounding it.}. What seems to be happening here is that we do have to make a choice, but then all choices are equally good (or equally bad, depending on one's personal philosophy). Whenever there is a large number of equally good choices to make, it is an hint that there is an action of some symmetry group. The pervasive sequence solution is to factor out this symmetry group by making a choice of an origin, but modern geometry shows us that there are other ways to tackle this problem.


\smallskip
Second, although the choice of a pervasive sequence allows us to pick a linear space of sections with the required measurability properties, in the case of \emph{characteristic bundles} there \emph{already is} a notion of measurability available. Let $E\subseteq X$ be a measurable subset and $H$ a separable Hilbert space. The characteristic bundle $\characteristic(E)H$ is defined as:
\begin{equation} \label{bundle:characteristic-hilbert}
    \characteristic(E)H\defequal
    \begin{cases}
        H &\text{if $x\in E$,} \\
        \zero &\text{otherwise.}
    \end{cases}
\end{equation}

\smallskip
The measurable structure of $\characteristic(E)H$ is given by picking a countable dense sequence $\sequence{h}{n}$ of $H$ and taking the sequence of constant sections
\begin{equation*}
    s_{n}(x)\defequal
    \begin{cases}
        h_{n} &\text{if $x\in E$,} \\
        0 &\text{otherwise.}
    \end{cases}
\end{equation*}

Note that the particular choice of the dense sequence is irrelevant and any other such would lead to an equivalent measurable structure. On the other hand, since sections $s$ of $\characteristic(E)H$ can be identified with functions $E\to H$ we say that $s$ is \emph{Borel measurable} if the associated function is measurable when $H$ is given the Borel measurable structure. Recall that if $X$ is a topological space, the \emph{Borel structure of $X$} is the $\sigma$-algebra generated by the open sets. We now have:

\begin{theorem} \label{theorem:measurable-pervasive-measurable}
A section $s$ of $\characteristic(E)H$ is Borel measurable iff for every $n$, the map $x\mapto \inner{s(x)}{s_{n}(x)}$ is measurable.
\end{theorem}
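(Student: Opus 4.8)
The plan is to prove the two implications separately; the forward one is routine and the converse carries all the content. For the forward implication I would observe that if the associated function $s\colon E\to H$ is Borel measurable, then $x\mapsto\inner{s(x)}{h_{n}}$ is measurable on $E$, being the composite of $s$ with the bounded (hence continuous, hence Borel) linear functional $\inner{\cdot}{h_{n}}\colon H\to\field$; and since $s_{n}$ equals the constant section $h_{n}$ on $E$ and vanishes off $E$, the function $x\mapsto\inner{s(x)}{s_{n}(x)}$ is simply $\characteristic(E)\cdot\inner{s(\cdot)}{h_{n}}$, manifestly measurable.

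For the converse the plan is to exploit the separability of $H$. Its Borel $\sigma$-algebra is generated by the open balls $B(h_{n},q)$ centred at the points of the dense sequence with rational radii $q>0$, so it suffices to show that each $\invimage{s}\parens{B(h_{n},q)}$ is measurable. Expanding $\norm{s(x)-h_{n}}^{2}$ gives
\[
    \invimage{s}\parens{B(h_{n},q)}=\set{x\in E\colon \norm{s(x)}^{2}-2\operatorname{Re}\inner{s(x)}{h_{n}}+\norm{h_{n}}^{2}<q^{2}},
\]
which reduces the problem to the measurability of $x\mapsto\inner{s(x)}{h_{n}}$ — which is exactly the hypothesis, since $s_{n}=h_{n}$ on $E$ — together with the measurability of $x\mapsto\norm{s(x)}^{2}$.

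The crux, and the step I expect to be the main obstacle, is this last measurability of the norm function. I would obtain it by applying Gram--Schmidt to $\sequence{h}{n}$ to produce an orthonormal basis $\sequence{e}{k}$ of $H$ in which each $e_{k}$ is a finite linear combination of the $h_{n}$; then each $x\mapsto\inner{s(x)}{e_{k}}$ is a finite linear combination of measurable functions, hence measurable, and Parseval gives $\norm{s(x)}^{2}=\sum_{k}\abs{\inner{s(x)}{e_{k}}}^{2}$, an increasing pointwise limit of measurable functions and so measurable. A variant that avoids Gram--Schmidt: the $h_{n}$ lying in the closed unit ball are dense in it, so $\norm{s(x)}=\sup\set{\abs{\inner{s(x)}{h_{n}}}\colon\norm{h_{n}}\leq 1}$ is a countable supremum of measurable functions. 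With both ingredients in place, every $\invimage{s}\parens{B(h_{n},q)}$ is measurable, and since these balls generate $\Borel(H)$ the function $s$ is Borel measurable.

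The genuine difficulty is thus confined to turning the countably many scalar measurability conditions into honest $H$-valued Borel measurability, and it hinges entirely on separability (so that balls around a countable set generate $\Borel(H)$) plus the identity expressing $\norm{s(x)}$ through countably many inner products. Everything else is bookkeeping, modulo the harmless remark that, a section of $\characteristic(E)H$ being supported on $E$, all measurability statements take place over $E$ with its induced $\sigma$-algebra.
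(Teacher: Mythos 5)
Your proof is correct, but it is not the route the paper has in mind: the paper gives no direct argument at all and instead points out that the statement is a special case of the Pettis measurability theorem (theorem \ref{theorem:pettis}), via the chain ``weakly measurable and separably valued $\Rightarrow$ strongly measurable $\Rightarrow$ Borel measurable'', where one first upgrades measurability of the countably many functions $x\mapsto\inner{s(x)}{h_{n}}$ to weak measurability against \emph{all} of $\dual{H}$ by a pointwise limiting argument using density of $\sequence{h}{n}$ and the bound $\abs{\inner{s(x)}{h}-\inner{s(x)}{h_{n}}}\leq\norm{s(x)}\,\norm{h-h_{n}}$. What you do instead is a self-contained, Hilbert-specific argument: separability lets the balls $B(h_{n},q)$ generate $\Borel(H)$, and the polarization-type expansion of $\norm{s(x)-h_{n}}^{2}$ reduces everything to measurability of $\operatorname{Re}\inner{s(x)}{h_{n}}$ (the hypothesis) and of $x\mapsto\norm{s(x)}^{2}$, which you get either by Gram--Schmidt plus Parseval (after discarding linearly dependent $h_{n}$'s, and allowing a finite basis in the finite-dimensional case) or, more cleanly, by the countable supremum $\norm{s(x)}=\sup\set{\abs{\inner{s(x)}{h_{n}}}\colon\norm{h_{n}}\leq 1}$, which is legitimate because the $h_{n}$ in the open unit ball are dense there and hence in the closed ball. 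Your approach buys elementarity and avoids quoting a nontrivial theorem of vector measure theory; the paper's approach buys generality, since the Pettis theorem is exactly the tool that survives the later passage from Hilbert to Banach fibers (where Parseval and the inner-product expansion are unavailable) and is needed in subsection \ref{subsection:failure-radon-nikodym} anyway. The only bookkeeping point, which you handle correctly, is that a section of $\characteristic(E)H$ vanishes off $E$, so all measurability claims are relative to the trace $\sigma$-algebra on the measurable set $E$ and extend to $X$ by the factor $\characteristic(E)$.
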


A direct proof of the above theorem is not difficult, but since it is a special case of the Pettis measurability theorem to which we will have to return to later in subsection \ref{subsection:failure-radon-nikodym}, we leave it to the interested reader. Note also that a bundle morphism $\tau\colon \characteristic(E)H\to \characteristic(F)K$ can be identified with a map
\begin{equation} \label{map:operators-functions-into-operators}
    \tau\colon E\cap F\to \homset{\Hilb}{H}{K}
\end{equation}


The codomain is a Banach space and has a Borel structure. The question of the equivalence of the measurability of $\tau$ with the Borel measurability of \eqref{map:operators-functions-into-operators} is more delicate\footnote{The difficulty lies partly in the fact that the space $\homset{\Hilb}{H}{K}$ is not separable when $H$ and $K$ are infinite dimensional and thus, Borel measurability is not adequate. To make matters more complicated, a deep result of A.~Szankowski (\cite{szankowski:algebra-bounded-operators-not-approximation-property1981}) shows that this space does not have the approximation property.}, but since the Borel measurability of the latter already implies the measurability of $x\mapto \norm{\tau_{x}}$ we are on the right track.


\smallskip
But the story does not end here. Let $\xi$ be a measurable bundle. Then from a pervasive sequence $\sequence{t}{m}$ we can construct a \emph{pointwise orthonormal sequence} $\sequence{s}{n}$. Since this construction is important, being the foundation of later results, we recall it in the next theorem. For the proof, we refer the reader to \cite[chapter II, section 4]{dixmier:von-neumann-algebras1981}.

\begin{theorem} \label{theorem:pervasive-pointwise-orthonormal}
Let $\xi$ be a bundle of Hilbert spaces and $\sequence{t}{m}$ a pervasive sequence. Then there exists a sequence $\sequence{s}{n}$ of sections of $\xi$ such that:
\begin{enumerate}
  \item \label{theor-enum:pervasive-pointwise-orthonormal1}
  For every $n, m$, the function $x\mapto \inner{s_{n}(x)}{s_{m}(x)}$ is measurable.

  \item \label{theor-enum:pervasive-pointwise-orthonormal2}
  For a.e.~$x\in X$, the set $\set{s_{n}(x)}$ is an orthonormal base of $\xi_{x}$ if $\dimension(\xi_{x})= \omega$. If $\dimension(\xi_{x})= m$ then the set $\set{s_{n}(x)\colon n\leq m}$ is an orthonormal base of $\xi_{x}$ and $s_{n}(x)= 0$ for $n> m$.

  \item \label{theor-enum:pervasive-pointwise-orthonormal3}
  A section $s$ of $\xi$ is measurable iff the function $x\mapto \inner{s(x)}{s_{n}(x)}$ is measurable for every $n\in \Natural$.
\end{enumerate}
\end{theorem}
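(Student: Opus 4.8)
The plan is to run the classical Gram--Schmidt process fibrewise on the pervasive sequence $\sequence{t}{m}$, arranging the bookkeeping so that every choice made along the way depends measurably on $x$. The only scalar data at our disposal are the functions $g_{ij}(x) \defequal \inner{t_i(x)}{t_j(x)}$, which are measurable by hypothesis (see Definition~\ref{definition:measurable-bundle}), so each vector we produce must be exhibited as an explicit finite combination $\sum_i c_i(x)\, t_i(x)$ with the $c_i$ measurable, and each fibrewise linear-algebra decision must be read off from the Gram matrices $G_m(x) \defequal [g_{ij}(x)]_{i,j\le m}$. The elementary fact we lean on is that $\dimension\linearspan\set{t_1(x),\dotsc,t_m(x)}$ equals $\operatorname{rank}G_m(x)$, and that the rank of a matrix of measurable functions is measurable, since $\set{x : \operatorname{rank}G_m(x)\ge r}$ is the finite union, over $r\times r$ submatrices, of the sets on which the corresponding minor --- a polynomial in the $g_{ij}$ --- does not vanish.

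Put $d_0 \defequal 0$ and $d_m(x) \defequal \operatorname{rank}G_m(x)$; this is a measurable, non-decreasing sequence with $d_m(x)\le m$, and by pervasiveness $d_m(x)$ increases to $\dimension\xi_x$ for a.e.\ $x$. Call $m$ a \emph{pivot at $x$} when $d_m(x)>d_{m-1}(x)$, i.e.\ $t_m(x)\notin\linearspan\set{t_1(x),\dotsc,t_{m-1}(x)}$. The essential difficulty is that the set of pivots depends on $x$, so one cannot speak of ``the $n$-th pivot of $\sequence{t}{m}$'' as a fixed index; the remedy is to slice $X$ up. For $n,m\in\Natural$ set $B_{n,m}\defequal\set{x : d_{m-1}(x)=n-1,\ d_m(x)=n}$, a measurable set; for fixed $n$ the sets $\set{B_{n,m}}_m$ are pairwise disjoint and cover $\set{x:\dimension\xi_x\ge n}$ up to a null set. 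On $B_{n,m}$ the subspace $\linearspan\set{t_1(x),\dotsc,t_{m-1}(x)}$ has dimension $n-1$, so we further partition $B_{n,m}=\bigsqcup_S B_{n,m,S}$ over $(n-1)$-element sets $S\subseteq\set{1,\dotsc,m-1}$ by assigning $x$ to the lexicographically first $S$ with $\det[g_{ij}(x)]_{i,j\in S}\ne 0$ (finitely many pieces, each measurable). On $B_{n,m,S}$ the vectors $\set{t_i(x):i\in S}$ are a basis of $\linearspan\set{t_1(x),\dotsc,t_{m-1}(x)}$, so the orthogonal projection of $t_m(x)$ onto that subspace is $\sum_{i\in S}c_i(x)\,t_i(x)$, where $c(x)=[g_{ij}(x)]_{i,j\in S}^{-1}\,(g_{mj}(x))_{j\in S}$; by Cramer's rule each $c_i(x)$ is a ratio of measurable determinants with non-vanishing denominator, hence measurable. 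We then set
\begin{equation*}
  r(x)\defequal t_m(x)-\sum_{i\in S}c_i(x)\,t_i(x),\qquad s_n(x)\defequal\frac{r(x)}{\norm{r(x)}}\text{ on }B_{n,m,S},
\end{equation*}
with $r(x)\ne 0$ on $B_{n,m}$ so the normalisation is legitimate, and $s_n(x)\defequal 0$ off $\bigcup_m B_{n,m}$.

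It remains to check the three conclusions. First, each $s_n$ is a measurable section of $(\xi,\sequence{t}{m})$: on $B_{n,m,S}$ it is a measurable scalar combination of $t_m$ and the $t_i$ ($i\in S$), so $x\mapsto\inner{s_n(x)}{t_k(x)}$ is a measurable combination of the $g_{\bullet k}$, and a section measurable on each block of a countable measurable partition is measurable. For conclusion \eqref{theor-enum:pervasive-pointwise-orthonormal2}, an induction on $n$ shows that on $B_{n,m}$ one has $\linearspan\set{s_1(x),\dotsc,s_n(x)}=\linearspan\set{t_1(x),\dotsc,t_m(x)}$ (using that $d$ is constant between consecutive pivots) and that $r(x)\perp\linearspan\set{s_1(x),\dotsc,s_{n-1}(x)}$ from the normal-equations identity $\inner{r(x)}{t_j(x)}=g_{mj}(x)-\sum_{i\in S}c_i(x)g_{ij}(x)=0$ for $j\in S$; hence the nonzero $s_n(x)$ are orthonormal, their linear span equals $\linearspan\set{t_m(x):m\in\Natural}$, which is dense in $\xi_x$ for a.e.\ $x$, and since the number of pivots at $x$ equals $\dimension\xi_x$ while $s_n(x)=0$ whenever $\dimension\xi_x<n$, the $s_n(x)$ have exactly the form asserted. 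Conclusion \eqref{theor-enum:pervasive-pointwise-orthonormal1} is then immediate, since $\inner{s_n(x)}{s_m(x)}$ equals $1$ when $n=m$ on the measurable set $\set{x:\dimension\xi_x\ge n}$ and $0$ elsewhere. Finally, for \eqref{theor-enum:pervasive-pointwise-orthonormal3} the ``only if'' direction follows from the definition of measurable section together with the measurability of each $s_n$ just proved, while conversely, if $x\mapsto\inner{s(x)}{s_n(x)}$ is measurable for every $n$, expanding $t_k(x)$ in the pointwise orthonormal base gives $\inner{s(x)}{t_k(x)}=\sum_n\inner{s_n(x)}{t_k(x)}\,\inner{s(x)}{s_n(x)}$, a pointwise-convergent series of measurable functions, so $s$ is measurable.

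The crux is not analytic but organisational: turning the fibrewise pivot selection --- together with the choice of a basis of the running span among $t_1(x),\dotsc,t_{m-1}(x)$ --- into an honest countable measurable partition of $X$, on each block of which the Gram--Schmidt residual is given by one uniform Cramer-type formula. Once that is in place, everything else is the finite-dimensional computation performed a.e.\ fibrewise.
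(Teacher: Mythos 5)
The paper gives no proof of this theorem—it simply cites Dixmier—and your argument is precisely the classical measurable Gram--Schmidt construction found there: fibrewise orthonormalization of the pervasive sequence, with the pivot and basis choices organized into a countable measurable partition of $X$ by means of Gram determinants, and the residual given blockwise by a Cramer-type formula. The proof is correct; the one point to state carefully is that in the ``only if'' half of \eqref{theor-enum:pervasive-pointwise-orthonormal3} you must invoke the blockwise representation of $s_{n}$ as a measurable-coefficient combination of finitely many $t_{i}$ (which your construction provides), not merely the measurability of $s_{n}$ as a section, since the general fact that the inner product of two measurable sections is measurable is itself deduced from this theorem and would otherwise be circular.
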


The converse passage from pointwise orthonormal sequences to pervasive ones is even easier. Given a sequence $\sequence{s}{n}$ satisfying \eqref{theor-enum:pervasive-pointwise-orthonormal1} and \eqref{theor-enum:pervasive-pointwise-orthonormal2} of theorem \ref{theorem:pervasive-pointwise-orthonormal}, then taking rational linear combinations of $s_{n}$ and reindexing, we obtain a pervasive sequence of $\xi$ and the respective $\Integrable_{2}$-spaces of sections are isometrically isomorphic. In other words, as far as bundle measurability is concerned the two concepts are equivalent.


\smallskip
Before putting theorem \ref{theorem:pervasive-pointwise-orthonormal} to work, we must make a small detour and discuss one piece of the categorial structure of $\IBundle(X)$. What we need is the simple fact that direct sums in $\Hilb$ lift to $\IBundle(X)$. If $\pair{\xi}{\sequence{s}{n}}$ and $\pair{\zeta}{\sequence{t}{m}}$ are two measurable bundles, their \emph{pointwise direct sum} is the bundle $x\mapto \xi_{x}\oplus \zeta_{x}$ with pervasive sequence (after suitable reindexing) given by all sums of the form
\begin{equation*}
    s_{n} + t_{m}
\end{equation*}

It is not difficult to see that $\xi\oplus \zeta$ is a coproduct in $\IBundle(X)$. Since the zero bundle $x\mapto \zero$ is a zero in $\IBundle(X)$, we have the following theorem.

\begin{theorem} \label{theorem:measurable-bundles-additive}
The category $\IBundle(X)$ is additive.
\end{theorem}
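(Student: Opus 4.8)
The plan is to verify the three defining ingredients of an additive category as in \cite[chapter VIII, section 2]{maclane:categories-working-mathematician1971}: that $\IBundle(X)$ is an $\Ab$-category (its hom-sets are abelian groups and composition is $\Integer$-bilinear), that it has a zero object, and that it admits a biproduct for every pair of objects. All three should follow by transporting the corresponding structure on $\Hilb$ (abelian-group hom-sets, zero object, finite biproducts --- the last by theorem \ref{theorem:existence-coproducts-hilb} and corollary \ref{corollary:finite-coproducts-hilb}) fiberwise; the only real work is to confirm that the operations involved keep us inside the class of essentially bounded measurable morphisms taken modulo a.e.-equivalence.

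First I would put on $\homset{\IBundle(X)}{\xi}{\zeta}$ the pointwise addition $(\tau + \sigma)_{x} \defequal \tau_{x} + \sigma_{x}$ borrowed from the abelian group $\homset{\Hilb}{\xi_{x}}{\zeta_{x}}$, and check that $\tau + \sigma$ is again an essentially bounded measurable bundle morphism: essential boundedness is clear from $\norm{(\tau + \sigma)_{x}} \leq \norm{\tau_{x}} + \norm{\sigma_{x}}$ (the right side being measurable by theorem \ref{theorem:measurability-norm-morphism} and essentially bounded by hypothesis), while measurability holds because for any measurable section $s$ of $\xi$ the section $\tau(s) + \sigma(s)$ of $\zeta$ is measurable --- tested against the pervasive sequence $\sequence{t}{m}$ of $\zeta$, the function $x \mapto \inner{(\tau(s) + \sigma(s))(x)}{t_{m}(x)}$ is a sum of two measurable functions. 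The same computation applied to $-\tau = (-1)\tau$ closes the hom-set under inverses, and the class of the fiberwise-zero morphism is the group identity. Since pointwise addition obviously respects a.e.-equivalence ($\tau \aeequal \tau'$, $\sigma \aeequal \sigma'$ give $\tau + \sigma \aeequal \tau' + \sigma'$) and composition is fiberwise composition of bounded operators, which is pointwise $\Integer$-bilinear and also respects a.e.-equivalence, composition is $\Integer$-bilinear on hom-sets; thus $\IBundle(X)$ is an $\Ab$-category.

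Next I would take the zero bundle $x \mapto \zero$, with the constant-zero pervasive sequence, as zero object: for every measurable bundle $\xi$ the only fiberwise map $\xi_{x} \to \zero$ and the only one $\zero \to \xi_{x}$ is the zero operator, so there is a unique bundle morphism in each direction and it is vacuously measurable and essentially bounded, making this bundle both initial and terminal. For biproducts I would use the pointwise direct sum $\xi \oplus \zeta\colon x \mapto \xi_{x} \oplus \zeta_{x}$ with the reindexed pervasive sequence $\set{s_{n} + t_{m}}$ of sums of members of the pervasive sequences $\sequence{s}{n}$ of $\xi$ and $\sequence{t}{m}$ of $\zeta$, as described just before the theorem. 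The fiberwise inclusions and projections are bundle morphisms of norm $\leq 1$ and are measurable --- e.g.\ $x \mapto \inner{(i_{\xi}s)(x)}{(s_{n} + t_{m})(x)} = \inner{s(x)}{s_{n}(x)}$ is measurable for every measurable section $s$ of $\xi$ --- and fiberwise they satisfy $p_{\alpha}\composition i_{\beta} = \delta_{\alpha\beta}\,\id$ and $i_{\xi}\composition p_{\xi} + i_{\zeta}\composition p_{\zeta} = \id_{\xi \oplus \zeta}$, which are finitely many equations between morphisms and so hold in $\IBundle(X)$. By the standard fact that in an $\Ab$-category with a zero object such a configuration is simultaneously a product and a coproduct, $\xi \oplus \zeta$ is a biproduct (this also re-proves the coproduct claim made above), and hence $\IBundle(X)$ has all binary, and so all finite, biproducts. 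Together with the previous two paragraphs this shows $\IBundle(X)$ is additive.

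I expect the only genuine obstacle to be the $\Ab$-enrichment step --- precisely, the closure of the essentially bounded measurable morphisms under addition and the descent of both addition and composition to a.e.-equivalence classes --- after which the zero object and biproducts are essentially formal. Even this is mild: measurability of a section is tested against the fixed countable pervasive sequence, and finite linear combinations of measurable functions are measurable, so no measure-zero pathology can obstruct the verification.
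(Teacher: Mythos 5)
Your proposal is correct and follows essentially the same route as the paper, which simply observes that the pointwise direct sum $\xi\oplus\zeta$ (with pervasive sequence the sums $s_{n}+t_{m}$) is a (co)product and that the zero bundle is a zero object. You merely spell out what the paper leaves implicit --- the $\Ab$-enrichment of the hom-sets of a.e.-classes of essentially bounded measurable morphisms and the biproduct identities --- so the extra detail is welcome but the argument is the same.
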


The construction of pointwise direct sums can be extended to a countable\footnote{But no larger. Indexing sets of larger cardinality lead to non-separable fibers.} sequence $\pair{\xi_{n}}{\sequence{s}{m, n}}$ of measurable bundles. On the fibers, the direct sum is given by
\begin{equation*}
    \sum_{n}\xi_{n}\colon x\mapto \sum_{n}\parens{\xi_{n}}_{x}
\end{equation*}

The measurable structure is the sequence (after reindexing) given by sums of the form,
\begin{equation*}
    \sum_{n\in I}s_{n, m_{n}}
\end{equation*}
where $I\subseteq \Natural$ is a finite subset and $s_{n, m_{n}}$ is a section of $\xi_{n}$ in the respective pervasive sequence. For much the same reasons as in $\Hilb$ (see section \ref{section:from-hilbert-banach-spaces} below), the countable pointwise direct sum $\sum_{n}\xi_{n}$ is \emph{not} a coproduct in $\IBundle(X)$.

\smallskip
Back to measurability issues. If $s$ is a measurable section of a measurable bundle $\xi$, then the \emph{support} of $s$ is the set
\begin{equation} \label{equation:support-measurable-section}
    \supp(s)\defequal \set{x\in X\colon s(x)\neq 0}
\end{equation}

Since $s$ is measurable, $\supp(s)$ is measurable. Thus, the section $s$ generates the characteristic line bundle $\characteristic(\supp(s))\field$ concentrated on $\supp(s)$ and from pointwise orthonormality it follows that:

\begin{theorem} \label{theorem:measurability-countable-coproduct}
Let $\xi$ be a measurable bundle and $\sequence{s}{n}$ a pointwise orthonormal sequence in $\xi$. Put $E_{n}\defequal \supp(s_{n})$. Then there is an isometric bundle isomorphism
\begin{equation}\label{isomorphism:measurability-countable-coproduct}
    \xi\isomorphic \sum_{n}\characteristic(E_{n})\field
\end{equation}
\end{theorem}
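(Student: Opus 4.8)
The plan is to manufacture the isomorphism one fiber at a time out of the pointwise orthonormal sequence, and then to spend the real effort on checking that it and its inverse are \emph{measurable} morphisms.

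First I would pin down the two bundles involved. Since each $s_n$ is measurable, $E_n= \supp(s_n)$ is measurable, and clause \eqref{theor-enum:pervasive-pointwise-orthonormal2} of theorem \ref{theorem:pervasive-pointwise-orthonormal} tells us that for a.e.~$x$ the set $\set{n\colon x\in E_n}= \set{n\colon s_n(x)\neq 0}$ has exactly $\dimension(\xi_x)$ elements (and, incidentally, that the $E_n$ decrease up to a null set). On the other side, unwinding the definition \eqref{bundle:characteristic-hilbert} of the characteristic bundle together with the construction of the countable pointwise direct sum, the fiber of $\zeta\defequal \sum_n\characteristic(E_n)\field$ over $x$ is the Hilbert space $\ell^2\parens{\set{n\colon x\in E_n}}$ of square-summable families indexed by those $n$ with $x\in E_n$, and $\zeta$ carries the pointwise orthonormal sequence $\family{\delta}{n}$, where $\delta_n(x)$ is the $n$-th standard unit vector for $x\in E_n$ and $\delta_n(x)= 0$ otherwise; by the analogue of clause \eqref{theor-enum:pervasive-pointwise-orthonormal3} of theorem \ref{theorem:pervasive-pointwise-orthonormal} for $\family{\delta}{n}$, a section $u$ of $\zeta$ is measurable iff each coordinate function $x\mapto \inner{u(x)}{\delta_n(x)}$ is measurable.

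Next I would define, for a.e.~$x$, the fiber map $\phi_x\colon \xi_x\to \zeta_x$ by $\phi_x(v)\defequal \family{\inner{v}{s_n(x)}}{n}$; this lands in $\zeta_x$ by Bessel's inequality and because $\inner{v}{s_n(x)}= 0$ whenever $x\notin E_n$. Since the nonzero members of $\set{s_n(x)}$ form an orthonormal basis of $\xi_x$, Parseval's identity and the Riesz--Fischer theorem show that $\phi_x$ is a surjective linear isometry with inverse $\family{c}{n}\mapto \sum_n c_n s_n(x)$. The family $\family{\phi}{x}$ is then a fiber-preserving, fiberwise isometric bundle map $\phi\colon \xi\to \zeta$ that is a bijection on total spaces, and since $\supnorm{\phi_x}= 1$ both $\phi$ and its pointwise inverse are essentially bounded.

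It remains to see that $\phi$ and $\inverse{\phi}$ preserve measurability of sections, which is the only point requiring any care. For a section $t$ of $\xi$, the section $\phi(t)$ has $n$-th coordinate function $x\mapto \inner{\phi(t)(x)}{\delta_n(x)}= \inner{t(x)}{s_n(x)}$; by clause \eqref{theor-enum:pervasive-pointwise-orthonormal3} of theorem \ref{theorem:pervasive-pointwise-orthonormal} these are all measurable exactly when $t$ is measurable, which by the criterion recorded above for $\zeta$ is exactly when $\phi(t)$ is measurable. Conversely, for a measurable section $u$ of $\zeta$ and every $m$ one has, using pointwise orthonormality, $\inner{\inverse{\phi}(u)(x)}{s_m(x)}= \sum_n u(x)_n\inner{s_n(x)}{s_m(x)}= u(x)_m= \inner{u(x)}{\delta_m(x)}$, which is measurable, so $\inverse{\phi}(u)$ is a measurable section of $\xi$. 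Hence $\phi$ is an isomorphism in $\IBundle(X)$ whose underlying bundle map is fiberwise isometric, i.e.~an isometric bundle isomorphism, and the null set on which pointwise orthonormality fails does not affect the isomorphism class. I expect the fussiest part to be exactly this matching of measurable structures --- pinning down the coordinate sections $\delta_n$ of $\zeta$ and the ``all coordinates measurable'' criterion for it --- rather than anything genuinely deep; once that is set up, the proof reduces to a short computation with the orthonormality relations.
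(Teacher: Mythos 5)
Your proof is correct and is exactly the argument the paper leaves implicit when it says the isomorphism ``follows from pointwise orthonormality'': coordinatizing each fiber by the Fourier coefficients with respect to $\sequence{s}{n}$ and using clause \eqref{theor-enum:pervasive-pointwise-orthonormal3} of theorem \ref{theorem:pervasive-pointwise-orthonormal} in both directions to match the measurable structures. Your care with the coordinate sections $\delta_{n}$ of $\sum_{n}\characteristic(E_{n})\field$ and with the a.e.\ caveat is consistent with the paper's conventions (morphisms being a.e.\ classes), so nothing is missing.
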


Theorem \ref{theorem:measurability-countable-coproduct} describes every measurable bundle as a gluing of characteristic line bundles by a countable collection of measurable bundle isomorphisms $\phi_{n, m}\colon \characteristic(E_{n}\cap F_{m})\to \characteristic(E_{n}\cap F_{m})$, or by a \emph{measurable non-zero function $E_{n}\cap F_{m}\to \field$}. Normalizing, that is, dividing by $\abs{\phi_{n, m}}$, we obtain a function with values in the unit sphere of $\field$: the unit circle $\sphere^{1}$ in the complex scalar case, the zeroth sphere $\sphere^{0}= \set{-1, 1}$ in the real scalar case \footnote{This cocycle description is familiar to anyone conversant with vector bundles. What may not be so familiar is that these gluing functions $\phi_{n, m}$ are precisely the \emph{extreme points} of the unit ball of certain Banach spaces. At this point this may sound like a useless piece of information, but the reader should store it in the back of his mind as extreme point phenomena will resurface later.}.

\smallskip
In a nutshell, we can do away with pervasive sequences by working with countable direct sums of characteristic bundles and countable direct sums of measurable morphisms. In more categorial terms, we expect that something very close to principle \ref{principle:category-measurable-bundles} is true. It can be seen as the categorified analogue of the fact that the space of measurable functions is, in the appropriate topology, the sequentially closed linear span of the characteristic functions.

\begin{principle} \label{principle:category-measurable-bundles}
The category of measurable bundles is the closure of the subcategory of characteristic bundles and measurable morphisms under countable colimits.
\end{principle}

The process of completing categories by classes of (co)limits is covered in \cite{kelly:enriched-category-theory2005} (starting in chapter 5, section 7) and then in a string of papers of which we can mention \cite{albert-kelly:closure-class-colimits1988}, \cite{kelly-schmitt:notes-enriched-categories-colimits-class2005} and \cite{kelly-lack:monadicity-categories-chosen-colimits2000}. In general, such completions involve transfinite constructions, on which the definitive paper is the fairly indigestible \cite{kelly:unified-treatment-transfinite-constructions1980}. Later, we will see that it is both technically simpler and conceptually sounder to simply add \emph{all} (small) colimits.


\smallskip
The main point of principle \ref{principle:category-measurable-bundles} is that underlying it, there is at work a deep analogy with geometry where the spaces (schemes, bundles, manifolds, etc.) are built by gluing local models (affine schemes, trivial or constant bundles, open subsets of euclidean space, etc.) along a topology by using an appropriate class of gluing maps. In our case, the local models are the characteristic bundles and the topology is replaced by the Boolean algebra of measurable sets. In this view, a bundle is measurable because it can ``be reached'' by a countable colimit of characteristic bundles and the problem has shifted from the \emph{definition} of measurability to the \emph{existence} of infinitary colimits. An important difference is that while in geometry, the gluing is in general made along an open cover with arbitrary cardinality, with measurable bundles we have to restrict to \emph{countable} covers since anything of higher cardinality is bound to destroy measurability. The next example lifted from \cite{yetter:measurable-categories2005} shows this.

\begin{example} \label{example:higher-cardinality-kills-measurability}
Let $E\subseteq X$ be a non-measurable set and let $\family{E}{i}$ be a, necessarily uncountable, partition of $E$ into measurable sets. For example, if singleton sets are measurable just take $\set{\set{x}\colon x\in E}$. Then:
\begin{equation*}
    \characteristic(E)\field\isomorphic \sum_{i}\characteristic(E_{i})\field
\end{equation*}

But the identity map on $\characteristic(E)\field$ does not have measurable norm.
\end{example}

Even though example \ref{example:higher-cardinality-kills-measurability} shows that there are differences, the analogy with algebraic geometry and schemes is too good to pass up and it is worth elaborating upon it. The first thing to remember is that to define sheaves we do not need a topological space $X$, we only need the \emph{locale} of open sets $\open(X)$. The $\sigma$-Boolean algebras of measure theory are not locales because they are not order complete, only $\sigma$-complete (or countably complete), so we have to resort to the next best thing: \emph{Grothendieck topologies}\footnote{P.~Johnstone's terminology of \emph{coverage} is closer to conveying the underlying idea: the formalization of the basic properties of covers in topological spaces permitting the definition of sheaves.} on categories. In simple terms, a Grothendieck topology is a gadget that allows us to define sheaves over the category. In our case, a Boolean algebra is viewed as a category in the usual way: there is a (unique) arrow $E\to F$ iff $E\subseteq F$. The exact definition of Grothendieck topology can be found in many textbooks (e.g.~\cite[chapter III]{maclane-moerdijk:sheaves-geometry-logic1992}); for now, we content ourselves with conveying the basic ideas. This leads us to our third principle.

\begin{principle} \label{principle:measurable-bundles-sheaves}
The category of measurable bundles is a category of ``locally constant'' sheaves of Hilbert spaces on a suitable site.
\end{principle}

The Grothendieck topology of principle \ref{principle:measurable-bundles-sheaves} we have in mind is the so-called \emph{countable join topology} (or \emph{$\sigma$-topology}). A rigorous definition will be given later, but for the moment it suffices to think of it as restricting the arbitrary cardinality covers by open sets in the topological case to countable covers by measurable sets. The ``locally constant'' buzzword is supposed to remind the reader that a general measurable bundle is obtained from characteristic bundles in a manner reminiscent of how schemes are obtained from affine schemes.

\smallskip
Another connection with sheaves on topological spaces can be established by recalling that Stone duality (for example, see \cite[chapter II]{johnstone:stone-spaces1982}), given by associating to a Boolean algebra $\Omega$ its Stone space $\Stone(\Omega)$, yields an equivalence between the category of Boolean algebras and the dual of the category of compact Hausdorff, totally disconnected spaces. This suggests that it may be possible to transfer sheaves over the site $\Omega$ to sheaves as we know and love them over the topological space $\Stone(\Omega)$. This is indeed possible but it is more delicate\footnote{The first problem is that the unit isomorphism $\eta\colon\Omega\to \clopen(\Stone(\Omega))$ of the Stone equivalence is only \emph{finitely} order continuous so that the transfer of a sheaf over $\Omega$ to $\Stone(\Omega)$ gives a presheaf \emph{over the clopens} and satisfying the patching condition for \emph{finite covers} only. The second problem is with the definition of a Banach space-valued sheaf on topological spaces. The two are intimately related -- see subsection \ref{subsection:banach-sheaves} for the details.}, so we leave it aside for now.

\smallskip
There is yet a third, more analytical, description of measurable bundles as certain modules over the Banach algebra $\Integrable_{\infty}(X)$. This description is equally important but since we have no need of it right now, we pass on immediately to our second concern in this section.

\subsection{Linearity of the direct integral}
\label{subsection:linearity-direct-integral}


As is well-known, even by most calculus students, the integral is linear in the integrands and we have the equality,
\begin{equation} \label{equation:linearity-ordinary-integral}
    \integral{X}{\parens{kf + lg}}{\mu}= k\integral{X}{f}{\mu} + l\integral{X}{g}{\mu}
\end{equation}
for every pair of integrable functions $f, g$ and every pair of scalars $k, l\in \field$. By subsection \ref{subsection:hilb-categorified-ring} we already know how to interpret the right-hand side of \eqref{equation:linearity-ordinary-integral} in the categorified setting. The bit of categorial structure of $\IBundle(X)$ that we need to interpret the left-hand side is that of a \emph{module over a symmetric ring category} as defined for example, in \cite{yetter:categorical-linear-algebra}. The next table contains the relevant information.
\begin{table}[htbp]
  \begin{tabular}{|c|c|}
    \hline
    Module $M$ & Categorified module $\Integrable(X)$ \\
    \hline
    \hline
    Zero $0\in M$ & Zero bundle $x\mapto \zero$ \\
    Sum $v + w\in M$ & Pointwise direct sum $\xi\oplus \zeta$ \\
    Scalar action $kv\in M$ & Pointwise tensor product $H\otimes \xi$ \\
    \hline
  \end{tabular}
  \smallskip
  \caption{Categorified analogues of the module operations}
  \label{table:categorified-analogues-module-operations}
\end{table}


Pointwise direct sums have already been defined in subsection \ref{subsection:measurability-bundles} and they interact well with the direct integral functor.

\begin{theorem} \label{theorem:direct-integral-commutes-infinite-sums}
Let $\sequence{\xi}{n}$ be a sequence of measurable bundles. Then there is an isometric isomorphism
\begin{equation} \label{isomorphism:direct-integral-commutes-infinite-sums}
    \sum_{n}\integral{X}{\xi_{n}}{\mu}\isomorphic \integral{X}{\parens{\sum_{n}\xi_{n}}}{\mu}
\end{equation}
\end{theorem}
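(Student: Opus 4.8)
The plan is to produce explicitly the isometric isomorphism in \eqref{isomorphism:direct-integral-commutes-infinite-sums} by comparing dense subspaces on both sides, and then invoking completeness. Recall that $\Integrable_2\bigl(\sum_n\xi_n\bigr)$ is, by construction, the Hilbert space completion of the measurable square-integrable sections of the pointwise direct-sum bundle $\sum_n\xi_n$ modulo a.e.-zero sections, and that the measurable structure of $\sum_n\xi_n$ is the reindexed sequence of finite sums $\sum_{n\in I}s_{n,m_n}$ with $I\subseteq\Natural$ finite. On the other side, $\sum_n\Integrable_2(\xi_n)$ is, per subsection \ref{subsection:hilb-categorified-ring}, the completion of the algebraic direct sum $\bigoplus_n\Integrable_2(\xi_n)$ under the pointwise-sum inner product \eqref{equation:coproduct-inner-product}.

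First I would define the candidate map $\Phi\colon\bigoplus_n\Integrable_2(\xi_n)\to\Integrable_2\bigl(\sum_n\xi_n\bigr)$ on a finitely supported family $(u_n)_n$, with $u_n\in\Integrable_2(\xi_n)$, by sending it to the section $x\mapsto (u_n(x))_n\in\sum_n(\xi_n)_x$. The first point to check is that this is well-defined into $\Integrable_2\bigl(\sum_n\xi_n\bigr)$: each $u_n$ is represented by a measurable section of $\xi_n$, a finite sum of such is a measurable section of $\sum_n\xi_n$ by the description of its measurable structure, and square-integrability follows from
\begin{equation*}
    \integral{X}{\Bigl\lVert\sum_n u_n(x)\Bigr\rVert^2}{\mu}
    = \integral{X}{\sum_n\norm{u_n(x)}^2}{\mu}
    = \sum_n\integral{X}{\norm{u_n(x)}^2}{\mu}
    = \sum_n\hilbertnorm{u_n}^2,
\end{equation*}
using orthogonality of the summands in each fibre $\sum_n(\xi_n)_x$ and the monotone convergence theorem. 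This same computation shows simultaneously that $\Phi$ is isometric for the norm on $\bigoplus_n\Integrable_2(\xi_n)$ coming from \eqref{equation:coproduct-inner-product}; since an isometry of inner product spaces preserves inner products by polarization, $\Phi$ is an inner-product-space embedding. Hence $\Phi$ extends uniquely to an isometric linear map $\sum_n\Integrable_2(\xi_n)\to\Integrable_2\bigl(\sum_n\xi_n\bigr)$ of the completions.

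It remains to show surjectivity, which amounts to density of the image. Here I would use theorem \ref{theorem:pervasive-pointwise-orthonormal}: choose for each $\xi_n$ a pointwise orthonormal sequence; the reindexed finite sums of these are exactly the elements of the pervasive sequence of $\sum_n\xi_n$ that defines $\Integrable_2\bigl(\sum_n\xi_n\bigr)$, and rational linear combinations of them are dense. Every such combination lies, after cutting off tails (only finitely many indices $n$ are involved in any given basis element), in the image of $\Phi$ on $\bigoplus_n\Integrable_2(\xi_n)$; more carefully, given an arbitrary measurable square-integrable section $s$ of $\sum_n\xi_n$, its truncations $P_{\le N}s$ to the first $N$ summands lie in the image and converge to $s$ in $\Integrable_2$-norm by dominated convergence. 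Therefore the image is dense, and being also closed (it is the image of a complete space under an isometry), it is all of $\Integrable_2\bigl(\sum_n\xi_n\bigr)$. The main obstacle, such as it is, is the bookkeeping in matching the two reindexing conventions — the one implicit in the definition of $\sum_n\xi_n$ and the one implicit in $\sum_n\Integrable_2(\xi_n)$ — and verifying that truncation is measurable and that the dominating function ($x\mapsto\norm{s(x)}^2$ itself) is integrable; all of this is routine once theorem \ref{theorem:pervasive-pointwise-orthonormal} is in hand. Finally, although naturality in the $\xi_n$ is not asserted in the statement, it follows immediately from the pointwise description of $\Phi$, which commutes with the fibrewise action of any sequence of measurable bundle morphisms.
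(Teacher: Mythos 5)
Your proposal is correct and follows essentially the same route as the paper: the canonical map identifying (sums of) sections of the $\xi_{n}$ with sections of $\sum_{n}\xi_{n}$, with the isometry coming from fibrewise orthogonality of the summands. The paper simply states the bijectivity and measurability as "more or less clear," whereas you fill in those details (extension from the algebraic direct sum, truncation and dominated convergence for density), which is a faithful elaboration rather than a different argument.
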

\begin{proof}
To a sequence of sections $s_{n}$ of $\xi_{n}$ living in $\integral{X}{\parens{\sum_{n}\xi_{n}}}{\mu}$ we associate the section,
\begin{equation*}
    x\mapto \sum_{n}s_{n}(x)
\end{equation*}
where we identify a section of $\xi_{n}$ with a section of $\sum_{n}\xi_{n}$ via the obvious isometric embedding. That this map is measurable and bijective is more or less clear and that it is an isometry amounts to the norm equality,
\begin{equation*}
    \sum_{n}\power{\norm{s_{n}}_{2}}{2}= \power{\norm{\sum_{n}s_{n}}_{2}}{2}
\end{equation*}
which is true because the $s_{n}$ are orthogonal in $\integral{X}{\parens{\sum_{n}\xi_{n}}}{\mu}$.
\end{proof}

Theorem \ref{theorem:direct-integral-commutes-infinite-sums} not only encompasses the finite linearity of the integral but it is natural to view it as the expression of the fact that the \emph{direct integral is $\sigma$-additive}.

\smallskip
Next, we turn to the pointwise tensor product. If $\pair{\xi}{\sequence{s}{n}}$ and $\pair{\zeta}{\sequence{t}{m}}$ are two measurable bundles, their pointwise tensor product is the bundle $x\mapto \xi_{x}\otimes \zeta_{x}$ with pervasive sequence (as always, after suitable reindexing) given by rational linear combinations of elements of the form,
\begin{equation*}
    s_{n}\otimes t_{m}
\end{equation*}
where $s_{n}$ and $t_{m}$ are sections in the pervasive sequences of the respective bundles. The bifunctor $\pair{\xi}{\zeta}\mapto \xi\otimes \zeta$ endows $\IBundle(X)$ with a symmetric monoidal structure.

\smallskip
Unfortunately, the formulation of the universal property is even more difficult because of the fact that $\Hilb$ is not closed is now compounded by difficult bundle measurability issues. Nevertheless, if we make use of the notion of \emph{module functor} (see \cite{yetter:categorical-linear-algebra}) the categorified scalar invariance of the integral is easy to formulate.


\smallskip
If $H$ is the scalar field $\field$, then the characteristic line bundle $\characteristic(E)\field$ introduced in subsection \ref{subsection:measurability-bundles} will be denoted simply by $\characteristic(E)$. Identifying an Hilbert space $H$ with the characteristic bundle $\characteristic(X)H$ and making use of the pointwise tensor product, we have the isometric isomorphism
\begin{equation} \label{isomorphism:scalar-tensor-hilb}
    \characteristic(E)H\isomorphic \characteristic(E)\otimes H
\end{equation}

\smallskip
As per the last row of \ref{table:categorified-analogues-module-operations}, the action of $\Hilb$ on $\IBundle(X)$ is given by
\begin{equation*}
    \pair{\xi}{H}\mapto \xi\otimes H
\end{equation*}

The next theorem now shows that the direct integral commutes with the $\Hilb$ action -- the \emph{categorified version of the fact that the integral is equivariant for the action of the scalar field}.

\begin{theorem} \label{theorem:direct-integral-tensors}
Let $\xi$ be a measurable bundle. Then there is an isometric isomorphism
\begin{equation} \label{isomorphism:direct-integral-tensors}
    \integral{X}{\parens{\xi\otimes H}}{\mu}\isomorphic \parens{\integral{X}{\xi}{\mu}}\otimes H
\end{equation}
\end{theorem}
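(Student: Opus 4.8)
The plan is to trivialise the tensor factor $H$ by choosing a basis, thereby reducing both sides of \eqref{isomorphism:direct-integral-tensors} to a countable Hilbertian direct sum of copies of $\integral{X}{\xi}{\mu}$, and then to invoke Theorem \ref{theorem:direct-integral-commutes-infinite-sums}. Fix a countable orthonormal basis $\sequence{e}{k}$ of $H$ (separability of $H$ being exactly what guarantees that $\xi\otimes H$ is again an object of $\IBundle(X)$, i.e.\ has separable fibres). Over each point $x$ the assignment $\sequenced{\eta_{k}}\mapto \sum_{k}\eta_{k}\otimes e_{k}$ is an isometric isomorphism $\sum_{k}\xi_{x}\to \xi_{x}\otimes H$, since the $e_{k}$ are orthonormal. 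The first step is to promote this pointwise family of isometries to an isomorphism
\[
    \sum_{k}\xi\isomorphic \xi\otimes H
\]
in $\IBundle(X)$, where $\sum_{k}\xi$ denotes the countable pointwise direct sum of copies of $\xi$ introduced in subsection \ref{subsection:measurability-bundles}.

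Granting this, the remaining steps are routine. Applying the direct integral functor and Theorem \ref{theorem:direct-integral-commutes-infinite-sums} (with $\xi_{n}= \xi$ for all $n$) yields isometric isomorphisms
\[
    \integral{X}{\parens{\xi\otimes H}}{\mu}\isomorphic \integral{X}{\parens{\sum_{k}\xi}}{\mu}\isomorphic \sum_{k}\integral{X}{\xi}{\mu} .
\]
Finally, for \emph{any} Hilbert space $L$ the assignment $\sequenced{\ell_{k}}\mapto \sum_{k}\ell_{k}\otimes e_{k}$ is an isometric isomorphism $\sum_{k}L\isomorphic L\otimes H$ (isometric because the $e_{k}$ are orthonormal, surjective because the elementary tensors $\ell\otimes e_{k}$ lie in the image and span a dense subspace). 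Taking $L= \integral{X}{\xi}{\mu}$ and composing the three isomorphisms gives \eqref{isomorphism:direct-integral-tensors}. One can moreover check that the composite coincides with the canonical map, namely the one induced through the universal property of $\otimes$ by the bounded bilinear map $\parens{s, h}\mapto s\otimes\parens{x\mapto h}$ from $\integral{X}{\xi}{\mu}\times H$ into $\integral{X}{\parens{\xi\otimes H}}{\mu}$, so nothing is lost by having passed through a chosen basis.

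The one point requiring care — and hence the main obstacle — is the first step: one must verify that the fibrewise isometries $\eta\mapto \sum_{k}\eta_{k}\otimes e_{k}$ and their inverses preserve measurability of sections (Definition \ref{definition:measurable-bundle-morphism}), so that the isomorphism genuinely lives in $\IBundle(X)$ and descends to the a.e.-equivalence classes; since the maps are fibrewise isometric, \eqref{equation:norm-equality-measurable-morphisms} then makes the induced map on $\Integrable_{2}$-spaces a bijective isometry. This verification is mechanical given the earlier results. By Theorem \ref{theorem:pervasive-pointwise-orthonormal}\eqref{theor-enum:pervasive-pointwise-orthonormal3}, a measurable section of $\sum_{k}\xi$ is a pointwise square-summable family $\sequenced{u_{k}}$ of measurable sections of $\xi$, and testing its image against a generator $s_{n}\otimes t_{m}$ of the pervasive sequence of $\xi\otimes H$ (where $t_{m}$ is the constant section $x\mapto h_{m}$) gives
\[
    \inner{\textstyle\sum_{k}u_{k}(x)\otimes e_{k}}{s_{n}(x)\otimes t_{m}(x)}= \sum_{k}\inner{u_{k}(x)}{s_{n}(x)}\,\inner{e_{k}}{h_{m}} ,
\]
a pointwise convergent series of measurable functions of $x$ (each $x\mapto \inner{u_{k}(x)}{s_{n}(x)}$ is measurable and $\inner{e_{k}}{h_{m}}$ is a constant), hence measurable; measurability against rational combinations of such generators follows, and the reverse inclusion is identical. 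Alternatively one could bypass the basis argument by first using Theorem \ref{theorem:measurability-countable-coproduct} to reduce $\xi$ to $\sum_{n}\characteristic(E_{n})\field$, whereupon the statement collapses to the base identity $\integral{X}{\characteristic(E)H}{\mu}\isomorphic \Integrable_{2}(\characteristic(E))\otimes H$, that is, the classical $\Integrable_{2}(E;H)\isomorphic \Integrable_{2}(E)\otimes H$, with Theorem \ref{theorem:measurable-pervasive-measurable} identifying the measurable sections with the Bochner-measurable $H$-valued functions.
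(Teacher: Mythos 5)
Your proof is correct. There is no internal argument to compare it with: the paper disposes of this theorem by citing Dixmier, chapter II, section 8, and your route—fix an orthonormal basis $\sequence{e}{k}$ of $H$, identify $\xi\otimes H$ with the countable pointwise direct sum $\sum_{k}\xi$ inside $\IBundle(X)$, apply theorem \ref{theorem:direct-integral-commutes-infinite-sums}, and undo the same trivialisation downstairs via $\sum_{k}L\isomorphic L\otimes H$—is essentially the classical argument given in that reference, so you have in effect supplied the proof the paper omits. You correctly isolate the only point of substance, namely that the fibrewise isometries and their inverses are measurable morphisms, and your verification goes through: testing against the generators $s_{n}\otimes t_{m}$ of the pervasive sequence of $\xi\otimes H$ gives pointwise limits of measurable partial sums, and for the inverse one approximates $e_{k}$ by the dense sequence of $H$ defining the measurable structure of $\characteristic(X)H$; since everything is fibrewise isometric and bijective on measurable sections, the induced map on $\Integrable_{2}$-spaces is a bijective isometry. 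Two minor remarks. First, the statement implicitly requires $H$ separable (as you note) so that $\xi\otimes H$ has separable fibres; for finite-dimensional $H$ the index set is finite and theorem \ref{theorem:direct-integral-commutes-infinite-sums} is not even needed. Second, your closing aside identifying the composite with the ``canonical map'' is best justified by checking inner products on elementary tensors (which span densely), rather than by appealing to a universal property of $\otimes$ for all bounded bilinear maps: the Hilbert-space tensor product represents only the weakly Hilbert--Schmidt bilinear forms, a point the paper itself glosses over; this does not affect your proof, which is complete without that remark. Your alternative reduction via theorem \ref{theorem:measurability-countable-coproduct} to $\Integrable_{2}(E, H)\isomorphic \Integrable_{2}(E)\otimes H$ is also sound.
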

\begin{proof}
See \cite[chapter II, section 8]{dixmier:von-neumann-algebras1981}.
\end{proof}


Since the direct integral is the extension of the $\Integrable_{2}$-functor on measure spaces to a functor on the category of measurable bundles, the computation of the direct integral of characteristic line bundles is trivial. This isomorphism is so important that we record it in the next theorem.


\begin{theorem} \label{theorem:integral-characteristic-line-bundles}
If $\xi$ is the characteristic line bundle $\characteristic(E)$ of a measurable subset $E\subseteq X$, then there is an isometric isomorphism:
\begin{equation} \label{isomorphism:integral-characteristic-line-bundles}
    \integral{X}{\characteristic(E)}{\mu}\isomorphic \Integrable_{2}(E, \mu)
\end{equation}
\end{theorem}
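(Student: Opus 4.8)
The statement is essentially a matter of unwinding definitions: the direct integral extends the $\Integrable_{2}$-functor on measure spaces, and the characteristic line bundle $\characteristic(E)$ is, under the identifications below, just the constant line bundle on the sub-measure-space $E$. First I would identify the sections of $\characteristic(E)$ with scalar functions on $E$. By \eqref{bundle:characteristic-hilbert} the fibre of $\characteristic(E)$ over $x\notin E$ is the zero space $\zero$, so every section $s$ must vanish off $E$, while over $x\in E$ the fibre is $\field$, so $s$ is the same datum as an ordinary function $f_{s}\colon E\to \field$ (equivalently, a function $X\to \field$ supported on $E$). The assignment $s\mapsto f_{s}$ is a linear bijection onto $\field^{E}$ carrying the pointwise operations on sections to the pointwise operations on functions.

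Next I would match the bundle-theoretic measurability of a section with Borel measurability of the associated scalar function. The measurable structure of $\characteristic(E)$ is the pervasive sequence of sections $s_{n}$ equal to $h_{n}$ on $E$ and $0$ off $E$, for $\sequence{h}{n}$ a countable dense subset of $\field$, which we may take to contain $1$, say $h_{n_{0}}=1$. For any section $s$ one has $\inner{s(x)}{s_{n}(x)}=\overline{h_{n}}\,f_{s}(x)$ on $E$ and $0$ elsewhere, so $s$ being measurable (that is, $x\mapsto\inner{s(x)}{s_{n}(x)}$ measurable for every $n$) forces $x\mapsto f_{s}(x)\characteristic(E)(x)$ measurable by taking $n=n_{0}$, hence $f_{s}$ measurable on $E$; the converse is immediate. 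This is exactly the special case $H=\field$ of Theorem \ref{theorem:measurable-pervasive-measurable}. Thus $s\mapsto f_{s}$ restricts to a linear bijection between the measurable sections of $\characteristic(E)$ and the measurable functions on $E$, where $E$ carries the trace $\sigma$-algebra and the restriction of $\mu$.

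Finally I would compare norms: for a measurable section $s$ one has
\begin{equation*}
    \norm{s}_{2}^{2}=\integral{X}{\norm{s(x)}^{2}}{\mu}=\integral{E}{\abs{f_{s}(x)}^{2}}{\mu},
\end{equation*}
so $s$ is square-integrable iff $f_{s}\in\Integrable_{2}(E,\mu)$, and the correspondence is norm-preserving. Since a section supported on $E$ vanishes $\mu$-a.e.\ on $X$ precisely when $f_{s}$ vanishes $\mu$-a.e.\ on $E$, the bijection descends to the quotients by the respective null subspaces, yielding the isometric isomorphism $\integral{X}{\characteristic(E)}{\mu}\isomorphic\Integrable_{2}(E,\mu)$. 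I do not anticipate a real obstacle here: the only step requiring any care is the measurability equivalence of the second paragraph, which is Theorem \ref{theorem:measurable-pervasive-measurable} specialised to one-dimensional fibres, and everything else is bookkeeping --- chiefly keeping straight the induced measure structure on $E$ (trace $\sigma$-algebra, restricted measure, with no assumption that $\mu(E)$ be full).
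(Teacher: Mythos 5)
Your proof is correct and is essentially the paper's own argument: the paper simply remarks that since the direct integral extends the $\Integrable_{2}$-functor, the computation for characteristic line bundles is trivial, and your three steps (sections of $\characteristic(E)$ as scalar functions supported on $E$, the measurability match via the case $H=\field$ of theorem \ref{theorem:measurable-pervasive-measurable}, and the norm comparison followed by the quotient by a.e.\ null sections) are exactly the bookkeeping that remark leaves implicit. No gap.
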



In ordinary measure theory we have the equality $ \integral{X}{\characteristic(E)}{\mu}= \mu(E)$. If we take our notations seriously, then isomorphism \eqref{isomorphism:integral-characteristic-line-bundles} is telling us that $E\mapto \Integrable_{2}(E, \mu)$ is playing the role of \emph{categorified measure}. The first thing to notice about this functor is that it is \emph{covariant} with domain the $\sigma$-algebra $\Omega$ of measurable sets of $X$ -- it is a \emph{precosheaf}\footnote{Not being a native English speaker, I do not know which, if any, of \emph{copresheaf} or \emph{precosheaf} is more correct. The latter just sounds slightly less horrible to my internal ear.}. And we can even isolate the crucial \emph{$\sigma$-additivity property}:

\begin{theorem} \label{theorem:L2-countably-additive}
Let $\sequence{E}{n}$ be a countable partition of $E$. Denote by $\iota_{E_{n}, E}$ the inclusion map $\Integrable_{2}(E_{n}, \mu)\to \Integrable_{2}(E, \mu)$. Then the induced map
\begin{equation} \label{isomorphism:sigma-additivity-L2}
    \sum_{n}\iota_{E_{n}, E}\colon \sum_{n}\Integrable_{2}(E_{n}, \mu)\to \Integrable_{2}(E, \mu)
\end{equation}
is an isometric isomorphism.
\end{theorem}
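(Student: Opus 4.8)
The plan is to reduce the statement to the classical fact that $\Integrable_{2}(E,\mu)$ decomposes as an orthogonal Hilbert-space direct sum over a measurable partition of $E$, and then to recognize that orthogonal direct sum as the categorical direct sum $\sum_{n}$ appearing in the statement. First I would unwind the notation: by Theorem~\ref{theorem:integral-characteristic-line-bundles} the space $\Integrable_{2}(E_{n},\mu)$ is the space of square-integrable $\field$-valued functions on $E_{n}$, and the inclusion $\iota_{E_{n},E}$ is extension by zero, namely $f\mapsto \characteristic(E_{n})f$ viewed as a function on $E$. Since $\sequence{E}{n}$ is a partition of $E$, the images $\iota_{E_{n},E}(\Integrable_{2}(E_{n},\mu))$ are pairwise orthogonal closed subspaces of $\Integrable_{2}(E,\mu)$: for $f$ supported in $E_{n}$ and $g$ supported in $E_{m}$ with $n\neq m$ we have $\inner{f}{g}=\integral{E}{f\overline{g}}{\mu}=0$ because $f\overline{g}\aeequal 0$.

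Next I would check the two halves of ``isometric isomorphism'' separately. Isometry is the identity
\begin{equation*}
    \power{\sumnorm[2]{\sum_{n}\iota_{E_{n},E}(f_{n})}}{2}=\sum_{n}\power{\sumnorm[2]{f_{n}}}{2},
\end{equation*}
which I would obtain by monotone convergence: the partial sums $\sum_{n\leq N}\characteristic(E_{n})\abs{f_{n}}^{2}$ increase pointwise a.e.\ to $\sum_{n}\characteristic(E_{n})\abs{f_{n}}^{2}$, and integrating term by term is legitimate because each summand is supported on the disjoint piece $E_{n}$; the left side is by definition of the Hilbertian norm on the categorical sum $\sum_{n}$ (cf.\ equation~\eqref{equation:coproduct-inner-product} and the completion following it). This shows the induced map is a well-defined isometry, hence injective with closed range. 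For surjectivity I would take an arbitrary $h\in\Integrable_{2}(E,\mu)$, set $f_{n}\defequal \restriction{h}{E_{n}}$, note $\sum_{n}\sumnorm[2]{f_{n}}^{2}=\sumnorm[2]{h}^{2}<\infty$ so that $\family{f}{n}\in\sum_{n}\Integrable_{2}(E_{n},\mu)$, and observe that $h\aeequal \sum_{n}\characteristic(E_{n})h$ because $E=\bigcup_{n}E_{n}$ up to the empty intersection overlaps; thus $h$ is the image of $\family{f}{n}$. Linearity is immediate from the definitions.

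I do not expect a serious obstacle here; this is essentially the $\sigma$-additivity of the Lebesgue integral repackaged, and it is the decategorified shadow of Theorem~\ref{theorem:direct-integral-commutes-infinite-sums} applied to the bundles $\xi_{n}=\characteristic(E_{n})$ (whose pointwise sum over the partition is $\characteristic(E)$, since at each $x\in E$ exactly one fibre is $\field$ and the rest are $\zero$). The only point requiring a word of care is that the target $\sum_{n}\Integrable_{2}(E_{n},\mu)$ is the \emph{completed} orthogonal direct sum rather than the algebraic one, so surjectivity genuinely uses that a sequence $\family{f}{n}$ with $\sum_{n}\sumnorm[2]{f_{n}}^{2}<\infty$ is already an element of the completion; this is exactly what makes the map onto rather than merely dense-image. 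Everything else is routine.
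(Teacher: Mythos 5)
Your proof is correct and takes essentially the same route as the paper, which disposes of this theorem in one line as a special case of Theorem \ref{theorem:direct-integral-commutes-infinite-sums} (applied to the bundles $\xi_{n}=\characteristic(E_{n})$, whose pointwise sum is $\characteristic(E)$). Your hands-on verification --- pairwise orthogonality of the extended-by-zero images, the Pythagorean identity for the isometry, surjectivity by restriction to the $E_{n}$, and the remark that the completed direct sum is what makes the map onto --- is precisely what that reduction unwinds to in the scalar case, as you yourself note.
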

\begin{proof}
In essence, this is a special case of theorem \ref{theorem:direct-integral-commutes-infinite-sums}.
\end{proof}

We will not prove it here, but it can be shown that theorem \ref{theorem:L2-countably-additive} implies that the functor $E\mapto \Integrable_{2}(E, \mu)$ satisfies the \emph{dual patching condition} for \emph{finite covers} and it fails the patching condition for \emph{countable covers} simply because the countable direct sums in theorem \ref{theorem:L2-countably-additive} are not coproducts. We have arrived at the fourth principle of categorified measure theory.

\begin{principle} \label{principle:categorified-measures-cosheaves}
A categorified measure is a cosheaf of Hilbert spaces over the $\sigma$-algebra of measurable sets and any measure $\mu$ on $X$ yields a categorified measure via $E\mapto \Integrable_{2}(E, \mu)$.
\end{principle}

\subsection{Continuity of the direct integral}
\label{subsection:continuity-direct-integral}


Integrals are (or ought to be) limits of sums. Recall that a function $f\colon X\to \field$ is \emph{simple} if there is a finite family of pairwise disjoint measurable sets $E_{n}\subseteq X$ and scalars $k_{n}\in \field$ such that,
\begin{equation*}
    f\aeequal \sum_{n}k_{n}\characteristic(E_{n})
\end{equation*}

The connection between the integral map $\int_{X}\differential\mu$ and the measure $\mu$ is the equality
\begin{equation}\label{equation:integral-measure-characteristic}
    \integral{X}{f}{\mu}= \sum_{n}k_{n}\mu(E_{n})
\end{equation}


The left-hand side of \eqref{equation:integral-measure-characteristic} is then extended to the whole space of integrable functions by a limiting procedure. In subsections \ref{subsection:measurability-bundles} and \ref{subsection:linearity-direct-integral} the calculations with measurable bundles and direct integrals show that once the direct integral of characteristic line bundles in known, the direct integral of every measurable bundle is known. Pursuing this analogy seriously, we are inevitably led to our fifth principle.

\begin{principle} \label{principle:infinitary-colimits-categorified-convergence}
Infinite direct sums and more generally infinitary colimits, give the categorified notion of convergence.
\end{principle}

The need for infinitary colimits has arisen from a consideration of $\sigma$-additivity, but it has already appeared explicitly before in principle \ref{principle:category-measurable-bundles} when we ventured that the category of measurable bundles is the closure of the category of characteristic bundles under countable colimits. Principle \ref{principle:infinitary-colimits-categorified-convergence} is formulated in a necessarily vague way, not the least because as we have been insistently repeating, the category $\Hilb$ does not have infinite (co)products. Let us for the moment forget such an unfortunate circumstance, and proceed as if all the categories in sight were cocomplete.


\smallskip
Denote by $\SBundle(X)$ the \emph{category of characteristic line bundles}. The objects are the characteristic line bundles $\characteristic(E)$, which we can identify with the measurable subsets $E\subseteq X$, and a morphism $E\to F$ is an essentially bounded measurable morphism $\characteristic(E)\to \characteristic(F)$. The constructions of section \ref{section:measurable-bundles-hilbert-spaces} restricted to this category yield a functor
\begin{equation} \label{map:measure-functor}
    \mu\colon \SBundle(X)\to \Hilb
\end{equation}
given on objects by $E\mapto \mu(E)= \Integrable_{2}(E, \mu)$.

\smallskip
On the other hand, consider the fully-faithful inclusion of $\SBundle(X)$ into the category of measurable bundles $\IBundle(X)$. The integral functor is then a lift of $\mu\colon E\mapto \mu(E)$ as in diagram \ref{diagram:integral-lift}.
\begin{figure}[htbp]
    \begin{equation*}
    \xymatrix{
        \IBundle(X) \ar@{-->}[r]^{\int_{X}\differential{\mu}} \ar@{}[dr]|<<<<{\Uparrow} & \Hilb \\
        \SBundle(X) \ar[ur]_{\mu} \ar[u] &
    }
    \end{equation*}
    \caption{Universal property of the direct integral functor.}
    \label{diagram:integral-lift}
\end{figure}

Diagram \ref{diagram:integral-lift} is not commutative, but only commutative up to a unique isomorphism. This isomorphism, the inner $2$-cell filling the triangle, is the natural isomorphism of theorem \ref{theorem:integral-characteristic-line-bundles}. Thinking in categorial terms, this suggests a sixth principle.

\begin{principle} \label{principle:integral-functor-left-Kan-extension}
The integral functor is the left Kan extension of $\mu$ along the inclusion of the subcategory of characteristic line bundles.
\end{principle}

We expect that the integral functor $\int_{X}\differential{\mu}$ is an extension of the $\mu$-functor. That it is a \emph{Kan extension} is the pavlovian reflex response of anyone trained in category theory (seriously, it is just another instance of principle \ref{principle:universal-properties-coherence-laws} at work). That it is a \emph{left} Kan extension (as opposed to a right one -- no pun intended) is related to the fact that it is the colimits that are important, not limits.


\smallskip
If $\Hilb$ were cocomplete, by the coend formula for left Kan extensions\footnote{Left Kan extensions and coends will be revisited later in subsection \ref{subsection:presheaf-categories}. For now, we offer the reader the reference \cite[chapter X, section 4]{maclane:categories-working-mathematician1971}. As usual, we use integral notation for coends. They should not be confused with the direct integrals $\integral{X}{\xi}{\mu}$.}, and dropping the $X$ from the morphism spaces of $\IBundle(X)$, we would have
\begin{equation}\label{isomorphism:direct-integral-coend}
\begin{split}
    \integral{X}{\xi}{\mu} &\isomorphic \int^{E\in \Omega}\homset{\IBundle}{\characteristic(E)}{\xi}\otimes \mu(E) \\
        &\isomorphic \int^{E\in \Omega}\xi(E)\otimes \mu(E)
\end{split}
\end{equation}

Formula \eqref{isomorphism:direct-integral-coend} shows that just as in the ordinary integral case, the direct integral is uniquely determined by its values on the characteristic line bundles. Note that the right-hand side of the first isomorphism does not really make sense, because $\Hilb$ is not closed and thus $\homset{\IBundle}{\characteristic(E)}{\xi}$ is not a Hilbert space. The last term does make sense (forgetting momentarily that $\Hilb$ is not cocomplete) and the second isomorphism is due to the following theorem, whose easy proof is left to the reader.

\begin{theorem} \label{theorem:characteristic-yoneda}
Let $\xi$ be a measurable bundle and $E\subseteq X$ a measurable set. Then there is an isometric isomorphism,
\begin{equation} \label{isomorphism:characteristic-yoneda}
    \homset{\IBundle}{\characteristic(E)}{\xi}\isomorphic \xi(E)
\end{equation}
where $\xi(E)$ denotes the restriction of $\xi$ to $E$.
\end{theorem}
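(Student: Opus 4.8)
The plan is to regard Theorem \ref{theorem:characteristic-yoneda} as the categorified Yoneda lemma, with the characteristic line bundle $\characteristic(E)$ in the role of a representable object, and to prove it by unwinding, fiber by fiber, what a morphism out of $\characteristic(E)$ is. Here $\xi(E)$ is to be read as the Banach space of a.e.-equivalence classes of essentially bounded measurable sections of the restricted bundle $\restriction{\xi}{E}$, with the norm $s\mapsto \supnorm{s}$ given by the essential supremum of $x\mapsto \norm{s(x)}$; the assertion is then that morphisms $\characteristic(E)\to \xi$ in $\IBundle(X)$ are ``the same'' as such sections, isometrically.

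First I would make the fiberwise identification. For $x\in E$ one has $\characteristic(E)_{x}= \field$, so a bounded linear map $\tau_{x}\colon \characteristic(E)_{x}\to \xi_{x}$ is completely determined by the single vector $\tau_{x}(1)\in \xi_{x}$, with $\norm{\tau_{x}}= \norm{\tau_{x}(1)}$; for $x\notin E$ one has $\characteristic(E)_{x}= \zero$, so $\tau_{x}= 0$ is forced. Hence $\tau\mapsto s_{\tau}$, where $s_{\tau}(x)= \tau_{x}(1)$ for $x\in E$ and $s_{\tau}(x)= 0$ otherwise, is a linear bijection between bundle maps $\characteristic(E)\to \xi$ in the sense of Definition \ref{definition:bundles-bundle-maps} and sections of $\xi$ supported in $E$, that is, sections of $\restriction{\xi}{E}$ extended by zero.

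Next I would check that this bijection matches the three decorations that cut the morphisms of $\IBundle(X)$ out of all bundle maps. For measurability: the canonical section of $\characteristic(E)$ taking the value $1$ on $E$ and $0$ off $E$ is measurable and is sent by $\tau$ to $s_{\tau}$, so measurability of $\tau$ in the sense of Definition \ref{definition:measurable-bundle-morphism} forces measurability of $s_{\tau}$; conversely, every measurable section of $\characteristic(E)$ is $f\characteristic(E)$ for a measurable scalar function $f$, and $\tau(f\characteristic(E))= fs_{\tau}$ is measurable because $x\mapsto \inner{f(x)s_{\tau}(x)}{s_{n}(x)}= f(x)\inner{s_{\tau}(x)}{s_{n}(x)}$ is a product of measurable scalar functions, $\sequence{s}{n}$ being a pervasive sequence of $\xi$. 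For the norm: $\norm{\tau}= \supnorm{\tau_{x}}= \supnorm{s_{\tau}}$, so $\tau$ is essentially bounded iff $s_{\tau}$ is and the correspondence is an isometry on the nose. Finally $\tau\aeequal \tau'$ iff $s_{\tau}\aeequal s_{\tau'}$, so the bijection descends to an isometric isomorphism $\homset{\IBundle}{\characteristic(E)}{\xi}\isomorphic \xi(E)$.

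There is no real obstacle: once the candidate isomorphism $\tau\mapsto s_{\tau}$ is written down, the whole argument is bookkeeping about the fibers $\field$ and $\zero$. The one point worth a moment's attention is the equivalence, in the measurability clause, between $\tau$ being a measurable morphism in the sense of Definition \ref{definition:measurable-bundle-morphism} and $s_{\tau}$ being a measurable section in the sense of Definition \ref{definition:measurable-bundle}; this is exactly where one uses the explicit description of the measurable structure of $\characteristic(E)$, namely that its measurable sections are precisely the products $f\characteristic(E)$ with $f$ measurable, so that testing $\tau$ against all measurable sections of $\characteristic(E)$ reduces to testing $s_{\tau}$ against the pervasive sequence of $\xi$.
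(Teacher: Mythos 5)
Your proof is correct, and since the paper explicitly leaves the proof of theorem \ref{theorem:characteristic-yoneda} to the reader, the fiberwise unwinding you give is exactly the intended easy argument: a morphism $\tau\colon \characteristic(E)\to \xi$ is determined by the section $x\mapto \tau_{x}(1)$, the measurability, essential boundedness and a.e.~equality conditions match on the two sides, and the isometry follows from $\norm{\tau_{x}}= \norm{\tau_{x}(1)}$ together with the fact that the morphism spaces of $\IBundle(X)$ carry the essential supremum of the fiberwise operator norms as in \eqref{equation:norm-equality-measurable-morphisms}. Your reading of $\xi(E)$ as the Banach space of a.e.~classes of essentially bounded measurable sections of $\restriction{\xi}{E}$ with the essential supremum norm is the only one that makes ``isometric isomorphism'' literal, and it is the one the Yoneda analogy in the surrounding discussion intends, so the interpretive step you flag is handled correctly.
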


Theorem \ref{theorem:characteristic-yoneda} looks remarkably like Yoneda lemma for the inclusion $\simple(X)\to \IBundle(X)$. Coincidence? There are no coincidences in mathematics. Anyway, the preceding discussion suggests another important principle.

\begin{principle} \label{principle:direct-integral-cocontinuous}
The integral functor has strong \emph{cocontinuity properties}.
\end{principle}


Combining the principles expounded up to now, we see that just as the ordinary integral is a two-stage process so is the categorified integral. Given a cosheaf (a categorified measure), we can define the integral of characteristic bundles. The integral of a general measurable bundle is then obtained by a limiting procedure, in this case, a left Kan extension. The properties of the integral, including its cocontinuity, should follow from this two-step construction. Formulated in these terms, we can extend this categorified integral to a large number of other categories since what we need is basically, the representability of certain functors (to have at our disposal the integral of characteristic bundles) and the existence of certain colimits (to be able to construct the left Kan extension). What we have done was actually the reverse of the historical process, since this categorified integral construction has appeared before in other contexts such as topoi\footnote{In \cite[chapter 4, section 3]{curtius:european-literature-latin-middle-ages1990}, the great German medievalist and literary critic E.~R.~Curtius, informs us that ancient rhetoric had five divisions. Of these, the most important was ``Invention'' (Latin \emph{inventio}) about which:
\begin{quote}
It is divided according to the five parts which make up the judicial oration: 1.~introduction (\emph{exordium} or \emph{prooemium}); 2.~``narrative'' (\emph{narratio}), that is, exposition of the facts in the matter; 3.~evidence (\emph{argumentatio} or \emph{probatio}); 4.~refutation of opposing opinions (\emph{refutatio}); 5.~close (\emph{peroratio} or \emph{epilogus}).
\end{quote}

About the third division, ``evidence'', E.~R.~Curtius writes:
\begin{quote}
As for the ``evidence'', antique theory made this division the field of supersubtle distinctions into which we may not enter. Essentially, every oration (including panegyrics) must make some proposition or thing plausible. It must adduce in its favor arguments which address themselves to the hearer's mind or heart. Now, there is a whole series of such arguments, which can be used on the most diverse occasions. They are intellectual themes, suitable for development and modification at the orator's pleasure. In Greek they are called $\kappa o\iota\nu o\iota$ $\tau o\pi o\iota$; in Latin, \emph{loci communes}; in earlier German, \emph{Gemein\"{o}rter}. Lessing and Kant still use the word. About 1770, \emph{Gemeinplatz} was formed after the English ``commonplace.'' We cannot use the word, since it has lost its original application. We shall therefore retain the Greek \emph{topos}. To elucidate its meaning---a topos of the most general sort is ``emphasis on inability to do justice to the subject''; a topos of panegyric: ``praise of forebears and their deeds.'' In Antiquity collections of such topoi were made. The science of topoi---called ``topics''---was set forth in separate treatises.
\end{quote}

With due deference to those that prefer the plural ``toposes'', there is some inexpressible poesy in using one and the same word for a mathematical universe and a technical term of ancient rhetoric designating a conventional -- in the various senses of the word including \emph{structural} or \emph{informing} -- literary idea.}. As will be seen below, there is a link between categorified measure theory and topos theory. Stay tuned.


\smallskip
Given a measurable subset $E\subseteq X$ we can consider the category of measurable bundles $\IBundle(E)$ on $E$. We expect it is a $2$-Hilbert space, whatever that may be. Varying $E\in \Omega$, we should obtain the categorified version of a categorified measure, that is, a precosheaf of $2$-Hilbert spaces that is a \emph{costack}\footnote{Stacks are categorified versions of sheaves. Very roughly, they are presheaves with values in a $2$-category satisfying the patching condition up to coherent isomorphism.} for the countable Grothendieck topology on $\Omega$. The pattern of higher-order categorifications should now be clearer. We record this as our last principle for this subsection.

\begin{principle} \label{principle:stack-2hilbert-spaces}
The precosheaf $E\mapto \IBundle(E)$ on $\Omega$ is a costack of $2$-Hilbert spaces for the Grothendieck countable join topology.
\end{principle}

\subsection{Universal property of the direct integral}
\label{subsection:universal-property-direct-integral}

In subsection \ref{subsection:linearity-direct-integral} we have introduced the categorified analogues of measures: cosheaves. Given a cosheaf $\mu$ and a measurable bundle $\xi$, we expect to obtain another cosheaf by taking the \emph{indefinite integral}:
\begin{equation} \label{equation:indefinite-direct-integral-cosheaf}
    E\mapto \integral{E}{\xi}{\mu}\defequal \integral{X}{\characteristic(E)\otimes \xi}{\mu}
\end{equation}

This yields an \emph{action} of the category of measurable bundles on the category of cosheaves. It also expresses the integral $\integral{X}{\xi}{\mu}$ as the \emph{evaluation} of a measure at the whole space. The Radon-Nikodym theorem holds if this action yields an equivalence between the categories of measurable bundles and cosheaves. In this case, the measurable bundles appear as \emph{relative densities}, or suitable \emph{quotients}, between cosheaf categorified measures.

\smallskip
As argued in subsection \ref{subsection:continuity-direct-integral}, \eqref{equation:indefinite-direct-integral-cosheaf} is obtained as a left Kan extension of the functor that a cosheaf $\mu$ defines on $\SBundle(X)$ and herein lies the rub. For $\Integrable_{2}$-cosheaves coming from scalar measures it is obvious that this functor is just the functor constructed in section \ref{section:measurable-bundles-hilbert-spaces} restricted to the full subcategory of characteristic line bundles, but what about for a general cosheaf? The conclusions drawn in subsections \ref{subsection:measurability-bundles}, \ref{subsection:linearity-direct-integral} and \ref{subsection:continuity-direct-integral} were based on \emph{formal analogies} between the ordinary integral of measurable functions and the direct integral of measurable bundles. In this subsection we will try to dig deeper; pragmatically this means answering the question if there is some universal property attached to $\integral{X}{\characteristic(E)}{\mu}$.


\smallskip
In order to make any kind of progress let us take a step back and direct our attention to the simplest possible case of direct integrals: take the measure space $X$ to be a finite set and for the measure $\mu$ take the counting measure. Then the direct integral $\integral{X}{\xi}{\mu}$ of a bundle $\xi$ over $X$ always exists and is simply the direct sum $\sum_{x\in X}\xi_{x}$ as defined in subsection \ref{subsection:measurability-bundles}. Principle \ref{principle:universal-properties-coherence-laws} to the fore: the most important fact related to direct sums is that they are a coproduct and \emph{defined by a universal property}. Recall that if $\family{a}{x}$ is a finite family of objects in a category (additive or not), their \emph{coproduct} is an object $\sum_{x\in X}a_{x}$ together with a cone $\family{i}{x}$ of maps $a_{x}\to \sum_{x\in X}a_{x}$ such that for every cone $\family{f}{x}$ there is a unique arrow $f$ making the diagram \ref{diagram:universal-property-coproduct} commutative for every $x\in X$.
\begin{figure}[htbp]
    \begin{equation*}
    \xymatrix{
        \sum_{x\in X}a_{x} \ar@{-->}[r]^-{f} & b \\
        a_{x} \ar[u]^{i_{x}} \ar[ur]_{f_{x}} &
    }
    \end{equation*}
    \caption{Universal property of the coproduct.}
    \label{diagram:universal-property-coproduct}
\end{figure}
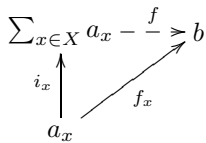

If $\family{T}{x}$ is a finite cone of bounded linear maps $H_{x}\to H$ then the induced map $T$ is the map given by $\family{h}{x}\mapto \sum_{x\in X}T_{x}(h_{x})$. In particular, if $H_{x}= H$ and the maps $T_{x}$ are the identity, then the induced map is the \emph{sum map}:
\begin{equation} \label{map:sum-coproduct-counit}
    \sum_{x\in X}H\to H\colon \family{h}{x}\mapto \sum_{x\in X}h_{x}
\end{equation}

In other words, the direct sum is not just a categorified \emph{analogue} of sums, but is \emph{directly connected} with ordinary sums via the counit map \eqref{map:sum-coproduct-counit}. The direct sum object $\sum_{x\in X}H_{x}$ is the ``object of sums $\sum_{x\in X}h_{x}$ of elements of $H_{x}$''. The sums $\sum_{x\in X}h_{x}$ are sent to a concrete sum via the counit \eqref{map:sum-coproduct-counit} as soon as we have a cone $\family{T}{x}$ of bounded linear maps.


\smallskip
Another point of view on the universal property of a coproduct is given by observing that a cone $\family{T}{x}$ of maps $H_{x}\to H$ with $H_{x}= \field$ is by the isometric isomorphism $\homset{\Hilb}{\field}{H}\isomorphic H$, the same thing as a section of the bundle with total space $\coprod_{x\in X}H$ or a function $X\to H$. Rewriting the universal property of the coproduct in these terms, we see that every function $f\colon X\to H$ induces a unique bounded linear map $\sum_{x\in X}\field\to H$ as in diagram \ref{diagram:discrete-radon-nikodym}.
\begin{figure}[htbp]
    \begin{equation*}
    \xymatrix{
        \sum_{x\in X}\field \ar@{-->}[r] & H \\
        X \ar[u]^{\diagonal} \ar[ur]_{f}
    }
    \end{equation*}
    \caption{Discrete Radon-Nikodym.}
    \label{diagram:discrete-radon-nikodym}
\end{figure}

The map $\diagonal$ is the \emph{diagonal map} given by $x\mapto \delta_{x}\in \sum_{x\in X}\field $ with,
\begin{equation*}
    \delta_{x}(y)=
    \begin{cases}
        1 &\text{if $x= y$,} \\
        0 &\text{otherwise.}
    \end{cases}
\end{equation*}
and the unique map in the factorization is the sum map applied to $f$:
\begin{equation*}
    \sum_{x\in X}\field\to H\colon \family{k}{x}\mapto \sum_{x\in X}f(k_{x})
\end{equation*}

But this is nothing else than a \emph{finite, Hilbert space, vector version of the Radon-Nikodym theorem}. If this is not clear, just put $H= \field$. Then the universal property states that every bounded linear functional on $\sum_{x\in X}\field\isomorphic \ell_{2}(X)$ arises from a function $X\to \field$ by taking the indefinite integral. Since $X$ is discrete, the integral amounts to a simple finite sum.

\smallskip
This is the case for the constant line bundle $x\mapto \field$. Let us generalize a little bit and take the constant bundle $x\mapto K$ with $K$ an arbitrary Hilbert space. The cones $h_{x}\in H$ are replaced by cones $T_{x}\in \homset{\Hilb}{H_{x}}{H}$ with $H_{x}$ constant and equal to $K$. But since $\Hilb$ is a symmetric monoidal category and the tensor product commutes with direct sums, we have the chain of isomorphisms,
\begin{align*}
    \homset{\Hilb}{\sum_{x\in X}K}{H} &\isomorphic \homset{\Hilb}{\sum_{x\in X}\field\otimes K}{H} \\
        &\isomorphic \homset{\Hilb}{K\otimes \parens{\sum_{x\in X}\field}}{H} \\
        &\isomorphic \homset{\Hilb}{\sum_{x\in X}\field}{\homset{\Hilb}{K}{H}}
\end{align*}

This means that the Radon-Nikodym property for the vector case follows, that is, we have the isomorphism,
\begin{equation*}
    \homset{\Hilb}{\sum_{x\in X}K}{H}\isomorphic \int_{X}H\differential{\mu}
\end{equation*}
if the characteristic line bundles have the Radon-Nikodym property \emph{with respect to Banach spaces}, that is, we have the isomorphism
\begin{equation*}
    \homset{\Hilb}{\sum_{x\in X}\field}{\homset{\Hilb}{K}{H}}\isomorphic \integral{X}{H}{\mu}
\end{equation*}

Once again, Banach spaces are seen to be essential to the proper formulation of the universal properties. Ultimately, this is due to the fact that $\Hilb$ is not closed\footnote{In connection with these matters, it is worth remarking that although in \cite[section 5]{baez-baratin-freidel-wise:infinite-dimensional-representations-2groups2008} the authors suggest $\inner{H}{K}= \dual{H}\otimes K$ for the categorified inner product, the fact is that the space on the right-hand side is the space of Hilbert-Schmidt operators which, in the infinite-dimensional case, is but a tiny subspace of $\homset{\Hilb}{H}{K}$.}.


\smallskip
Now, let us test these findings on the measurable situation, starting with a characteristic line bundle $\characteristic(E)$ with $E\subseteq X$ a measurable subset. Its direct integral was computed in theorem \ref{theorem:integral-characteristic-line-bundles}. The universal property of diagram \ref{diagram:discrete-radon-nikodym} now takes the form of diagram \ref{diagram:measurable-radon-nikodym}.
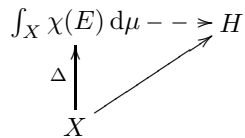
\begin{figure}[htbp]
    \begin{equation*}
    \xymatrix{
        \integral{X}{\characteristic(E)}{\mu} \ar@{-->}[r] & H \\
        X \ar[u]^{\diagonal} \ar[ur]
    }
    \end{equation*}
    \caption{Measurable Radon-Nikodym.}
    \label{diagram:measurable-radon-nikodym}
\end{figure}

But in order to make sense of diagram \ref{diagram:measurable-radon-nikodym} we need to have the integral version of the counit sum map, or we need \emph{integrals of vector-valued functions $X\to H$}. Assuming that all this works, then repeating the argument above, in order to derive the Radon-Nikodym property for more general characteristic bundles than line bundles, not only we need the measurable Radon-Nikodym property with respect to Banach spaces, we need the distributivity of $\otimes$ with respect to direct integrals -- but this is just theorem \ref{theorem:direct-integral-tensors}. Thus, diagram \ref{diagram:measurable-radon-nikodym} is telling us that the \emph{categorified integral of characteristic bundles is characterized by a universal property}. This suggests that just as the existence of direct sums is not an extra structure but a property of a category, the same happens with direct integrals. We elect this as another of our guiding principles.

\begin{principle} \label{principle:direct-integrals-universal-property}
The direct integral of a characteristic bundle is an object characterized by a universal property, the \emph{Radon-Nikodym property}.
\end{principle}


Principle \ref{principle:direct-integrals-universal-property} is a concrete embodiment of principle \ref{principle:universal-properties-coherence-laws}. We managed to isolate the basic object and characterize it by a universal property. Instead of being guided by more or less vague analogies, we now have a systematic means of investigation. It allows us to bring in the full force of category theory and prompts us to find suitable replacements for certain measure-theoretic and functional-analytic concepts in order to be able to also \emph{categorify the proofs}. Unfortunately, the reader should curb his natural enthusiasm because as will be seen below in subsection \ref{subsection:failure-radon-nikodym}, principle \ref{principle:direct-integrals-universal-property} cannot stand as currently formulated.

\section{From Hilbert to Banach spaces}
\label{section:from-hilbert-banach-spaces}


In section \ref{section:towards-categorified-measure-theory}, time and again we have hinted at the fact that an extension of measurable bundles to Banach spaces is bound to be necessary. The first reason is that $\Hilb$, while symmetric monoidal is not closed, so that enriched category theory with $\Hilb$ as a base is virtually impossible. For example, if $\xi$, $\zeta$ are two $X$-bundles, it is natural to consider the bundle $\exponential{\zeta}{\xi}$ of maps $\xi\to \zeta$ given by:
\begin{equation*}
    x\mapto \homset{\Hilb}{\xi_{x}}{\zeta_{x}}
\end{equation*}

But this is a bundle of Banach spaces. In other words, not only the categories of Hilbert space bundles are bound to lack such a basic construction as exponentials, but the formulation of the critical Radon-Nikodym universal property is right-down impossible.


\smallskip
The construction of finite coproducts in $\Hilb$ has already been given in theorem \ref{theorem:existence-coproducts-hilb}. Since $\Hilb$ is additive it has all products, and both kernels and cokernels are constructed in the usual manner. In other words, $\Hilb$ is finitely complete and finitely cocomplete. To emphasize the fact that in infinite dimensions substantially new phenomena arise, we recall the following well-known result.

\begin{theorem} \label{theorem:hilb-not-abelian}
The category $\Hilb$ is not abelian.
\end{theorem}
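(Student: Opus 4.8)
The plan is to exhibit a morphism of $\Hilb$ that is at once a monomorphism and an epimorphism but is not an isomorphism. In \emph{any} abelian category every such bimorphism is invertible: if $f$ is mono then $\kernel f= \zero$, so its coimage is the source; if $f$ is epi then $\cokernel f= \zero$, so its image is the target; and the canonical coimage-to-image comparison then exhibits $f$ as an isomorphism. So a single non-invertible bimorphism in $\Hilb$ settles the matter.

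First I would record the shape of kernels and cokernels in $\Hilb$. As already noted, $\Hilb$ is additive and finitely complete and cocomplete; for a bounded linear map $T\colon H\to K$ the kernel is the usual closed null space, while the cokernel is the Hilbert quotient $K\quotient \closure(\image T)$ — one is forced to pass to the closure of the range in order to land back inside $\Hilb$. Hence the monomorphisms of $\Hilb$ are exactly the injective maps, the epimorphisms are exactly the maps with dense range, and the ``image'' of $T$ in the categorical sense is the \emph{closed} subspace $\closure(\image T)$. Consequently the canonical map from the coimage $H\quotient \kernel T$ to the image $\closure(\image T)$ is simply the map induced by $T$, and it is an isomorphism of $\Hilb$ precisely when $\image T$ happens to be closed.

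Next I would produce the witness. Let $\sequence{e}{n}$ be the standard orthonormal basis of $\ell_{2}(\Natural)$ and let $T\colon \ell_{2}(\Natural)\to \ell_{2}(\Natural)$ be the diagonal operator with $Te_{n}= (n+1)^{-1}e_{n}$. It is bounded and injective, hence a monomorphism; its range contains every finitely supported vector and is therefore dense, hence it is an epimorphism; but it is not surjective, since $\sum_{n}(n+1)^{-1}e_{n}$ is not in the range, so $T$ is not a bijection and thus not an isomorphism of $\Hilb$. Equivalently, for this $T$ the coimage-to-image comparison $\ell_{2}(\Natural)\to \ell_{2}(\Natural)$ is $T$ itself — a monomorphism with dense but non-closed range — so it is not invertible, whereas an abelian category would force it to be. This yields the conclusion.

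There is honestly no serious obstacle here; the one point that must be handled with care is the bookkeeping that separates $\image T$ from $\closure(\image T)$, together with the observation that forming cokernels in $\Hilb$ is intrinsically a closure operation. It is exactly this discrepancy — invisible in finite dimensions, where $\Hilb$ \emph{is} abelian — that is the source of the failure, and the same phenomenon is what will later obstruct several of the universal properties discussed in section~\ref{section:towards-categorified-measure-theory}.
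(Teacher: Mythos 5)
Your argument is correct and follows the same route as the paper: monics in $\Hilb$ are the injective maps, epis are the maps with dense range, and a non-invertible bimorphism (which no abelian category can have) finishes the job. The only difference is that you supply an explicit witness — the compact diagonal operator with dense, non-closed range — where the paper merely notes such epi-monics are easy to find.
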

\begin{proof}
This follows from the fact that a monic in $\Hilb$ is an injective map and an epi is a bounded linear map with dense image. It is easy to find epi-monics in $\Hilb$ that are not isomorphisms, thus $\Hilb$ cannot be abelian.
\end{proof}


But the \emph{really} damaging fact is that $\Hilb$ does \emph{not} have infinite (co)products. Let $\family{H}{x}$ be an infinite family of Hilbert spaces and let us assume that the product $\prod_{x\in X}H_{x}$ existed and let $\family{p}{x}$ be the cone of projections. If $\family{T}{x}$ is a cone of bounded linear maps, then there is unique bounded linear map $T$ closing triangle \ref{diagram:universal-property-projections-product} for every $x\in X$.
\begin{figure}[htbp]
    \begin{equation*}
    \xymatrix{
            & \prod_{y\in X}H_{y} \ar[d]^{p_{x}} \\
        K \ar[r]_{T_{x}} \ar@{-->}[ur]^{T} & H_{x}
    }
    \end{equation*}
    \caption{Universal property of $\family{p}{x}$.}
    \label{diagram:universal-property-projections-product}
\end{figure}

Commutativity of triangle \ref{diagram:universal-property-projections-product} implies the norm inequality
\begin{equation*}
    \norm{T_{x}}= \norm{p_{x}T}\leq \norm{p_{x}}\norm{T}
\end{equation*}

But choosing a cone with a subsequence $\sequence{T}{x_{n}}$ such that $\norm{T_{x_{n}}}\converges \infty$, this is clearly impossible. A similar reasoning shows that infinite coproducts cannot exist.


\smallskip
The problem in both non-existence statements lies in the fact that we are trying to parameterize \emph{all} families of operators. In the product case, this is not possible because there is no meaningful inner product in the infinite product linear space $\prod_{x\in X}H_{x}$. In the coproduct case, this is not possible because to define the map,
\begin{equation*}
    \prod_{x\in X}\homset{\Hilb}{H_{x}}{H}\to \homset{\Hilb}{\sum_{x\in X}H_{x}}{H}
\end{equation*}
we need to \emph{sum} families of elements, which is impossible in any reasonable way if the family is arbitrary. In fact, the two problems are related because the sum map gives the representability isomorphism,
\begin{equation*}
    \homset{\Hilb}{\sum_{x\in X}H_{x}}{H}\isomorphic \prod_{x\in X}\homset{\Hilb}{H_{x}}{H}
\end{equation*}
but as we have noticed already the linear space on the right-hand side is not even normed.


\smallskip
Given the crucial role of countable colimits and in particular, countable coproducts, this would seem to deal a death blow to a proper theory of categorial integrals. The solution is to extend the codomain of the measurable bundles from Hilbert to Banach spaces. At first sight, the reader may rightly wonder why this is a solution at all, since the previous example that showed that infinite coproducts of Hilbert spaces do not exist also works for the category $\Ban$ of Banach spaces and bounded linear maps. This is indeed true, but there is a way out.

\smallskip
Let $\family{B}{x}$ be an $X$-family of Banach spaces. We will employ the same functor and bundle terminology introduced in section \ref{section:measurable-bundles-hilbert-spaces} in connection with bundles of Hilbert spaces. The product of the family $\family{B}{x}$ exists in the category $\Vect$ of linear spaces and its elements are $X$-parameterized families $\family{b}{x}$ of elements $b_{x}\in B_{x}$. Define the linear subspace,
\begin{equation*}
    \sum_{x\in X}B_{x}\defequal \set{\family{b}{x}\colon\sum_{x\in X}\norm{b_{x}}< \infty}
\end{equation*}
and equip it with the norm
\begin{equation*}
    \sumnorm{\family{b}{x}}\defequal \sum_{x\in X}\norm{b_{x}}
\end{equation*}

We also define the linear subspace,
\begin{equation*}
    \prod_{x\in X}B_{x}\defequal \set{\family{b}{x}\colon\sup\set{\norm{b_{x}}\colon x\in X}< \infty}
\end{equation*}
equipped with the uniform norm
\begin{equation*}
    \supnorm{\family{b}{x}}\defequal \sup\set{\norm{b_{x}}\colon x\in X}
\end{equation*}

A simple example illustrates these definitions.

\begin{example} \label{example:sum-prod-constant-functor}
Consider the constant functor $x\mapto B$ with $B$ a Banach space. Then $\sum_{x\in X}B$ is the Banach space $\ell_{1}(X, B)$ of absolutely summable families in $B$ indexed by $X$ and $\prod_{x\in X}B$ is the Banach space $\Bounded(X, B)$ of bounded functions $X\to B$ with the supremum norm.
\end{example}

The fact that both $\sum_{x\in X}B_{x}$ and $\prod_{x\in X}B_{x}$ are Banach spaces is proved in exactly the same way as in the aforementioned classical cases. For each $x\in X$ there is an isometric embedding $i_{x}\colon B_{x}\to \sum_{x\in X}B_{x}$ that to $b_{x}\in B_{x}$ associates the absolutely summable family
\begin{equation*}
    \delta_{b_{x}}(y)\defequal
    \begin{cases}
        b_{x} &\text{if $y=x$,} \\
        0 &\text{otherwise.}
    \end{cases}
\end{equation*}

Similarly, for each $x\in X$ there is a contractive linear map $p_{x}\colon \prod_{x\in X}B_{x}\to B_{x}$ given by $(b_{x})\mapto b_{x}$. The next theorem states the universal property of these two spaces and their associated cones. Its proof is left as an exercise to the reader.

\begin{theorem} \label{theorem:existence-co-products-ban}
For any family $\family{T}{x}$ of bounded linear maps $B_{x}\to A$ such that $\sup_{x\in X}\set{\norm{T_{x}}}< \infty$, there is a unique bounded linear map $T\colon\sum_{x\in X}B_{x}\to A$ such that for every $x\in X$, diagram \ref{diagram:universal-property-injections} is commutative.
\begin{figure}[htbp]
    \begin{equation*}
    \xymatrix{
        \sum_{x\in X}B_{x} \ar@{-->}[r]^{T} & A \\
        B_{x} \ar[u]^-{i_{x}} \ar[ur]_{T_{x}} &
    }
    \end{equation*}
    \caption{Universal property of injections $i_{x}$.}
    \label{diagram:universal-property-injections}
\end{figure}

The association $\family{T}{x}\mapto T$ is an isometric isomorphism
\begin{equation} \label{isomorphism:universal-property-injections}
    \prod_{x\in X}\homset{\Ban}{B_{x}}{A}\isomorphic \homset{\Ban}{\sum_{x\in X}B_{x}}{A}
\end{equation}

For any family $\family{T}{x}$ of bounded linear maps $A\to B_{x}$ in $\prod_{x\in X}\homset{\Ban}{A}{B_{x}}$, there is a unique bounded linear map $T\colon A\to \prod_{x\in X}B_{x}$ such that for every $x\in X$, diagram \ref{diagram:universal-property-projections} is commutative.
\begin{figure}[htbp]
    \begin{equation*}
    \xymatrix{
        & \prod_{x\in X}B_{x} \ar[d]^{p_{x}} \\
        A \ar@{-->}[ur]^{T} \ar[r]_{T_{x}} & B_{x}
    }
    \end{equation*}
    \caption{Universal property of the projections $p_{x}$.}
    \label{diagram:universal-property-projections}
\end{figure}

The association $\family{T}{x}\mapto T$ is an isometric isomorphism
\begin{equation} \label{isomorphism:universal-property-projections}
    \prod_{x\in X}\homset{\Ban}{A}{B_{x}}\isomorphic \homset{\Ban}{A}{\prod_{x\in X}B_{x}}
\end{equation}
\end{theorem}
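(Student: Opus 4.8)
The plan is to handle the two halves of the theorem independently, each by the same routine: build a candidate map from the obvious pointwise formula, check boundedness and the triangle identity, then prove uniqueness and read off the isometric bijection. Both $\sum_{x\in X}B_{x}$ and $\prod_{x\in X}B_{x}$ are Banach spaces, as noted above, and $A$ is Banach by hypothesis, so completeness is available wherever needed.

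For the $\ell^{1}$-type sum, set $M \defequal \sup_{x\in X}\norm{T_{x}}$ and, for $b = \family{b}{x} \in \sum_{x\in X}B_{x}$, define $T(b) \defequal \sum_{x\in X}T_{x}(b_{x})$. First I would check this is meaningful: absolute summability of $b$ forces $b_{x}= 0$ for all but countably many $x$, and $\sum_{x\in X}\norm{T_{x}(b_{x})} \leq M\sumnorm{b} < \infty$, so the series converges absolutely in the Banach space $A$ and its value does not depend on the enumeration. Linearity of $T$ and of $\family{T}{x}\mapto T$ is then clear, the same estimate gives $\norm{T(b)} \leq M\sumnorm{b}$ hence $\norm{T} \leq M$, and since $i_{x}(b_{x}) = \delta_{b_{x}}$ is supported at $x$ one has $T i_{x} = T_{x}$. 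For uniqueness, the finitely supported families span a dense linear subspace of $\sum_{x\in X}B_{x}$ (truncate to a finite set carrying all but $\varepsilon$ of the $\ell^{1}$-norm), and any bounded linear map closing all the triangles must agree with $T$ there, hence everywhere. Thus $\family{T}{x}\mapto T$ is a linear injection (if $T= 0$ then $T_{x}= T i_{x}= 0$); it is also surjective, because for any $T\colon \sum_{x\in X}B_{x}\to A$ the family $\parens{T i_{x}}$ satisfies $\norm{T i_{x}} \leq \norm{T}$, the $i_{x}$ being isometric, so it lies in $\prod_{x\in X}\homset{\Ban}{B_{x}}{A}$ and maps back to $T$ by the uniqueness just proved. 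Finally $\norm{T} \leq \sup_{x}\norm{T_{x}}$ from the construction and $\sup_{x}\norm{T i_{x}} \leq \norm{T}$ from isometry of the $i_{x}$ together establish \eqref{isomorphism:universal-property-injections}.

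For the $\ell^{\infty}$-type product the argument is even shorter, needing neither completeness nor density. Given $\family{T}{x}$ with $M \defequal \sup_{x}\norm{T_{x}} < \infty$, define $T(a) \defequal \family{T_{x}(a)}{x}$; boundedness of this family is immediate from $\norm{T_{x}(a)} \leq M\norm{a}$, so $T(a)\in \prod_{x\in X}B_{x}$ with $\supnorm{T(a)} \leq M\norm{a}$, whence $\norm{T}\leq M$. Linearity and $p_{x}T = T_{x}$ are coordinatewise tautologies, and uniqueness is automatic since an element of $\prod_{x\in X}B_{x}$ is determined by its coordinates. The assignment $\family{T}{x}\mapto T$ is therefore a linear bijection with inverse $T\mapto \parens{p_{x}T}$, which does land in $\prod_{x\in X}\homset{\Ban}{A}{B_{x}}$ because the $p_{x}$ are contractive, so $\norm{p_{x}T}\leq \norm{T}$; combining this bound with $\norm{T}\leq \sup_{x}\norm{T_{x}}$ establishes \eqref{isomorphism:universal-property-projections}.

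I do not expect any genuinely hard step: these are the Banach-space analogues of the elementary finite-dimensional statements. The only two points that warrant attention are, on the sum side, that absolute summability confines the support of $b$ to a countable set so that $\sum_{x}T_{x}(b_{x})$ is an honest convergent series in the complete space $A$, and the density of finitely supported families used for uniqueness. I expect the written-out proof to consist almost entirely of these two bookkeeping verifications, with the isometry in each half falling out mechanically from the pairing of ``$\norm{T}\leq \sup_{x}\norm{T_{x}}$ from the formula'' with ``$\norm{T_{x}}\leq \norm{T}$ from the isometric $i_{x}$ (resp.\ contractive $p_{x}$)''.
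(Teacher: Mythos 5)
Your proof is correct; the paper in fact leaves this theorem's proof as an exercise to the reader, and your argument is exactly the routine verification intended: define $T$ pointwise, use the $\ell^{1}$/$\ell^{\infty}$ estimates for boundedness, invoke density of finitely supported families (resp.\ coordinatewise determination) for uniqueness, and pair the bound $\norm{T}\leq \sup_{x}\norm{T_{x}}$ with the reverse bound coming from the isometric $i_{x}$ (resp.\ contractive $p_{x}$) to get the isometry. Nothing is missing.
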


Theorem \ref{theorem:existence-co-products-ban} exhibits some notable improvements over theorem \ref{theorem:existence-coproducts-hilb}. The indexing set $X$ is arbitrary and the representability isomorphisms are isometries. And as mentioned previously, although infinite (co)products do not exist in $\Ban$, it is clear that theorem \ref{theorem:existence-co-products-ban} is expressing a universal property of a (co)product object. It expresses a uniqueness factorization property for \emph{bounded cones}, precisely the elements of the Banach space $\prod_{x\in X}\homset{\Ban}{A}{B_{x}}$. Denote by $\Banc$ the category of Banach spaces and linear contractions, that is, those bounded linear maps $T$ with operator norm $\norm{T}\leq 1$. If we restrict attention to elements in the \emph{unit ball} of $\prod_{x\in X}\homset{\Ban}{A}{B_{x}}$, that is, \emph{contractive cones}, then theorem \ref{theorem:existence-co-products-ban} implies:

\begin{theorem} \label{theorem:banc-small-(co)products}
The category $\Banc$ has all small products and all small coproducts.
\end{theorem}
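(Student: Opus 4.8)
The plan is to recognize Theorem \ref{theorem:banc-small-(co)products} as essentially a corollary of Theorem \ref{theorem:existence-co-products-ban}: I claim that the $\ell_1$-style sum $\sum_{x\in X}B_x$, equipped with the isometric injections $i_x$, is the coproduct of $\family{B}{x}$ in $\Banc$, and dually that the $\ell_\infty$-style product $\prod_{x\in X}B_x$, equipped with the contractive projections $p_x$, is the product. The point that makes this work --- and the reason the obstruction to infinite (co)products in $\Ban$ vanishes --- is that in $\Banc$ every (co)cone consists of \emph{contractions}, so the boundedness hypothesis in Theorem \ref{theorem:existence-co-products-ban} is automatically satisfied: a family $\family{T}{x}$ with each $\norm{T_x}\leq 1$ certainly has $\sup_{x\in X}\norm{T_x}\leq 1<\infty$.

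First I would verify the product. The projections $p_x\colon \prod_{x\in X}B_x\to B_x$ are contractive, hence morphisms of $\Banc$, so $\parens{\prod_{x\in X}B_x, \family{p}{x}}$ is a cone in $\Banc$. Given any cone $\family{T}{x}$ of contractions $A\to B_x$, the family lies in the unit ball of $\prod_{x\in X}\homset{\Ban}{A}{B_x}$, so Theorem \ref{theorem:existence-co-products-ban} produces a unique bounded linear $T\colon A\to \prod_{x\in X}B_x$ with $p_xT=T_x$; since the isomorphism \eqref{isomorphism:universal-property-projections} is an \emph{isometry}, $\norm{T}=\sup_{x\in X}\norm{T_x}\leq 1$, so $T$ is itself a morphism of $\Banc$. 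Uniqueness within $\Banc$ is inherited from uniqueness within $\Ban$, because any contraction closing the triangles is in particular a bounded map closing them. Hence $\prod_{x\in X}B_x$ is a product of $\family{B}{x}$ in $\Banc$.

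The coproduct is handled dually. The injections $i_x\colon B_x\to \sum_{x\in X}B_x$ are isometric embeddings, hence contractions and morphisms of $\Banc$. For any cocone $\family{T}{x}$ of contractions $B_x\to A$, the family has $\sup_{x\in X}\norm{T_x}\leq 1<\infty$, so Theorem \ref{theorem:existence-co-products-ban} yields a unique bounded linear $T\colon \sum_{x\in X}B_x\to A$ with $Ti_x=T_x$, and the isometry \eqref{isomorphism:universal-property-injections} forces $\norm{T}\leq 1$, so $T\in\Banc$; uniqueness again transfers from $\Ban$. Thus $\sum_{x\in X}B_x$ is a coproduct in $\Banc$, and since the indexing set $X$ in Theorem \ref{theorem:existence-co-products-ban} is arbitrary, $\Banc$ has \emph{all} small products and coproducts.

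There is no genuinely hard step; the whole content sits in Theorem \ref{theorem:existence-co-products-ban}. The only thing to be careful about is the bookkeeping: one must note (i) that a categorical (co)cone in $\Banc$ is literally a family of contractions, so it meets the boundedness hypothesis of the earlier theorem for free, and (ii) that the representability isomorphisms of Theorem \ref{theorem:existence-co-products-ban} are isometric, so they restrict to bijections between unit balls, which is exactly what sends contractive (co)cones to contractive factorizing maps. Conceptually, passing from $\Ban$ to $\Banc$ replaces the hom-\emph{Banach spaces} by their unit balls, and it is precisely this restriction that upgrades the ``(co)product object for bounded (co)cones'' of Theorem \ref{theorem:existence-co-products-ban} into an honest categorical (co)limit.
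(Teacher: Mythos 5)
Your proposal is correct and follows exactly the paper's route: the paper derives Theorem \ref{theorem:banc-small-(co)products} precisely by restricting the isometric representability isomorphisms of Theorem \ref{theorem:existence-co-products-ban} to unit balls, i.e.\ to contractive (co)cones, which is the observation you make explicit. Your write-up merely spells out the bookkeeping (contractive cones are automatically bounded, isometries restrict to bijections of unit balls, uniqueness transfers from $\Ban$ to $\Banc$), which is exactly what the paper leaves implicit.
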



Equalizers and coequalizers are constructed in the usual manner so that $\Banc$ is both complete and cocomplete. For the benefit of the reader, we recall that if $A$ and $B$ are Banach spaces, the \emph{projective tensor product}\footnote{Although the projective tensor product is in a sense the canonical one, there are other tensor products and useful ones at that. For example, later on we will make use of the \emph{injective tensor product}. Just as there is no unique way to norm a direct sum (think of the whole family of $p$-norms) there are also other reasonable norms one can put on the tensor product. The definition of what constitutes a reasonable norm, their connections with spaces of operators, etc.~is an important area in Banach space theory. Good references are \cite{ryan:introduction-tensor-product-banach-spaces2002} and, more demanding but also more exhaustive in its survey, \cite{defant-floret:tensor-norms-operator-ideals1993}.} $A\projotimes B$ is the completion of the linear tensor product $A\otimes B$ under the \emph{projective norm},
\begin{equation} \label{def-equation:projective-norm}
    \projnorm{u}= \inf\set{\sum_{n}\norm{a_{n}}\norm{b_{n}}\colon u= \sum_{n}a_{n}\otimes b_{n}}
\end{equation}
where the infimum is taken over all the representations of $u$ as a sum $\sum_{n}a_{n}\otimes b_{n}$ of elementary tensors.

\smallskip
The projective tensor product makes of $\Ban$ a closed symmetric monoidal category. Since this structure descends to $\Banc$ (all the coherence isomorphisms are isometries), it follows that we have a good theory of \emph{categories enriched in $\Banc$}. The two single biggest problems detected in section \ref{section:towards-categorified-measure-theory}, the lack of closedness and infinitary colimits have just been solved.


\smallskip
Categories of Banach spaces have received a good deal of attention in the 1970's. For our purposes, the monograph \cite{cigler-losert-michor:banach-modules-functors-categories-banach-spaces1979} contains most of what we need. While not couched in the language of enriched category theory that is, in practice, what the authors are doing. Our basic reference for enriched category theory is \cite{kelly:enriched-category-theory2005}, but to simplify the discussion it is convenient to introduce some terminology specific to the Banach space case.

\begin{definition} \label{definition:banach-categories}
A $\Banc$-enriched category will be called a \emph{Banach category}\footnote{Deference to tradition and settled custom has led me to this choice of terminology, but I have to confess my envy of a physicist's freedom for scavenging \emph{Finnegans Wake} for accurate and descriptive names.}. A $\Banc$-enriched (or \emph{strong}, or even \emph{contractive}) functor between Banach categories is an ordinary functor $F\colon \mathcal{A}\to \mathcal{B}$ such that the map
\begin{equation*}
    f\mapto F(f)
\end{equation*}
is a linear contraction.

If $F, G\colon \mathcal{A}\to \mathcal{B}$ are two contractive functors, a $\Banc$-enriched natural transformation $F\to G$ is an ordinary natural transformation $\tau\colon F\to G$ that is \emph{uniformly bounded}, that is:
\begin{equation*}
    \sup\set{\norm{\tau_{a}}\colon a\in \mathcal{A}}< \infty
\end{equation*}

It should be noted that the Banach space of natural transformations $F\to G$ is an object of $\Banc$ only if the domain category $\mathcal{A}$ is small. For the moment, we forego such size technicalities but eventually we will have to come to grips with them. Taking unit balls, we obtain the $2$-category $\BanCat$ of Banach categories, contractive functors and contractive natural transformations.
\end{definition}

The remarks preceding definition \ref{definition:banach-categories} suggest the tenth and the last of our list of fundamental principles.

\begin{principle} \label{principle:categorified-measure-theory-banach}
Categorified measure theory lives in the $2$-category $\BanCat$.
\end{principle}


Later, we will sharpen principle \ref{principle:categorified-measure-theory-banach} by taking into account principle \ref{principle:infinitary-colimits-categorified-convergence} and throwing cocompleteness conditions into the mix. What is to be emphasized here is that the important $2$-category is the $2$-category of Banach categories and that all notions are relative to (or enriched in) Banach spaces. This has some noteworthy consequences. For example, since the isomorphisms in $\Banc$ are the linear bijective isometries we will be interested mainly in \emph{isometric Banach space theory}; the distinction between coproducts and products becomes important because even in the finite case they are \emph{not} isometrically isomorphic, etc.

\smallskip
While the passage to $\Banc$-enriched categories gives us the necessary categorial completeness conditions, it also lands us in the very harsh Banach space land. The long litany of wild and intricate counterexamples is not only a testament to the ingenuity of Banach space theorists but also the definitive proof of the unsuitability of Banach spaces for many purposes\footnote{The underlying motivation for undertaking the categorification of measure theory is to build TQFT's and quantum gravity models. In this respect, one should notice that the naive definition of Banach-space valued TQFT's is bound to encounter difficulties. The reason is that every object $M$ of the cobordism category $\Cobord$ is reflexive while there are many non-reflexive Banach spaces. Even worse, the projective tensor product of two reflexive spaces need not be reflexive. For a striking example, we refer the reader to the computation of the tensor diagonal of $\ell_{p}\projotimes \ell_{p}, p\geq 1$ in \cite[section 2.5]{ryan:introduction-tensor-product-banach-spaces2002}.}. Let one example suffice for many. In a Hilbert space, the existence of the orthogonality concept implies that for every closed linear subspace $M$ there is a canonical norm $1$ projection with range $M$ and a canonical topological complement $\orthogonal{M}$. The relatively simple and tame linear geometry of Hilbert spaces allows some deep theorems on bounded linear operators (e.g.~the spectral theorem) which in its turn allows a reasonably satisfying representation theory for many objects (e.g.~locally compact groups). This idyllic situation is blown to smithereens with Banach spaces because their linear geometry can be extremely complicated. While in a Hilbert space every closed subspace is complemented, in a general Banach space the best general result is that finite-dimensional and finite-codimensional subspaces are complemented. In \cite{lindenstrauss-tzafriri:classical-banach-spacesI1977} it is proved that every infinite-dimensional Banach space not isomorphic to a Hilbert space has one non-trivial (that is, neither finite-dimensional nor finite-codimensional) uncomplemented subspace. In \cite{gowers-maurey:unconditional-basic-sequence1993}, the authors constructed the first known example of a Banach space whose every non-trivial closed subspace is uncomplemented. By now there are known several examples of such spaces (commonly called \emph{hereditarily indecomposable}), including examples of $\Continuous(X)$-spaces for cunningly crafted compact Hausdorff $X$ (see \cite{koszmider:banach-spaces-continuous-functions-few-operators2004} and \cite{plebanek:ck-spaces-few-operators2004}).

\smallskip
The passage from Hilbert to Banach spaces has other important consequences to categorified measure theory. Ultimately, it is the very simple isometric classification of separable Hilbert spaces that allows a decomposition such as that of theorem \ref{theorem:measurability-countable-coproduct}. In fact, one can even say that the countability restrictions in such principles as principle \ref{principle:category-measurable-bundles} is something of a red herring, imposed on us by a conspiracy between bad measurability definitions and the simple classification of separable Hilbert spaces. With Banach spaces the situation is completely different. Let $A$ and $B$ be two Banach spaces of linear dimension $n$. The spaces $A$ and $B$ are isomorphic in $\Ban$ but they are not necessarily isometric isomorphic, that is, isomorphic in $\Banc$. Define $d(A, B)$ to be the quantity
\begin{equation}\label{equation-definition:banach-mazur-metric}
    \log\inf\set{\norm{T}\norm{\inverse{T}}\colon \text{$T$ is a linear isomorphism $A\to B$}}
\end{equation}

\smallskip
Definition \eqref{equation-definition:banach-mazur-metric} gives a metric to the set $\mathcal{M}(n)$ of equivalence classes of $n$-dimensional Banach spaces for the equivalence relation of isometric isomorphism. The metric space $\mathcal{M}(n)$ is called the \emph{Banach-Mazur compactum} and is a compact metric space. In \cite{ageev-bogatyi-repovs:banach-mazur-compactum2004} it is proved that it is homeomorphic to the Alexandroff compactification of a Hilbert cube manifold -- a humongous space. The moral is that a measurable bundle $\xi$, even if it has for values only finite dimensional Banach spaces, can be a very complicated beast.


\smallskip
Our choice to extend $\Hilb$ to $\Ban$ is a matter of convenience. The category $\Ban$ lies nearest at hand and allows us to remain within the confines of classical functional analysis. More generally, any symmetric monoidal closed category $\mathcal{A}$ that is both complete and cocomplete and in which $\Hilb$ embeds in a sufficiently nice way can serve as a candidate for base category. The category $\mathcal{A}$ should also have the categorial structure of $\Hilb$ that makes it so important for quantum mechanics (basically, a contravariant endofunctor on $\mathcal{A}$ that distills the properties of adjoint operators in $\Hilb$ -- precise definitions can be found in \cite{baez:hda2-2-hilbert-spaces1997} and \cite{selinger:dagger-compact-closed-categories-completely-positive-maps2007}) and the embedding should preserve it\footnote{However, note that a general observable in quantum mechanics is a self-adjoint, \emph{not necessarily bounded} operator.}. Independently of the embedding $\Hilb\to \mathcal{A}$ we choose and assuming it exists in the first place, we believe that the categorified measure theory briefly described in section \ref{section:categorified-measures-integrals} is sufficiently robust and general that it extends to this new setting in a straightforward way.

\subsection{The failure of the Radon-Nikodym property}
\label{subsection:failure-radon-nikodym}

As remarked in the beginning of this section, while passage to the category of Banach spaces gives us the necessary categorial completeness conditions, we pay a heavy price in having to deal with objects far more complicated than Hilbert spaces. This fact will surface on many occasions, the first and most important being the failure of the Radon-Nikodym property, which, as argued in section \ref{section:towards-categorified-measure-theory}, is fundamental for a good categorified measure theory. The plain matter of fact is that the Radon-Nikodym property fails in infinite dimensions and fails badly.


\smallskip
Let us return to the principles formulated in section \ref{section:towards-categorified-measure-theory}, especially principle \ref{principle:direct-integrals-universal-property}. The universal property implied by it was depicted in diagram \ref{diagram:measurable-radon-nikodym}, which we depict again next.
\begin{figure}[htbp]
    \begin{equation*}
    \xymatrix{
        \integral{X}{\characteristic(E)}{\mu} \ar@{-->}[r]^-{\integral{X}{f}{\mu}} & H \\
        X \ar[u]^{\diagonal} \ar[ur]_{f}
    }
    \end{equation*}
    \caption{Measurable Radon-Nikodym.}
\end{figure}

In terms of representability isomorphisms, it says that the map $f\mapto \integral{X}{f}{\mu}$ establishes a natural isometric isomorphism,
\begin{equation*}
    \Integrable_{\infty}(X, B)\isomorphic \homset{\Ban}{\Integrable_{1}(X)}{B}
\end{equation*}
between a space of measurable functions $X\to B$ and the space of bounded linear operators $\Integrable_{1}(X)\to B$. Using integral notation, we can write,
\begin{equation} \label{isomorphism:rnp-integral-form}
    \prod_{X}B\differential{\mu}\isomorphic \Ban\parens{\integral{X}{\characteristic(E)}{\mu}, B}
\end{equation}
so that $\Integrable_{\infty}(X, B)$ is a \emph{measurable power} while $\Integrable_{1}(X)$ is a \emph{measurable copower}. The Radon-Nikodym property then states that the functor $B\mapto \Integrable_{\infty}(X, B)$ is \emph{representable}.


\smallskip
The space $\Integrable_{\infty}(X, B)$ is the Banach space of a.e.~equivalence classes of ``measurable'' functions $f\colon X\to B$ with the essential supremum norm. The adjective measurable is between quotes because measurability here is \emph{not} what one would naively expect: measurability of $f$ with $B$ endowed with the Borel measurable structure. This latter notion of measurability is \emph{almost} good enough but the reasons why it is not good enough are subtle\footnote{Subtle because finding an example of a Borel measurable, not strongly measurable function is directly connected with questions undecidable in ZFC. We refer the reader to \cite{edgar:measurality-banach-space1977} and \cite{edgar:measurality-banach-spacesII1979} for the details.}.

\smallskip
In subsection \ref{subsection:continuity-direct-integral} we recalled the notion of \emph{simple function}: a function that up to a null-measure set is a linear combination of characteristic functions. We will need a generalization.

\begin{definition} \label{definition:strongly-measurable}
An \emph{elementary function} is a function $f\colon X\to B$ such that there is a countable partition $\family{E}{n}$ of $X$ into measurable sets and a sequence $\sequence{b}{n}$ in $B$ such that:
\begin{equation*}
    f\aeequal \sum_{n}\characteristic(E_{n})b_{n}
\end{equation*}

We say that a function $f\colon X\to B$ is \emph{strongly measurable} if it is the a.e.~uniform limit of a sequence of elementary functions. We now define $\Integrable_{\infty}(X, B)$ as the space of a.e.~equivalence classes of bounded strongly measurable functions with the essential supremum norm.
\end{definition}

The next theorem, which we have already alluded to before, is fundamental. The proof (as well as the proof of the Bochner theorem \ref{theorem:bochner} below) can be found in \cite[chapter 2, section 3]{ryan:introduction-tensor-product-banach-spaces2002}. Chapter 2 of \cite{diestel-uhl:vector-measures1977} is another reference.

\begin{theorem}[Pettis] \label{theorem:pettis}
For a function $f\colon X\to B$ the following are equivalent:
\begin{enumerate}
  \item \label{theor-enum:pettis1}
  $f$ is strongly measurable.

  \item \label{theor-enum:pettis2}
  $f$ is Borel measurable and there is a conegligible\footnote{A measurable subset $E\subseteq X$ is \emph{conegligible} if its complement has null measure. We remind the reader of our standing assumption that the measure $\mu$ is complete.} set $E\subseteq X$ such that $\dirimage{f}(E)$ is separable.

  \item \label{theor-enum:pettis3}
  $f$ is \emph{weakly measurable}, that is, for every bounded linear functional $b^{\ast}$, the function $b^{\ast}f$ is measurable, and there is a conegligible set $E\subseteq X$ such that $\dirimage{f}(E)$ is separable.
\end{enumerate}
\end{theorem}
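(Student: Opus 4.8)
The plan is to run the cycle $(1)\Rightarrow(2)\Rightarrow(3)\Rightarrow(1)$, with essentially all the work in the last implication. For $(1)\Rightarrow(2)$ I would note that every elementary function is Borel measurable (it takes countably many values, each level set measurable) and has countable, hence separable, range; so if $f$ is the a.e.\ uniform limit of elementary functions $\sequence{f}{k}$ on a conegligible set $E$, then on $E$ it is a pointwise limit of Borel measurable functions and hence Borel measurable, completeness of $\mu$ lets us absorb the exceptional null set, and $\dirimage{f}(E)$ sits inside $\closure\parens{\bigcup_{k}\dirimage{f_{k}}(E)}$, which is separable. The implication $(2)\Rightarrow(3)$ is immediate: every $\dual{b}\in \dual{B}$ is norm-continuous, hence Borel, so $\dual{b}f$ is measurable, and the separability clause is copied verbatim.

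For $(3)\Rightarrow(1)$ I would first discard a null set --- legitimate since $\mu$ is complete --- so as to assume $\dirimage{f}(X)$ lies in a separable closed subspace $B_{0}\subseteq B$. The crux is a lemma: for each $b\in B$ the function $x\mapsto \norm{f(x)-b}$ is measurable. To obtain it, take a countable dense set $\set{y_{k}}$ in $B_{0}$, use Hahn--Banach to choose norm-one functionals on $B_{0}$ with $\dual{b_{k}}(y_{k})= \norm{y_{k}}$, and extend them to $B$; a routine density estimate then shows $\set{\dual{b_{k}}}$ is norming on $B_{0}$, that is, $\norm{y}= \sup_{k}\abs{\dual{b_{k}}(y)}$ for every $y\in B_{0}$. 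Consequently $\norm{f(x)-b}= \sup_{k}\abs{\dual{b_{k}}(f(x))-\dual{b_{k}}(b)}$ is a countable supremum of functions that are measurable by weak measurability of $f$, hence measurable.

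Granting the lemma, fix a countable dense set $\set{c_{n}}$ in $\dirimage{f}(X)$. For each $k\in \Natural$ the sets $A_{k,n}\defequal \set{x\in X\colon \norm{f(x)-c_{n}}< 1/k}$ are measurable and cover $X$ by density, so disjointifying them in $n$ gives a countable measurable partition of $X$; defining $f_{k}$ to equal $c_{n}$ on the $n$-th block produces an elementary function with $\norm{f_{k}(x)-f(x)}< 1/k$ for every $x$. Hence $f_{k}\to f$ uniformly, so $f$ is strongly measurable; taking the $c_{n}$ within the ball of radius $\supnorm{f}$ keeps the $f_{k}$ bounded, which is what is needed for the accompanying statement about $\Integrable_{\infty}(X,B)$.

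The step I expect to be the real obstacle is the lemma --- promoting weak measurability to measurability of every distance function $x\mapsto \norm{f(x)-b}$; the rest (disjointification, uniform approximation, shuffling null sets around using completeness of $\mu$) is bookkeeping. The point to watch is that the norming family $\set{\dual{b_{k}}}$ must be built on the separable subspace $B_{0}$ and only afterwards extended to $B$: an arbitrary countable set of functionals on $B$ has no reason to be norming on $B_{0}$, and it is precisely separability of $\dirimage{f}(X)$ that makes the argument go through.
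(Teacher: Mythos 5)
Your proof is correct and is essentially the classical argument that the paper itself does not reproduce but delegates to its references (Ryan; Diestel--Uhl): a Hahn--Banach norming sequence of functionals on the separable subspace to upgrade weak measurability to measurability of the distance functions, followed by disjointification to build uniform elementary approximants, exactly matching the paper's definition of strong measurability as an a.e.\ uniform limit of elementary functions. One small point: as written, your lemma's norming-family argument establishes measurability of $x\mapsto\norm{f(x)-b}$ only for $b$ in the separable subspace $B_{0}$ (for general $b\in B$ one would take the norming family on the separable closed span of $B_{0}\cup\set{b}$), but since you only invoke it for $b=c_{n}\in B_{0}$ nothing is lost.
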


With the strong measurability notion, the (strong or Bochner) integral of functions is developed just like the Lebesgue integral. If $f$ is an elementary function, we define its integral to be,
\begin{equation} \label{def-equation:integral-elementary-functions}
    \integral{X}{f}{\mu}\defequal \sum_{n}\mu(E_{n})b_{n}
\end{equation}
and say that $f$ is \emph{integrable} if the series on the right-hand side of \eqref{def-equation:integral-elementary-functions} converges. The integral is then extended in the obvious way to strongly measurable functions. The next criterion for integrability is very useful.

\begin{theorem}[Bochner] \label{theorem:bochner}
A strongly measurable function $f\colon X\to B$ is integrable iff the function $x\mapto \norm{f(x)}$ is integrable.
\end{theorem}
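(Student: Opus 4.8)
The plan is to settle the elementary-function case by a bare-hands computation and then lift to an arbitrary strongly measurable $f$ by an $L^{1}$-approximation argument, with the completeness of $B$ and the fact that $\mu$ is a probability doing the real work.

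First I would record the identity: if $f\aeequal \sum_{n}\characteristic(E_{n})b_{n}$ is elementary, then $\norm{f}\aeequal \sum_{n}\characteristic(E_{n})\norm{b_{n}}$ is again elementary and $\integral{X}{\norm{f}}{\mu}=\sum_{n}\mu(E_{n})\norm{b_{n}}$. By the triangle inequality applied to the partial sums of \eqref{def-equation:integral-elementary-functions}, whenever $x\mapto \norm{f(x)}$ is integrable the series $\sum_{n}\mu(E_{n})b_{n}$ converges absolutely, hence converges since $B$ is complete; thus $f$ is integrable and $\norm{\integral{X}{f}{\mu}}\leq \integral{X}{\norm{f}}{\mu}$. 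The reverse implication for elementary functions is where one must be slightly careful: for it to hold on the nose one should read ``integrable'' in Definition \ref{definition:strongly-measurable} as ``$\integral{X}{\norm{f}}{\mu}<\infty$'' (equivalently, $\sum_{n}\mu(E_{n})b_{n}$ converges unconditionally \emph{and} absolutely), since a merely conditionally convergent series $\sum_{n}\mu(E_{n})b_{n}$ would violate the claimed equivalence --- in infinite dimensions unconditional convergence does not force absolute convergence, by Dvoretzky--Rogers. With that reading, and with ``$f$ integrable'' for general strongly measurable $f$ meaning ``$f$ is the a.e.\ uniform limit of a sequence of integrable elementary functions'' (which, because $\mu$ is finite, is the same as being an $L^{1}$-limit of such functions), the statement for elementary functions is a tautology and the rest is routine.

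Now for a general strongly measurable $f$, take elementary $g_{n}\to f$ a.e.\ uniformly. If $x\mapto\norm{f(x)}$ is integrable, then off a null set $\norm{g_{n}}\leq \norm{f}+1$ for all large $n$, so $\integral{X}{\norm{g_{n}}}{\mu}\leq \integral{X}{\norm{f}}{\mu}+1<\infty$ because $\mu(X)=1$; hence each such $g_{n}$ is an integrable elementary function, and $\integral{X}{\norm{g_{n}-g_{m}}}{\mu}\leq \supnorm{g_{n}-g_{m}}\to 0$, so by the elementary case $\parens{\integral{X}{g_{n}}{\mu}}_{n}$ is Cauchy in $B$ and converges; this exhibits $f$ as a (uniform, hence $L^{1}$) limit of integrable elementary functions, i.e.\ $f$ is integrable. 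Conversely, if $f$ is integrable, choose integrable elementary $f_{n}$ with $\integral{X}{\norm{f_{n}-f}}{\mu}\to 0$; then $\integral{X}{\norm{f}}{\mu}\leq \integral{X}{\norm{f-f_{n}}}{\mu}+\integral{X}{\norm{f_{n}}}{\mu}<\infty$, the last term being finite by the elementary-function computation.

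The only real obstacle is thus the bookkeeping around the meaning of ``integrable'' for countably-valued functions: taken too literally, the bare requirement that the defining series converge is strictly weaker than $\integral{X}{\norm{f}}{\mu}<\infty$, which would break the ``only if'' half; pinning the definition to absolute convergence of the elementary sums (equivalently, $L^{1}$-approximation by simple, i.e.\ finitely valued, functions --- always possible here since $\mu$ is finite and $f$ is strongly measurable, e.g.\ by truncating the $g_{n}$ above) removes it, and everything else reduces to the triangle inequality and the completeness of $B$. For the full details I refer to \cite[chapter 2, section 3]{ryan:introduction-tensor-product-banach-spaces2002} and \cite[chapter 2]{diestel-uhl:vector-measures1977}.
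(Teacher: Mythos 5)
Your argument is correct, and it is essentially the standard proof that the paper itself does not reproduce: for this theorem the paper only points to \cite[chapter 2, section 3]{ryan:introduction-tensor-product-banach-spaces2002} and \cite[chapter 2]{diestel-uhl:vector-measures1977}, and what you wrote (settle the countably-valued case by absolute convergence plus completeness of $B$, then pass to a general strongly measurable $f$ by a.e.\ uniform, hence $\Integrable_{1}$-, approximation, using $\mu(X)=1$) is exactly the route taken there. Your definitional caveat is also well taken rather than a defect of your proof: the paper's literal phrase ``$f$ is integrable if the series $\sum_{n}\mu(E_{n})b_{n}$ converges'' is too weak, since by Dvoretzky--Rogers one can choose a partition with $\mu(E_{n})=2^{-n}$ and $b_{n}=2^{n}c_{n}$ for an unconditionally but not absolutely convergent $\sum_{n}c_{n}$, producing an elementary $f$ whose defining series converges while $\integral{X}{\norm{f}}{\mu}=\infty$, so the ``only if'' half would fail; the cited references avoid this by defining Bochner integrability through $\Integrable_{1}$-approximation by simple functions (equivalently, absolute convergence in the countably-valued case), which is precisely the reading you adopt. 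The only cosmetic looseness is the aside that a.e.\ uniform approximation by integrable elementary functions ``is the same as'' $\Integrable_{1}$-approximation --- for a strongly measurable $f$ on a probability algebra the two are indeed equivalent, by the very two implications you prove, but it is worth stating that as a consequence rather than as an input.
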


Basically, all the properties of scalar integrals lift unscathed to the vector case. One that we will need below, and that follows from the Pettis and Bochner theorems, is the following:

\begin{theorem} \label{theorem:simples-dense-integrables}
The linear space of simple functions $X\to B$ is dense in the space $\Integrable_{1}(X, B)$ of integrable functions.
\end{theorem}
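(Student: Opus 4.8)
The plan is to obtain the density in two stages: first that the integrable \emph{elementary} functions (definition \ref{definition:strongly-measurable}) are dense in $\Integrable_{1}(X, B)$, and then that every integrable elementary function is an $\Integrable_{1}$-limit of \emph{simple} functions; chaining the two yields the assertion. Both stages are elementary once one has the Bochner theorem \ref{theorem:bochner}, which guarantees that $x\mapto \norm{f(x)}$ is integrable for every $f\in \Integrable_{1}(X, B)$.

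\textbf{Stage 1.} Let $f\in \Integrable_{1}(X, B)$. Since $f$ is strongly measurable there is a sequence $\sequence{g}{n}$ of elementary functions with $g_{n}\to f$ uniformly off a null set, so $\supnorm{g_{n} - f}\to 0$. As $\mu$ is a probability, $\sumnorm{g_{n} - f}\leq \supnorm{g_{n} - f}\to 0$. It remains to see that $g_{n}$ is \emph{integrable} for all large $n$: for such $n$ we have $\norm{g_{n}(x)}\leq \norm{f(x)} + 1$ a.e., the right-hand side being integrable by the Bochner theorem \ref{theorem:bochner} together with the finiteness of $\mu$; writing $g_{n}\aeequal \sum_{m}\characteristic(E_{m})b_{m}$ on a measurable partition $\family{E}{m}$ of $X$, this says $\sum_{m}\mu(E_{m})\norm{b_{m}} = \integral{X}{\norm{g_{n}(x)}}{\mu}< \infty$, i.e.\ $g_{n}$ is an integrable elementary function. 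Hence the integrable elementary functions are dense in $\Integrable_{1}(X, B)$.

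\textbf{Stage 2.} Let $g\aeequal \sum_{n}\characteristic(E_{n})b_{n}$ be an integrable elementary function, so $\family{E}{n}$ is a measurable partition of $X$ and $\sum_{n}\mu(E_{n})\norm{b_{n}}< \infty$. The partial sums $s_{N}\defequal \sum_{n= 1}^{N}\characteristic(E_{n})b_{n}$ are simple functions, and since the $E_{n}$ are pairwise disjoint, $\norm{g(x) - s_{N}(x)}$ equals $\norm{b_{n}}$ when $x\in E_{n}$ for some $n> N$ and vanishes otherwise; integrating, $\sumnorm{g - s_{N}} = \sum_{n> N}\mu(E_{n})\norm{b_{n}}\to 0$ as $N\to \infty$, being the tail of a convergent series. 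Composing with Stage 1: given $f\in \Integrable_{1}(X, B)$ and $\varepsilon> 0$, choose an integrable elementary $g$ with $\sumnorm{f - g}< \varepsilon/2$ and then a simple $s$ with $\sumnorm{g - s}< \varepsilon/2$, so that $\sumnorm{f - s}< \varepsilon$.

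The one point that needs care is precisely the passage from Stage 1 to Stage 2: strong measurability only furnishes countably-valued (elementary) approximants, and to reach honestly finite-valued ones one truncates, the truncation error being controlled by the integrability of $x\mapto \norm{f(x)}$ -- which is the sole substantive use of the Bochner theorem \ref{theorem:bochner}. The Pettis theorem \ref{theorem:pettis} is needed only if one prefers to manufacture the elementary approximants from an essentially separable range rather than from definition \ref{definition:strongly-measurable} directly; with the definition adopted here it is not required. Everything else is bookkeeping, and the finiteness of $\mu$ makes Stage 1 immediate by converting uniform approximation into $\Integrable_{1}$-approximation.
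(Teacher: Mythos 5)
Your proof is correct, and it is essentially the standard argument the paper has in mind: the paper gives no proof of its own, merely remarking that the statement "follows from the Pettis and Bochner theorems" and pointing to the references, and your two-stage route (uniform a.e.\ approximation by elementary functions, then truncation of the countably-valued approximant controlled by the tail of $\sum_{n}\mu(E_{n})\norm{b_{n}}$) is exactly the standard filling-in, with the accurate observation that Pettis is dispensable once strong measurability is \emph{defined} via elementary approximants. The only wording to tighten is the opening of Stage 2: absolute summability $\sum_{n}\mu(E_{n})\norm{b_{n}}<\infty$ is not literally the paper's definition of an integrable elementary function (which asks only that $\sum_{n}\mu(E_{n})b_{n}$ converge), but it does follow from membership in $\Integrable_{1}(X,B)$ via the Bochner criterion and monotone convergence -- and in any case the elementary functions you produce in Stage 1 already satisfy it, so the gap is purely cosmetic.
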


The integral of a measurable vector function $g$ yields a natural isometric embedding, the \emph{Radon-Nikodym map},
\begin{equation*}
    \Integrable_{\infty}(X, B)\to \homset{\Ban}{\Integrable_{1}(X)}{B}
\end{equation*}
given by
\begin{equation*}
    g\mapto \parens{f\mapto \integral{X}{gf}{\mu}}
\end{equation*}

\smallskip
Bounded linear maps $T$ that are in the image of the Radon-Nikodym map are called \emph{representable} and the function $g\in\Integrable_{\infty}(X, B)$ representing $T$ is called its \emph{Radon-Nikodym derivative}. A Banach space $B$ such that every bounded linear map $\Integrable_{1}(X)\to B$ is representable for every probability space $X$ is said to have the \emph{Radon-Nikodym property}, or RNP for short.


\smallskip
The scalar Radon-Nikodym theorem (for example, see \cite[section 31]{halmos:measure-theory1974}) implies that the map is indeed surjective for $B= \field$. Since $B\mapto \homset{\Ban}{\Integrable_{1}(X)}{B}$ is a representable functor, it is continuous in the Banach space variable and from the fact that $B\mapto \Integrable_{\infty}(X, B)$ is \emph{finitely continuous} it follows that every finite limit of Banach spaces with RNP has RNP. Note also that since what is in question is the \emph{surjectivity} of the Radon-Nikodym map, it follows that the Radon-Nikodym property is an invariant of the linear homeomorphism type. Many other classes of spaces are known to have the Radon-Nikodym property, such as all reflexive spaces, all separable dual spaces, countable coproducts of RNP spaces, etc.~but alas, it is not universally true. The problem is readily apparent if we note that the unit $X\to \Integrable_{1}(X)$ is the inverse image of the identity under the would-be-isomorphism Radon-Nikodym map. The obvious candidate for such a universal map is $x\mapto \delta_{x}$ with $\delta_{x}$ given by:
\begin{equation*}
    \delta_{x}(y)\defequal
    \begin{cases}
        1 &\text{if $y= x$,} \\
        0 &\text{otherwise.}
    \end{cases}
\end{equation*}


But if the measure space is atomless,
\begin{equation*}
    \delta_{x}\aeequal 0
\end{equation*}
so that this cannot work. This is no proof that $\Integrable_{1}(X)$ does not have the Radon-Nikodym property\footnote{This line of argument can be pursued and justified somewhat. The presence of atoms in the measure space guarantees the existence of extreme points in the unit ball of $\Integrable_{1}(X)$ while if $X$ is atomless it can be shown that the unit ball has no extreme points whatsoever. This suggests that there is an intimate relation between a geometric property, the existence of extreme points, and an analytic one, the existence of Radon-Nikodym derivatives. The single most important unsolved problem in the area of vector measures on Banach spaces is precisely the conjectured equivalence between the Krein-Milman property (every closed bounded convex set is the closed convex hull of its extreme points) and the Radon-Nikodym property. That RNP implies KMP is known and is due to J.~Lindenstrauss (\cite{lindenstrauss:extreme-points-l11966}). As for the converse, in \cite[section 5.4, page 118]{albiac-kalton:topics-banach-space-theory2006} the authors remark that ``It is probably fair to say that the subject has received relatively little attention since the 1980's and some really new ideas seem to be necessary to make further progress.'' We refer the reader to \cite[chapter VII]{diestel-uhl:vector-measures1977} for more information on this subject.}, but it sure is a strong indication that in infinite dimensions the Radon-Nikodym theorem fails.

\begin{theorem} \label{theorem:representable-nuclear-Linfty}
Denote by $\iota$ the inclusion $\Integrable_{\infty}(X)\to \Integrable_{1}(X)$. If the bounded linear map $T\colon \Integrable_{1}(X)\to B$ is representable then the composite $T\iota$ is compact.
\end{theorem}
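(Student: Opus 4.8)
The statement is that if $T\colon \Integrable_1(X)\to B$ is representable, then $T\iota\colon \Integrable_\infty(X)\to \Integrable_1(X)\to B$ is compact. Recall the ambient assumption that $\mu$ is a complete probability, so $\Integrable_\infty(X)\subseteq\Integrable_1(X)$ with $\iota$ norm $1$. Since $T$ is representable, there is $g\in\Integrable_\infty(X,B)$ with $T(f)=\integral{X}{gf}{\mu}$ for all $f\in\Integrable_1(X)$, so that $(T\iota)(f)=\integral{X}{gf}{\mu}$ for $f\in\Integrable_\infty(X)$. The plan is to show this map factors (up to uniform approximation in operator norm) through finite-rank maps, hence is a norm-limit of compact operators, hence compact.

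**Main argument.** By the Pettis theorem \ref{theorem:pettis} the representing function $g$ is strongly measurable, and by the Bochner theorem \ref{theorem:bochner} it is integrable since $\norm{g(x)}$ is essentially bounded on a probability space. Strong measurability gives a sequence of elementary functions $g_k\aeequal\sum_n \characteristic(E_n^k)b_n^k$ converging to $g$ a.e.\ uniformly; truncating and using $\norm{g}_\infty<\infty$ one may take the convergence to be in $\supnorm{\cdot}$, i.e.\ $\supnorm{g-g_k}\to 0$. First I would reduce to $g_k$ a \emph{simple} function: an elementary function is a countable $\ell_\infty$-sum of characteristic slabs, and by taking finite partial sums (again using uniform boundedness) I approximate each $g_k$ in $\supnorm{\cdot}$ by simple functions. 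So it suffices to treat the case $g$ simple, $g=\sum_{n=1}^N \characteristic(E_n)b_n$ with the $E_n$ disjoint: then for $f\in\Integrable_\infty(X)$,
\begin{equation*}
    (T_g\iota)(f)=\integral{X}{gf}{\mu}=\sum_{n=1}^N\left(\integral{E_n}{f}{\mu}\right)b_n,
\end{equation*}
which is visibly finite-rank (range contained in $\linearspan\set{b_1,\dots,b_N}$), and $f\mapsto\integral{E_n}{f}{\mu}$ is a bounded functional on $\Integrable_\infty(X)$ of norm $\mu(E_n)\le 1$, so $T_g\iota$ is bounded, hence compact. Finally, if $\supnorm{g-g_k}\to 0$ then for $f\in\Integrable_\infty(X)$ with $\supnorm{f}\le 1$ we have $\norm{(T_g-T_{g_k})\iota(f)}=\norm{\integral{X}{(g-g_k)f}{\mu}}\le\supnorm{g-g_k}$, so $T_g\iota$ is the operator-norm limit of the finite-rank maps $T_{g_k}\iota$ and is therefore compact.

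**Where the work is.** The only real content beyond bookkeeping is the reduction of an arbitrary strongly measurable bounded $g$ to finite simple functions \emph{in the sup norm} — i.e.\ going from the a.e.-uniform-limit-of-elementary-functions definition to a genuine $\supnorm{\cdot}$-approximation by finitely-many-valued functions. This uses essential boundedness of $g$ crucially (on an atomless infinite-measure space it would fail), plus the completeness of $\mu$ to handle the negligible sets cleanly; it is the step I would write out carefully. Everything after that — the explicit finite-rank formula and the norm estimate $\norm{(T_g-T_{g_k})\iota}\le\supnorm{g-g_k}$ — is routine. One should note the footnote-level subtlety that the $g$ produced by representability is automatically strongly measurable (not merely Borel measurable) precisely because $\Integrable_\infty(X,B)$ was \emph{defined} via strong measurability in Definition \ref{definition:strongly-measurable}, so Pettis is being invoked only to license the elementary-function approximation, not to upgrade measurability.
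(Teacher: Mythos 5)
Your reduction of $g$ to a \emph{simple} function in the $\supnorm{\cdot}$-norm is false in general, and it is exactly the step you flag as the real content. A bounded strongly measurable function is a $\supnorm{\cdot}$-limit of simple functions only if its essential range is totally bounded; essential boundedness bounds the values of $g$ but does not make the tail of an elementary function small. Taking finite partial sums of $g_k\aeequal\sum_{n}\characteristic(E_{n}^{k})b_{n}^{k}$ leaves a tail whose essential sup norm is $\sup\set{\norm{b_{n}^{k}}\colon n> N,\ \mu(E_{n}^{k})> 0}$, which need not tend to $0$. Concretely, take $B= \ell_{2}$, a countable partition $\family{E}{n}$ of a probability space into sets of positive measure, and $g= \sum_{n}\characteristic(E_{n})e_{n}$ with $\family{e}{n}$ orthonormal: $g$ is elementary and $\norm{g(x)}= 1$ a.e., yet for any simple $s$ with values $c_{1}, \dotsc, c_{m}$ one has $\supnorm{g - s}\geq \sqrt{2}/2$, since each $c_{j}$ can lie within $\sqrt{2}/2$ of at most one $e_{n}$. (The paper itself records this phenomenon later: the uniform closure of the simple functions is $\Integrable_{\infty}(X)\injotimes B$, the functions with essentially compact range, which is strictly smaller than $\Integrable_{\infty}(X, B)$.) Note also that the obstruction has nothing to do with atomless infinite-measure spaces, as you suggest; it already occurs on a probability space and is caused by the infinite-dimensionality of $B$.

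The repair is the paper's own route, and it is simpler than what you attempt: approximate $g$ by simple functions in the $\Integrable_{1}(X, B)$ norm, which is legitimate by theorem \ref{theorem:simples-dense-integrables} (Bochner integrability of $g$ follows, as you say, from the Pettis and Bochner theorems). This weaker approximation suffices precisely because the test functions live in $\Integrable_{\infty}(X)$: the estimate $\norm{\integral{X}{(g - g_{k})f}{\mu}}\leq \supnorm{f}\integral{X}{\norm{g - g_{k}}}{\mu}$ shows that the finite-rank operators $T_{k}\iota$, $T_{k}(f)= \integral{X}{g_{k}f}{\mu}$, converge to $T\iota$ in operator norm whenever $\sumnorm{g - g_{k}}\converges 0$. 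Your remaining steps, the explicit finite-rank formula for simple $g$ and the fact that an operator-norm limit of finite-rank maps is compact, are exactly the paper's and are fine once the approximation is carried out in $\sumnorm{\cdot}$ rather than $\supnorm{\cdot}$.
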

\begin{proof}
Suppose $T$ is representable, that is, there is $g\in \Integrable_{\infty}(X, B)$ such that:
\begin{equation*}
    T(f)= \integral{X}{gf}{\mu}
\end{equation*}

By Bochner's theorem, the function $g$ is Bochner integrable. Density of simple functions implies the existence of a sequence $\sequence{g}{n}$ such that $g_{n}\converges g$ in $\Integrable_{1}(X, B)$. The linear maps
\begin{equation*}
    T_{n}(f)\defequal \integral{X}{g_{n}f}{\mu}
\end{equation*}
on $\Integrable_{\infty}(X)$ have finite rank. From,
\begin{equation*}
    \norm{\integral{X}{(g - g_{n})f}{\mu}}\leq \norm{f}_{\infty}\integral{X}{\norm{g - g_{n}}}{\mu}
\end{equation*}
we can conclude that the sequence of operators $T_{n}$ converges to $T\iota$, which implies that $T\iota$ is compact.
\end{proof}

Theorem \ref{theorem:representable-nuclear-Linfty} can be improved. A bounded linear map $T\colon\Integrable_{1}(X)\to B$ is representable iff the composite map $T\iota$ is \emph{nuclear}. Recall that a linear map $A\to B$ is nuclear if it is in the image of the natural map $\dual{A}\projotimes B\to \homset{\Ban}{A}{B}$. The quotient of $\dual{A}\projotimes B$ by the kernel of this map is the space $\nuclear(A, B)$ of nuclear operators. This result, originally due to A.~Grothendieck, will not be needed since the theorem as stated is more than enough for our purposes. If $X$ is an infinite atomless probability space such as the unit interval with the Lebesgue measure, consider the identity $\Integrable_{1}(X)\to \Integrable_{1}(X)$. By theorem \ref{theorem:representable-nuclear-Linfty}, if the identity were representable, the unit ball of $\Integrable_{\infty}(X)$ would be relatively compact in $\Integrable_{1}(X)$, which is clearly and obviously false\footnote{On second thought, this may not be entirely obvious. To show that the unit ball of $\Integrable_{\infty}(X)$ is not relatively compact it suffices to construct a uniformly bounded sequence $\sequence{f}{n}$ in $\Integrable_{\infty}(X)$ with no $\Integrable_{1}$-convergent subsequence. The fact that $X$ is atomless allows us to employ exhaustion (see \cite[section 41, exercises 2 and 3]{halmos:measure-theory1974} or \cite[chapter 1, section 5]{fremlin:measure-theory22001}) to, starting with the singleton partition $\set{X}$ of $X$, recursively build a partition $\mathcal{E}_{n + 1}$ of $X$ with $\power{2}{n}$ sets by dividing each set of $\mathcal{E}_{n}$ in two sets of equal measure. With this setup, we will now simulate \emph{Rademacher functions} on $X$. Define the sequence $\sequence{r}{n}$ by,
\begin{equation*}
    r_{n}\defequal \sum_{E_{m, n}\in \mathcal{E}_{n}}\epsilon_{m}\characteristic(E_{m, n})
\end{equation*}
with $\epsilon_{m}\in \set{-1, 1}$ (the unit sphere of $\Real$) defined by
\begin{equation*}
    \epsilon_{m}\defequal
    \begin{cases}
        1 &\text{if $m$ is even,} \\
        -1 &\text{otherwise.}
    \end{cases}
\end{equation*}

It is easy to see that $\sumnorm{r_{n} - r_{m}}= \mu(X)$ for $n\neq m$. Leaping ahead of ourselves, we remark that this construction extends almost word for word to $\sigma$-complete atomless measure algebras.}.


\smallskip
Pondering over theorem \ref{theorem:representable-nuclear-Linfty}, the reader may suspect that the failure of RNP is due to our definition of the space $\Integrable_{\infty}(X, B)$, a space too small to represent all bounded linear maps, and if only we were ingenious enough a suitable generalized definition of a.e.~bounded measurable functions could be found to make the Radon-Nikodym map surjective. Unfortunately, even a cursory perusal of \cite[chapter VI]{dunford-schwartz:linear-operatorsI1958} will show that to make the Radon-Nikodym map surjective, we have to change the space $\Integrable_{\infty}(X, B)$ and then in all cases one ends up having one or more of the following defects:
\begin{enumerate}
  \item \label{enum:rn-defects1}
  The Radon-Nikodym map is not surjective for some Banach spaces $B$.

  \item \label{enum:rn-defects2}
  The representing function $g$ takes values in a space other than $B$.

  \item \label{enum:rn-defects3}
  The representing function $g$ is not a function at all but a measure, whose Radon-Nikodym derivative, if it existed, would be the representing function.
\end{enumerate}

In section \ref{section:measure-algebras-integrals} we will provide a Radon-Nikodym theorem in the form \eqref{enum:rn-defects3}, which will be important in our quest to categorify measure theory. In subsection \ref{subsection:stone-space-ba-weak-integrals} we will give an inkling of how Radon-Nikodym theorems of the form \eqref{enum:rn-defects2} can be obtained when discussing the fundamental Stone space construction of a Boolean algebra $\Omega$.


\smallskip
The failure of the Radon-Nikodym property in infinite dimensions is a blow in our dreams of building a proper categorified measure theory. But not all is lost. A possible course of action is to salvage what can be salvaged: we already know that RNP holds uniformly for all finite dimensional spaces so this could be exploited to find weaker forms of the RNP universal property. This course of action could be pursued, but not only it would lead us far into the so-called \emph{local Banach space theory} (see \cite{pietsch:what-is-local-banach-space-theory1999} and references therein), but at this point it is not even clear that it brings any significant dividends, so we leave it aside for some better opportunity. The more obvious course of action however, is to simply forget (half of) principle \ref{principle:direct-integrals-universal-property} and proceed with what we have and this is precisely what we will do. As section \ref{section:categorified-measures-integrals} unfolds, the importance of the Radon-Nikodym property will recede into nearly microscopic proportions, until finally our perseverance is rewarded and in the end, almost out of the blue, the universal property for $\integral{X}{\characteristic(E)}{\mu}$ will fall on our laps and the final truth revealed. But this universal property depends on some of the deeper properties of the category of cosheaves and to get there, a significant amount of ground will have to be covered.

\section{Foregrounding: measure algebras and integrals}
\label{section:measure-algebras-integrals}

If the reader looks closely on the work done up to now, he will notice that the underlying set $X$ of a measure space was hardly ever needed and was dragged along like an appendage whose original function has been long forgotten. In this section we will excise it, throw it in the trash bin, and concentrate on the real core of measure theory: a Boolean algebra $\Omega$ and a measure $\mu$ defined on it, or a \emph{measure algebra} as will be defined below.


\smallskip
At this juncture, some readers will raise their hands in protest and remark that the underlying set $X$ does fulfill a function: it allows us to define the $\Integrable_{p}$-spaces of functions. The answer to this objection is that the $\Integrable_{p}$-spaces \emph{can} be constructed for a general Boolean algebra and even throw a considerable light on the whole categorification process. The reason for this is actually quite simple: the abstract construction of the $\Integrable_{p}$-spaces brings to the foreground the \emph{universal properties of these spaces} (principle \ref{principle:universal-properties-coherence-laws} again!) and these depend not on the underlying set $X$ but on the measure algebra $\pair{\Omega}{\mu}$. In general, we believe that a good deal of conceptual clarification is achieved and, most importantly for us, they are an essential stepping stone for categorifying measure theory. Since most readers will probably not have seen these universal properties\footnote{Understandably, textbooks on measure theory do not mention these universal properties as they call for concepts such as vector measures, Banach spaces, semivariation, etc., which have no place in introductory texts.} we will devote this section to them\footnote{As said elsewhere, the main motivation for this work arises from physical considerations related with TQFT's and quantum gravity models. One of the main objectives of quantum gravity is to \emph{derive} space-time from the physical theory and one obvious place to start from is the structure of the (state independent) propositions that can be made about the system. Classically, these are Boolean algebras (e.g.~see \cite[chapter 5]{jauch:foundations-qm1968}). See recent work connecting topos theory with quantum mechanics \cite{doring-isham:what-is-a-thing2008} and \cite{heunen-landsman-spitters:topos-algebraic-quantum-theory2007}. The connection of this paper with these works will become clearer in subsection \ref{subsection:banach-sheaves}.}.

\smallskip
For measure algebras, the obvious reference is the encyclopedic \cite{fremlin:measure-theory32002}. The reader can also find there appropriate references for the theory of Boolean algebras, including Stone duality. For the latter, \cite{johnstone:stone-spaces1982} is eminently suitable. For vector measures, there is nothing better than to turn to \cite{diestel-uhl:vector-measures1977}. As it happens frequently, many of the results we find ourselves quoting do not come in the exact versions we require. Many are but trifle modifications of known results, while others require one or two additional ideas, usually of a category-theoretic nature. Since their number is Legion, we will herd them all together in a future survey paper (\cite{rodrigues:vector-measures-boolean-algebras}).


\smallskip
We start by introducing a couple of definitions.

\begin{definition} \label{definition:measure-algebra}
A \emph{measure algebra} is a pair $\pair{\Omega}{\mu}$ where $\Omega$ is a Boolean algebra and $\mu$ is a bounded, positive, finitely additive map $\Omega\to \Real$.
\end{definition}

Comparing with the corresponding definition in \cite[chapter 2, section 1]{fremlin:measure-theory32002}, there are a few differences. First, we work only with bounded (or \emph{totally finite}) measures. This is necessary if we are to exploit the strong link between the measure-theoretic and the functional-analytic sides. Second, we do not require the measure $\mu$ to be \emph{non-degenerate}, that is:
\begin{equation*}
    \mu(E)= 0\implies E= 0
\end{equation*}

This is a minor technicality since we can always quotient $\Omega$ by the ideal of null-measure sets (see below). As will be seen in the continuation, virtually every functor that we define on the category of measure algebras will factor through the subcategory of non-degenerate measure algebras. The measure algebra definition \ref{definition:measure-algebra} just happens to be slightly more convenient for our categorially-minded purposes.


\smallskip
The more substantial difference is however, that we do not require either $\sigma$-completeness of $\Omega$ or $\sigma$-additivity of $\mu$. There are many reasons for this laxness, starting with the fact that the more important measures that we will meet are spectral measures and these are never $\sigma$-additive except in trivial cases. Restricting ourselves to finitely additive measures also has a didactic purpose. All the deep theorems of measure theory need some sort of $\sigma$-completeness hypothesis (Lebesgue's dominated theorem, Radon-Nikodym theorem, etc.). Contrapositively, all the proofs sketched below for finitely additive measures are not only easy but they are \emph{easy to categorify}. This is not to say that $\sigma$-completeness hypothesis are not needed or important; they are and very much so. But by restricting ourselves to finitely additive measures, we can more easily underline the categorial underpinnings essential for categorifying measure theory.


\smallskip
It is easy to see that $\mu$ is \emph{monotone}, that is, if $E\subseteq F$ then $\mu(E)\leq \mu(F)$. Because of this, $\mu$ is bounded iff $\mu(1)< \infty$. We repeat that unless specified otherwise, we will \emph{only} consider bounded measures. We denote by $\Null(\mu)$ the ideal of $\mu$-null elements. In symbols:
\begin{equation} \label{equation-definition:measure-null-ideal}
    \Null(\mu)\defequal \set{E\in \Omega\colon \mu(E)= 0}
\end{equation}

We can get rid of these $\mu$-null sets by taking the quotient $\pi\colon\Omega\to \Omega\quotient \Null(\mu)$. The universal property of this quotient with respect to measures is easy to state. A finitely additive $\nu$ is \emph{$\mu$-supported} (or \emph{has $\mu$-support})\footnote{Classically, for \emph{$\sigma$-additive measures on $\sigma$-complete Boolean algebras}, having $\mu$-support is equivalent to a ``uniform continuity'' property and is called \emph{absolute $\mu$-continuity}. Reusing the classical terminology would be a mistake however, because $\mu$-support is a very weak requirement when the $\sigma$-completeness conditions are dropped. The paper \cite{wendt:category-disintegration1994} uses the term \emph{measure reflecting}, but the direction is reversed since the author is working with measure spaces.} if for every $E\in \Omega$ with $\mu(E)= 0$ then $\nu(E)= 0$. We now have that a measure $\nu$ on $\Omega$ is $\mu$-supported iff there is a (unique) finitely additive $\universal{\nu}$ on the quotient $\Omega\quotient \Null(\mu)$ such that diagram \ref{diagram:universal-property-quotient-null-ideal} is commutative.
\begin{figure}[htbp]
    \begin{equation*}
    \xymatrix{
        \Omega\quotient\Null(\mu) \ar@{-->}[r]^-{\universal{\nu}} & V \\
        \Omega \ar[u]^{\pi} \ar[ur]_{\nu} &
    }
    \end{equation*}
\caption{Universal property of quotient $\pi\colon\Omega\to \Omega\quotient \Null(\mu)$.}
\label{diagram:universal-property-quotient-null-ideal}
\end{figure}

This universal property says that for most purposes we can forget about the ideal of $\mu$-null sets and simply assume that $\mu$ is \emph{non-degenerate}, that is $\Null(\mu)= \zero$.


\smallskip
A morphism $\pair{\Omega}{\mu}\to \pair{\Sigma}{\nu}$ of measure algebras is a Boolean algebra morphism $\phi\colon \Omega\to \Sigma$ such that $\nu\phi$ has $\mu$-support. Note that formally, a measure algebra $\pair{\Omega}{\mu}$ can be identified with the measure since the Boolean algebra can always be recovered as the domain of $\mu$. Thus, the category of measure algebras and measure algebra maps will be denoted by $\Meas$. Without any sort of order completeness properties (and e.g.~Radon-Nikodym's theorem) the morphisms are too weak for some purposes and at times it will be necessary to require more of $\phi$. The morphism $\phi$ is \emph{Lipschitz} if there is a constant $C$ such that:
\begin{equation} \label{inequality:lipschitz}
    \nu\phi(E)\leq C\mu(E), \forall E\in \Omega
\end{equation}

If $\phi$ is a Lipschitz morphism we denote by $\lipschitz{\phi}$, its \emph{Lipschitz norm}, the quantity $\inf C_{\phi}$ where $C_{\phi}$ is the set of constants $C$ in the conditions of \eqref{inequality:lipschitz}. Lipschitz morphisms are needed for example, to establish the functoriality of the $\Integrable_{1}$-functor on measure algebras.

\subsection{The Banach algebra \texorpdfstring{$\Integrable_{\infty}(\Omega)$}{of measurable bounded functions}}
\label{subsection:banach-algebra-linfty}


One of the more striking theorems on linear operators is the spectral theorem, which in one form (\cite[chapter IX, section 2]{conway:course-functional-analysis1990}) represents every bounded normal operator $T$ on a separable Hilbert $H$ as an integral over a canonical spectral measure on the Borel sets of the spectrum of $T$. The extension of this cycle of ideas to Banach spaces was done by W.~G.~Bade in the 1950's. The original papers are \cite{bade:boolean-algebras-projections-algebras-operators1955} and \cite{bade:mulplicity-theory-boolean-algebras-projections-banach-spaces1959}, but see also \cite{dunford-schwartz:linear-operators31971} or the more recent monograph \cite{ricker:operator-algebras-commuting-projections1999}, whose outlook is closer to our needs. As we noticed in section \ref{section:from-hilbert-banach-spaces} there are known by now a handful of Banach spaces with very few projections so it is matter of some importance to know conditions that guarantee the existence of enough projections. We offer to the reader \cite{orihuela-valdivia:projective-generators-resolutions-identity-banach-spaces1989} as an entry point into this vast subject. In this sketchy subsection we are interested only in some very simple aspects of these objects which will be enough to construct the spectral measure associated to a cosheaf.


\smallskip
The theory of $\Integrable_{\infty}(\Omega)$ depends only on the fact that $\Omega$ is a Boolean algebra and necessitates no order completeness properties or the choice of a measure on $\Omega$. The first step is the construction of the space of ``simple functions on $\Omega$''. This is simply the quotient of the free linear space $\Free(\Omega)$ by the subspace generated by,
\begin{equation*}
    \iota(E\cup F) - \iota(E) - \iota(F)
\end{equation*}
for all pairs of disjoint $E, F\in \Omega$. Denote the quotient space, the space of \emph{simple elements}, by $\simple(\Omega)$, and by $\pi$ the quotient linear map $\Free(\Omega)\to \simple(\Omega)$. The composite $\Omega\namedto{\iota}\Free(\Omega)\namedto{\pi}\simple(\Omega)$, the \emph{characteristic map}, is denoted by $\characteristic$. By construction it is finitely additive and the universal such among finitely additive maps with values in linear spaces -- see diagram \ref{diagram:universal-property-simple-elements}.
\begin{figure}[htbp]
    \begin{equation*}
    \xymatrix{
        \simple(\Omega) \ar@{-->}[r] & V \\
        \Omega \ar[u]^{\characteristic} \ar[ur]_{\nu}
    }
    \end{equation*}
    \caption{Universal property of $\characteristic\colon \Omega\to \simple(\Omega)$.}
    \label{diagram:universal-property-simple-elements}
\end{figure}

\smallskip
To describe the universal property inside the category of Banach spaces, we start by introducing a norm on $\simple(\Omega)$. First, we need a lemma to the effect that every simple element has a representation as a linear combination of characteristic elements.

\begin{lemma} \label{lemma:canonical-representation-simples}
For every non-zero $f\in \simple(\Omega)$ there is a partition $\family{E}{n}$ and non-zero scalars $k_{n}\in \field$, such that:
\begin{equation} \label{equation:canonical-representation-simples}
    f= \sum_{n}k_{n}\characteristic(E_{n})
\end{equation}
\end{lemma}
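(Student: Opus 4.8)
The plan is to prove that every non-zero simple element $f\in\simple(\Omega)$ admits a \emph{disjoint} representation as in \eqref{equation:canonical-representation-simples}, starting from the definition of $\simple(\Omega)$ as a quotient of the free linear space $\Free(\Omega)$. By construction, any $f\in\simple(\Omega)$ is the image under $\pi$ of a finite formal linear combination $\sum_{j}\lambda_{j}\iota(F_{j})$ with $F_{j}\in\Omega$ and $\lambda_{j}\in\field$, so $f=\sum_{j}\lambda_{j}\characteristic(F_{j})$. The sets $F_{j}$ need not be disjoint, so the first step is a \emph{refinement} argument: given finitely many $F_{1},\dotsc,F_{m}\in\Omega$, the Boolean subalgebra they generate is finite and hence atomic, so there is a finite partition $\family{E}{n}$ of $1$ into its atoms such that each $F_{j}$ is a join of a subcollection of the $E_{n}$. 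I would record this as the combinatorial backbone of the proof.

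Next I would use the finite additivity of $\characteristic$ (which holds by the very definition of $\simple(\Omega)$, since the relators $\iota(E\cup F)-\iota(E)-\iota(F)$ for disjoint $E,F$ were quotiented out). Finite additivity gives $\characteristic(F_{j})=\sum_{n\,:\,E_{n}\leq F_{j}}\characteristic(E_{n})$ for each $j$. Substituting into $f=\sum_{j}\lambda_{j}\characteristic(F_{j})$ and collecting terms, we obtain
\begin{equation*}
    f=\sum_{n}\Bigl(\sum_{j\,:\,E_{n}\leq F_{j}}\lambda_{j}\Bigr)\characteristic(E_{n})=\sum_{n}k_{n}\characteristic(E_{n}),
\end{equation*}
where $k_{n}\defequal\sum_{j\,:\,E_{n}\leq F_{j}}\lambda_{j}$. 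This already expresses $f$ as a linear combination of characteristic elements of a partition. Finally, to meet the exact wording of the lemma I would clean up the representation: discard every index $n$ with $k_{n}=0$ (also discarding $\characteristic(0)$, which is zero since $0\cup 0=0$ forces $\characteristic(0)=0$), and merge the remaining sets with equal coefficient value — that is, group the surviving $E_{n}$ by the value of $k_{n}$ and replace each group by its join, using finite additivity once more. The result is a partition $\family{E}{n}$ (of $1$, or of $\bigvee E_n$; either reading is fine) together with \emph{non-zero} and pairwise distinct scalars $k_{n}$ with $f=\sum_{n}k_{n}\characteristic(E_{n})$. Since $f\neq 0$, at least one term survives, so the partition is non-empty.

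The main obstacle — really the only non-formal point — is making sure the representation is \emph{well-defined at the level of the quotient}, i.e. that passing through an arbitrary lift $\sum_{j}\lambda_{j}\iota(F_{j})$ of $f$ and then refining does not secretly depend on the lift in a way that breaks the argument. It does not: any two lifts differ by an element of the relator subspace, and the refinement computation above only ever uses the finite additivity relation that $\characteristic$ satisfies in $\simple(\Omega)$, so the identity $f=\sum_{n}k_{n}\characteristic(E_{n})$ holds in $\simple(\Omega)$ regardless of the chosen lift. (Uniqueness of the canonical representation, which would require knowing that distinct partition-with-distinct-nonzero-coefficients data give distinct simple elements, is not asserted by the lemma and I would not prove it here; it follows later once a norm — or equivalently a faithful representation on some $\Integrable_\infty$-space — is in place.) Everything else is routine finite Boolean algebra bookkeeping.
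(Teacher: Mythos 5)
Your proof is correct and amounts to the same routine argument the paper sketches: the paper disjointifies by induction on the number of terms, while you do it in one step via the atoms of the finite Boolean subalgebra generated by the $F_{j}$, then use finite additivity of $\characteristic$ and discard zero coefficients --- same content, different bookkeeping. One tiny caveat: your parenthetical ``of $1$, or of $\bigvee E_{n}$; either reading is fine'' is not quite right, since after discarding zero coefficients the surviving $E_{n}$ need not cover $1$, so the lemma can only intend a pairwise disjoint family (a partition of its supremum), which is exactly what your construction delivers.
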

\begin{proof}
For each $f\in \Free(\Omega)$ there are elements $E_{n}\in \Omega$ and scalars $k_{n}\in \field$ such that $f= \sum_{n}k_{n}\characteristic(E_{n})$. The lemma is now proved by induction on the number of terms and using some basic Boolean algebra manipulations.
\end{proof}

We now introduce a norm on $\simple(\Omega)$ by picking a representation of $f$ as in lemma \ref{lemma:canonical-representation-simples} and putting:
\begin{equation} \label{equation-definition:Linfty-norm}
    \supnorm{f}\defequal \max\set{\abs{k_{n}}}
\end{equation}

It is not difficult to see that \eqref{equation-definition:Linfty-norm} does not depend on the chosen representation. The completion of $\simple(\Omega)$ under the norm \eqref{equation-definition:Linfty-norm} is denoted by $\Integrable_{\infty}(\Omega)$.

\smallskip
For the additive maps side, denote by $\partitions(E)$ the set of finite partitions of $E\in \Omega$. If $E= 1$ then this set is also denoted by\footnote{Or the other way around, the unit or top element $1$ of a Boolean algebra $\Omega$ will also be denoted by $\Omega$.} $\partitions(\Omega)$. This set is ordered by \emph{refinement}: a partition $\mathcal{E}$ \emph{refines} $\mathcal{F}$ if for every $F\in \mathcal{F}$ there is $E\in \mathcal{E}$ such that $E\subseteq F$. Under this partial order, $\partitions(E)$ is \emph{filtered}.

\begin{definition} \label{definition:semivariation}
Let $B$ be a Banach space and $\nu\colon\Omega\to B$ an additive map. The \emph{semivariation} of $\nu$ on $E$ is the (possibly infinite) quantity:
\begin{equation} \label{equation:semivariation}
    \semivariation{\nu}(E) = \sup\set{\sum_{F\in \mathcal{E}}\abs{\inner{b^{\ast}}{\nu(F)}}\colon b^{\ast}\in \ball(\dual{B}), \mathcal{E}\in \partitions(E)}
\end{equation}

The map $\nu$ has \emph{bounded semivariation} if for every $E\in \Omega$, $\semivariation{\nu}(E)< \infty$.
\end{definition}

It is hard to grasp what exactly is the semivariation measuring and to be totally honest, I do not have any illuminating thoughts to offer on this matter.

\smallskip
The semivariation defines a map $E\mapto \semivariation{\nu}(E)$ on $\Omega$ that is a positive, monotone and finitely subadditive, thus $\nu$ has bounded semivariation iff $\semivariation{\nu}(1)< \infty$. Only slightly more difficult than this triviality is the fact that $\nu$ has bounded semivariation iff it has bounded range. We denote by $\additive(\Omega, V)$ the linear space of additive maps $\Omega\to V$ and by $\badditive(\Omega, B)$ the normed space of bounded finitely additive maps $\Omega\to B$ with the semivariation norm.

\begin{theorem} \label{theorem:universal-property-Linfty}
For every bounded additive map $\nu\colon\Omega\to B$ with $B$ a Banach space, there is a unique bounded linear map $\int\differential\nu\colon\Integrable_{\infty}(\Omega)\to B$ such that the triangle \ref{diagram:universal-property-Linfty} is commutative.
\begin{figure}[htbp]
    \begin{equation*}
    \xymatrix{
        \Integrable_{\infty}(\Omega) \ar@{-->}[r]^{\int\differential\nu} & B \\
        \Omega \ar[u]^{\characteristic} \ar[ur]_{\nu} &
    }
    \end{equation*}
    \caption{Universal property of $\Integrable_{\infty}(\Omega)$.}
    \label{diagram:universal-property-Linfty}
\end{figure}

The map $\nu\mapto \int\differential\nu$ is a natural linear isometric isomorphism
\begin{equation} \label{isomorphism:universal-property-Linfty}
    \badditive(\Omega, B)\isomorphic \homset{\Ban}{\Integrable_{\infty}(\Omega)}{B}
\end{equation}
\end{theorem}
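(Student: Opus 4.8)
The plan is to exhibit the isomorphism \eqref{isomorphism:universal-property-Linfty} concretely on the dense subspace $\simple(\Omega)$ and then extend by continuity. First I would construct the map in one direction: given a bounded additive $\nu\colon \Omega\to B$, define a linear map on $\simple(\Omega)$ by sending a simple element $f= \sum_{n}k_{n}\characteristic(E_{n})$ (in the canonical form of Lemma \ref{lemma:canonical-representation-simples}) to $\sum_{n}k_{n}\nu(E_{n})\in B$. One must check this is well-defined, i.e.~independent of the representation; this follows because any two canonical representations have a common refinement and $\nu$ is finitely additive, so the values agree — exactly the universal property of $\characteristic\colon \Omega\to \simple(\Omega)$ already recorded in diagram \ref{diagram:universal-property-simple-elements} (restricted to the Banach-space context), which gives existence and uniqueness of the \emph{linear} extension. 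Linearity in $\nu$ of the assignment $\nu\mapsto \int\differential\nu$ is then immediate.

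Next I would prove the norm estimate that makes this a bounded — in fact isometric — correspondence. For $f= \sum_{n}k_{n}\characteristic(E_{n})$ in canonical form over the partition $\family{E}{n}$, and any $b^{\ast}\in \ball(\dual{B})$, one has $\abs{\inner{b^{\ast}}{\int f\,\differential\nu}}= \abs{\sum_{n}k_{n}\inner{b^{\ast}}{\nu(E_{n})}}\leq \max_{n}\abs{k_{n}}\cdot \sum_{n}\abs{\inner{b^{\ast}}{\nu(E_{n})}}\leq \supnorm{f}\cdot \semivariation{\nu}(1)$. Taking the supremum over $b^{\ast}\in \ball(\dual{B})$ and using the Hahn–Banach duality formula $\norm{v}= \sup\set{\abs{\inner{b^{\ast}}{v}}\colon b^{\ast}\in \ball(\dual{B})}$ gives $\norm{\int f\,\differential\nu}\leq \supnorm{f}\,\semivariation{\nu}(1)$, so the operator $\int\differential\nu$ is bounded on $\simple(\Omega)$ with $\norm{\int\differential\nu}\leq \semivariation{\nu}(1)= \variation{\nu}$, and hence extends uniquely to a bounded linear map $\Integrable_{\infty}(\Omega)\to B$ since $\simple(\Omega)$ is dense in $\Integrable_{\infty}(\Omega)$ by construction. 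For the reverse inequality $\norm{\int\differential\nu}\geq \semivariation{\nu}(1)$: given $\varepsilon> 0$ pick a partition $\mathcal{E}= \set{E_{n}}$ and $b^{\ast}\in \ball(\dual{B})$ nearly attaining the semivariation, then set $f= \sum_{n}\sigma_{n}\characteristic(E_{n})$ where $\sigma_{n}$ is a scalar of modulus $1$ chosen so that $\sigma_{n}\inner{b^{\ast}}{\nu(E_{n})}= \abs{\inner{b^{\ast}}{\nu(E_{n})}}$; then $\supnorm{f}= 1$ and $\abs{\inner{b^{\ast}}{\int f\,\differential\nu}}= \sum_{n}\abs{\inner{b^{\ast}}{\nu(E_{n})}}$ is within $\varepsilon$ of $\semivariation{\nu}(1)$. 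This establishes that $\nu\mapsto \int\differential\nu$ is an isometry.

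The inverse direction is the routine part: given a bounded linear $T\colon \Integrable_{\infty}(\Omega)\to B$, set $\nu(E)\defequal T(\characteristic(E))$. Finite additivity of $\nu$ follows from additivity of $\characteristic$ and linearity of $T$, and boundedness of semivariation follows from the estimate in the previous paragraph applied in reverse (or directly: $\semivariation{\nu}(1)\leq \norm{T}$ since each $f$ of the special form above has $\supnorm{f}= 1$). One checks $\characteristic$-compatibility, i.e.~that $\int\differential\nu$ recovers $T$ on $\simple(\Omega)$ and hence everywhere by density, and that starting from $\nu$, forming $T= \int\differential\nu$, and taking $T(\characteristic(E))$ returns $\nu(E)$ — both round-trips are identities on a dense set, hence identities. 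Naturality in $B$ means that for a bounded linear $g\colon B\to B'$ the square relating $\badditive(\Omega,-)$ and $\homset{\Ban}{\Integrable_{\infty}(\Omega)}{-}$ commutes, which amounts to $\int\differential(g\nu)= g\circ \int\differential\nu$, and this again holds on $\simple(\Omega)$ by inspection and extends by continuity. I expect the only genuinely delicate point to be the careful bookkeeping in the \emph{isometry} claim — in particular getting the two-sided norm comparison between $\supnorm{\cdot}$ on $\simple(\Omega)$ and the semivariation norm on $\badditive(\Omega,B)$ exactly right, including the choice of unimodular scalars $\sigma_{n}$ and the passage to the supremum over $\ball(\dual{B})$ — whereas the algebraic well-definedness and the density argument are entirely formal once Lemma \ref{lemma:canonical-representation-simples} is in hand.
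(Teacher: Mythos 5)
Your proposal is correct and follows essentially the same route as the paper: lift $\nu$ through the universal property of $\characteristic\colon \Omega\to \simple(\Omega)$, bound the lift by the semivariation for the $\supnorm{\,}$-norm and extend by density to the completion $\Integrable_{\infty}(\Omega)$, and invert via $T\mapsto T\characteristic$ (your unimodular-scalar argument for the reverse inequality is just the explicit form of the paper's appeal to $\semivariation{\characteristic}\leq 1$). The only slip is the passing identification $\semivariation{\nu}(1)= \variation{\nu}$, which is false in general since the semivariation can be strictly smaller than the variation (and the variation may be infinite); it is harmless here because the norm on $\badditive(\Omega, B)$ is the semivariation norm and all your estimates use only that.
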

\begin{proof}
The proof is mostly a matter of checking details. First, if $T$ is a bounded linear map then $T\characteristic$ is an additive map with $\semivariation{T\characteristic}\leq \norm{T}\semivariation{\characteristic}$. The trickiest thing may now be to prove that the semivariation of $\characteristic$ is $\leq 1$ -- if the reader is having difficulties we refer him to \cite[page 3, example 7]{diestel-uhl:vector-measures1977}. On the converse direction, the universal property \ref{diagram:universal-property-simple-elements} gives the lift of $\nu$ to $\simple(\Omega)$ and a simple computation shows it to be bounded for the norm \eqref{equation-definition:Linfty-norm}, thus it lifts uniquely to $\Integrable_{\infty}(\Omega)$.
\end{proof}

Theorem \ref{theorem:universal-property-Linfty} can seem quite innocent, but when coupled with other high-powered tools such as Stone duality, it can yield some significant results. For example, we refer the reader to \cite{garling:short-proof-riesz-representation-theorem1973} for a very elegant proof of the Riesz representation theorem. An exposition of this proof can also be found in \cite[chapter 16]{carothers:short-course-banach-space-theory}.

\smallskip
We emphasize the conceptual role of theorem \ref{theorem:universal-property-Linfty}. The integral map is introduced as a byproduct of the universal property of $\Integrable_{\infty}(\Omega)$, exactly the type of result we need to guide us in categorifying measure theory. Operator algebraists will readily recognize it. Note also that some of the formal properties of the integral are implicit in \ref{theorem:universal-property-Linfty}. For example, the naturality of the isomorphism \eqref{isomorphism:universal-property-Linfty} in both variables amounts to the equalities,
\begin{equation} \label{equation:naturality-integral-map}
\begin{split}
    T\parens{\int f\differential\nu} &= \int f\differential T\nu \\
    \int \covariant{\phi}(f)\differential{\nu} &= \int f\differential \pullback{\phi}\nu
\end{split}
\end{equation}
for every bounded linear map $T$ and every Boolean algebra morphism $\phi$. The second equality is a cheap form of the change of variables formula. The measure $\pullback{\phi}\nu$, the \emph{pullback of $\nu$ under $\phi$}, is simply
\begin{equation} \label{equation-definition:pullback-measure}
    \pullback{\phi}\nu\colon E\mapto \nu(\phi(E))
\end{equation}


\smallskip
A Banach algebra structure\footnote{In fact, by theorem \ref{theorem:stone-measurable-continuous-map} below $\Integrable_{\infty}(\Omega)$ is a $C^{\ast}$-algebra and with suitable order completeness hypothesis on $\Omega$, even a Von-Neumann algebra. This operator algebra viewpoint is very fruitful, probably essential, but will not be pursued here.} can be introduced on $\Integrable_{\infty}(\Omega)$ such that $\characteristic(E\cap F)= \characteristic(E)\characteristic(F)$, that is, the additive map $\characteristic\colon \Omega\to \Integrable_{\infty}(\Omega)$ is \emph{spectral} or \emph{multiplicative}. The universal property of theorem \ref{theorem:universal-property-Linfty} can be extended to yield an isometric isomorphism between the spaces of Banach algebra morphisms and the bounded spectral finitely additive measures\footnote{Spectral measures are hardly ever $\sigma$-additive, although in many cases they are $\sigma$-additive for certain weaker, locally convex linear topologies. For example, the spectral measure of the spectral theorem is $\sigma$-additive for the strong operator topology (the analysts name for the topology of pointwise convergence).}. Spectral measures are essential in the categorified integral theory as will be seen later and because of this we insert the next theorem, which is a more or less trivial extension of \ref{theorem:universal-property-Linfty} to spectral maps.

\begin{theorem} \label{theorem:universal-property-Linfty-algebra}
Let $A$ be a Banach algebra and $\nu\colon\Omega\to A$ a bounded spectral map. Then there is a unique bounded algebra morphism $\int\differential\nu\colon \Integrable_{\infty}(\Omega)\to A$ such that the diagram in figure \ref{diagram:universal-property-Linfty-algebra} is commutative.
\begin{figure}[htbp]
    \begin{equation*}
    \xymatrix{
        \Integrable_{\infty}(\Omega) \ar@{-->}[r]^{\int\differential\nu} & A \\
        \Omega \ar[u]^{\characteristic} \ar[ur]_{\nu} &
    }
    \end{equation*}
    \caption{Universal property of the Banach algebra $\Integrable_{\infty}(\Omega)$.}
    \label{diagram:universal-property-Linfty-algebra}
\end{figure}

The association $\nu\mapto \int\differential\nu$ establishes a natural isometric isomorphism between the set of bounded spectral measures $\Omega\to A$ and the set of bounded algebra morphisms $\Integrable_{\infty}(\Omega)\to A$,
\begin{equation} \label{isomorphism:universal-property-Linfty-algebra}
    \bsadditive(\Omega, A)\isomorphic \homset{\BanAlg}{\Integrable_{\infty}(\Omega)}{A}
\end{equation}
\end{theorem}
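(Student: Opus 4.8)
The plan is to bootstrap from Theorem \ref{theorem:universal-property-Linfty}, which already gives the bijection at the level of bounded linear maps, and then show that the passage $\nu \mapsto \int \differential\nu$ matches up multiplicativity on the measure side with multiplicativity on the operator side. First I would recall that a bounded spectral map $\nu\colon \Omega \to A$ is in particular a bounded additive map, so Theorem \ref{theorem:universal-property-Linfty} hands us a unique bounded linear map $\int\differential\nu\colon \Integrable_{\infty}(\Omega) \to A$ with $\parens{\int\differential\nu}\composition \characteristic = \nu$, and moreover $\norm{\int\differential\nu} = \semivariation{\nu}(1)$. So the only thing left to check is that $\int\differential\nu$ is an algebra homomorphism precisely when $\nu$ is spectral, i.e.\ when $\nu(E \cap F) = \nu(E)\nu(F)$ for all $E, F \in \Omega$.

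The key step is a density-and-continuity argument. On the subalgebra $\simple(\Omega) \subseteq \Integrable_{\infty}(\Omega)$ of simple elements, every element has, by Lemma \ref{lemma:canonical-representation-simples}, a representation $f = \sum_{n} k_{n}\characteristic(E_{n})$ over a finite partition $\family{E}{n}$; given two such simple elements $f = \sum_n k_n \characteristic(E_n)$ and $g = \sum_m l_m \characteristic(F_m)$, one may pass to a common refinement so that their product in $\Integrable_{\infty}(\Omega)$ is $fg = \sum_{n,m} k_n l_m \characteristic(E_n \cap F_m)$ — using $\characteristic(E \cap F) = \characteristic(E)\characteristic(F)$, which holds in $\Integrable_{\infty}(\Omega)$ by the construction of its Banach algebra structure recalled just before the theorem. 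Applying $\int\differential\nu$ and using linearity together with $\parens{\int\differential\nu}(\characteristic(E)) = \nu(E)$, we get $\parens{\int\differential\nu}(fg) = \sum_{n,m} k_n l_m \nu(E_n \cap F_m)$, whereas $\parens{\int\differential\nu}(f)\parens{\int\differential\nu}(g) = \sum_{n,m} k_n l_m \nu(E_n)\nu(F_m)$. These agree for all simple $f, g$ exactly when $\nu(E \cap F) = \nu(E)\nu(F)$ for all $E, F$, i.e.\ exactly when $\nu$ is spectral. Since $\int\differential\nu$ is bounded and multiplication in a Banach algebra is jointly continuous, and $\simple(\Omega)$ is dense in $\Integrable_{\infty}(\Omega)$ by construction, multiplicativity on $\simple(\Omega)$ propagates to all of $\Integrable_{\infty}(\Omega)$. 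Conversely, if $\int\differential\nu$ is an algebra morphism then restricting the identity $\parens{\int\differential\nu}(\characteristic(E)\characteristic(F)) = \parens{\int\differential\nu}(\characteristic(E))\parens{\int\differential\nu}(\characteristic(F))$ and using $\characteristic(E)\characteristic(F) = \characteristic(E\cap F)$ forces $\nu$ to be spectral; this shows the bijection of Theorem \ref{theorem:universal-property-Linfty} restricts to a bijection $\bsadditive(\Omega, A) \isomorphic \homset{\BanAlg}{\Integrable_{\infty}(\Omega)}{A}$.

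For the isometry and naturality claims, I would simply observe that these are inherited from Theorem \ref{theorem:universal-property-Linfty}: the norm on $\bsadditive(\Omega, A)$ is the restriction of the semivariation norm on $\badditive(\Omega, A)$, the norm on $\homset{\BanAlg}{\Integrable_{\infty}(\Omega)}{A}$ is the restriction of the operator norm, and $\nu \mapsto \int\differential\nu$ was already shown isometric; naturality in $A$ (along bounded algebra morphisms) and in $\Omega$ (along Boolean algebra morphisms) follows from the equations \eqref{equation:naturality-integral-map} once one notes that pushforward along an algebra morphism preserves spectral measures and pullback along a Boolean morphism preserves spectrality as well. The main obstacle — really the only place requiring care — is the refinement bookkeeping that shows the product of two simple elements is computed partition-wise as $\sum_{n,m} k_n l_m \characteristic(E_n \cap F_m)$ and that this is independent of the chosen representations; everything else is a routine transfer of structure through the universal property already in hand.
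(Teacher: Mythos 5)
Your proposal is correct and follows exactly the route the paper intends: the paper offers no separate argument, describing the result as a ``more or less trivial extension'' of Theorem \ref{theorem:universal-property-Linfty}, and your proof supplies precisely that extension --- uniqueness, existence and isometry inherited from the linear case, with multiplicativity checked on simple elements via common refinements and propagated by density and joint continuity of the product. The converse direction (algebra morphisms restrict to spectral measures on characteristic elements) and the naturality claims are handled as the paper's framework suggests, so nothing further is needed.
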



\subsection{The Bochner integral}
\label{subsection:bochner-integral}


In this subsection we revisit the strong (or Bochner) integral and show how all the basic properties follow easily with \emph{easy} proofs. Since our objective is to support our introductory injunction that a good deal of conceptual clarification is achieved if we consider Boolean algebras and measures on it, we consider only finitely additive measures.

\smallskip
Let $\pair{\Omega}{\mu}$ be a measure algebra. In subsection \ref{subsection:banach-algebra-linfty} we have defined the space of simple elements $\simple(\Omega)$. We extend the definition to measure algebras by defining:
\begin{equation} \label{equation-definition:simples-measure-algebra}
    \simple(\Omega, \mu)\defequal \simple(\Omega\quotient \Null(\mu))
\end{equation}

The notation $\simple(\Omega, \mu)$ is slightly misleading, because this space only depends on $\Null(\mu)$ not on the specific values of $\mu$. On $\simple(\Omega, \mu)$ we have the norm:
\begin{equation}\label{equation-definition:L1-Bochner-norm}
    \sumnorm{\sum_{n}k_{n}\characteristic(E_{n})}\defequal \sum_{n}\abs{k_{n}}\mu(E_{n})
\end{equation}

It is not difficult to see that the right-hand side of \eqref{equation-definition:L1-Bochner-norm} is independent of the chosen representation as a linear combination of characteristic elements. Finally, we define the \emph{space of simple elements with values in $B$} to be the linear tensor product,
\begin{equation} \label{equation-definition:simples-value-Banach-space}
    \simple(\Omega, \mu, B)\defequal \simple(\Omega, \mu,)\otimes B
\end{equation}
and define a norm on it by
\begin{equation}\label{equation-definition:L1-projective-norm}
    \sumnorm{\sum_{n}f_{n}\otimes b_{n}}\defequal \sum_{n}\sumnorm{f_{n}}\norm{b_{n}}
\end{equation}

Obviously, the space $\simple(\Omega, \mu, B)$ reduces to $\simple(\Omega, \mu)$ when $B$ is the real field. The space $\Integrable_{1}(\Omega, \mu, B)$ is the completion of $\Integrable_{1}(\Omega, \mu, B)$ under the norm \eqref{equation-definition:L1-projective-norm}. Before defining the integral map, we need a lemma giving a canonical form for the elements of $\simple(\Omega, \mu, B)$.

\begin{lemma} \label{lemma:combination-tensor-disjoint-charateristic}
For every non-zero $f\in \simple(\Omega, \mu, B)$, there exist non-zero pairwise disjoint elements $E_{n}\in \Omega$ and elements $b_{n}\in B$ such that
\begin{equation} \label{equation:combination-tensor-disjoint-charateristic}
    f = \sum_{n}\characteristic(E_{n})\otimes b_{n}
\end{equation}
\end{lemma}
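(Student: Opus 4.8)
The plan is to reduce the claim to Lemma \ref{lemma:canonical-representation-simples} applied in the scalar factor, and then to extract a common refinement so that all the scalar characteristic elements appearing can be replaced by characteristic elements of a single partition. First I would note that every element of $\simple(\Omega,\mu,B)=\simple(\Omega,\mu)\otimes B$ is, by definition of the algebraic tensor product, a finite sum $f=\sum_{j}g_{j}\otimes c_{j}$ with $g_{j}\in\simple(\Omega,\mu)$ and $c_{j}\in B$. Applying Lemma \ref{lemma:canonical-representation-simples} to each non-zero $g_{j}$, we can write $g_{j}=\sum_{n}k_{j,n}\characteristic(E_{j,n})$ for a partition $\family{E_{j,n}}{n}$ of $1$ (in the quotient algebra $\Omega\quotient\Null(\mu)$) and non-zero scalars; substituting, $f$ becomes a finite $\field$-linear combination of elements of the form $\characteristic(E)\otimes c$ with $E\in\Omega\quotient\Null(\mu)$ and $c\in B$.

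Next I would pass to a common refinement. The finitely many sets $E$ occurring in the above expansion generate a finite subalgebra of $\Omega\quotient\Null(\mu)$, whose atoms form a finite partition $\family{F}{m}$ refining all of them; each $\characteristic(E)$ is then the sum $\sum_{F_{m}\subseteq E}\characteristic(F_{m})$, using finite additivity of $\characteristic$ (established in subsection \ref{subsection:banach-algebra-linfty}) and bilinearity of $\otimes$. Collecting terms, $f=\sum_{m}\characteristic(F_{m})\otimes b_{m}$ for suitable $b_{m}\in B$ (namely $b_{m}=\sum k_{\bullet}c_{\bullet}$ over the indices whose set contains $F_{m}$). Finally, discard those indices $m$ with $b_{m}=0$ and those with $F_{m}=0$; what remains is a sum over non-zero, pairwise disjoint $E_{n}\in\Omega$ (lift the $F_{m}$ arbitrarily to $\Omega$) with non-zero $b_{n}\in B$, which is exactly \eqref{equation:combination-tensor-disjoint-charateristic}. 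One should also remark that $f\neq 0$ guarantees at least one surviving term, so the representation is genuinely of the stated form.

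The only point requiring a little care — and the main obstacle — is the very first reduction: an element of the algebraic tensor product $\simple(\Omega,\mu)\otimes B$ is an equivalence class, so one must check that the chosen finite representative $\sum_{j}g_{j}\otimes c_{j}$ can be taken with the $g_{j}$ honest simple elements rather than formal symbols, and that the subsequent manipulations (substituting the canonical forms of the $g_{j}$, refining, regrouping) are all performed inside the tensor product and hence independent of the representative. This is routine bilinear algebra, so I would merely indicate that the construction respects the defining relations of $\otimes$ and leave the verification to the reader, exactly in the style of the proof of Lemma \ref{lemma:canonical-representation-simples}.
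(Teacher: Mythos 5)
Your proposal is correct and is essentially the paper's own argument: the paper's proof is the one-line remark that the lemma follows by "combining Lemma \ref{lemma:canonical-representation-simples} with the properties of linear tensor products," and your expansion (write $f$ as a finite sum of elementary tensors, put each scalar factor in the canonical form of Lemma \ref{lemma:canonical-representation-simples}, pass to the common refinement given by the atoms of the finite subalgebra generated, regroup by bilinearity, and discard zero terms) is exactly what that combination amounts to. The only cosmetic caveat is the $\Omega$ versus $\Omega\quotient\Null(\mu)$ bookkeeping at the end, which the paper itself glosses over.
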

\begin{proof}
This is a combination of lemma \ref{lemma:canonical-representation-simples} with the properties of linear tensor products.
\end{proof}


There is a map $\int\differential\mu\colon \simple(\Omega, \mu, B)\to B$ given by
\begin{equation*}
    \sum_{n}\characteristic(E_{n})\otimes b_{n}\mapto \sum_{n}\mu(E_{n})b_{n}
\end{equation*}

This map is linear and contractive for the $\sumnorm{,}$-norm, therefore it extends uniquely to a linear contraction $\Integrable_{1}(\Omega, \mu, B)\to B$. The next theorem identifies the space $\Integrable_{1}(\Omega, \mu, B)$ with a projective tensor product and, in particular, it will show that $\Integrable_{1}(\Omega, \mu, B)$ is cocontinuous in the Banach space variable. It will also prove that not only the vector integral $\int\differential\mu$ commutes with every bounded linear map, but also that $\int\differential\mu$ is the scalar integral tensored with the identity $\id_{B}$.

\begin{theorem} \label{theorem:l1-tensor-product}
The bilinear map $\simple(\Omega, \mu)\times B\to \Integrable_{1}(\Omega, \mu, B)$ given by
\begin{equation*}
    \pair{f}{b}\mapto f\otimes b
\end{equation*}
establishes a natural isometric isomorphism,
\begin{equation} \label{isomorphism:l1-tensor-product}
    \Integrable_{1}(\Omega, \mu, B)\isomorphic \Integrable_{1}(\Omega, \mu)\projotimes B
\end{equation}
where $\projotimes$ denotes the projective tensor product of Banach spaces. Furthermore, diagram \ref{diagram:naturality-integral} is commutative.
\begin{figure}[htbp]
    \begin{equation*}
    \xymatrix{
        \Integrable_{1}(\Omega, \mu)\projotimes A \ar[rrr]^{\id_{\Integrable_{1}(\Omega, \mu)}\projotimes T} \ar[dd]_{\isomorphic} \ar[dr]_{\int\differential\mu\projotimes \id_{A}} &  &  & \Integrable_{1}(\Omega, \mu)\projotimes
        B \ar[dd]^{\isomorphic} \ar[dl]^{\int\differential\mu\projotimes \id_{B}} \\
          & A \ar[r]^{T} & B & \\
        \Integrable_{1}(\Omega, \mu, A) \ar[rrr]_{\covariant{T}} \ar[ur]^{\int\differential\mu} &  &  &
        \Integrable_{1}(\Omega, \mu, B) \ar[ul]_{\int\differential\mu}
    }
    \end{equation*}
    \caption{Naturality of $\int\differential\mu$.}
    \label{diagram:naturality-integral}
\end{figure}
\end{theorem}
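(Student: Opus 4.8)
The plan is to show that the canonical comparison map is already an isometry on the dense subspace of simple elements, and then pass to completions. By construction $\simple(\Omega,\mu,B)$ is the algebraic tensor product $\simple(\Omega,\mu)\otimes B$, and $\Integrable_{1}(\Omega,\mu)\projotimes B$ is the completion of this same algebraic tensor product for the projective norm \eqref{def-equation:projective-norm}; so the bilinear map in the statement extends the identity of $\simple(\Omega,\mu)\otimes B$, and the real content is the norm identity
\begin{equation*}
    \sumnorm{u}= \projnorm{u}, \qquad u\in \simple(\Omega,\mu)\otimes B,
\end{equation*}
where $\sumnorm{u}$ is \eqref{equation-definition:L1-projective-norm} computed on the canonical disjoint representation $u= \sum_{n}\characteristic(E_{n})\otimes b_{n}$ of Lemma \ref{lemma:combination-tensor-disjoint-charateristic}, so that $\sumnorm{u}= \sum_{n}\mu(E_{n})\norm{b_{n}}$. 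Independence of this quantity from the chosen disjoint representation is itself a special case of the same identity and should be disposed of first.

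The inequality $\projnorm{u}\leq \sumnorm{u}$ is immediate, the canonical disjoint representation being one admissible representation of $u$ as a sum of elementary tensors. For the reverse inequality, start from an arbitrary representation $u= \sum_{m}g_{m}\otimes c_{m}$ with $g_{m}\in \simple(\Omega,\mu)$; since $\partitions(\Omega)$ is filtered under refinement we may pass to a common refinement of the finitely many partitions occurring in the $g_{m}$ and thus assume all $g_{m}$ are carried by one partition $\mathcal{F}$, say $g_{m}= \sum_{F\in \mathcal{F}}k_{mF}\characteristic(F)$. Collecting coefficients, $u= \sum_{F\in \mathcal{F}}\characteristic(F)\otimes \parens{\sum_{m}k_{mF}c_{m}}$, which after discarding null or zero terms is a coarsening of the canonical form; a one-line estimate using the triangle inequality and $\sumnorm{\characteristic(F)}= \mu(F)$ then gives $\sumnorm{u}\leq \sum_{m}\sumnorm{g_{m}}\norm{c_{m}}$, and taking the infimum over representations yields $\sumnorm{u}\leq \projnorm{u}$. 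Hence the identity holds, the comparison map is isometric on $\simple(\Omega,\mu)\otimes B$, and since this space is dense both in $\Integrable_{1}(\Omega,\mu,B)$ and in $\Integrable_{1}(\Omega,\mu)\projotimes B$ (density of $\simple(\Omega,\mu)$ in $\Integrable_{1}(\Omega,\mu)$, and of an algebraic tensor product in its projective completion), it extends uniquely to the isometric isomorphism \eqref{isomorphism:l1-tensor-product}.

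Naturality and the commutativity of Figure \ref{diagram:naturality-integral} are then formal, verified on elementary tensors $\characteristic(E)\otimes b$ and extended by continuity. There the isomorphism is the identity, and the vector integral sends $\characteristic(E)\otimes b$ to $\mu(E)b$, which is also the image of $\characteristic(E)\otimes b$ under $\int\differential\mu\projotimes \id_{B}\colon \Integrable_{1}(\Omega,\mu)\projotimes B\to \field\projotimes B\isomorphic B$; this settles the two vertical triangles, i.e.\ the fact that the vector integral is the scalar integral tensored with $\id_{B}$. The top square is naturality of the isomorphism in $B$, which is clear because $\covariant{T}$ is by construction $\id\otimes T$ on the algebraic tensor product; the two slanted triangles through $T$ are respectively the functoriality of $\projotimes$ in its second variable and the resulting fact that $\int\differential\mu$ commutes with every bounded linear map. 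Naturality in the measure-algebra variable is checked identically on elementary tensors.

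The only non-formal step is the reverse inequality $\sumnorm{u}\leq \projnorm{u}$, via the common-refinement argument; this is exactly where the $L^{1}$-nature of $\Integrable_{1}(\Omega,\mu)$ enters, since for a general Banach space in place of $\Integrable_{1}(\Omega,\mu)$ the projective norm is strictly dominated by the naive $\ell^{1}$-combination norm. Everything else reduces to the definition of $\projotimes$ as a completion together with routine density and continuity bookkeeping.
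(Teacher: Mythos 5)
Your proposal is correct and follows essentially the same route as the paper's (very terse) proof: verify that the projective norm agrees with the norm \eqref{equation-definition:L1-projective-norm} on the algebraic tensor product of simple elements via the canonical disjoint representation of lemma \ref{lemma:combination-tensor-disjoint-charateristic} and a common refinement, then pass to completions and check naturality on elementary tensors. The only slip is cosmetic: the representation obtained over the common refinement $\mathcal{F}$ is a \emph{refinement} of the canonical disjoint form (not a coarsening), and finite additivity of $\mu$ is what makes the two disjoint-representation values agree.
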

\begin{proof}
Given our definitions, the proof consists in a simple check that the projective norm is equal to the norm \eqref{equation-definition:L1-projective-norm} and a few trivial computations with simple functions.
\end{proof}

Next, we treat the universal property of $\Integrable_{1}(\Omega, \mu, A)$. By theorem \ref{theorem:l1-tensor-product}, we have the chain of isomorphisms,
\begin{equation} \label{isomorphism:universal-property-general-L1}
\begin{split}
    \homset{\Ban}{\Integrable_{1}(\Omega, \mu, A)}{B} &\isomorphic \homset{\Ban}{\Integrable_{1}(\Omega, \mu)\projotimes A}{B} \\
        &\isomorphic \homset{\Ban}{\Integrable_{1}(\Omega, \mu)}{\Ban(A, B)}
\end{split}
\end{equation}
so that it is sufficient to discuss the universal property of $\Integrable_{1}(\Omega, \mu)$. In order to do that, we introduce another class of measures, the \emph{Lipschitz measures}.

\begin{definition} \label{definition:dominated-measure}
Let $B$ be a Banach space. An additive map $\nu\colon\Omega\to B$ is \emph{$\mu$-Lipschitz} (or simply \emph{Lipschitz} if the scalar measure $\mu$ is fixed throughout) if there is a positive constant $C$ such that:
\begin{equation} \label{equation:measure-dominated}
    \norm{\nu(E)}\leq C\mu(E), \forall E\in \Omega
\end{equation}
\end{definition}

It is easy to see that the measure $\nu$ is $\mu$-Lipschitz iff the quotient map $\Omega\to \Omega\quotient \Null(\nu)$ is Lipschitz according to \eqref{inequality:lipschitz}. If $\nu$ is $\mu$-Lipschitz, then we also say that \emph{$\nu$ is $\mu$-dominated} and $\mu$ is a \emph{dominating measure}. This terminology parallels the terminology for \emph{control measures}. If $\nu$ is $\mu$-Lipschitz, we define the \emph{$\mu$-Lipschitz norm} by:
\begin{equation} \label{equation-definition:measure-lipschitz-norm}
    \lipschitz{\nu}\defequal \inf C_{\nu}
\end{equation}
where $C_{\nu}$ is the set of constants $C$ in the conditions of \eqref{equation:measure-dominated}. The Lipschitz norm is an upper bound for the \emph{distortion ratio} $\norm{\nu(E)}/\mu(E)$. We define $\ladditive(\Omega, \mu, B)$ to be the space of $\mu$-Lipschitz additive maps $\Omega\to B$ and with norm the $\mu$-Lipschitz norm \eqref{equation-definition:measure-lipschitz-norm}.

\begin{theorem} \label{theorem:universal-property-L1}
If $\nu\colon\Omega\to B$ is a $\mu$-Lipschitz, finitely additive map, then there is a unique bounded linear map $\int\differential{\nu}\colon\Integrable_{1}(\Omega, \mu)\to B$ such that triangle \ref{diagram:universal-property-L1} is commutative.
\begin{figure}[htbp]
    \begin{equation*}
    \xymatrix{
        \Integrable_{1}(\Omega, \mu) \ar@{-->}[r]^-{\int\differential\nu} & B \\
        \Omega \ar[u]^{\characteristic} \ar[ur]_{\nu} &
    }
    \end{equation*}
    \caption{Universal property of $\Integrable_{1}(\Omega, \mu)$.}
    \label{diagram:universal-property-L1}
\end{figure}

The map $\nu\mapto \int\differential\nu$ establishes a natural isometric isomorphism
\begin{equation} \label{isomorphism:universal-property-L1}
    \ladditive(\Omega, \mu, B)\isomorphic \homset{\Ban}{\Integrable_{1}(\Omega, \mu)}{B}
\end{equation}
\end{theorem}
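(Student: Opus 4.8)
The plan is to follow the template of the proof of Theorem~\ref{theorem:universal-property-Linfty}, with the semivariation replaced by the Lipschitz norm and $\simple(\Omega)$ replaced by $\simple(\Omega, \mu)$. Since $\nu$ is $\mu$-Lipschitz it vanishes on $\Null(\mu)$, so it factors (uniquely, and still finitely additively) through the quotient $\Omega \to \Omega\quotient\Null(\mu)$. The universal property of the characteristic map in diagram~\ref{diagram:universal-property-simple-elements} then yields a unique linear map $\simple(\Omega, \mu) \to B$ determined by $\characteristic(E) \mapsto \nu(E)$; by Lemma~\ref{lemma:canonical-representation-simples} this map sends $f = \sum_{n} k_{n}\characteristic(E_{n})$ (in canonical representation, the $E_{n}$ pairwise disjoint) to $\sum_{n} k_{n}\nu(E_{n})$.

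The one real computation is to bound this map for the norm~\eqref{equation-definition:L1-Bochner-norm}. Using the canonical representation,
\[
    \norm{\sum_{n} k_{n}\nu(E_{n})} \leq \sum_{n}\abs{k_{n}}\norm{\nu(E_{n})} \leq \lipschitz{\nu}\sum_{n}\abs{k_{n}}\mu(E_{n}) = \lipschitz{\nu}\,\sumnorm{f},
\]
so the map is bounded with operator norm $\leq \lipschitz{\nu}$. As $B$ is complete and $\simple(\Omega, \mu)$ is dense in $\Integrable_{1}(\Omega, \mu)$, it extends uniquely to a bounded linear map $\int\differential\nu\colon \Integrable_{1}(\Omega, \mu) \to B$, and density also forces uniqueness of any map closing triangle~\ref{diagram:universal-property-L1}.

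For the representability statement, $\nu \mapsto \int\differential\nu$ is linear (by the uniqueness just used) with candidate inverse $T \mapsto T\characteristic$. That $T\characteristic$ is $\mu$-Lipschitz, with $\lipschitz{T\characteristic} \leq \norm{T}$, follows from $\norm{T\characteristic(E)} \leq \norm{T}\sumnorm{\characteristic(E)} = \norm{T}\mu(E)$, the last equality being a special case of~\eqref{equation-definition:L1-Bochner-norm}. Commutativity of triangle~\ref{diagram:universal-property-L1} together with the density/uniqueness argument shows the two assignments are mutually inverse; feeding $T = \int\differential\nu$ into $\lipschitz{T\characteristic} \leq \norm{T}$ and combining with $\norm{\int\differential\nu} \leq \lipschitz{\nu}$ from above gives equality of the norms, so the bijection is an isometry. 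Naturality in $B$ is once more a consequence of uniqueness, and naturality in the measure-algebra variable is the change-of-variables identity $\int\covariant{\phi}(f)\,\differential\nu = \int f\,\differential\pullback{\phi}\nu$, proved exactly as in~\eqref{equation:naturality-integral-map}.

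I do not expect a genuine obstacle here; the statement is, like Theorem~\ref{theorem:universal-property-Linfty}, essentially a bookkeeping exercise. The only points meriting attention are that the operator norm equals $\lipschitz{\nu}$ and not merely bounds it --- witnessed by evaluating $\int\differential\nu$ on $\characteristic(E)$ for sets $E$ with $\norm{\nu(E)}$ arbitrarily close to $\lipschitz{\nu}\,\mu(E)$ --- and the well-definedness of the norm~\eqref{equation-definition:L1-Bochner-norm} on $\simple(\Omega, \mu)$, which the text has already recorded.
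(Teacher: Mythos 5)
Your proposal is correct, and it is exactly the route the paper intends: the paper states Theorem~\ref{theorem:universal-property-L1} without an explicit proof, remarking only that the constructions of subsection~\ref{subsection:bochner-integral} prove existence by arguments ``completely analogous'' to Theorem~\ref{theorem:universal-property-Linfty}, which is precisely the template you follow (factor through $\Omega\quotient\Null(\mu)$, lift via the universal property of $\characteristic$, bound by $\lipschitz{\nu}$ on canonical representations, extend by density, invert via $T\mapto T\characteristic$). Your attention to the two points the paper glosses over --- attainment of the norm equality and well-definedness of the $\sumnorm{\,}$-norm --- is exactly the right bookkeeping, and no gap remains.
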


Theorem \ref{theorem:universal-property-L1} is something of a blind alley and I know of no serious analytical uses of it as it simply trades one complication for another. Its positive aspects however, are far more important for us. For one, it settles the cocontinuity properties of the bifunctor $\ladditive(\Omega, \mu, B)$, thereby giving us another family of Banach sheaves. More important is its conceptual meaning. The universal property expressed in theorem \ref{theorem:universal-property-L1}, completely analogous to the universal property of $\Integrable_{\infty}(\Omega)$ in theorem \ref{theorem:universal-property-Linfty}, \emph{defines} the space $\Integrable_{1}(\Omega, \mu)$, while the constructions preceding it \emph{prove} that it does indeed exist, while also giving a particularly convenient presentation for it.

\smallskip
Other basic theorems follow equally, with relatively easy proofs. For example, the proof of Fubini's theorem starts with the construction of the coproduct of Boolean algebras $\Omega\otimes \Sigma$ and the identification of the measures on $\Omega\otimes \Sigma$ as certain biadditive functionals. In particular, if we have finitely additive measures $\mu$ and $\nu$ on $\Omega$ and $\Sigma$ respectively, then the map
\begin{equation*}
    \mu\otimes \nu\colon E\otimes F\mapto \mu(E)\nu(F)
\end{equation*}
yields a well-defined measure on $\Omega\otimes \Sigma$. The Fubini theorem\footnote{For $\sigma$-additive measures on $\sigma$-complete Boolean algebras the construction of this coproduct is more involved, because the coproduct $\Omega\otimes \Sigma$ in the category $\Bool$ of Boolean algebras is $\sigma$-complete only in trivial cases.} is now the assertion that there is a natural isometric isomorphism
\begin{equation}\label{isomorphism:fubini}
    \Integrable_{1}(\Omega\otimes \Sigma, \mu\otimes \nu)\isomorphic \Integrable_{1}(\Omega, \mu)\projotimes \Integrable_{1}(\Sigma, \nu)
\end{equation}

Since by theorem \ref{theorem:l1-tensor-product} we have the natural isomorphism $\Integrable_{1}(\Omega, \mu)\projotimes \Integrable_{1}(\Sigma, \nu)\isomorphic \Integrable_{1}(\Omega, \mu, \Integrable_{1}(\Sigma, \nu))$, the \emph{equality of the iterated integrals} follows from the Fubini isomorphism \eqref{isomorphism:fubini}.

\smallskip
At this point, the reader may be justly impatient with our dabbling with formal trivialities\footnote{Judging by the times it is repeated, ``mere formality'' may very well be the favorite expression of \cite{diestel-uhl:vector-measures1977}. The pragmatic distrust of ``mere formalities'' is a natural, even healthy attitude on the part of an analyst. The publishing date of \cite{diestel-uhl:vector-measures1977} is 1977, but the prejudices it echoes are still around and one can still occasionally hear that category theory is ``just a language,'' a ``useful organizational tool'' or some such, usually said in a scornful tone betraying ignorance and misunderstanding. This attitude derives much from the individual perceptions of what counts as deep in mathematics, but even on the face of it, describing category theory as ``just a language,'' as if it were derogatory, is surely bizarre, for isn't language one of the highest functions of the human brain? Language enhances our perceptions and more than expands the reach of our minds. Similarly, even if category theory were just an ``organizational tool,'' it would already serve a very useful purpose much in the same way as a hammer is a mighty fine tool for organizing nails along walls and other similar hard surfaces like blockhead skulls. But category theory is first and foremost, a division of mathematics with its own specific, autonomous field of activity and its own specific set of problems and concerns. A mathematician can spend his whole career without once using any deep result from say, group theory; and yet, were he to decry group theory as a ``mere organizational tool for symmetry,'' such a comment would be met by many of us with an ironical silence in the (probably misguided) notion that there are some points of view about which the less said the better. In a given mathematical field, category theory may not contribute much more than a few trivial observations; in others it may be essential -- witness algebraic geometry and algebraic topology. The evocation of these examples is not without reason, since by the very essence of their disciplines, algebraic geometers and topologists, less afflicted by the narrow myopia of specialization, have to cross the boundaries and smuggle the goods between algebra and geometry, and category theory is just the right language to organize that sort of trafficking.}. In a sense it is an unjust criticism, because a string of trivialities may yield, and does yield, non trivial results. In another sense, the criticism is entirely justified. But it is precisely the trivial quality of these proofs that not only justifies the scaffolding, but it allows to categorify it, which after all, is our purpose all along.

\subsection{The Stone space of a Boolean algebra and weak integrals.}
\label{subsection:stone-space-ba-weak-integrals}

This subsection has two distinct purposes. The first is to elucidate the relation of a Boolean algebra $\Omega$ with its Stone space $\Stone(\Omega)$ and the important consequences this has for measure theory. The second is to fulfill our promise of subsection \ref{subsection:failure-radon-nikodym} of basing Radon-Nikodym type theorems on weak integrals. Strictly speaking, it will not be needed for our subsequent work but it is such a beautiful piece of mathematics that we have not resisted the temptation to include it here. There are two weak integrals, the Pettis integral involving the weak topology and the Gelfand integral involving the $\wkstar$-topology in dual spaces. The Gelfand integral will be the object of interest in this subsection. Both integrals can be constructed in a purely analytical manner with no measure theory involved. For the general construction, we refer the reader to \cite[chapter 3]{rudin:functional-analysis1991}, and for its construction in the context of vector measure theory we refer the reader to \cite[chapter II, section 3]{diestel-uhl:vector-measures1977}. Contrary to the previous subsections, where only the most basic functional analysis was needed, in this subsection sophistication is upped by a few notches and some knowledge of weak topologies in Banach spaces is required. Our basic reference is \cite[chapter V]{conway:course-functional-analysis1990}. In our introduction to the $\wkstar$-integral, we follow a somewhat devious route. There is method to our madness though, as it leads to the description of the dual category of $\Banc$ as the category of Walbr\"{o}eck spaces. We refer the reader to \cite[chapter I, section 2]{cigler-losert-michor:banach-modules-functors-categories-banach-spaces1979} for more information.


\smallskip
Recall that if $\Omega$ is a Boolean algebra, its Stone space $\Stone(\Omega)$ is the set of (proper) ultrafilters on $\Omega$. These can be identified with Boolean algebra morphisms $\Omega\to \two$ where $\two$ is the Boolean field $\set{0, 1}$. A topology can be introduced in $\Stone(\Omega)$ in the following way: to each $E\in \Omega$ we associate the set of ultrafilters containing $E$, that is:
\begin{equation*}
    \eta(E)\defequal \set{x\in \Stone(\Omega)\colon x(E)= 1}
\end{equation*}

Now, take the collection $\set{\eta(E)\colon E\in \Omega}$ as a base for the topology of $\Stone(\Omega)$. It can then be proved that each $\eta(E)$ is both closed and open (a \emph{clopen}) and that $\Stone(\Omega)$ is a compact Hausdorff, totally disconnected space. If we denote by $\Bool$ the category of Boolean algebras and by $\BoolTop$ the full subcategory of compact Hausdorff, totally disconnected spaces (or \emph{Boolean spaces}), then it is a fundamental result that there is an adjoint equivalence (called \emph{Stone duality}),
\begin{equation} \label{equivalence:stone-duality}
    \homset{\Bool}{\Omega}{\clopen(Y)}\isomorphic \homset{\opposite{\BoolTop}}{\Stone(\Omega)}{Y}
\end{equation}
with unit the Boolean algebra isomorphism $\eta\colon \Omega\to \clopen(\Stone(\Omega))$ given by $E\mapto \eta(E)$.


\smallskip
Stone duality implies that every measure-theoretical concept has a natural topological mirror image in the world of Boolean spaces. The remarkable aspect is that these topological counterparts are usually better behaved than their Boolean algebra siblings. A first indication of this is the identification of the elements of $\Integrable_{\infty}(\Omega)$ with actual \emph{continuous functions}. The crucial theorem in this direction is:

\begin{theorem} \label{theorem:stone-measurable-continuous-map}
The map $\Omega\to \Continuous(\Stone(\Omega))$ given by
\begin{equation*}
    E\mapto \characteristic(\eta(E))
\end{equation*}
is a bounded spectral map. The unique lift to $\Integrable_{\infty}(\Omega)$ as in diagram \ref{diagram:stone-measurable-continuous-map} is an isometric Banach algebra isomorphism.
\begin{figure}[htbp]
    \begin{equation*}
    \xymatrix{
        \Integrable_{\infty}(\Omega) \ar@{-->}[r] & \Continuous(\Stone(\Omega)) \\
        \Omega \ar[u]^{\characteristic} \ar[ur]
    }
    \end{equation*}
\caption{The Banach algebra isomorphism $\Integrable_{\infty}(\Omega)\isomorphic \Continuous(\Stone(\Omega))$.}
\label{diagram:stone-measurable-continuous-map}
\end{figure}
\end{theorem}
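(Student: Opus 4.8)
The plan is to deduce the statement from the universal property of the Banach algebra $\Integrable_{\infty}(\Omega)$ recorded in theorem~\ref{theorem:universal-property-Linfty-algebra}, and then to verify separately that the resulting algebra morphism is isometric and surjective. First I would check that $E\mapto \characteristic(\eta(E))$ is a bounded spectral map $\Omega\to \Continuous(\Stone(\Omega))$. By the definition of the Stone topology each $\eta(E)$ is clopen, so $\characteristic(\eta(E))$ is a genuine continuous function. The assignment $\eta$ is a Boolean algebra homomorphism onto $\clopen(\Stone(\Omega))$ (this is the content of Stone duality \eqref{equivalence:stone-duality}), so $\eta(E\cap F)=\eta(E)\cap\eta(F)$ and $\eta(E\cup F)=\eta(E)\cup\eta(F)$; hence $\characteristic(\eta(E\cap F))=\characteristic(\eta(E))\characteristic(\eta(F))$, i.e.~the map is multiplicative, and it is finitely additive on disjoint pairs. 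Boundedness is immediate, since a characteristic map always has semivariation $\leq 1$ (the same computation invoked in the proof of theorem~\ref{theorem:universal-property-Linfty}). Theorem~\ref{theorem:universal-property-Linfty-algebra} then supplies a unique bounded algebra morphism $\Phi\colon\Integrable_{\infty}(\Omega)\to\Continuous(\Stone(\Omega))$ sending $\characteristic(E)$ to $\characteristic(\eta(E))$ for every $E\in\Omega$.

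Next I would show $\Phi$ is an isometry. It suffices to check this on the dense subspace $\simple(\Omega)$, and by lemma~\ref{lemma:canonical-representation-simples} every non-zero $f\in\simple(\Omega)$ can be written $f=\sum_{n}k_{n}\characteristic(E_{n})$ with $\family{E}{n}$ a partition and the $k_{n}$ non-zero. Then $\Phi(f)=\sum_{n}k_{n}\characteristic(\eta(E_{n}))$ and, since the clopen sets $\eta(E_{n})$ are pairwise disjoint and cover $\Stone(\Omega)$, this function takes precisely the values $k_{n}$, so $\supnorm{\Phi(f)}=\max_{n}\abs{k_{n}}=\supnorm{f}$ by \eqref{equation-definition:Linfty-norm}. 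An isometry defined on a dense subspace extends to an isometry on the completion; in particular $\Phi$ is injective.

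Finally, surjectivity. The image $\Phi(\Integrable_{\infty}(\Omega))$ is a closed subalgebra of $\Continuous(\Stone(\Omega))$ (closed because the isometric image of a complete space is complete) that contains the constants and every $\characteristic(\eta(E))$. Since distinct ultrafilters on $\Omega$ are separated by some $E\in\Omega$, the functions $\characteristic(\eta(E))$ separate the points of $\Stone(\Omega)$, so the Stone--Weierstrass theorem forces the image to be all of $\Continuous(\Stone(\Omega))$. Alternatively, and more in the spirit of the rest of the section, one argues directly that $\Stone(\Omega)$ is compact and totally disconnected, so every continuous function on it is a uniform limit of locally constant functions, i.e.~of functions lying in $\Phi(\simple(\Omega))$. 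Combining the three steps, $\Phi$ is an isometric Banach algebra isomorphism.

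The step I expect to be the genuine obstacle is the surjectivity: everything else is bookkeeping around the already-established universal properties, whereas surjectivity is exactly where the compactness and total disconnectedness of $\Stone(\Omega)$ --- equivalently, the fullness half of Stone duality --- must be used in an essential way.
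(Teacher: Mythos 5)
Your proposal is correct and follows essentially the same route as the paper: verify that $E\mapto \characteristic(\eta(E))$ is a bounded spectral map, invoke theorem \ref{theorem:universal-property-Linfty-algebra} for the unique algebra morphism, check the isometry by a direct computation on $\simple(\Omega)$, and obtain surjectivity from total disconnectedness plus Stone--Weierstrass. The only cosmetic caveat is that the partition in lemma \ref{lemma:canonical-representation-simples} need not cover the unit, but since the $k_{n}$ are non-zero and each $\eta(E_{n})$ is non-empty the sup-norm identity you compute still holds, so nothing is lost.
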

\begin{proof}
The map $E\mapto \characteristic(\eta(E))$ is clearly a spectral measure $\Omega\to \Continuous(\Stone(\Omega))$. By theorem \ref{theorem:universal-property-Linfty-algebra}, there is a unique Banach algebra morphism closing triangle \ref{diagram:stone-measurable-continuous-map}. A simple computation proves that this morphism is an isometry. Since $\Stone(\Omega)$ is totally disconnected, the characteristic functions of clopen sets are continuous and separate points. The Stone-Weierstrass theorem (\cite[chapter V, section 8]{conway:course-functional-analysis1990}) now implies that the morphism is bijective.
\end{proof}

In particular, the spectral map $\Omega\to \Continuous(\Stone(\Omega))$ has the same universal property of theorem \ref{theorem:stone-measurable-continuous-map}. This theorem allows us to embed measure theory in topological measure theory and entails a sleuth of important results, starting with the following corollary:

\begin{corollary} \label{corollary:Linfty-continuous}
The functor $\Bool\to \Ban$ given on objects by $\Omega\mapto \Integrable_{\infty}(\Omega)$ is continuous.
\end{corollary}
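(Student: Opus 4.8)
\smallskip
The plan is to read Corollary~\ref{corollary:Linfty-continuous} through Theorem~\ref{theorem:stone-measurable-continuous-map}, which provides for each $\Omega$ an isometric isomorphism $\Integrable_{\infty}(\Omega)\isomorphic\Continuous(\Stone(\Omega))$. This isomorphism is natural in $\Omega$ by the usual argument: it is characterized by a universal property (Theorem~\ref{theorem:universal-property-Linfty-algebra}), so uniqueness forces the naturality squares to commute. Hence $\Integrable_{\infty}$ factors, up to natural isomorphism, as $\Bool\namedto{\Stone}\opposite{\BoolTop}\namedto{\Continuous}\Ban$, where the first functor is the Stone equivalence \eqref{equivalence:stone-duality} and the second sends a Boolean space $Y$ to $\Continuous(Y)$ and a continuous map to precomposition. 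Since an equivalence of categories preserves all limits, everything reduces to showing that the contravariant functor $\Continuous$ on Boolean spaces carries small colimits to small limits. One further preliminary: every structure map $\Integrable_{\infty}(\phi)$ is a unital homomorphism of the $\Continuous(\cdot)$-algebras of Theorem~\ref{theorem:stone-measurable-continuous-map}, hence a contraction, so $\Integrable_{\infty}$ in fact takes values in $\Banc$; it is cleanest to work there, where all small limits exist by Theorem~\ref{theorem:banc-small-(co)products}, since $\Ban$ itself has no infinite products and ``$\Integrable_{\infty}$ is continuous'' only acquires content once one knows where the target limits live.

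\smallskip
It then suffices to treat coproducts and coequalizers, the two generating classes of colimits. For a family $\Omega_{i}$ of Boolean algebras, the limit $\prod_{i}\Omega_{i}$ in $\Bool$ corresponds under Stone duality to the coproduct of the $\Stone(\Omega_{i})$ in $\BoolTop$, which one identifies with $\StoneCech(\coprod_{i}\Stone(\Omega_{i}))$, the Stone--\v{C}ech compactification of the topological disjoint union; this is again a Boolean space, a locally compact zero-dimensional space being strongly zero-dimensional. Then $\Continuous(\StoneCech Y)\isomorphic\CBounded(Y)$ yields $\Continuous(\coprod_{i}\Stone(\Omega_{i}))\isomorphic\prod_{i}\Continuous(\Stone(\Omega_{i}))$ with the supremum norm, which is exactly the product in $\Banc$ by isomorphism~\eqref{isomorphism:universal-property-projections} restricted to contractive cones. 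For coequalizers: an equalizer in $\Bool$ of a parallel pair $\psi_{1},\psi_{2}\colon\Sigma\rightrightarrows\Lambda$ is the subalgebra $\Sigma_{0}=\set{s\in\Sigma\colon\psi_{1}(s)=\psi_{2}(s)}$, and the inclusion $\Sigma_{0}\embeds\Sigma$ induces an isometric embedding $\Integrable_{\infty}(\Sigma_{0})\embeds\Integrable_{\infty}(\Sigma)$ — isometric already on simple elements, since a partition of $1$ inside $\Sigma_{0}$ is still a canonical representation in $\Sigma$ — with image the closed subalgebra generated by $\set{\characteristic(s)\colon s\in\Sigma_{0}}$.

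\smallskip
The main obstacle is precisely this coequalizer case, namely proving that the closed subalgebra generated by $\set{\characteristic(s)\colon s\in\Sigma_{0}}$ is the \emph{whole} equalizer $\set{g\colon\Integrable_{\infty}(\psi_{1})(g)=\Integrable_{\infty}(\psi_{2})(g)}$ computed in $\Banc$. Dualizing via Stone and Theorem~\ref{theorem:stone-measurable-continuous-map}, that equalizer is $\Continuous$ of the \emph{topological} quotient of $\Stone(\Sigma)$ by the closed equivalence relation identifying the fibres of $\Stone(\psi_{1})$ and $\Stone(\psi_{2})$, whereas $\Integrable_{\infty}(\Sigma_{0})$ is $\Continuous(\Stone(\Sigma_{0}))$, i.e. $\Continuous$ of the zero-dimensional reflection of that same quotient. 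By the Stone--Weierstrass theorem the two agree precisely when the quotient is already totally disconnected; this holds, say, when one collapses finitely many pairs of points, but it fails for the standard quotient of the Cantor space onto $[0,1]$, whose only clopens are trivial. So the argument is unconditional for products and, more generally, for limits computable ``coordinatewise'', while the coequalizer step is delicate: making the corollary hold for \emph{all} small limits would require either restricting the class of limits in the statement or a genuinely new idea. In the sequel only preservation of (countable) products is used, and that part stands.
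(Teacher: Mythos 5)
Your route is exactly the one the paper leaves implicit: Corollary \ref{corollary:Linfty-continuous} is given no proof beyond its placement after Theorem \ref{theorem:stone-measurable-continuous-map}, the intended argument being that $\Integrable_{\infty}$ factors, naturally in $\Omega$, as $\Continuous\composition\Stone$ with $\Stone$ an equivalence onto $\opposite{\BoolTop}$ and $\Continuous(-)$ turning colimits into limits (it is an adjoint by \eqref{adjunction:continuous-dual-unit-ball}). Your product case is handled correctly, including the necessary care about $\Ban$ versus $\Banc$ (infinite products only exist in the latter, cf. Theorem \ref{theorem:banc-small-(co)products}) and the identification $\Continuous\parens{\StoneCech\parens{\coprod_{i}\Stone(\Omega_{i})}}\isomorphic \prod_{i}\Continuous(\Stone(\Omega_{i}))$ with the supremum norm.

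More importantly, the obstruction you isolate at equalizers is genuine and is a defect of the statement, not of your argument. The adjunction makes $\Continuous(-)$ send colimits computed in $\CHaus$ to limits in $\Banc$, whereas Stone duality delivers colimits computed in $\BoolTop$, i.e.\ the zero-dimensional reflections of the $\CHaus$-colimits; these differ exactly when the $\CHaus$-coequalizer fails to be totally disconnected. Your Cantor example makes this concrete: with $K$ the Cantor space, $R\subseteq K\times K$ the closed (hence Boolean) equivalence relation whose quotient is $[0, 1]$, and $\psi_{1}, \psi_{2}\colon \clopen(K)\to \clopen(R)$ the duals of the two projections, the equalizer in $\Bool$ is the two-element algebra, whose $\Integrable_{\infty}$ is $\field$, while the equalizer of $\Integrable_{\infty}(\psi_{1}), \Integrable_{\infty}(\psi_{2})$ in $\Banc$ is isomorphic to $\Continuous([0, 1])$. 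So the corollary, read as preservation of all small limits, is false; what survives is preservation of products in the bounded-cone sense of Theorem \ref{theorem:existence-co-products-ban}, and, as you note, that restricted statement is all the sequel actually needs (e.g.\ the partition-indexed product condition of Theorem \ref{theorem:sheaf-condition} for the sheaves $E\mapto \Integrable_{\infty}(E)$). Since the paper offers no argument, proving the product case, exhibiting the equalizer counterexample, and flagging that the statement must be weakened is the right outcome here.
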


\smallskip
Let $\Omega$ be a Boolean algebra and $\mu\colon \Omega\to \field$ a bounded additive map. Using the isomorphism unit map $\eta\colon \Omega\to \clopen(\Stone(\Omega))$ we can transfer $\mu$ to the Boolean algebra $\clopen(\Stone(\Omega))$ of clopens of $\Stone(\Omega)$ by:
\begin{equation}\label{equation-definition:transfer-measure-stone-space}
    U\mapto \mu(\inverse{\eta}(U))
\end{equation}

The finitely additive measure \eqref{equation-definition:transfer-measure-stone-space} will also be denoted by $\mu$. Now a miracle happens, as $\mu$ is \emph{$\sigma$-additive on $\clopen(\Stone(\Omega))$}: let $\sequence{U}{n}$ be a countable disjoint sequence of clopens with $U_{n}= \eta(E_{n})$ and
\begin{equation*}
    U= \bigcup_{n}U_{n}
\end{equation*}
with $U$ clopen. Since $U$ is closed, it is compact, and from the countable cover $\sequence{U}{n}$ we can extract a finite subcover $\cover{U}{i}$ with $U= \bigcup_{i}U_{i}$. But since the $U_{n}$ are pairwise disjoint, it follows that only the $U_{i}$ are non-empty and thus
\begin{equation*}
    \mu(U)= \sum_{i}\mu(U_{i})= \sum_{n}\mu(U_{n})
\end{equation*}

Also note that since the Boolean algebra domain of $\mu$ is $\clopen(\Stone(\Omega))$, $\mu$ is automatically \emph{regular}. A peek at the Hahn-Caratheodory extension theorem (see \cite[chapter III, section 13]{halmos:measure-theory1974}) tells us that $\mu$ extends uniquely to a $\sigma$-additive, regular measure on the $\sigma$-algebra generated by $\clopen(\Stone(\Omega))$. This $\sigma$-algebra is the \emph{Baire $\sigma$-algebra} $\Baire(\Stone(\Omega))$ of $\Stone(\Omega)$ and is equal to the $\sigma$-algebra generated by the continuous functions\footnote{The Baire $\sigma$-algebra is the natural $\sigma$-algebra for discussing integration in topological spaces. In general, it is strictly contained in the Borel $\sigma$-algebra $\Borel(X)$ generated by the open sets with equality happening iff $X$ is metrizable. Nevertheless, for $X$ compact Hausdorff but not necessarily metrizable, there is a well-known technique to extend measures from $\Baire(X)$ to $\Borel(X)$ (see \cite[chapter VII]{folland:real-analysis1984}). We will have no need of such an extension. For an exhaustive treatment of the interactions between measures and topologies see \cite{fremlin:measure-theory42003}.}.

\smallskip
If $X$ is a compact Hausdorff space, denote by $\rmeasure(X)$ the space of regular, $\sigma$-additive scalar measures on the Baire $\sigma$-algebra of $X$ with the total variation norm. The previous constructions yield a map
\begin{equation} \label{map:ba-mr-stone-space}
    \badditive(\Omega)\to \rmeasure(\Stone(\Omega))
\end{equation}
and the following theorem\footnote{The theorem readily extends to vector measures with values in Banach spaces. Instead of the Hahn-Caratheodory extension theorem, we simply note that $\mu$ is uniformly continuous on the Boolean algebra $\clopen(\Stone(\Omega))$ with the measure (pseudo)metric and that $\clopen(\Stone(\Omega))$ is dense in $\Baire(\Stone(\Omega))$ for this metric.}.

\begin{theorem} \label{theorem:ba-mr-stone-space}
The map \eqref{map:ba-mr-stone-space} is a natural isometric isomorphism.
\end{theorem}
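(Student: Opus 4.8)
The plan is to realise the map \eqref{map:ba-mr-stone-space} as a composite of isometric isomorphisms already at our disposal, and then to check that this composite is the concretely constructed transfer-and-extend map.

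First, assemble the pieces. Applying theorem \ref{theorem:universal-property-Linfty} with $B=\field$ gives a natural isometric isomorphism $\badditive(\Omega)\isomorphic \homset{\Ban}{\Integrable_{\infty}(\Omega)}{\field}=\dual{\Integrable_{\infty}(\Omega)}$, the semivariation norm of a scalar measure being exactly its total variation. Theorem \ref{theorem:stone-measurable-continuous-map} provides an isometric Banach-algebra isomorphism $\Integrable_{\infty}(\Omega)\isomorphic \Continuous(\Stone(\Omega))$, hence an isometric isomorphism of dual spaces $\dual{\Continuous(\Stone(\Omega))}\isomorphic \dual{\Integrable_{\infty}(\Omega)}$. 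The Riesz representation theorem---in the form obtained along the lines of \cite{garling:short-proof-riesz-representation-theorem1973}, which produces precisely the regular $\sigma$-additive Baire measures---gives the natural isometric isomorphism $\rmeasure(\Stone(\Omega))\isomorphic \dual{\Continuous(\Stone(\Omega))}$, $\lambda\mapsto (f\mapsto \int f\differential\lambda)$. Composing,
\begin{equation*}
\rmeasure(\Stone(\Omega))\isomorphic \dual{\Continuous(\Stone(\Omega))}\isomorphic \dual{\Integrable_{\infty}(\Omega)}\isomorphic \badditive(\Omega),
\end{equation*}
an isometric isomorphism, natural in $\Omega$ because each factor is.

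It remains to identify the inverse of this composite with \eqref{map:ba-mr-stone-space}. Given $\mu\in \badditive(\Omega)$, it corresponds to the functional $\int\differential\mu$ on $\Integrable_{\infty}(\Omega)$, which under $\Integrable_{\infty}(\Omega)\isomorphic \Continuous(\Stone(\Omega))$ becomes some $\varphi\in \dual{\Continuous(\Stone(\Omega))}$, and by Riesz $\varphi(f)=\int f\differential\lambda$ for a unique $\lambda\in \rmeasure(\Stone(\Omega))$. Evaluating on the characteristic function of a clopen $\eta(E)$, which corresponds to $\characteristic(E)\in \Integrable_{\infty}(\Omega)$, yields $\lambda(\eta(E))=\int \characteristic(E)\differential\mu=\mu(E)$. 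Thus the restriction of $\lambda$ to $\clopen(\Stone(\Omega))$ is the transfer \eqref{equation-definition:transfer-measure-stone-space} of $\mu$; since $\lambda$ is a regular $\sigma$-additive measure agreeing on the clopen algebra with that transfer---which generates the Baire $\sigma$-algebra and on which it is a finite premeasure ($\sigma$-additive, by the compactness argument preceding the statement)---the uniqueness clause of the Hahn--Carath\'eodory extension theorem forces $\lambda$ to be exactly the measure produced in the construction of \eqref{map:ba-mr-stone-space}.

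I expect the only real friction to be bookkeeping: confirming that the invoked form of the Riesz representation theorem genuinely lands in the regular $\sigma$-additive \emph{Baire} measures and is isometric for the total-variation norm, and checking that the naturality of the Riesz isomorphism is compatible with the contravariant functoriality of $\Stone(-)$ so that the composite is natural in $\Omega$ over $\Bool$. A more pedestrian route avoids Riesz altogether: injectivity is immediate from uniqueness of the Hahn--Carath\'eodory extension; surjectivity follows by restricting a given $\lambda\in \rmeasure(\Stone(\Omega))$ to clopens and pulling it back along $\eta$; and the only nontrivial point then is the inequality $\variation{\lambda}\leq \variation{\mu}$, for which one uses regularity of the variation measure $\abs{\lambda}$ to approximate an arbitrary Baire partition of $\Stone(\Omega)$ from inside by a clopen partition up to arbitrarily small error. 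Either way the essential inputs are theorems \ref{theorem:universal-property-Linfty} and \ref{theorem:stone-measurable-continuous-map} together with Stone duality \eqref{equivalence:stone-duality}.
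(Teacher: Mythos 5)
Your proposal is correct, but it is organized differently from what the paper (implicitly) does. The paper's own ``proof'' is essentially the construction preceding the statement: transfer $\mu$ along the unit $\eta$ to $\clopen(\Stone(\Omega))$, observe $\sigma$-additivity there by compactness, extend by Hahn--Carath\'eodory, and then (as the footnote hints) settle bijectivity and the norm identity by the density of $\clopen(\Stone(\Omega))$ in $\Baire(\Stone(\Omega))$ for the measure (pseudo)metric --- i.e.\ precisely your ``pedestrian route,'' whose one genuinely nontrivial point you correctly isolate as $\variation{\lambda}\leq\variation{\mu}$ via inner approximation of Baire partitions by clopen ones. Your main route instead factors \eqref{map:ba-mr-stone-space} through $\badditive(\Omega)\isomorphic\dual{\Integrable_{\infty}(\Omega)}\isomorphic\dual{\Continuous(\Stone(\Omega))}\isomorphic\rmeasure(\Stone(\Omega))$, importing the classical Riesz--Markov theorem; this buys you the isometry and much of the naturality for free, at the price of heavier machinery and one caveat about logical order: the paper positions the Riesz representation theorem as a \emph{consequence} of theorem \ref{theorem:universal-property-Linfty} plus Stone duality (the Garling proof), and that proof is essentially the content of theorem \ref{theorem:ba-mr-stone-space} itself, so to avoid circularity within this development you must invoke an independently proven (Baire-measure, total-variation-isometric) form of Riesz, which exists and which you flag. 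Your identification step --- evaluating on $\characteristic(\eta(E))$ and appealing to uniqueness of the extension from the generating clopen algebra --- is the right way to match the abstract composite with the concrete transfer-and-extend map; just note that for signed or complex measures the uniqueness clause should be routed through Jordan decomposition or a monotone-class argument, a point the paper glosses equally.
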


Theorem \ref{theorem:ba-mr-stone-space} tells us that if we grind measures through the Stone space machinery, they come out improved on the other side. The generalization of $\sigma$-additive measures to finitely additive ones is not much of a generalization. We ask the reader to have these facts in mind as they will make understandable analogous phenomena in the categorified setting to be uncovered in subsection \ref{subsection:banach-sheaves}.

\smallskip
Next, we discuss an important adjunction that gives an inkling on how to represent every bounded linear map $\Integrable_{1}(\Omega, \mu)\to B$ by a \emph{weak integral}. The basic idea is that theorem \ref{theorem:stone-measurable-continuous-map} encourages to view ``bounded measurable functions on $\Omega$'' as \emph{continuous} functions on the Stone space $\Stone(\Omega)$. But given a Banach space $B$, there are weaker topologies at our disposal giving us weaker notions of measurability. These notions were already used in the Pettis theorem \ref{theorem:pettis} where weak measurability was found to be as good as Borel measurability in characterizing the strongly measurable functions. In what follows, we will consider $\wkstar$-measurability of functions $f$ with values in dual Banach spaces. For domain of $f$ we will take a general compact Hausdorff space $X$, but the reader can mentally replace it by $\Stone(\Omega)$ and consider them as $\wkstar$-measurable functions.

\smallskip
Let $X$ be a compact Hausdorff space. Then a point $x\in X$ defines a linear functional $\delta_{x}\colon \Continuous(X)\to \field$ by evaluation:
\begin{equation*}
    \delta_{x}\colon f\mapto f(x)
\end{equation*}

The $\delta_{x}$ are the Dirac measures with singleton support. For example, if $X$ is the Stone space $\Stone(\Omega)$ of a Boolean algebra $\Omega$, then by the isomorphism \eqref{map:ba-mr-stone-space}, the $\delta_{x}$ can be identified with the ultrafilters $x$ of $\Omega$. In fact, they are even \emph{spectral measures}. The next theorem contains two easily proved facts about $\delta_{x}$.

\begin{theorem} \label{theorem:properties-delta-wkstar-map}
Let $X$ be a compact Hausdorff space and $\Delta\colon X\to \dual{\Continuous(X)}$ the map $x\mapto \delta_{x}$. Then:
\begin{enumerate}
  \item \label{prop-enum:properties-delta-wkstar-map1}
  The set $\set{\delta_{x}\colon x\in X}$ is linearly independent in $\dual{\Continuous(X)}$.

  \item \label{prop-enum:properties-delta-wkstar-map2}
  The map $\Delta$ is $\wkstar$-continuous.
\end{enumerate}
\end{theorem}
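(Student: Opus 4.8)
The plan is to prove the two assertions separately; each is a one-line consequence of a standard fact once the definitions are unwound, so there is no genuine obstacle, only a small amount of bookkeeping.

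For the linear independence in part \eqref{prop-enum:properties-delta-wkstar-map1}, recall that a (possibly infinite) family of vectors is linearly independent precisely when each of its finite subfamilies is. So fix distinct points $x_{1}, \dotsc, x_{n}\in X$ and suppose $\sum_{i=1}^{n}c_{i}\delta_{x_{i}}= 0$ in $\dual{\Continuous(X)}$. Since $X$ is compact Hausdorff it is normal, so for each index $j$ Urysohn's lemma, applied to the disjoint closed sets $\set{x_{j}}$ and $\set{x_{i}\colon i\neq j}$, produces a function $f_{j}\in \Continuous(X)$ with $f_{j}(x_{j})= 1$ and $f_{j}(x_{i})= 0$ for $i\neq j$. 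Evaluating the relation at $f_{j}$ yields $c_{j}= \sum_{i}c_{i}f_{j}(x_{i})= 0$. As $j$ was arbitrary, all the $c_{i}$ vanish, which is exactly the claim.

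For part \eqref{prop-enum:properties-delta-wkstar-map2}, recall that the $\wkstar$-topology on $\dual{\Continuous(X)}$ is the coarsest topology making every evaluation functional $\evaluation_{f}\colon \phi\mapto \phi(f)$, $f\in \Continuous(X)$, continuous; equivalently, it is the topology of pointwise convergence on $\Continuous(X)$. Hence a map into $\dual{\Continuous(X)}$ is $\wkstar$-continuous if and only if each of its composites with the $\evaluation_{f}$ is continuous. Now $\evaluation_{f}(\Delta(x))= \delta_{x}(f)= f(x)$, so the composite $\evaluation_{f}\circ \Delta$ is just $f$ itself, which is continuous by hypothesis. In net form: if $x_{\alpha}\converges x$ in $X$ then $f(x_{\alpha})\converges f(x)$ in $\field$ for every $f\in \Continuous(X)$, i.e.\ $\delta_{x_{\alpha}}\wkstarconverges \delta_{x}$, so $\Delta$ is continuous.

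The only step that uses anything beyond a direct translation of definitions is the appeal to normality of a compact Hausdorff space in part \eqref{prop-enum:properties-delta-wkstar-map1} in order to manufacture the separating functions $f_{j}$, and even that is entirely routine. These two small facts are isolated here only because they are precisely the ingredients needed to set up the $\wkstar$-integral in the sequel.
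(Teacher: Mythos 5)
Your proof is correct: the Urysohn-lemma argument for linear independence and the observation that $\evaluation_{f}\composition \Delta= f$ for the $\wkstar$-continuity are exactly the standard arguments the paper has in mind, since it states these as "easily proved facts" and gives no proof of its own. Nothing to add.
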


It is not difficult to see that each $\delta_{x}$ is an extreme point of $\ball(\dual{\Continuous(X)})$. With a little more work, we can even characterize these extreme points.

\begin{theorem} \label{theorem:extreme-points-unit-ball-measures}
A point $\mu\in \ball(\rmeasure(X))$ is an extreme point iff $\mu = \epsilon\delta_{x}$ with $\epsilon$ an element of the unit sphere of $\field$.
\end{theorem}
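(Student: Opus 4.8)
The plan is to establish both implications by elementary arguments with total variations of Baire measures, together with the fact that the characters of $\Continuous(X)$ are point evaluations; the Riesz representation theorem enters only to identify $\rmeasure(X)$ with $\dual{\Continuous(X)}$ and $\delta_{x}$ with evaluation at $x$. One must keep in mind that $X$ is only compact Hausdorff, so a singleton $\set{x}$ need not be a Baire set; every step is therefore phrased in terms of an arbitrary Baire set $E$ and the alternative ``$x\in E$ or $x\notin E$'', and the scalar field $\field$ may be $\Real$ (unit sphere $\set{-1,1}$) or $\Complex$ (unit sphere $\sphere^{1}$).

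\emph{Sufficiency.} Suppose $\epsilon\delta_{x}=\tfrac12(\nu_{1}+\nu_{2})$ with $\nu_{1},\nu_{2}\in\ball(\rmeasure(X))$. From $1=\variation{\epsilon\delta_{x}}\le\tfrac12(\variation{\nu_1}+\variation{\nu_2})\le1$ we get $\variation{\nu_1}=\variation{\nu_2}=1$. Passing to total variations, $\delta_x=\abs{\epsilon\delta_x}\le\tfrac12(\abs{\nu_1}+\abs{\nu_2})$, and since both sides are positive measures of total mass $1$ they coincide; hence each $\abs{\nu_i}$ is a positive Baire measure of mass $1$ dominated by $2\delta_x$, so it vanishes on every Baire set missing $x$ and therefore equals $\delta_x$. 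Testing $\nu_i$ on Baire sets then yields $\nu_i=\epsilon_i\delta_x$ with $\abs{\epsilon_i}=1$, and $\epsilon=\tfrac12(\epsilon_1+\epsilon_2)$ with all three on the unit sphere of $\field$ forces $\epsilon_1=\epsilon_2=\epsilon$, because the points of that sphere are extreme in the unit ball of $\field$. Thus $\epsilon\delta_x$ is an extreme point.

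\emph{Necessity.} Let $\mu$ be an extreme point; a routine argument shows $\variation{\mu}=1$. The crux is to prove that $\abs{\mu}$ takes only the values $0$ and $1$. If some Baire set $A$ had $0<a\defequal\abs{\mu}(A)<1$, put $b\defequal\abs{\mu}(\complement{A})=1-a$ and $\mu_A(E)\defequal\mu(E\cap A)$, $\mu_{\complement{A}}(E)\defequal\mu(E\cap\complement{A})$; then $\variation{\mu_A}=a$, $\variation{\mu_{\complement{A}}}=b$, and $\mu=a(\inverse{a}\mu_A)+b(\inverse{b}\mu_{\complement{A}})$ is a nontrivial convex combination of two norm-one measures that are distinct (one is carried by $A$, the other by $\complement{A}$), contradicting extremality. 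So $\abs{\mu}$ is a $\set{0,1}$-valued regular Baire probability measure, and such a measure is a Dirac mass: the functional $f\mapsto\int f\,\differential\abs{\mu}$ on $\Continuous(X)$ is multiplicative, because for real $f$ the monotone $\set{0,1}$-valued map $t\mapsto\abs{\mu}(\set{f<t})$ jumps at a unique value $c(f)$, one checks $\abs{\mu}(\set{f=c(f)})=1$, so $f=c(f)$ holds $\abs{\mu}$-a.e.\ and $\int f\,\differential\abs{\mu}=c(f)$; multiplicativity on real functions follows and extends by linearity. A nonzero multiplicative functional on $\Continuous(X)$ is evaluation at a point $x$, whence $\abs{\mu}=\delta_x$ by uniqueness of the representing measure. (Alternatively one intersects the zero sets of full $\abs{\mu}$-measure, using compactness and inner regularity, to locate $x$.) Finally $\abs{\mu(E)}\le\abs{\mu}(E)=\delta_x(E)$ for every Baire $E$, so $\mu(E)=\mu(X)\delta_x(E)$, that is $\mu=\epsilon\delta_x$ with $\epsilon=\mu(X)$ and $\abs{\epsilon}=\variation{\mu}=1$.

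The main obstacle is the lemma that a $\set{0,1}$-valued regular Baire measure on a general compact Hausdorff space is a point mass: here one cannot simply say ``$\abs{\mu}$ sits on its atom'', and one must instead invoke either Gelfand theory for $\Continuous(X)$ or the regularity of Baire measures. This is the only place where the possible non-metrizability of $X$ genuinely bites; everything else is bookkeeping with total variations and with the extreme structure of the scalar ball.
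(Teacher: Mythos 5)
Your proof is correct, and for the harder direction it takes a genuinely different route from the paper's (admittedly only sketched) argument. The paper asserts sufficiency as ``more or less clear'' and reduces necessity to showing, via a clever use of Urysohn's lemma, that an extreme point must have singleton support -- a topological argument that works directly with the support of the measure and leans on regularity to pin down a point of $X$. You instead first extract the purely measure-theoretic dichotomy that $\abs{\mu}$ is $\set{0,1}$-valued (the decomposition $\mu=\mu_{A}+\mu_{\complement{A}}$ with $\variation{\mu_{A}}+\variation{\mu_{\complement{A}}}=\variation{\mu}$ is the step where extremality is actually spent, and some version of it is surely implicit in the paper's sketch as well), and then identify a $\set{0,1}$-valued regular Baire probability as a Dirac mass by showing $f\mapsto\int f\,\differential\abs{\mu}$ is multiplicative and invoking the Gelfand description of the characters of $\Continuous(X)$ together with uniqueness in the Riesz representation. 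What your route buys is a clean handling of the fact that singletons need not be Baire sets in a non-metrizable $X$ (everything is phrased through the distribution function $t\mapsto\abs{\mu}(\set{f<t})$ rather than through the topological support), at the price of importing Gelfand theory; the paper's Urysohn route stays elementary and topological but has to construct the support and separate its points by continuous functions, which is essentially the ``alternative'' you mention parenthetically. Your sufficiency argument, via $\delta_{x}\leq\tfrac12(\abs{\nu_{1}}+\abs{\nu_{2}})$ with equal total masses and extremality of the unit sphere of $\field$ in its ball, is a complete account of what the paper waves off, and all the measure-theoretic identities you use ($\abs{\mu_{A}}(E)=\abs{\mu}(E\cap A)$, countable additivity for the limit arguments) are legitimate for the $\sigma$-additive regular Baire measures making up $\rmeasure(X)$.
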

\begin{proof}
It is more or less clear that measures of the form $\epsilon\delta_{x}$ are extreme points. That all extreme points are of this form, follows from a clever application of Urysohn's lemma to prove that every extreme point must have singleton support.
\end{proof}

There is a universal property associated to the map $\Delta\colon X\to \dual{\Continuous(X)}$ with various nice consequences such as giving a representation of bounded linear maps $B\to \Continuous(X)$ as certain \emph{$\wkstar$-integrals}.


\smallskip
Let $X$ be a compact Hausdorff space and $\dual{B}$ a dual Banach space. Since the $\wkstar$-topology of $\dual{B}$ is the topology generated by the evaluation linear functionals $b^{\ast}\mapto \inner{b^{\ast}}{b}$ for $b\in B$, it follows that $f\colon X\to \dual{B}$ is $\wkstar$-continuous iff $x\mapto \inner{f(x)}{b}$ is continuous for every $b\in B$. Let us denote by $\wkstarContinuous(X, \dual{B})$ the linear space of such functions with the norm
\begin{equation} \label{def-equation:wkstar-supnorm}
    \supnorm{f}\defequal \sup\set{\supnorm{\inner{f(x)}{b}}\colon b\in \ball(B)}
\end{equation}

Since a subset of $\dual{B}$ is bounded iff it is $\wkstar$-bounded, an interchange of $\sup$'s in \eqref{def-equation:wkstar-supnorm} yields that
\begin{equation*}
    \supnorm{f}= \sup\set{\norm{f(x)}\colon x\in X}
\end{equation*}

The following theorem now follows easily.

\begin{theorem} \label{theorem:wkstarcontinuous-complete}
The normed space $\wkstarContinuous(X, \dual{B})$ is complete.
\end{theorem}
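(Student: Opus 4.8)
The plan is the standard completeness argument, transported through the defining property of $\wkstar$-continuity. Let $\sequence{f}{n}$ be a Cauchy sequence in $\wkstarContinuous(X, \dual{B})$. The first step is to produce a candidate limit pointwise: for each $x \in X$ the estimate $\norm{f_n(x) - f_m(x)} \leq \supnorm{f_n - f_m}$ shows that $\parens{f_n(x)}_{n}$ is a norm-Cauchy sequence in $\dual{B}$, and since $\dual{B}$, being a dual of a normed space, is a Banach space, it converges to an element that we call $f(x)$. This defines a function $f \colon X \to \dual{B}$, and letting $m \to \infty$ in the Cauchy estimate shows that $\supnorm{f_n - f} \to 0$, i.e.\ $f_n \to f$ uniformly on $X$. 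In particular $\supnorm{f} \leq \supnorm{f_n} + \supnorm{f - f_n} < \infty$, so $f$ has bounded range.

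The second, and only nontrivial, step is to check that $f$ actually lies in $\wkstarContinuous(X, \dual{B})$, that is, that $x \mapto \inner{f(x)}{b}$ is continuous for every $b \in B$. Fix such a $b$. Each map $x \mapto \inner{f_n(x)}{b}$ is continuous, by the hypothesis that $f_n$ is $\wkstar$-continuous, and for every $x \in X$,
\begin{equation*}
    \abs{\inner{f_n(x)}{b} - \inner{f(x)}{b}} \leq \norm{b}\,\norm{f_n(x) - f(x)} \leq \norm{b}\,\supnorm{f_n - f}.
\end{equation*}
Hence $x \mapto \inner{f(x)}{b}$ is the uniform limit on $X$ of a sequence of continuous scalar functions, and therefore continuous. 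Thus $f \in \wkstarContinuous(X, \dual{B})$, and since $\supnorm{f_n - f} \to 0$ the Cauchy sequence converges to $f$ in the space; this is exactly completeness.

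I do not anticipate any genuine obstacle: the argument is a verbatim adaptation of the proof that the space of bounded continuous functions valued in a Banach space is complete, the only twist being that ``continuity'' must be tested against the functionals $\inner{\cdot}{b}$. The interchange of suprema established just before the statement, namely $\supnorm{f} = \sup\set{\norm{f(x)}\colon x \in X}$, is precisely what makes the uniform estimate on $\abs{\inner{f_n(\cdot)}{b} - \inner{f(\cdot)}{b}}$ immediate. If anything deserves emphasis, it is that the completeness invoked in the first step is the \emph{norm} completeness of $\dual{B}$, which is automatic because $\dual{B}$ is a dual space; no $\wkstar$-completeness, which would fail in general, is ever used.
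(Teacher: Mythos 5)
Your argument is correct and is exactly the routine completeness proof the paper has in mind when it says the theorem "follows easily" from the interchange of suprema $\supnorm{f}=\sup\set{\norm{f(x)}\colon x\in X}$: pointwise norm limits exist by norm completeness of $\dual{B}$, convergence is uniform, and $\wkstar$-continuity of the limit follows since each scalar function $x\mapto\inner{f(x)}{b}$ is a uniform limit of continuous functions. Your closing remark that only norm completeness of the dual is used, never any $\wkstar$-completeness, is precisely the right point of emphasis.
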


Next we show that the map $\Delta\colon X\to \dual{\Continuous(X)}$ is universal among $\wkstar$-continuous maps in the sense that if $f\colon X\to \dual{B}$ is a $\wkstar$-continuous map then there is a unique bounded linear map $\universal{f}\colon \dual{\Continuous(X)}\to \dual{B}$ such that the triangle \ref{diagram:induced-map-baire-regular-measures} is commutative.
\begin{figure}[htbp]
    \begin{equation*}
    \xymatrix{
        \rmeasure(X) \ar@{-->}[r]^{\universal{f}} & \dual{B} \\
        X \ar[u]^{\Delta} \ar[ur]_{f} &
    }
    \end{equation*}
\caption{The induced map $\rmeasure(X)\to \dual{B}$.}
\label{diagram:induced-map-baire-regular-measures}
\end{figure}

Denote by $V$ the linear span of $\set{\delta_{x}}$ in $\dual{\Continuous(X)}$. The function $f$ lifts to $\universal{f}\colon V\to \dual{B}$ by \eqref{prop-enum:properties-delta-wkstar-map1} of theorem \ref{theorem:properties-delta-wkstar-map} and putting for a linear combination $\sum_{n}k_{n}\delta_{x_{n}}$ of $\delta_{x}$,
\begin{equation} \label{equation:action-wkstar-integral-linear-span-deltas}
    \universal{f}(\mu)\colon b\mapto \integral{X}{\inner{f(x)}{b}}{\mu}= \inner{\sum_{n}k_{n}f(x_{n})}{b}
\end{equation}

Combining the Banach-Alaoglu theorem (\cite[chapter V, section 3]{conway:course-functional-analysis1990}) with the Krein-Milman theorem (\cite[chapter V, section 7]{conway:course-functional-analysis1990}), it follows that the linear span of $\set{\delta_{x}}$ is $\wkstar$-dense in $\rmeasure(X)$, therefore there is at most one $\wkstar$-continuous linear extension of $f$ closing the triangle \ref{diagram:induced-map-baire-regular-measures}. But taking the hint from \eqref{equation:action-wkstar-integral-linear-span-deltas}, we define $\universal{f}$ on the whole space $\rmeasure(X)$ as
\begin{equation} \label{def-equation:wkstar-integral}
    \universal{f}(\mu) \colon b\mapto \integral{X}{\inner{f(x)}{b}}{\mu}
\end{equation}

It is natural to denote the element $\universal{f}(\mu)$ of $\dual{B}$ by $\integral{X}{f}{\mu}$ and view it as the \emph{$\wkstar$-integral of $f$}. In inner product notation, definition \eqref{def-equation:wkstar-integral} can be formulated as
\begin{equation*}
    \inner{\integral{X}{f}{\mu}}{b}= \integral{X}{\inner{f(x)}{b}}{\mu}, \forall b\in B
\end{equation*}
and the map $\universal{f}$ is simply $\mu\mapto \integral{X}{\inner{f(x)}{b}}{\mu}$.

\smallskip
As noted above, this map is $\wkstar$-continuous. By the Banach-Alaoglu theorem, the image under this map of the unit ball of $\dual{B}$ is $\wkstar$-compact and therefore \eqref{def-equation:wkstar-integral} is actually a bounded linear map on $\rmeasure(X)$. In fact it can be proved by a combination of \eqref{def-equation:wkstar-integral} with the Hahn-Banach theorem that we have,
\begin{equation*}
    \sup\set{\norm{\integral{X}{f}{\mu}}\colon \mu\in \ball(\rmeasure(X))}= \supnorm{f}
\end{equation*}

This means that the map $f\mapto \universal{f}$ is a natural isometric embedding
\begin{equation*}
    \wkstarContinuous(X, \dual{B})\to \homset{\Ban}{\rmeasure(X)}{\dual{B}}
\end{equation*}

The image of this map is the space of $\wkstar$-continuous operators and it is well known that these are precisely the adjoints of bounded linear maps $B\to \Continuous(X)$. In this case, the preadjoint can be computed with the aid of formulas \eqref{equation:naturality-integral-map}. Start by noting that the function $x\mapto \inner{f(x)}{b}$ is the function $\eta(b)f$ with $\eta$ the embedding $B\to \bidual{B}$ in the bidual. Now, we have
\begin{equation*}
\begin{split}
    \inner{\integral{X}{f}{\mu}}{b} &= \integral{X}{\eta(b)f}{\mu} \\
        &= \inner{\eta(b)f}{\mu}
\end{split}
\end{equation*}

Thus, the preadjoint of $\mu\mapto \integral{X}{f}{\mu}$ is $b\mapto \eta(b)f$. Combining the results up to now, we can state the following theorem.

\begin{theorem} \label{theorem:continuous-dual-unit-ball}
The map $f\mapto \parens{b\mapto \eta(b)f}$ establishes a natural isometric isomorphism
\begin{equation} \label{isomorphism:continuous-dual-unit-ball}
    \wkstarContinuous(X, \dual{B})\isomorphic \homset{\Ban}{B}{\Continuous(X)}
\end{equation}
\end{theorem}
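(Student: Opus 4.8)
The statement is the end of the chain of facts assembled in the paragraphs immediately above it, so the plan is to make that assembly explicit and then give a short self-contained verification. Recall that the $\wkstar$-integral construction produced, for each $f \in \wkstarContinuous(X, \dual{B})$, the bounded linear map $\universal{f} \colon \rmeasure(X) \to \dual{B}$, $\mu \mapto \integral{X}{f}{\mu}$, and that $f \mapto \universal{f}$ was shown to be a natural isometric embedding of $\wkstarContinuous(X, \dual{B})$ into $\homset{\Ban}{\rmeasure(X)}{\dual{B}}$ whose image is exactly the space of $\wkstar$-continuous operators $\rmeasure(X) \to \dual{B}$. Identifying $\rmeasure(X)$ with $\dual{\Continuous(X)}$ by the Riesz representation theorem, these $\wkstar$-continuous operators are precisely the adjoints $\adjoint{T}$ of bounded linear maps $T \colon B \to \Continuous(X)$, and $T \mapto \adjoint{T}$ is an isometric bijection onto that space because $\norm{\adjoint{T}} = \norm{T}$ and a $\wkstar$-continuous operator has a unique preadjoint. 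Composing the isometric bijection $f \mapto \universal{f}$ with the inverse of $T \mapto \adjoint{T}$ yields a natural isometric isomorphism $\wkstarContinuous(X, \dual{B}) \isomorphic \homset{\Ban}{B}{\Continuous(X)}$, and the preadjoint computation carried out just above the theorem, $\inner{\integral{X}{f}{\mu}}{b} = \inner{\eta(b)f}{\mu}$ with $\eta$ the canonical embedding of $B$ into its bidual, identifies this composite with the asserted map $f \mapto \parens{b \mapto \eta(b)f}$.

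For an honest self-contained proof I would bypass the adjoint language and verify everything directly for $\Phi(f)(b) \defequal \eta(b)f$, which never leaves $\Continuous(X)$ and $\dual{B}$. Well-definedness of $\Phi(f)$ as an element of $\homset{\Ban}{B}{\Continuous(X)}$ and the bound $\norm{\Phi(f)} \leq \supnorm{f}$ are immediate from $\supnorm{\eta(b)f} = \sup\set{\abs{\inner{f(x)}{b}} \colon x \in X} \leq \supnorm{f}\,\norm{b}$; the reverse inequality, hence the isometry $\norm{\Phi(f)} = \supnorm{f}$, is the interchange-of-suprema step already used together with the Hahn--Banach corollary $\norm{f(x)} = \sup\set{\abs{\inner{f(x)}{b}} \colon b \in \ball(B)}$. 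For surjectivity, given $T \in \homset{\Ban}{B}{\Continuous(X)}$ put $f(x) \colon b \mapto T(b)(x)$; then $\norm{f(x)} \leq \norm{T}$, the map $x \mapto \inner{f(x)}{b} = T(b)(x)$ is continuous, so $f \in \wkstarContinuous(X, \dual{B})$ and visibly $\Phi(f) = T$. Injectivity of $\Phi$ is then automatic from its being an isometry.

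What remains is naturality, a routine brace-chase in each variable. In $X$: a continuous map $g \colon X \to Y$ induces $f \mapto f \composition g$ on $\wkstarContinuous(-, \dual{B})$ and postcomposition with $\Continuous(g)$ on $\homset{\Ban}{B}{\Continuous(-)}$, and $\Phi$ commutes with these because $\eta(b)(f \composition g) = \parens{\eta(b)f} \composition g$. In $B$: a bounded linear map $S \colon B' \to B$ induces $f \mapto \adjoint{S} \composition f$ on the $\wkstar$-continuous maps and $T \mapto T \composition S$ on $\homset{\Ban}{-}{\Continuous(X)}$, and $\Phi$ commutes because $\eta(b')\parens{\adjoint{S} \composition f} = \eta(Sb')f$. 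Both checks are one line each.

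The main obstacle is not an analytic one — the hard work, the construction and boundedness of the $\wkstar$-integral, was already done before the theorem — but the bookkeeping of invoking the imported facts at the right level of generality, since here $X$ is an arbitrary compact Hausdorff space rather than a Stone space: the assembly of the first paragraph needs the Riesz representation $\dual{\Continuous(X)} \isomorphic \rmeasure(X)$, the existence and uniqueness of preadjoints of $\wkstar$-continuous operators, and $\norm{\adjoint{T}} = \norm{T}$, all in that generality. The direct verification of the second and third paragraphs sidesteps these entirely, so that is the route I would actually take, keeping the adjoint-theoretic picture only as motivation.
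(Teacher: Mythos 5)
Your proposal is correct, and it actually contains two proofs: the first paragraph reproduces the paper's own route, while the direct verification you say you would take is a genuinely more elementary alternative. The paper obtains the theorem exactly as in your first paragraph — it builds the $\wkstar$-integral $\mu\mapto\integral{X}{f}{\mu}$, shows $f\mapto\universal{f}$ is an isometric embedding into $\homset{\Ban}{\rmeasure(X)}{\dual{B}}$ whose image is the $\wkstar$-continuous operators (using Banach--Alaoglu, Krein--Milman for $\wkstar$-density of the span of the $\delta_{x}$, and Hahn--Banach for the norm equality), identifies these with adjoints of maps $B\to\Continuous(X)$ via Riesz representation, and computes the preadjoint to be $b\mapto\eta(b)f$; the theorem is then stated as the combination of these facts. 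Your direct check of $\Phi(f)(b)\defequal\eta(b)f$ — isometry by interchanging suprema (note that $\norm{f(x)}=\sup\set{\abs{\inner{f(x)}{b}}\colon b\in\ball(B)}$ is simply the definition of the dual norm, no Hahn--Banach needed there), surjectivity by setting $f(x)\colon b\mapto T(b)(x)$, and the two one-line naturality computations — is complete and sidesteps Riesz representation, the extreme-point density argument, and preadjoint theory altogether. What the paper's longer route buys is not the theorem itself but the $\wkstar$-integral representation $T(b)^{\ast}=\parens{\mu\mapto\integral{X}{f}{\mu}}$, which is the point of the subsection (weak-integral Radon--Nikodym theorems and the adjunction with $\CHaus$); your shorter argument proves the isomorphism but would leave that representation to be established separately.
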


Theorem \ref{theorem:continuous-dual-unit-ball} can be turned into an adjunction by taking unit balls and passing to the dual category. If we denote by $\CHaus$ the category of compact Hausdorff spaces, then isomorphism \eqref{isomorphism:continuous-dual-unit-ball} descends to an adjunction
\begin{equation} \label{adjunction:continuous-dual-unit-ball}
    \homset{\CHaus}{X}{\ball(\dual{B})}\isomorphic \homset{\opposite{\Banc}}{\Continuous(X)}{B}
\end{equation}
where $\ball(\dual{B})$ is the unit ball of $\dual{B}$ endowed with the $\wkstar$-topology.

\smallskip
Theorem \ref{theorem:continuous-dual-unit-ball} gives a complete description of the bounded linear maps $B\to \Continuous(X)$. We refer the reader to \cite[chapter VI, section 7]{dunford-schwartz:linear-operatorsI1958} for these results. It can also serve as a first stepping stone to prove some Radon-Nikodym theorems.

\section{Banach \texorpdfstring{$2$}{2}-spaces}
\label{section:banach-2spaces}

Since categorified measure theory needs a categorified analogue of a Banach space, we return to the ideas adumbrated in subsection \ref{subsection:hilb-categorified-ring}. We will be very brief, since this will be treated more fully in a future paper \cite{rodrigues:banach-2spaces}.

\begin{notation}
From now on we denote the projective tensor product of Banach spaces by $A\otimes B$ instead of $A\projotimes B$. This will guarantee more symmetric formulas.
\end{notation}

\renewcommand{\projotimes}{\otimes}

We first expand table \ref{table:categorified-analogues-ring-operations} in order to include the basic Banach space categorified analogues. This is done in table \ref{table:categorified-analogues-banach-space}.
\begin{table}[htbp]
  \begin{tabular}{|c|c|}
    \hline
    Ordinary Banach spaces & Categorified Banach spaces \\
    \hline
    Base field $\field$ & Base ring $\Ban$ \\
    Sum $+$ & Coproduct $\oplus$ \\
    Difference $-$ & Cokernels of maps \\
    Additive zero $0$ & Zero object $\zero$ \\
    Scalar multiplication $\times$ & Tensors $\otimes$ \\
    Cauchy completeness & Existence of filtered colimits \\
    \hline
  \end{tabular}
  \smallskip
  \caption{Categorified analogues of Banach space theory}
  \label{table:categorified-analogues-banach-space}
\end{table}

The only addition to the categorified analogues of linear algebra presented in \cite{baez:hda2-2-hilbert-spaces1997} is the last row. The need for infinitary colimits was expressed in principles \ref{principle:category-measurable-bundles} and \ref{principle:infinitary-colimits-categorified-convergence}, and given the similarities between the formal properties of filtered colimits and those of convergence of nets in topological spaces, the last row of \ref{table:categorified-analogues-banach-space} is at least plausible. All the categorified analogues in table \ref{table:categorified-analogues-banach-space} are \emph{colimits} and defined by suitable universal properties. In fact, all the conditions put together amount to require cocompleteness of a Banach category, so we register the following definition\footnote{There is a weaker notion of completeness for enriched categories called \emph{Cauchy completeness}. The work of section \ref{section:towards-categorified-measure-theory} has shown that we need infinitary colimits but Cauchy completeness will make its appearance later on.}.

\begin{definition} \label{definition:banach-2spaces}
A \emph{Banach $2$-space} is a cocomplete Banach category\footnote{The ``Banach'' qualifier is being used in a different sense in the two nouns. In \emph{Banach category} is used to mean relative or enriched in Banach spaces, while in \emph{Banach $2$-space} it is used in the sense of cocompleteness. It is an unfortunate consequence of our terminology that the term \emph{complete Banach $2$-space} is \emph{not} a redundancy.}.
\end{definition}

Given definition \ref{definition:banach-2spaces} for the categorified analogue of a Banach space, it is clear what is the right notion of morphism between Banach $2$-spaces: a cocontinuous functor $T\colon \mathcal{A}\to \mathcal{B}$. If $S, T\colon \mathcal{A}\to \mathcal{B}$ are cocontinuous functors, a morphism $S\to T$ is simply a contractive natural transformation. In this way, we obtain the $2$-category $\BanModCat$ of Banach $2$-spaces.


\smallskip
Some comments on these definitions are in order. A first question that can arise in the reader's mind is what is the categorified analogue of the metric. The answer, which we happily embrace, is given on \cite{lawvere:metric-spaces-generalized-logic-closed-categories2002}. The metric, or the object that ``measures the distance between two objects $a$ and $b$'', is the Banach space $\homset{\mathcal{A}}{a}{b}$ of morphisms $a\to b$. Perceptive readers will also notice the conspicuous absence of any completeness conditions in definition \ref{definition:banach-2spaces}. Eventually, we will need them, e.g.~when we discuss sheaves in subsection \ref{subsection:banach-sheaves}. Their presence also insures that the dual category $\opposite{\mathcal{A}}$ is also a Banach $2$-space. In many instances one can prove directly that a given Banach $2$-space is complete; this is the case with all the examples of this paper. One should view definition \ref{definition:banach-2spaces} as a minimal set of properties a Banach $2$-space must have; further developments may dictate stronger requirements. Requiring limits also makes some constructions with Banach $2$-spaces more difficult (for a hint of this, see the remarks at the end of subsection \ref{subsection:presheaf-categories}). A related question is if cocontinuous functor is the right notion of morphism between Banach $2$-spaces. The adjoint functor theorem guarantees that in most cases cocontinuous functors will have right adjoints\footnote{A particularly striking and easy to remember form of the adjoint functor that applies to many, if not all of our examples of cocontinuous functors, can be found in \cite[chapter 5, section 6, theorem 33]{kelly:enriched-category-theory2005}: every cocontinuous functor defined on a cocomplete category with a small dense subcategory has a right adjoint.}, so taking the hint from topos theory and its geometric morphisms, we are inclined to take adjunctions as the right notion of morphism between Banach $2$-spaces.

\smallskip
In fact as hinted earlier, there is an (uneasy) relationship between categorified measure theory and topos theory. This relation will be more and more agitated as we proceed, until by the end of the next section it will have mounted to an almost obsessive pattern. There is of course, a lesson to be learned here and one cannot help but wonder if there is not a notion of ``Banach topos'' lurking behind all the analogies. I do not know the answer to this question, but I refer the reader to \cite{street:cauchy-characterization-enriched-categories2004}, especially its introductory remarks.

\smallskip
As mentioned in the beginning we will not develop the theory of Banach $2$-spaces. But we will need two simple, related constructions on $\BanModCat$. If $\mathcal{A}$ and $\mathcal{B}$ are two Banach $2$-spaces then the Banach category $\homset{\BanModCat}{\mathcal{A}}{\mathcal{B}}$ is cocomplete with colimits computed pointwise. The problem is that without size restrictions on $\mathcal{A}$, $\homset{\BanModCat}{\mathcal{A}}{\mathcal{B}}$ does not have to be locally small and thus not a Banach category according to our definitions. For much the same reasons, the \emph{bitensor product} $\mathcal{A}\bitensor \mathcal{B}$ of Banach $2$-spaces may not exist. Recall that a bifunctor $\mathcal{A}\otimes \mathcal{B}\to \mathcal{C}$ is \emph{bicocontinuous} if it is cocontinuous in each variable. The bitensor product is then a bicocontinuous bifunctor $\mathcal{A}\otimes \mathcal{B}\to \mathcal{A}\bitensor \mathcal{B}$ universal among all such bifunctors, that is, each bicocontinuous $\mathcal{A}\otimes \mathcal{B}\to \mathcal{C}$ factors through it via a cocontinuous functor as in diagram \ref{diagram:universal-property-bitensor}, and this factorization is unique up to unique canonical isomorphism.
\begin{figure}[htbp]
    \begin{equation*}
    \xymatrix{
        \mathcal{A}\bitensor \mathcal{B} \ar@{-->}[r] \ar@{}[dr]|<<<<<{\isomorphic} & \mathcal{C} \\
        \mathcal{A}\otimes \mathcal{B} \ar[u] \ar[ur]_{\varphi} &
    }
    \end{equation*}
\caption{Universal property of $\mathcal{A}\otimes \mathcal{B}\to \mathcal{A}\bitensor \mathcal{B}$.}
\label{diagram:universal-property-bitensor}
\end{figure}

It is more or less clear that if the bitensor product exists, then we have an equivalence
\begin{equation}\label{equivalence:banmodcat-biclosed}
    \homset{\BanModCat}{\mathcal{A}\bitensor \mathcal{B}}{\mathcal{C}}\equivalent \homset{\BanModCat}{\mathcal{A}}{\homset{\BanModCat}{\mathcal{B}}{\mathcal{C}}}
\end{equation}

\subsection{Presheaf categories}
\label{subsection:presheaf-categories}

In principle \ref{principle:integral-functor-left-Kan-extension} it was stated that the integral functor is a left Kan extension. Since left Kan extensions are essential for everything that follows, we devote this subsection to review them. This will lead to the fundamental concepts of \emph{dense functor}, which can be seen as a categorified analogue of topological density, and \emph{(small) projective object}. We will use \cite{kelly:enriched-category-theory2005} as our main reference for enriched category theory. Chapter 6 of \cite{borceux:handbook-categorical-algebra21994} is another profitable source.


\smallskip
Before the definition of left Kan extensions we take a step back and recall the construction of \emph{free Banach categories}. An important special case of the isometric isomorphism \eqref{isomorphism:universal-property-injections} is obtained by letting $B_{x}= \field$ for every $x\in X$. Denoting $\ell_{1}(X, \field)$ simply by $\ell_{1}(X)$ then \eqref{isomorphism:universal-property-injections} specializes to
\begin{equation} \label{isomorphism:free-banach-space}
    \homset{\Ban}{\ell_{1}(X)}{A}\isomorphic \Bounded(X, A)
\end{equation}

The isometric isomorphism \eqref{isomorphism:free-banach-space} is easily seen to be natural in $X\in\Set$, so that we can view $\ell_{1}(X)$ as a sort of free Banach space. In fact, $X\mapto \ell_{1}(X)$ can be made into a left adjoint. There is a functor $\ball\colon \Banc\to \Set$, the \emph{unit ball functor}, that to a Banach space $B$ associates its unit ball $\ball(B)$. Isomorphism \eqref{isomorphism:free-banach-space} now reads as
\begin{equation} \label{isomorphism:unit-ball-adjunction}
    \homset{\Banc}{\ell_{1}(X)}{A}\isomorphic \homset{\Set}{X}{\ball(A)}
\end{equation}
which is true because a function $X\to \ball(A)$ induces via the isomorphism \eqref{isomorphism:free-banach-space} a unique linear contraction $\ell_{1}(X)\to A$. All told:

\begin{theorem} \label{theorem:free-banach-space}
The unit ball functor $\ball\colon\Banc\to \Set$ has a left adjoint\footnote{The unit ball functor is a right adjoint, faithful and reflects isomorphisms, but it is not monadic. For the proof, see \cite[chapter 4, section 3]{barr-wells:toposes-triples-theories1983}.} that on objects is given by $X\mapto \ell_{1}(X)$.
\end{theorem}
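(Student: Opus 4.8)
The plan is to obtain the adjunction directly from the universal property of coproducts in $\Ban$ proved in Theorem \ref{theorem:existence-co-products-ban}. First I would specialize the isometric isomorphism \eqref{isomorphism:universal-property-injections} to the constant family $B_{x}= \field$. Since $\sum_{x\in X}\field$ is, by its very definition, the space of absolutely summable scalar families, i.e.\ $\ell_{1}(X)$, and since $\homset{\Ban}{\field}{A}\isomorphic A$ isometrically via evaluation at $1\in\field$ (and hence $\prod_{x\in X}\homset{\Ban}{\field}{A}\isomorphic\Bounded(X,A)$ as in Example \ref{example:sum-prod-constant-functor}), Theorem \ref{theorem:existence-co-products-ban} yields the natural isometric isomorphism
\[
    \homset{\Ban}{\ell_{1}(X)}{A}\isomorphic \prod_{x\in X}\homset{\Ban}{\field}{A}\isomorphic \Bounded(X,A),
\]
which is \eqref{isomorphism:free-banach-space}. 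Concretely, a bounded linear map $T$ corresponds to the family $x\mapto T(\delta_{x})$, and the content of the isometry is the norm identity $\norm{T}= \sup_{x\in X}\norm{T(\delta_{x})}$.

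Second, I would read off from this norm identity what it means for such a $T$ to be a contraction: $\norm{T}\le 1$ holds if and only if $\norm{T(\delta_{x})}\le 1$ for every $x$, i.e.\ if and only if the corresponding function $X\to A$ factors through the inclusion $\ball(A)\embeds A$. Restricting \eqref{isomorphism:free-banach-space} to unit balls therefore gives the bijection
\[
    \homset{\Banc}{\ell_{1}(X)}{A}\isomorphic \homset{\Set}{X}{\ball(A)},
\]
which is the asserted isomorphism \eqref{isomorphism:unit-ball-adjunction}.

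Third, I would promote $X\mapto\ell_{1}(X)$ to a functor $\Set\to\Banc$: a function $h\colon X\to X'$ extends by linearity and continuity to the map $\ell_{1}(h)\colon\ell_{1}(X)\to\ell_{1}(X')$ determined by $\delta_{x}\mapto\delta_{h(x)}$, which is a contraction since $\sumnorm{\sum_{x}c_{x}\delta_{h(x)}}\le\sum_{x}\abs{c_{x}}$. I would then check that the bijection above is natural in both variables — in $A$ by postcomposition with a contraction $A\to A'$ and with $\ball$ applied to it, in $X$ by precomposition with $h$ and with $\ell_{1}(h)$ — these being routine verifications on elements of the form $\delta_{x}$. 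Naturality of the bijection exhibits $\ell_{1}(-)$ as left adjoint to $\ball$, with unit $\eta_{X}\colon X\to\ball(\ell_{1}(X))$, $x\mapto\delta_{x}$, and counit $\varepsilon_{A}\colon\ell_{1}(\ball(A))\to A$, $\delta_{a}\mapto a$; one may alternatively conclude by verifying the triangle identities on these (co)units.

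There is no serious obstacle here. The single point that needs genuine care is the passage from $\Ban$ to $\Banc$: one must use that Theorem \ref{theorem:existence-co-products-ban} supplies the \emph{equality} $\norm{T}= \sup_{x}\norm{T\delta_{x}}$ (not merely an inequality), since it is precisely this that makes ``$T$ contractive'' equivalent to ``the associated family lands in $\ball(A)$''; and one must note that $\ell_{1}(h)$ is in general only contractive, not isometric, which is exactly what is needed for it to be a morphism of $\Banc$. Everything else is bookkeeping.
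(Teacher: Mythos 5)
Your argument is essentially the paper's own: it specializes the isometric isomorphism \eqref{isomorphism:universal-property-injections} of Theorem \ref{theorem:existence-co-products-ban} to the constant family $B_{x}=\field$ to obtain \eqref{isomorphism:free-banach-space}, and then restricts to contractions to get \eqref{isomorphism:unit-ball-adjunction}. The only difference is that you spell out the functoriality of $X\mapto\ell_{1}(X)$, the naturality checks, and the unit/counit, which the paper leaves implicit; this is correct and welcome detail, not a different route.
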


Coupling the projective tensor product with the $\ell_{1}$ free functor yields,
\begin{align*}
\homset{\Banc}{\ell_{1}(X)\projotimes \ell_{1}(Y)}{A}
    &\isomorphic \homset{\Banc}{\ell_{1}(X)}{\homset{\Banc}{\ell_{1}(Y)}{A}} \\
    &\isomorphic \homset{\Banc}{\ell_{1}(X)}{\homset{\Set}{Y}{\ball(A)}} \\
    &\isomorphic \homset{\Set}{X}{\homset{\Set}{Y}{\ball(A)}} \\
    &\isomorphic \homset{\Set}{X\times Y}{\ball(A)} \\
    &\isomorphic \homset{\Banc}{\ell_{1}(X\times Y)}{A}
\end{align*}
and it follows from Yoneda's lemma that it determines a natural isometric isomorphism,
\begin{equation} \label{isomorphism:l1-product}
    \ell_{1}(X\times Y)\isomorphic \ell_{1}(X)\projotimes \ell_{1}(Y)
\end{equation}
that can be seen as \emph{Fubini's theorem for absolutely summable families}. It also means that $\ell_{1}$ is a symmetric monoidal functor when $\Set$ is given the cartesian closed symmetric monoidal structure $\pair{X}{Y}\mapto X\times Y$.

\smallskip
The fact that $X\mapto \ell_{1}(X)$ is symmetric monoidal implies that the adjunction \eqref{isomorphism:unit-ball-adjunction} lifts to a $2$-adjunction between the $2$-categories $\BanCat$ and $\Cat$. The right adjoint part is the forgetful $2$-functor $\BanCat\to \Cat$ that associates to a Banach category $\mathcal{A}$ its \emph{unit ball category} $\ball(\mathcal{A})$, which is the subcategory of contractions of $\mathcal{A}$. In symbols:
\begin{equation}\label{definition:unit-ball-category}
    \ball(\mathcal{A})\defequal \set{f\in \mathcal{A}\colon \norm{f}\leq 1}
\end{equation}

To construct the left adjoint of the unit ball functor, let $\mathcal{X}$ be a category and denote in matrix form $\homsetmatrix{\mathcal{X}}{x}{y}$ the set of morphisms $x\to y$. The Banach category $\ell_{1}(\mathcal{X})$ has for objects the objects of $\mathcal{X}$. The Banach space $\homset{\ell_{1}(\mathcal{X})}{x}{y}$ of morphisms $x\to y$ is the free Banach space $\ell_{1}(\homsetmatrix{\mathcal{X}}{x}{y})$. The unit map is the map
\begin{equation*}
    \field\to \ell_{1}(\homsetmatrix{\mathcal{X}}{x}{x})
\end{equation*}
given by $\id_{x}\in \homsetmatrix{\mathcal{X}}{x}{x}$ and the composition map
\begin{equation*}
    \ell_{1}(\homsetmatrix{\mathcal{X}}{x}{y})\otimes \ell_{1}(\homsetmatrix{\mathcal{X}}{y}{z})\to \ell_{1}(\homsetmatrix{\mathcal{X}}{x}{z})
\end{equation*}
is the composite,
\begin{equation*}
    \xymatrix{
        \ell_{1}(\homsetmatrix{\mathcal{X}}{x}{y})\otimes \ell_{1}(\homsetmatrix{\mathcal{X}}{y}{z}) \ar[r]^{\isomorphic} & \ell_{1}(\homsetmatrix{\mathcal{X}}{x}{y}\times \homsetmatrix{\mathcal{X}}{y}{z}) \ar[rr]^-{\ell_{1}(\composition_{x, y, z})} & & \ell_{1}(\homsetmatrix{\mathcal{X}}{x}{z})
    }
\end{equation*}
where $\composition_{x, y, z}$ is the composition map $\homsetmatrix{\mathcal{X}}{x}{y}\times \homsetmatrix{\mathcal{X}}{y}{z}\to \homsetmatrix{\mathcal{X}}{x}{z}$. Clearly, both the unit and composition maps are contractions and routine diagrammatic computations which we leave to the reader would prove the associative and unital laws. Putting it all together, we have:

\begin{theorem} \label{theorem:2adjunction-ball-ell}
The $2$-functor $\mathcal{X}\mapto \ell_{1}(\mathcal{X})$ establishes a natural isomorphism
\begin{equation} \label{isomorphism:2adjunction-ball-ell}
    \homset{\BanCat}{\ell_{1}(\mathcal{X})}{\mathcal{A}}\isomorphic \homset{\Cat}{\mathcal{X}}{\ball(\mathcal{A})}
\end{equation}
\end{theorem}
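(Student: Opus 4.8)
The plan is to construct the bijection explicitly in both directions and then verify the 2-categorical refinement. First I would describe the map from $\homset{\Cat}{\mathcal{X}}{\ball(\mathcal{A})}$ to $\homset{\BanCat}{\ell_1(\mathcal{X})}{\mathcal{A}}$. Given a functor $G\colon \mathcal{X}\to \ball(\mathcal{A})$, set $\widehat{G}$ equal to $G$ on objects. On hom-objects, $G$ supplies for each pair $x, y$ a function $\homsetmatrix{\mathcal{X}}{x}{y}\to \ball\parens{\homset{\mathcal{A}}{Gx}{Gy}}$; by the adjunction $\ell_1\dashv \ball$ of theorem \ref{theorem:free-banach-space} (isomorphism \eqref{isomorphism:unit-ball-adjunction}) this extends uniquely to a linear contraction $\ell_1(\homsetmatrix{\mathcal{X}}{x}{y})\to \homset{\mathcal{A}}{Gx}{Gy}$, which is declared to be $\widehat{G}$ on morphisms. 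Conversely, given a contractive functor $F\colon \ell_1(\mathcal{X})\to \mathcal{A}$, post-compose the unit $\eta_{\mathcal{X}}\colon \mathcal{X}\to \ball(\ell_1(\mathcal{X}))$ (which sends each morphism of $\mathcal{X}$ to its corresponding basis element) with $\ball(F)\colon \ball(\ell_1(\mathcal{X}))\to \ball(\mathcal{A})$ — the latter well-defined because $F$ is a contraction — to obtain a functor $\check{F}\colon \mathcal{X}\to \ball(\mathcal{A})$.

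Next I would check that $\widehat{G}$ is indeed a functor. Preservation of identities is immediate from the definition of the unit map of $\ell_1(\mathcal{X})$. Preservation of composition is the only computation with any content: one unwinds the definition of $\composition$ in $\ell_1(\mathcal{X})$ as the composite through the Fubini isomorphism \eqref{isomorphism:l1-product} followed by $\ell_1(\composition_{x, y, z})$, and uses that composition in $\mathcal{A}$ is bilinear, hence factors through a contraction on the projective tensor product. Since every map involved is bounded linear and $\homsetmatrix{\mathcal{X}}{x}{y}$ spans a dense subspace of $\ell_1(\homsetmatrix{\mathcal{X}}{x}{y})$, it suffices to verify the compatibility square on generators $f\in \homsetmatrix{\mathcal{X}}{x}{y}$ and $g\in \homsetmatrix{\mathcal{X}}{y}{z}$, where it collapses to $\widehat{G}(g\composition f)= G(g)\composition G(f)$ — exactly the functoriality of $G$.

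Then I would show the two assignments are mutually inverse. The round trip $G\mapto \widehat{G}\mapto \check{\widehat{G}}$ returns $G$ by the uniqueness clause of the adjunction $\ell_1\dashv \ball$ (equivalently, a triangle identity). The round trip $F\mapto \check{F}\mapto \widehat{\check{F}}$ returns $F$ because both $\widehat{\check{F}}$ and $F$ are contractive functors that agree on objects and on the generating morphisms, and two bounded linear maps agreeing on a spanning set are equal. Naturality of the bijection in $\mathcal{X}\in \Cat$ and $\mathcal{A}\in \BanCat$ is inherited directly from the naturality in $\Set$ of the underlying free-Banach-space adjunction \eqref{isomorphism:unit-ball-adjunction}.

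Finally, for the full 2-categorical statement I would observe that a contractive natural transformation $\tau\colon F\Rightarrow F'$ between contractive functors $\ell_1(\mathcal{X})\rightrightarrows \mathcal{A}$ has components indexed only by objects of $\mathcal{X}$, and each naturality square is an equality of bounded linear maps out of $\ell_1(\homsetmatrix{\mathcal{X}}{x}{y})$, hence need only be checked on generators, where it becomes the naturality square of the transformation $\check{F}\Rightarrow \check{F'}$ with the same components; conversely any natural transformation downstairs lifts uniquely. Thus the bijection of hom-sets promotes to an isomorphism of hom-categories, giving the asserted $2$-adjunction. The main obstacle is the bookkeeping in the composition-preservation step — tracking the Fubini isomorphism and the bilinearity of composition in $\mathcal{A}$ through the definition of $\composition$ in $\ell_1(\mathcal{X})$ — but this becomes routine once one restricts to generators.
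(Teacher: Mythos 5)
Your proposal is correct and follows essentially the route the paper intends: it lifts the set-level adjunction $\ell_{1}\dashv \ball$ of theorem \ref{theorem:free-banach-space} to the enriched setting by extending functors along the dense span of basis vectors, checking composition via the Fubini isomorphism \eqref{isomorphism:l1-product}, and reducing all naturality squares to generators — precisely the ``routine diagrammatic computations'' the paper leaves to the reader before stating theorem \ref{theorem:2adjunction-ball-ell}. The only cosmetic remark is that where you say ``agreeing on a spanning set,'' you mean (as you note earlier) agreement on a set whose linear span is dense, which is exactly what bounded linearity requires.
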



\begin{notation}
In what follows we will make \emph{no notational distinction} between a category $\mathcal{X}$ and the free Banach category $\ell_{1}(\mathcal{X})$. In other words, in the enriched context an ordinary category $\mathcal{X}$ is automatically promoted to the free Banach category $\ell_{1}(\mathcal{X})$. Likewise, a contractive functor $\mathcal{X}\to \mathcal{A}$ into a Banach category will be identified with the induced contractive functor $\ell_{1}(\mathcal{X})\to \mathcal{A}$.
\end{notation}


\smallskip
A functor $F$ defined on a subcategory $\mathcal{M}\subseteq \mathcal{A}$ can have from none to many extensions to the whole category. Among all the possible extensions of $F$ to the whole category $\mathcal{A}$ we can single out a maximal and a minimal one and these can be characterized by simple universal properties.

\begin{definition} \label{definition:left-kan-extension}
Let $I\colon \mathcal{M}\to \mathcal{A}$ and $F\colon \mathcal{M}\to \mathcal{B}$ be two functors, with $\mathcal{B}$ a Banach $2$-space. The \emph{left Kan extension of $F$ along $I$} is a pair $\pair{L}{\eta}$ with $L\colon \mathcal{A}\to \mathcal{B}$ and $\eta\colon F\to LI$ universal as an arrow from $\contravariant{I}\colon \homset{\BanCat}{\mathcal{A}}{\mathcal{B}}\to \homset{\BanCat}{\mathcal{M}}{\mathcal{B}}$ to $F$ -- see diagram \ref{2diagram:left-Kan-extension}.
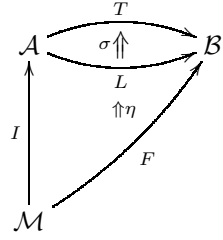
\begin{figure}[htbp]
    \begin{equation*}
    \xymatrix{
        \mathcal{A} \rrtwocell_{L}^{T}{^{\sigma}} &  & \mathcal{B} \\
            & \ar@{}[u]|<<<{\Uparrow\eta} & \\
        \mathcal{M} \ar[uu]^{I} \ar@/_/[uurr]_{F} & &
    }
    \end{equation*}
\caption{$\pair{L}{\eta}$ as a left Kan extension.}
\label{2diagram:left-Kan-extension}
\end{figure}
\end{definition}

Definition \ref{definition:left-kan-extension} of a left Kan extension is the one that can be found in \cite[chapter X, section 3]{maclane:categories-working-mathematician1971} for ordinary categories, but for reasons that need not concern us here it is not entirely satisfactory in the general enriched context. The cocompleteness hypothesis on the codomain category $\mathcal{B}$ tacked on the definition forces (small) colimits to be \emph{pointwise colimits} and possibilitates the simpler definition \ref{definition:left-kan-extension}.

\smallskip
We will denote the pair $\pair{L}{\eta}$ simply by $\Lan_{I}F$, leaving the universal cone $\eta$ implicit. By universality, the assignment $\sigma\mapto \sigma_{I}\eta$ establishes an isometric isomorphism
\begin{equation} \label{isomorphism:left-kan-extension}
    \homset{\BanCat}{\Lan_{I}F}{T}\isomorphic \homset{\BanCat}{F}{TI}
\end{equation}

In particular, if $\Lan_{I}F$ exists for every $F$ then the functor $F\mapto \Lan_{I}F$ is left adjoint to $T\mapto TI$ and therefore, cocontinuous.

\smallskip
The calculus of coends yields a very beautiful formula for $\Lan_{I}F$ and simple sufficient conditions for its existence. Coends are colimits over bifunctors that exhibit behavior analogous to integrals -- we refer the reader to \cite[chapter IX]{maclane:categories-working-mathematician1971} besides the already cited references for enriched category theory where (co)ends play no less than an essential role.

\begin{theorem} \label{theorem:left-kan-extension-coend}
Let $I\colon \mathcal{M}\to \mathcal{A}$ and $F\colon \mathcal{M}\to \mathcal{B}$ be two functors, with $\mathcal{B}$ a Banach $2$-space. Then the left Kan extension $\Lan_{I}F$ exists and is given by,
\begin{equation}\label{isomorphism:left-kan-extension-coend}
    \Lan_{I}F(a)\isomorphic \int^{m\in\mathcal{M}}\homset{\mathcal{A}}{Im}{a}\otimes Fm
\end{equation}
on the hypothesis that for every $a\in \mathcal{A}$ the coend on the right hand side exists. Furthermore, the universal map $\eta\colon F\to {\Lan_{I}F}I$ is given by the composite in diagram \ref{diagram:universal-map-left-Kan-extension}.
\begin{figure}[htbp]
    \begin{equation*}
    \xymatrix{
        Fn \ar[r] \ar[dr]_{\eta_{n}} & \homset{\mathcal{A}}{In}{In}\otimes Fn \ar[d]^{w_{n, In}} \\
            & \int^{m\in \mathcal{M}}\homset{\mathcal{A}}{Im}{In}\otimes Fm
    }
    \end{equation*}
\caption{The universal map $\eta$.}
\label{diagram:universal-map-left-Kan-extension}
\end{figure}

The vertical arrow $w_{n, In}$ in \ref{diagram:universal-map-left-Kan-extension} is the colimiting wedge for the coend and the top arrow is the composite
\begin{equation*}
    \xymatrix{
        Fn \ar[r]^-{\isomorphic} & \field\otimes Fn \ar[rr]^-{i_{\id_{Fn}}\otimes \id_{Fn}} & & \homset{\mathcal{A}}{In}{In}\otimes Fn
    }
\end{equation*}
\end{theorem}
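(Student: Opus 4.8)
The plan is to verify that the object defined by the coend formula satisfies the universal property of Definition \ref{definition:left-kan-extension}, i.e.\ that the assignment $\sigma \mapsto \sigma_I \eta$ is an isometric isomorphism $\homset{\BanCat}{L}{T} \isomorphic \homset{\BanCat}{F}{TI}$ for every $T\colon \mathcal{A}\to\mathcal{B}$, where $L$ is given by \eqref{isomorphism:left-kan-extension-coend} and $\eta$ by the composite in diagram \ref{diagram:universal-map-left-Kan-extension}. First I would check that $L$ is well-defined as a functor: on objects it is the coend $a \mapsto \int^{m}\homset{\mathcal{A}}{Im}{a}\otimes Fm$, which exists by hypothesis and because $\mathcal{B}$ is a Banach $2$-space (all the needed colimits — tensors by Banach spaces and coequalizers/coproducts assembling the coend — are available by cocompleteness); on morphisms $a\to a'$ it is induced by functoriality of $\homset{\mathcal{A}}{Im}{-}$ in the covariant slot, using the universal property of the coend to descend the map to the quotient. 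Contractivity of $a\mapsto L(a)$ on hom-spaces follows because each structural map ($\homset{\mathcal{A}}{Im}{a}\otimes Fm \to \homset{\mathcal{A}}{Im}{a'}\otimes Fm$ and the colimiting wedges) is a contraction, and the coend of contractions is a contraction.

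Next I would establish the representing isomorphism. For a contractive functor $T\colon\mathcal{A}\to\mathcal{B}$, a contractive natural transformation $\sigma\colon L\to T$ is, by the universal property of the coend computing $L(a)$, the same as a family of contractive wedges $\homset{\mathcal{A}}{Im}{a}\otimes Fm \to T(a)$ natural in $a$ and dinatural in $m$. By the $\ell_1$–Set adjunction (Theorem \ref{theorem:free-banach-space}) combined with the tensor–hom adjunction in $\Banc$, giving such a wedge is equivalent to giving a family $Fm \to \homset{\mathcal{B}}{T(Im)}{T(a)}$, i.e.\ (taking $a = Im$ and chasing the dinaturality) precisely a contractive natural transformation $F \to TI$; this is a Yoneda-type manipulation of exactly the kind carried out for \eqref{isomorphism:l1-product} earlier in the subsection. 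One verifies that, under these identifications, the correspondence $\sigma\mapsto\sigma_I\eta$ with $\eta$ as in diagram \ref{diagram:universal-map-left-Kan-extension} is exactly the inverse transit, so it is a bijection, and each direction is an isometry because all the adjunction isomorphisms involved ($\ell_1 \dashv \ball$, projective-tensor–hom) are isometric. Finally, since this holds naturally in $T$, the pair $\pair{L}{\eta}$ is a left Kan extension, and the last claim — that $\eta$ is the composite of diagram \ref{diagram:universal-map-left-Kan-extension} — is just the image of $\id_L$ under the isomorphism, which unwinds to the stated composite of the unit insertion $Fn\isomorphic\field\otimes Fn$, the map $i_{\id_{Fn}}\otimes\id_{Fn}$, and the colimiting wedge $w_{n,In}$.

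The main obstacle I anticipate is the careful bookkeeping of \emph{dinaturality and naturality} in the coend wedge description — making sure that a contractive natural transformation $L\to T$ really corresponds, with no slack, to a wedge that is dinatural in $m$ \emph{and} natural in $a$, and that collapsing $a\mapsto Im$ recovers naturality of $F\to TI$ in $m$ without losing the isometry. In the enriched (here $\Banc$-enriched) setting one must confirm that the relevant (co)limits in the defining universal properties are \emph{pointwise}, which is exactly what the cocompleteness hypothesis on $\mathcal{B}$ buys us (this is the remark following Definition \ref{definition:left-kan-extension}); without it the simple Definition \ref{definition:left-kan-extension} would not match the coend. Everything else — existence of the coend, contractivity of structure maps, and the identification of $\eta$ — is routine given the tools already assembled (Theorems \ref{theorem:free-banach-space} and \ref{theorem:2adjunction-ball-ell}, the projective tensor being closed symmetric monoidal on $\Banc$), so I would state those verifications briefly and spend the bulk of the argument on the wedge/Yoneda correspondence.
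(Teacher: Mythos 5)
Your proposal is correct and follows essentially the paper's own route: the paper merely asserts that MacLane's coend-formula argument (CWM, chapter X, section 4) adapts straightforwardly to the $\Banc$-enriched case, and your verification of the universal property of Definition \ref{definition:left-kan-extension} via the coend's universal property, the projective tensor--hom adjunction, and a strong-Yoneda evaluation at $a= Im$, $\id_{Im}$ is exactly that adaptation, including the identification of $\eta$ as the image of $\id_{L}$. (Two cosmetic blemishes, neither fatal: the transposed family should read $\homset{\mathcal{A}}{Im}{a}\to \homset{\Ban}{Fm}{Ta}$, natural in $a$, rather than $Fm\to \homset{\mathcal{B}}{T(Im)}{T(a)}$, and the $\ell_{1}\dashv \ball$ adjunction of Theorem \ref{theorem:free-banach-space} plays no role here --- what is used is closedness of $\Banc$ and the enriched Yoneda lemma.)
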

\begin{proof}
The proof in \cite[chapter X, section 4]{maclane:categories-working-mathematician1971} can be adapted in a straightforward fashion to the $\Banc$-enriched case.
\end{proof}

As an immediate corollary, if $\mathcal{M}$ is small the coends in \eqref{isomorphism:left-kan-extension-coend} exist and therefore so does every left Kan extension $\Lan_{I}F$. Another corollary is that if $I$ is fully-faithful then
\begin{equation*}
    (\Lan_{I}F)In\isomorphic \int^{m\in\mathcal{M}}\homset{\mathcal{A}}{Im}{In}\otimes Fm\isomorphic
    \int^{m\in\mathcal{M}}\homset{\mathcal{M}}{m}{n}\otimes Fm
\end{equation*}
but the second coend is just $Fn$ by Yoneda. Thus, $\eta$ induces an isometric isomorphism $\parens{\Lan_{I}F}I\isomorphic F$ so that $\Lan_{I}F$ is indeed an extension of $F$.

\smallskip
Two questions present themselves naturally. First, when is the extension essentially unique and second, when is the extension $\Lan_{I}F$ cocontinuous as a functor on $\mathcal{A}$. The second question can be answered almost immediately. Looking at the coend formula
\eqref{isomorphism:left-kan-extension-coend} for a left Kan extension, we see that if the functor $a\mapto \homset{\mathcal{A}}{Im}{a}$ is cocontinuous, then the interchange of colimits theorem (\cite[chapter 3, section 3]{kelly:enriched-category-theory2005}) guarantees that $\Lan_{I}F$ is cocontinuous. Since the codomain $\mathcal{B}$ is cocomplete, colimits of functors are computed pointwise and thus the same interchange of colimits theorem yields that this condition is not only sufficient but actually necessary. This leads us to the next important concept.


\begin{definition} \label{definition:projective-objects}
Let $\mathcal{M}$ be a small Banach category, $\mathcal{A}$ a Banach $2$-space and $I\colon \mathcal{M}\to \mathcal{A}$ a functor. An object $m\in \mathcal{M}$ is \emph{small $I$-projective} if the functor
\begin{equation*}
    a\mapto \homset{\mathcal{A}}{Im}{a}
\end{equation*}
is cocontinuous. If $I$ is the inclusion functor we just say that $m$ is \emph{small projective}.
\end{definition}

The notion of projective object comes from homological algebra (see \cite[chapter 2, section 2]{weibel:introduction-homological-algebra1994}). The definition makes sense in any category: an object $a\in \mathcal{A}$ is \emph{projective} if every arrow $f\colon a\to b$ factors through every epi $e\colon c\to b$ as in diagram \ref{diagram:projective-object}.
\begin{figure}[htbp]
    \begin{equation*}
    \xymatrix{
          & c \ar[d]^{e} \\
        a \ar[r]_{f} \ar@{-->}[ur] & b
    }
    \end{equation*}
\caption{Lifting property of projective objects.}
\label{diagram:projective-object}
\end{figure}

This is equivalent to require that if $e$ is epi then $\covariant{e}$ is epi. In the context of abelian categories (= homological algebra) this is equivalent to the fact that the functor $b\mapto \homset{\mathcal{A}}{a}{b}$ is \emph{finitely cocontinuous}. Definition \ref{definition:projective-objects} is a considerable strengthening in that we require full cocontinuity of the representable, but since we will consider no other notion of projective object, we drop the ``small'' qualifier.

\begin{example} \label{example:representables-projective-presheaves}
Let $\mathcal{A}$ be a small Banach category. By Yoneda's lemma, we have the isometric isomorphism,
\begin{equation*}
    \homset{\BanCat}{\Yoneda^{a}}{F}\isomorphic F(a)
\end{equation*}
which implies that the functor $F\mapto \homset{\BanCat}{\Yoneda^{a}}{F}$ is isomorphic to the evaluation functor $\evaluation_{a}\colon F\mapto F(a)$. Since $\Ban$ is cocomplete and colimits in functor categories are computed pointwise, the evaluation functor is cocontinuous and thus every representable is projective in $\homset{\BanCat}{\opposite{\mathcal{A}}}{\Ban}$.
\end{example}


In the context of Banach spaces, the extension of a bounded linear map $M\to B$ with $M\subseteq A$ is unique when $M$ is a dense linear subspace of $A$. In the context of enriched categories, this leads us to the concept of \emph{dense functor}.

\begin{definition} \label{definition:dense-functor}
A functor $I\colon \mathcal{M}\to \mathcal{A}$ is \emph{dense} if the identity $\id_{I}\colon I\to \id_{\mathcal{A}}I$ exhibits $\id_{\mathcal{A}}$ as the left Kan extension of $I$ along $I$. If $I$ is the inclusion of a subcategory $\mathcal{M}$ we just say that $\mathcal{M}$ is dense in $\mathcal{A}$.
\end{definition}

Recall that if $T\colon \mathcal{A}\to \mathcal{B}$ is a functor, then we have natural transformations $T_{a, b}\colon \homset{\mathcal{A}}{a}{b}\to \homset{\mathcal{B}}{Ta}{Tb}$ given by $f\mapto Tf$. The concreteness of the base category $\Banc$ makes the proof of naturality a triviality, for the general enriched case follow the instructions in \cite[chapter 1, section 8]{kelly:enriched-category-theory2005}. The existence of tensors in $\mathcal{B}$ turns these natural transformations into wedges,
\begin{equation*}
    T_{a, b}\colon \homset{\mathcal{A}}{a}{b}\otimes Ta\to Tb
\end{equation*}
and the strong form of Yoneda's lemma (\cite[chapter 2, section 4]{kelly:enriched-category-theory2005}) implies that these wedges are universal, that is, they induce an isomorphism
\begin{equation}\label{isomorphism:generalized-yoneda-coend-form}
    Tb\isomorphic \int^{a\in \mathcal{A}}\homset{\mathcal{A}}{a}{b}\otimes Ta
\end{equation}

The case of $T$ the identity functor was used above in deriving the isomorphism $\parens{\Lan_{I}F}I\isomorphic F$. In this case also, the universal wedge gives us a canonical map,
\begin{equation*}
    \evaluation_{a,b}\colon \homset{\mathcal{A}}{a}{b}\otimes a\to b
\end{equation*}
which can be seen as a sort of internal version of the evaluation map (and thus the notation). The coend formula for a left Kan extension now means that $I$ is dense iff the canonical map $\evaluation_{Im, b}$ is a universal wedge with,
\begin{equation} \label{isomorphism:density-presentation-identity}
    a\isomorphic \int^{m\in \mathcal{M}}\homset{\mathcal{A}}{Im}{a}\otimes
    Im
\end{equation}
that is, every object $b$ is a colimit over $I\colon \mathcal{M}\to \mathcal{A}$ \emph{in a canonical way}. Furthermore, if $T\colon \mathcal{A}\to \mathcal{B}$ is cocontinuous then by \eqref{isomorphism:density-presentation-identity}, we have,
\begin{equation} \label{isomorphism:cocontinuous-after-dense}
    Ta\isomorphic T\parens{\int^{m\in \mathcal{M}}\homset{\mathcal{A}}{Im}{a}\otimes
    Im}\isomorphic \int^{m\in \mathcal{M}}\homset{\mathcal{A}}{Im}{a}\otimes
    TIm
\end{equation}
which implies that if $TI\isomorphic SI$ then $T\isomorphic S$, that is, cocontinuous extensions over dense functors, if they exist, are unique (as always, up to unique isomorphism).

\smallskip
We now apply these results to the special case of presheaf categories. If $\mathcal{A}$ is a small Banach category, the category of contractive functors $\opposite{\mathcal{A}}\to \Ban$ will be denoted by $\PShv(\mathcal{A})$ and its objects are called \emph{Banach space-valued presheaves}, or simply presheaves if there is no ambiguity about the codomain category being $\Ban$. Following the bundle terminology of section \ref{section:measurable-bundles-hilbert-spaces}, a general presheaf will be denoted by $\xi$, $\zeta$, etc.~and morphisms by $\tau$, $\phi$, etc. By the usual pointwise computation of limits and colimits, $\PShv(\mathcal{A})$ is a complete Banach $2$-space. The Yoneda functor $\Yoneda\colon \mathcal{A}\to \PShv(\mathcal{A})$ associates to each $a\in \mathcal{A}$, the representable $\opposite{\mathcal{A}}\to \Ban$ given on objects by
\begin{equation*}
    b\mapto \Yoneda(a)b= \homset{\mathcal{A}}{b}{a}
\end{equation*}

Computing the left Kan extension of $\Yoneda$ along $\Yoneda$ we have,
\begin{align*}
    \Lan_{\Yoneda}{\Yoneda}(\xi) &\isomorphic \int^{a\in \mathcal{A}}\homset{\mathcal{A}}{\Yoneda(a)}{\xi}\otimes \Yoneda(a) \\
        &\isomorphic \int^{a\in \mathcal{A}}\xi(a)\otimes \Yoneda(a) \\
        &\isomorphic \int^{a\in \mathcal{A}}\Yoneda(a)\otimes \xi(a) \\
        &\isomorphic \xi
\end{align*}
where we have used Yoneda's lemma and isomorphism \eqref{isomorphism:generalized-yoneda-coend-form}. By \cite[chapter 5, theorem 5.1]{kelly:enriched-category-theory2005} a functor $F\colon \mathcal{M}\to \mathcal{A}$ is dense iff there is \emph{some} natural isomorphism $\id_{\mathcal{A}}\isomorphic \Lan_{F}F$. Therefore:

\begin{theorem} \label{theorem:yoneda-dense}
The Yoneda embedding $\Yoneda\colon \mathcal{A}\to \PShv(\mathcal{A})$ is dense.
\end{theorem}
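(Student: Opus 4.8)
The statement to establish is that the Yoneda embedding $\Yoneda\colon \mathcal{A}\to \PShv(\mathcal{A})$ is dense, where $\mathcal{A}$ is a small Banach category and $\PShv(\mathcal{A})$ is the Banach $2$-space of contractive functors $\opposite{\mathcal{A}}\to \Ban$. By Definition \ref{definition:dense-functor}, density of $\Yoneda$ means precisely that the identity $\id_{\PShv(\mathcal{A})}$, together with the identity $2$-cell $\id_{\Yoneda}\colon \Yoneda\to \id_{\PShv(\mathcal{A})}\composition \Yoneda$, exhibits $\id_{\PShv(\mathcal{A})}$ as the left Kan extension $\Lan_{\Yoneda}\Yoneda$. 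The plan is to invoke the criterion of \cite[chapter 5, theorem 5.1]{kelly:enriched-category-theory2005}, already cited just before the statement: a functor $F\colon \mathcal{M}\to \mathcal{A}$ is dense iff there exists \emph{some} natural isomorphism $\id_{\mathcal{A}}\isomorphic \Lan_{F}F$. Thus it suffices to produce a natural isometric isomorphism $\Lan_{\Yoneda}\Yoneda\isomorphic \id_{\PShv(\mathcal{A})}$, without worrying a priori that the comparison $2$-cell be the identity.

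\textbf{Key steps.} First I would note that $\PShv(\mathcal{A})$ is a cocomplete Banach $2$-space (it is complete and cocomplete by pointwise computation of limits and colimits in $\Ban$, as already observed in the paragraph introducing $\PShv(\mathcal{A})$), and that $\mathcal{A}$ is small, so by the corollary to Theorem \ref{theorem:left-kan-extension-coend} the left Kan extension $\Lan_{\Yoneda}\Yoneda$ exists and is computed by the coend formula \eqref{isomorphism:left-kan-extension-coend}. Second, I would run through the chain of isomorphisms already displayed in the excerpt:
\begin{align*}
    \Lan_{\Yoneda}{\Yoneda}(\xi) &\isomorphic \int^{a\in \mathcal{A}}\homset{\PShv(\mathcal{A})}{\Yoneda(a)}{\xi}\otimes \Yoneda(a) \\
        &\isomorphic \int^{a\in \mathcal{A}}\xi(a)\otimes \Yoneda(a) \\
        &\isomorphic \int^{a\in \mathcal{A}}\Yoneda(a)\otimes \xi(a) \\
        &\isomorphic \xi.
\end{align*}
The first isomorphism is the coend formula \eqref{isomorphism:left-kan-extension-coend} applied with $I=F=\Yoneda$; the second is the (strong, enriched) Yoneda lemma $\homset{\PShv(\mathcal{A})}{\Yoneda(a)}{\xi}\isomorphic \xi(a)$, which is an isometric isomorphism in $\Ban$ and natural in both $a$ and $\xi$; the third is just the symmetry isometry of the projective tensor product $\otimes$ on $\Ban$; and the last is the coend presentation \eqref{isomorphism:generalized-yoneda-coend-form} of $\xi$ itself, i.e.\ $\xi\isomorphic \int^{a}\homset{\opposite{\mathcal{A}}}{?}{a}\otimes\xi(a)$ evaluated against $\Yoneda$. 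Third, I would check that each step is natural in $\xi\in\PShv(\mathcal{A})$, so that the composite assembles into a natural isometric isomorphism $\Lan_{\Yoneda}\Yoneda\isomorphic\id_{\PShv(\mathcal{A})}$. Invoking \cite[chapter 5, theorem 5.1]{kelly:enriched-category-theory2005} then closes the argument.

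\textbf{Main obstacle.} The genuinely delicate point is the enriched Yoneda lemma in the Banach-enriched setting, in the strong form that makes the second and fourth isomorphisms above isometries and, crucially, provides the coend presentation \eqref{isomorphism:generalized-yoneda-coend-form}. In the unenriched case Yoneda is routine; here one must know that $\homset{\PShv(\mathcal{A})}{\Yoneda(a)}{\xi}$ — the Banach space of contractive (indeed all bounded, uniformly) natural transformations — is isometrically $\xi(a)$, and that the canonical wedges $\homset{\mathcal{A}}{a}{b}\otimes\xi(a)\to\xi(b)$ are \emph{universal}. This is exactly \cite[chapter 2, section 4]{kelly:enriched-category-theory2005}, provided one has verified that $\Banc$ is a complete and cocomplete symmetric monoidal closed base — which was established in Theorem \ref{theorem:banc-small-(co)products} and the surrounding discussion. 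A secondary subtlety is bookkeeping the distinction between $\Ban$-valued and $\Banc$-valued homs, i.e.\ ensuring the coend \eqref{isomorphism:left-kan-extension-coend} is the $\Banc$-weighted colimit and that the isometries survive passage to unit balls; this is where the convention identifying an ordinary category with its free Banach category $\ell_{1}(-)$, and Theorem \ref{theorem:2adjunction-ball-ell}, must be applied carefully. Once these foundational facts are in hand, the proof is, as the author would put it, a mere formality.
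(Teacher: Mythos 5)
Your proposal is correct and follows essentially the same route as the paper: compute $\Lan_{\Yoneda}\Yoneda$ via the coend formula \eqref{isomorphism:left-kan-extension-coend}, collapse it to $\xi$ using the (strong, enriched) Yoneda lemma, the symmetry of $\otimes$, and the coend presentation \eqref{isomorphism:generalized-yoneda-coend-form}, then invoke \cite[chapter 5, theorem 5.1]{kelly:enriched-category-theory2005} to conclude density from the existence of \emph{some} natural isomorphism $\id\isomorphic\Lan_{\Yoneda}\Yoneda$. Your added care about the $\Banc$-enriched strong Yoneda lemma and the $\ell_{1}$/unit-ball conventions is consistent with what the paper takes for granted.
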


If $F\colon\mathcal{A}\to \mathcal{B}$ is a contractive functor with $\mathcal{B}$ a Banach $2$-space, we can lift $F$ to $\PShv(\mathcal{A})$ by taking the left Kan extension $\Lan_{\Yoneda}F$. Theorem \ref{theorem:left-kan-extension-coend} coupled with Yoneda's lemma yields:
\begin{equation} \label{isomorphism:coend-formula-left-kan-extension}
\begin{split}
    \Lan_{\Yoneda}F(\xi) &\isomorphic \int^{a\in \mathcal{A}} \homset{\PShv}{\Yoneda(a)}{\xi}\otimes Fa \\
        &\isomorphic \int^{a\in \mathcal{A}}\xi(a)\otimes F(a)
\end{split}
\end{equation}


This leads to the next very important theorem that characterizes the Yoneda embedding $\Yoneda\colon \mathcal{A}\to \PShv(\mathcal{A})$ as the \emph{free cocompletion} of $\mathcal{A}$.

\begin{theorem} \label{theorem:free-cocompletion}
Let $\mathcal{A}$ be a small Banach category and $F\colon \mathcal{A}\to \mathcal{B}$ a functor into a Banach $2$-space. Then there is a cocontinuous functor
\begin{equation*}
    \universal{F}\colon \PShv(\mathcal{A})\to \mathcal{B}
\end{equation*}
such that $\universal{F}\Yoneda\isomorphic F$. Furthermore, if $T\colon \PShv(\mathcal{A})\to \mathcal{B}$ is another such cocontinuous functor, there is a unique isometric isomorphism closing the $2$-diagram \ref{2diagram:universal-property-yoneda-embedding}.
\begin{figure}[htbp]
    \begin{equation*}
    \xymatrix{
        \PShv(\mathcal{A}) \rrtwocell_{\universal{F}}^{T}{^{\isomorphic}} & & \mathcal{B} \\
            & \ar@{}[u]|<<<{\Uparrow\isomorphic} & \\
        \mathcal{A} \ar[uu]^{\Yoneda} \ar@/_1pc/[uurr]_{F} &
    }
    \end{equation*}
    \caption{Universal property of the Yoneda embedding.}
    \label{2diagram:universal-property-yoneda-embedding}
\end{figure}
\end{theorem}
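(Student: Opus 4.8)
The plan is to define $\universal{F}$ to be the left Kan extension $\Lan_{\Yoneda}F$ of $F$ along the Yoneda embedding, and then to check, in order, that it extends $F$, that it is cocontinuous, and that it is the essentially unique cocontinuous extension. Since $\mathcal{A}$ is small, Theorem \ref{theorem:left-kan-extension-coend} provides $\Lan_{\Yoneda}F$ together with its coend formula, which after Yoneda's lemma (as already recorded in \eqref{isomorphism:coend-formula-left-kan-extension}) takes the form
\begin{equation*}
    \universal{F}(\xi)\isomorphic \int^{a\in \mathcal{A}}\xi(a)\otimes F(a).
\end{equation*}
That $\universal{F}\Yoneda\isomorphic F$ is then the corollary of Theorem \ref{theorem:left-kan-extension-coend} stating that left Kan extensions along fully faithful functors are genuine extensions; concretely, $\universal{F}(\Yoneda b)\isomorphic \int^{a\in \mathcal{A}}\homset{\mathcal{A}}{a}{b}\otimes F(a)\isomorphic F(b)$ by the coend form of Yoneda's lemma \eqref{isomorphism:generalized-yoneda-coend-form}, naturally and isometrically.

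For cocontinuity, the key input is Example \ref{example:representables-projective-presheaves}: every representable $\Yoneda(a)$ is small projective in $\PShv(\mathcal{A})$, i.e.\ the functor $\xi\mapto \homset{\PShv}{\Yoneda(a)}{\xi}\isomorphic \xi(a)$ is cocontinuous. Since $\mathcal{B}$ is a Banach $2$-space its colimits are computed pointwise, so applying the interchange-of-colimits theorem (\cite[chapter 3, section 3]{kelly:enriched-category-theory2005}) to the coend presentation of $\universal{F}$ --- a colimit of diagrams $\xi\mapto \xi(a)\otimes F(a)$ each cocontinuous in $\xi$ --- shows that $\universal{F}$ preserves all small colimits. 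This is precisely the sufficient-and-necessary criterion recorded just before Definition \ref{definition:projective-objects}, here instantiated with $I=\Yoneda$.

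For uniqueness, let $T\colon \PShv(\mathcal{A})\to \mathcal{B}$ be cocontinuous with $T\Yoneda\isomorphic F\isomorphic \universal{F}\Yoneda$. By Theorem \ref{theorem:yoneda-dense} the Yoneda embedding is dense, so by \eqref{isomorphism:density-presentation-identity} every presheaf is canonically a colimit $\xi\isomorphic \int^{a\in \mathcal{A}}\homset{\PShv}{\Yoneda(a)}{\xi}\otimes \Yoneda(a)$ over $\Yoneda$; applying the cocontinuous $T$ and $\universal{F}$ and invoking \eqref{isomorphism:cocontinuous-after-dense} together with $\homset{\PShv}{\Yoneda(a)}{\xi}\isomorphic \xi(a)$ yields
\begin{equation*}
    T\xi\isomorphic \int^{a\in \mathcal{A}}\xi(a)\otimes T\Yoneda(a)\isomorphic \int^{a\in \mathcal{A}}\xi(a)\otimes \universal{F}\Yoneda(a)\isomorphic \universal{F}\xi
\end{equation*}
naturally in $\xi$; that the resulting $2$-cell is isometric follows because the colimiting wedges, the tensor functoriality, and the comparison $T\Yoneda\isomorphic \universal{F}\Yoneda$ are all isometric.

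I expect the only real friction to be bookkeeping rather than ideas: one must check that each displayed isomorphism is an \emph{isometric} isomorphism and not merely a linear homeomorphism, which is exactly where the $\Banc$-enrichment (all coherence constraints being isometries, and Yoneda's lemma in its strong isometric form from \cite[chapter 2, section 4]{kelly:enriched-category-theory2005}) pulls its weight, and that the coends in sight are genuine $\Banc$-enriched coends computed pointwise in $\mathcal{B}$, so that the interchange theorem applies verbatim. All the conceptual content sits in the two facts already in hand --- density of $\Yoneda$ for the uniqueness clause, and small projectivity of the representables for cocontinuity.
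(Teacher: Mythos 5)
Your proposal is correct and follows essentially the same route the paper takes: define $\universal{F}$ as $\Lan_{\Yoneda}F$ via the coend formula \eqref{isomorphism:coend-formula-left-kan-extension}, get $\universal{F}\Yoneda\isomorphic F$ from full-faithfulness of $\Yoneda$ (strong Yoneda), cocontinuity from small projectivity of the representables plus interchange of colimits, and uniqueness from density of $\Yoneda$ via \eqref{isomorphism:cocontinuous-after-dense}. The only quibble is phrasing: it is colimits in the functor category $\PShv(\mathcal{A})$ (not in $\mathcal{B}$) that are computed pointwise, which is what makes the evaluation functors cocontinuous; this does not affect the argument.
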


Theorem \ref{theorem:free-cocompletion} implies that the functor $F\mapto \universal{F}$ establishes an equivalence
\begin{equation} \label{equivalence:free-cocompletion-biadjunction}
    \homset{\BanModCat}{\PShv(\mathcal{A})}{\mathcal{B}}\equivalent \homset{\BanCat}{\mathcal{A}}{\mathcal{B}}
\end{equation}

Note that in \eqref{equivalence:free-cocompletion-biadjunction} we have an equivalence, not an isomorphism. In other words, we have a \emph{biadjunction}, not a $2$-adjunction. This follows from the fact that the factorization of $1$-cells is only defined up to (unique) isometric isomorphism, the universal $2$-cell $\eta$ filling the triangle in diagram \ref{2diagram:left-Kan-extension}. As a consequence, it is \emph{not} true that the free Banach $2$-space construction $\PShv$ extends to a $2$-functor such that the equivalence \eqref{equivalence:free-cocompletion-biadjunction} is natural. Instead, since the $1$-cells are only defined up to (unique) isometric $2$-cells, $\PShv$ extends to a \emph{pseudo $2$-functor}, roughly, a $2$-functor respecting composition of $1$-cells up to coherent isomorphism. For the exact definition of pseudo $2$-functor, biadjunction and related notions and results we refer the reader to \cite{kelly-street:elements-2categories1974} or the more recent \cite{fiore:pseudo-limits-biadjoints-pseudo-algebras2006}, \cite{lack:2categories-companion} and references therein. Although we will not expatiate on these inherently bicategorial notions, we do note that constructions with Banach $2$-spaces tend to yield biadjunctions and not $2$-adjunctions.

\smallskip
The smallness of $\mathcal{A}$ is necessary for the arguments of this section. If $\mathcal{A}$ is not small then its cocompletion can still be constructed and is the category of \emph{accessible presheaves} (see \cite[chapter 4, section 8]{kelly:enriched-category-theory2005}). The completeness of the cocompletion is then a more delicate matter. We refer the reader to \cite{day-lack:limits-small-functors2006} for more information.

\subsection{Free Banach \texorpdfstring{$2$}{2}-spaces over sets}
\label{subsection:free-banach-2spaces-sets}

A very special, and especially important, class of Banach $2$-spaces is the class of free Banach $2$-spaces over a set $X$.  Their study in this and the next subsection has two aims: first to show how some of the results in \cite{yetter:categorical-linear-algebra} on categorified linear algebra can be recovered and second, as a toy model of categorified measure theory.

\smallskip
By virtue of $X$ being discrete, two things happen. First, $X$ is isomorphic to its dual $\opposite{X}$ so that there is no difference between presheaves and their duals, precosheaves. Second, every functor $F\colon X\to \Ban$ is automatically contractive. This leads us to identify the category $\PShv(X)$ with the category of \emph{bundles of Banach spaces over $X$}. The definitions simply repeat those of section \ref{section:measurable-bundles-hilbert-spaces} by replacing Hilbert with Banach spaces. If we denote by $\Bundle(X)$ the Banach category of Banach bundles and bounded natural transformations, the next theorem is almost self-evident.

\begin{theorem} \label{theorem:bundles-presheaves}
The functor that to a bundle $\xi$ assigns the functor $x\mapto \xi_{x}$ is an isomorphism
\begin{equation} \label{isomorphism:bundles-presheaves}
    \Bundle(X)\isomorphic \PShv(X)
\end{equation}
\end{theorem}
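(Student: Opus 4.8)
The plan is to promote the evident equivalence into an honest isomorphism of Banach categories by writing down an explicit inverse functor and checking that the two round-trips are literally the identity.

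First I would record what the two categories are, for $X$ a (discrete) set. Since $X\isomorphic\opposite{X}$ and the free Banach category $\ell_{1}(X)$ has hom-objects $\homset{\ell_{1}(X)}{x}{x}\isomorphic\ell_{1}(\homsetmatrix{X}{x}{x})\isomorphic\field$ while $\homset{\ell_{1}(X)}{x}{y}\isomorphic\ell_{1}(\homsetmatrix{X}{x}{y})=\ell_{1}(\emptyset)=\zero$ for $x\neq y$, a contractive functor $\xi\colon\opposite{X}\to\Ban$ is exactly an $X$-indexed family of Banach spaces $\family{\xi(x)}{x}$, with all functoriality constraints vacuous; and a morphism $\tau\colon\xi\to\zeta$ in $\PShv(X)$ is exactly a family $\family{\tau_{x}}{x}$ of bounded linear maps $\xi(x)\to\zeta(x)$ subject only to the uniform bound $\sup_{x}\norm{\tau_{x}}<\infty$ that makes it a bounded natural transformation in the sense of Definition \ref{definition:banach-categories}. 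On the other side, an object of $\Bundle(X)$ is a surjection $\pi\colon P\to X$ with each fiber $\invimage{\pi}(x)$ a Banach space, and a morphism is a fiber-preserving total-space map whose fiber restrictions are bounded linear, uniformly in $x$; identifying such a map with the family of those restrictions makes the data on the two sides match term for term.

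Next I would construct the inverse $G\colon\PShv(X)\to\Bundle(X)$: send a family $\family{B}{x}$ to the bundle with total space the disjoint union $P=\coprod_{x\in X}B_{x}$ and projection collapsing $B_{x}$ to $x$, so that $\invimage{\pi}(x)$ is canonically $B_{x}$; and send a uniformly bounded family $\family{\tau_{x}}{x}$ to the total-space map acting as $\tau_{x}$ on $B_{x}$, which is fiber-preserving, fiberwise bounded linear, and uniformly bounded. Functoriality of $G$ is immediate from the fiberwise nature of composition, and composing $G$ with the functor $\xi\mapto(x\mapto\xi_{x})$ in either order returns the identity on objects and on arrows. Finally, this passage carries the hom-norm of $\PShv(X)$ (the supremum over $x$ of the operator norms of the components) to the hom-norm of $\Bundle(X)$ (the supremum over $x$ of the operator norms of the fiber restrictions) isometrically, so what we obtain is an isomorphism of Banach categories, not merely of underlying ordinary categories.

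The only point needing care is set-theoretic rather than mathematical: a bundle has an honest set as total space, so the construction of $G$ depends on a fixed choice of disjoint union in $\Set$, and two bundles whose fibers agree but whose total spaces are built by different such choices are formally distinct objects mapping to the same presheaf. With a fixed convention the two functors are strict inverses; without one, the statement should be read as the assertion that $\xi\mapto(x\mapto\xi_{x})$ is fully faithful (indeed isometric on hom-spaces) and essentially surjective, hence an equivalence that becomes an isomorphism once bundles are identified along the evident isomorphisms. This is exactly the Banach-space avatar of the equivalence $\Bundle(X)\equivalent\exponential{\Hilb}{X}$ already noted in section \ref{section:measurable-bundles-hilbert-spaces}, and it is the only genuine subtlety — everything else is bookkeeping.
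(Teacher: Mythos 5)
Your proposal is correct and follows exactly the identification the paper has in mind: the paper offers no proof at all, calling the theorem ``almost self-evident,'' and your argument simply supplies the routine verification (objects and uniformly bounded morphism families matching term for term, the explicit disjoint-union inverse, and the isometry of hom-spaces), together with a sensible remark on the strict-isomorphism-versus-equivalence bookkeeping. Nothing further is needed.
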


The $\hom$-spaces of $\Bundle(X)$ are easily described. If $\mathcal{A}$ is small, the end formula (\cite[chapter 2, section 2]{kelly:enriched-category-theory2005}) for the space of natural transformations $F\to G$ says that we have an isometric isomorphism
\begin{equation}\label{isomorphism:end-formula-hom-functor-categories}
    \homset{\exponential{\mathcal{B}}{\mathcal{A}}}{F}{G}\isomorphic \int_{a\in \mathcal{A}}\homset{\mathcal{B}}{Fa}{Ga}
\end{equation}

If $\xi$ and $\zeta$ are bundles, then the end formula gives (dropping the $X$ from the $\hom$-spaces):
\begin{equation} \label{isomorphism:hom-spaces-bundle}
    \homset{\Bundle}{\xi}{\zeta}\isomorphic \int_{x\in
    X}\homset{\Ban}{\xi_{x}}{\zeta_{x}}\isomorphic \prod_{x\in X}\homset{\Ban}{\xi_{x}}{\zeta_{x}}
\end{equation}

Since $X$ is discrete, $X\isomorphic \opposite{X}$ and, in terms of bundles, a representable $\Yoneda^{x}\colon X\to \Ban$ is just a Dirac $\delta$-like bundle supported on a point (or in more measure theoretic terms, the \emph{characteristic bundle of a singleton set}):
\begin{equation*}
    \delta_{x}(y)\defequal
    \begin{cases}
        \field &\text{if $y= x$,} \\
        \zero &\text{otherwise.}
    \end{cases}
\end{equation*}

Inside $\Bundle(X)$, Yoneda's lemma implies \emph{Schur's lemma}, that is:
\begin{equation} \label{isomorphism:schur-lemma}
    \homset{\Bundle(X)}{\delta_{x}}{\delta_{y}}\isomorphic
    \begin{cases}
        \field &\text{if $x= y$,} \\
        \zero &\text{otherwise.}
    \end{cases}
\end{equation}

More generally, the evaluation or fiber functors $\evaluation_{x}\colon \xi\mapto \xi_{x}$ are representable with $\homset{\Bundle}{\delta_{x}}{\xi}\isomorphic \xi_{x}$. Since the Yoneda embedding $X\to \Bundle(X)$ is dense, the coend formula gives the canonical decomposition of a bundle $\xi$ as a sum of $\delta$-likes:
\begin{equation} \label{isomorphism:canonical-decomposition-bundle}
    \xi \isomorphic \int^{x\in X}\xi_{x}\otimes \delta_{x} \isomorphic \sum_{x\in X}\xi_{x}\otimes \delta_{x}
\end{equation}

Formula \eqref{isomorphism:canonical-decomposition-bundle} has important consequences for the categories $\Bundle(X)$. First, we can introduce a symmetric monoidal structure on $\Bundle(X)$ by taking the pointwise monoidal product:
\begin{equation*}
    \xi\otimes \zeta\isomorphic \sum_{x\in X}\parens{\xi_{x}\otimes \zeta_{x}}\delta_{x}
\end{equation*}

But denoting in exponential notation $\exponential{\varrho}{\zeta}$ the bundle $x\mapto \homset{\Ban}{\zeta_{x}}{\varrho_{x}}$,
\begin{equation*}
\begin{split}
    \homset{\Bundle}{\xi\otimes \zeta}{\varrho} &\isomorphic \prod_{x\in X}\homset{\Ban}{\xi_{x}\otimes \zeta_{x}}{\varrho_{x}} \\
        &\isomorphic \prod_{x\in X}\homset{\Ban}{\xi_{x}}{\homset{\Ban}{\zeta_{x}}{\varrho_{x}}} \\
        &\isomorphic \homset{\Bundle}{\xi}{\exponential{\varrho}{\zeta}}
\end{split}
\end{equation*}
and $\Bundle(X)$ is closed.

\begin{theorem} \label{theorem:bundle-symmetric-monoidal-closed}
The Banach category $\Bundle(X)$ with the pointwise tensor product $\xi\otimes \zeta\colon x\mapto \xi_{x}\otimes \zeta_{x}$ is a symmetric monoidal closed category.
\end{theorem}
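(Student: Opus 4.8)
The statement to prove is Theorem~\ref{theorem:bundle-symmetric-monoidal-closed}: the Banach category $\Bundle(X)$ with pointwise tensor product is symmetric monoidal closed. The plan is to verify the monoidal-category axioms fibrewise, borrowing them from the corresponding structure on $\Ban$, and then to exhibit the internal hom as the pointwise hom-bundle, with the adjunction isomorphism again checked fibrewise via the end/product formula~\eqref{isomorphism:hom-spaces-bundle}.

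First I would set up the monoidal structure. Define $\xi \otimes \zeta$ to be the bundle $x \mapsto \xi_x \otimes \zeta_x$ (projective tensor product in each fibre), with unit object the constant bundle $\one_X \colon x \mapsto \field$. The associator $\alpha$, left and right unitors $\lambda,\rho$, and symmetry $\sigma$ are the natural transformations whose component at $x$ is the corresponding coherence isomorphism for $\Ban$; naturality in $\Bundle(X)$ is immediate since morphisms of bundles are fibrewise, and each component is an isometry because the coherence isomorphisms of $\projotimes$ on $\Ban$ are isometric. The pentagon and triangle identities, and the hexagon for $\sigma$, hold because they hold fibrewise in $\Ban$ — there is genuinely nothing to check beyond pointing at the symmetric monoidal structure of $\Ban$ induced by $\projotimes$ (recorded in subsection~\ref{subsection:bochner-integral} and the surrounding discussion), and observing that everything transports through the isomorphism $\Bundle(X) \isomorphic \PShv(X) \isomorphic \exponential{\Ban}{X}$ of Theorem~\ref{theorem:bundles-presheaves}, i.e.\ through a functor category, where monoidal structure is defined pointwise. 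One should also note that $\otimes$ is $\Banc$-enriched (contractive in each variable) since $\projotimes$ is.

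Next I would establish closedness. For bundles $\zeta$ and $\varrho$ set $\exponential{\varrho}{\zeta} \colon x \mapsto \homset{\Ban}{\zeta_x}{\varrho_x}$, which is again a Banach bundle since $\Ban$ is closed for $\projotimes$. The required natural isometric isomorphism
\begin{equation*}
    \homset{\Bundle}{\xi \otimes \zeta}{\varrho} \isomorphic \homset{\Bundle}{\xi}{\exponential{\varrho}{\zeta}}
\end{equation*}
is obtained exactly as in the display preceding the theorem statement: apply \eqref{isomorphism:hom-spaces-bundle} to rewrite both sides as products over $x\in X$, use the closedness isomorphism $\homset{\Ban}{\xi_x \otimes \zeta_x}{\varrho_x} \isomorphic \homset{\Ban}{\xi_x}{\homset{\Ban}{\zeta_x}{\varrho_x}}$ fibrewise, and reassemble the product. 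Each step is an isometry (the fibrewise closedness isomorphism for $\projotimes$ is isometric, and \eqref{isomorphism:hom-spaces-bundle} is an isometry), so the composite is. Naturality in all three variables follows from naturality fibrewise together with the fact that the product $\prod_{x\in X}(-)$ is functorial; alternatively one can phrase the whole argument as: the monoidal closed structure on $\Ban$ lifts pointwise to the functor category $\exponential{\Ban}{X}$ when the domain $X$ is discrete, because then there are no naturality-square obstructions and the hom-objects are literally products of fibre-homs.

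**Main obstacle.** There is no deep obstacle here — the theorem is "almost self-evident" in the same spirit as Theorem~\ref{theorem:bundles-presheaves}. The one point requiring a moment's care is the \emph{isometry} claims: it is not enough that each coherence/closedness map of $\Ban$ be a linear homeomorphism, one needs it to be norm-preserving, which is where the choice of the \emph{projective} tensor product (rather than some other cross-norm) is essential — $\projotimes$ is precisely the tensor norm making $\Ban$ symmetric monoidal closed with isometric coherence data, as recalled in the discussion of $\projnorm{\cdot}$ in section~\ref{section:from-hilbert-banach-spaces}. The second minor point is bookkeeping: one must make sure that $\exponential{\varrho}{\zeta}$ genuinely lands in $\Bundle(X)$ (i.e.\ that $x\mapsto \homset{\Ban}{\zeta_x}{\varrho_x}$ is a well-defined bundle of Banach spaces), which is automatic since each fibre is a Banach space and $X$ is discrete so no continuity/measurability condition is imposed. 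I would therefore present the proof tersely, citing $\Ban$'s structure and \eqref{isomorphism:hom-spaces-bundle}, and leave the diagram-chasing of the coherence axioms to the reader as "pointwise in $\Ban$."
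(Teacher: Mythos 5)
Your proposal is correct and follows essentially the same route as the paper: the pointwise tensor product with coherence data inherited fibrewise from $(\Ban,\projotimes)$, and closedness obtained by rewriting $\homset{\Bundle}{\xi\otimes\zeta}{\varrho}$ via the product formula \eqref{isomorphism:hom-spaces-bundle} and applying the fibrewise adjunction $\homset{\Ban}{\xi_{x}\otimes\zeta_{x}}{\varrho_{x}}\isomorphic\homset{\Ban}{\xi_{x}}{\homset{\Ban}{\zeta_{x}}{\varrho_{x}}}$, exactly the chain of isomorphisms the paper displays before the theorem. Your added remarks on isometry of the coherence maps and well-definedness of the hom-bundle are sound but do not change the argument.
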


Now, let $T\colon \Bundle(X)\to \Bundle(Y)$ be a cocontinuous functor. By the decomposition formula \eqref{isomorphism:canonical-decomposition-bundle} applied to both categories we have
\begin{align*}
    T\xi &\isomorphic T\parens{\sum_{x\in X}\xi_{x}\otimes \delta_{x}} \\
        &\isomorphic \sum_{x\in X}\xi_{x}\otimes T(\delta_{x}) \\
        &\isomorphic \sum_{x\in X}\xi_{x}\otimes \parens{\sum_{y\in Y}T_{y}^{x}\otimes \delta_{y}} \\
        &\isomorphic \sum_{y\in Y}\parens{\sum_{x\in X}\xi_{x}\otimes T_{y}^{x}}\otimes \delta_{y}
\end{align*}

This computation shows that we can identify a cocontinuous $T\colon \Bundle(X)\to \Bundle(Y)$ with a \emph{matrix of Banach spaces $T_{x}^{y}$} parameterized by $\pair{x}{y}\in X\times Y$. In other words, we have the following theorem.

\begin{theorem} \label{theorem:cocontinuous-functors-free-product}
The functor $T\mapto T_{x}^{y}$ establishes an equivalence
\begin{equation} \label{equivalence:cocontinuous-functors-free-product}
    \homset{\BanModCat}{\Bundle(X)}{\Bundle(Y)}\equivalent \Bundle(X\times Y)
\end{equation}
\end{theorem}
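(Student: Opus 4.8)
The plan is to exhibit a pseudo-inverse pair of functors between $\homset{\BanModCat}{\Bundle(X)}{\Bundle(Y)}$ and $\Bundle(X\times Y)$ and check they compose to the identity up to natural isomorphism. In one direction, send a cocontinuous $T\colon\Bundle(X)\to\Bundle(Y)$ to the bundle $\pair{x}{y}\mapto T_{y}^{x}\defequal (T\delta_{x})_{y}$ over $X\times Y$; in the other, send a bundle $M$ over $X\times Y$ to the functor $\xi\mapsto\sum_{y\in Y}\bigl(\sum_{x\in X}\xi_{x}\otimes M_{x,y}\bigr)\otimes\delta_{y}$. The displayed four-line computation preceding the theorem already shows that starting from a cocontinuous $T$, running it through this round trip recovers a functor isomorphic to $T$; the only thing to add is that the comparison is natural in $T$, which follows because every isomorphism used (the canonical decomposition \eqref{isomorphism:canonical-decomposition-bundle}, cocontinuity of $T$, the identification $T(\delta_{x})\isomorphic\sum_{y}T_{y}^{x}\otimes\delta_{y}$, and the interchange of the two sums) is itself natural.

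For the converse round trip I would first check that the assignment $M\mapsto T_{M}$, with $T_{M}\xi\defequal\sum_{y\in Y}\bigl(\sum_{x\in X}\xi_{x}\otimes M_{x,y}\bigr)\otimes\delta_{y}$, genuinely lands in $\homset{\BanModCat}{\Bundle(X)}{\Bundle(Y)}$: it is contractive because it is built from $\otimes$, coproducts and $\delta$-likes, all of which are contractive (theorems \ref{theorem:existence-co-products-ban} and \ref{theorem:banc-small-(co)products} and the closed monoidal structure from theorem \ref{theorem:bundle-symmetric-monoidal-closed}); and it is cocontinuous because colimits in $\Bundle(X)$ are pointwise, the functor $\xi\mapsto\xi_{x}$ is cocontinuous (example \ref{example:representables-projective-presheaves}, the $\delta_{x}$ being representable hence projective), tensoring with a fixed Banach space is cocontinuous ($\Ban$ being closed), and a coproduct of cocontinuous functors is cocontinuous by interchange of colimits. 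Having done that, I compute $(T_{M})_{y}^{x}=(T_{M}\delta_{x})_{y}$. By Schur's lemma \eqref{isomorphism:schur-lemma}, $(\delta_{x})_{x'}=\field$ for $x'=x$ and $\zero$ otherwise, so $\sum_{x'}(\delta_{x})_{x'}\otimes M_{x',y}\isomorphic\field\otimes M_{x,y}\isomorphic M_{x,y}$, and evaluating the outer sum at $y$ picks out exactly this summand by \eqref{isomorphism:schur-lemma} again. Hence $(T_{M})_{y}^{x}\isomorphic M_{x,y}$, naturally in $M$, so $M\mapsto T_{M}\mapsto (T_{M})^{\bullet}_{\bullet}$ is isomorphic to the identity on $\Bundle(X\times Y)$.

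Combining the two round trips gives an adjoint equivalence; since both composites are isomorphic to identities via natural isomorphisms that are moreover isometric (every constituent isomorphism above is an isometry in $\Banc$), the equivalence \eqref{equivalence:cocontinuous-functors-free-product} holds as stated. I would phrase the argument so that the functoriality of $T\mapsto T_{y}^{x}$ is remarked first (a natural transformation $\sigma\colon S\to T$ induces $\sigma_{\delta_{x}}$, whose $y$-component is a map $S_{y}^{x}\to T_{y}^{x}$, and this is clearly compatible with composition and contractive), and likewise $M\mapsto T_{M}$ on morphisms sends a bundle map $M\to N$ to the evident natural transformation.

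The main obstacle, and the only point requiring genuine care rather than bookkeeping, is verifying cocontinuity of $T_{M}$ — specifically, justifying the interchange of the $X$-indexed sum with an arbitrary colimit in the argument $\xi$, and with the $Y$-indexed sum. This is where one must invoke the interchange-of-colimits theorem (\cite[chapter 3, section 3]{kelly:enriched-category-theory2005}) and the fact that, because $\mathcal{A}=\Bundle(X)$ is a Banach $2$-space, colimits are computed pointwise (fiberwise), so that the potentially delicate question of whether an infinite coproduct of cocontinuous functors is cocontinuous reduces to the already-established cocontinuity of fiberwise operations in $\Ban$. Everything else — contractivity, the Schur computation, naturality — is a routine consequence of the Yoneda machinery set up in subsection \ref{subsection:presheaf-categories} together with theorems \ref{theorem:bundles-presheaves} and \ref{theorem:bundle-symmetric-monoidal-closed}.
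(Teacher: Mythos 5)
Your proposal is correct and takes essentially the same route as the paper: the paper's proof is exactly the displayed chain of isomorphisms (canonical decomposition \eqref{isomorphism:canonical-decomposition-bundle}, cocontinuity of $T$, and interchange of the two sums), with the quasi-inverse left implicit. You simply make explicit what the paper takes for granted — that $M\mapsto T_{M}$ lands in cocontinuous contractive functors and that the Schur-lemma computation gives $(T_{M})_{y}^{x}\isomorphic M_{x,y}$ — which is a faithful filling-in of the same argument rather than a different one.
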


Using the equivalence \eqref{equivalence:cocontinuous-functors-free-product}, we can identify the composite of cocontinuous functors with \emph{matrix multiplication}. If $S\colon \Bundle(Y)\to \Bundle(Z)$ is a cocontinuous functor with matrix representation $S_{y}^{z}$, then $ST$ is a cocontinuous functor with matrix representation given by:
\begin{equation}\label{isomorphism:composition-matrix-composition}
    \parens{ST}_{x}^{z}\isomorphic \sum_{y\in Y}S_{y}^{z}\otimes T_{x}^{y}
\end{equation}


Note that while composition of cocontinuous functors is associative on the nose and yields a $2$-category, matrix composition given by \eqref{isomorphism:composition-matrix-composition} only yields a \emph{bicategory}. The identification of cocontinuous functors $\Bundle(X)\to \Bundle(Y)$ with bifunctors on $X\times Y$ is but a special case of a much more general phenomenon and leads to the important concept of \emph{distributor} in J.~B\'{e}nabou's terminology. Alternative names are \emph{profunctor}, \emph{bimodule} and even \emph{module} in case the reader lives upside down in the southern hemisphere and speaks Australian\footnote{In the definitive catalogue of the vagaries of human madness that is \emph{The Anatomy of Melancholy}, Robert Burton in one of his endless lists of invectives notes of a lover's blindness:
\begin{quote}
The major part of lovers are carried headlong like so many brute beasts; reason counsels one way, thy friends, fortunes, shame, disgrace, danger, and an ocean of cares that will certainly follow; yet this furious lust precipitates, counterpoiseth, weighs down on the other; though it be their utter undoing, perpetual infamy, loss, yet they will do it, and become at last \emph{insensati}, void of sense; degenerate into dogs, hogs, asses, brutes; as Jupiter into a bull, Apuleius an ass, Lycaon a wolf, Tereus a lapwing, Callisto a bear, Elpenor and Gryllus into swine by Circe.
\end{quote}

This methinks, is the surest scientifick explanation for why Australia features such a strong school in category theory: by walking upside down, the blood that in an average healthy male concentrates in the fiery region of the loins, is diverted to the lower parts of the brain, oxygenating it, opening the pores and allowing a better ventilation of the lustful humors, unobstipated vision, clear thinking, etc.}. We will get to these matters later in a more general context, so we proceed with other concerns.

\smallskip
Another important consequence of these results is that the bitensor product $\PShv(X)\bitensor \PShv(Y)$ of free Banach $2$-modules exists and is $\PShv(X\times Y)$. This comes from the biadjunction \eqref{equivalence:free-cocompletion-biadjunction} and using the line of reasoning used to establish isomorphism \eqref{isomorphism:l1-product}.

\begin{theorem}[Fubini] \label{theorem:fubini-free-banach-2modules}
The bitensor product $\PShv(X)\bitensor \PShv(Y)$ exists and there is an equivalence
\begin{equation} \label{equivalence:fubini-free-banach-2modules}
    \PShv(X)\bitensor \PShv(Y)\equivalent \PShv(X\times Y)
\end{equation}
\end{theorem}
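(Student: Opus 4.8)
The plan is to check directly that $\PShv(X\times Y)$, equipped with the external tensor product of bundles, has the universal property defining the bitensor product in diagram \ref{diagram:universal-property-bitensor}; the argument is the categorical analogue of the derivation of $\ell_{1}(X\times Y)\isomorphic \ell_{1}(X)\otimes \ell_{1}(Y)$ in \eqref{isomorphism:l1-product}, with the free cocompletion biadjunction \eqref{equivalence:free-cocompletion-biadjunction} playing the role that the free Banach space adjunction played there. The candidate bicocontinuous bifunctor is
\begin{equation*}
    \PShv(X)\otimes \PShv(Y)\to \PShv(X\times Y),\qquad \pair{\xi}{\zeta}\mapto \parens{\pair{x}{y}\mapto \xi(x)\otimes \zeta(y)},
\end{equation*}
where on the right $\otimes$ is the projective tensor product of Banach spaces. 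Once this is shown to be universal among bicocontinuous bifunctors out of $\PShv(X)\otimes \PShv(Y)$, both the existence of $\PShv(X)\bitensor \PShv(Y)$ and the claimed equivalence follow, a universal object being unique up to equivalence.

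Bicocontinuity of the external tensor bifunctor is immediate: colimits in presheaf categories are computed fibrewise, and for a fixed Banach space the endofunctors $A\otimes -$ and $-\otimes A$ of $\Ban$ are cocontinuous because $\Ban$ is closed symmetric monoidal. Two elementary identities will then be used: the Yoneda embedding sends $x$ to the $\delta$-like bundle $\delta_{x}$, and $\delta_{x}\otimes \delta_{y}\isomorphic \delta_{\pair{x}{y}}$ (the fibre is $\field\otimes \field\isomorphic \field$ over $\pair{x}{y}$ and $\zero$ elsewhere), so that the restriction of the external tensor bifunctor along $\Yoneda\times \Yoneda$ is identified with the Yoneda embedding $X\times Y\to \PShv(X\times Y)$.

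For universality, let $\varphi\colon \PShv(X)\otimes \PShv(Y)\to \mathcal{C}$ be bicocontinuous with $\mathcal{C}$ a Banach $2$-space. Restricting $\varphi$ along $\Yoneda\times \Yoneda$ gives a contractive functor $X\times Y\to \mathcal{C}$, to which Theorem \ref{theorem:free-cocompletion} assigns a cocontinuous $\universal{\varphi}\colon \PShv(X\times Y)\to \mathcal{C}$, unique up to isomorphism. Using the canonical decomposition $\xi\isomorphic \sum_{x\in X}\xi_{x}\otimes \delta_{x}$ of \eqref{isomorphism:canonical-decomposition-bundle} (and the same for $\zeta$), both $\varphi$ and the composite of $\universal{\varphi}$ with the external tensor bifunctor are bicocontinuous and agree on pairs of representables, hence agree up to isomorphism; uniqueness of the factorization follows the same way from density of the Yoneda embedding. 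This establishes the universal property, so $\PShv(X)\bitensor \PShv(Y)$ exists and is equivalent to $\PShv(X\times Y)$. Equivalently, and more transparently, for every Banach $2$-space $\mathcal{C}$ one has the chain of natural equivalences
\begin{align*}
    \homset{\BanModCat}{\PShv(X\times Y)}{\mathcal{C}}
        &\equivalent \homset{\BanCat}{X\times Y}{\mathcal{C}} \\
        &\equivalent \homset{\BanCat}{X}{\homset{\BanCat}{Y}{\mathcal{C}}} \\
        &\equivalent \homset{\BanCat}{X}{\homset{\BanModCat}{\PShv(Y)}{\mathcal{C}}} \\
        &\equivalent \homset{\BanModCat}{\PShv(X)}{\homset{\BanModCat}{\PShv(Y)}{\mathcal{C}}},
\end{align*}
the first and last steps being \eqref{equivalence:free-cocompletion-biadjunction}, the middle step cartesian closedness of $\Set$, and the third step the observation that for discrete $Y$ the category $\homset{\BanCat}{Y}{\mathcal{C}}$ is the small power $\mathcal{C}^{Y}$, again a Banach $2$-space; the right-hand side is, for varying $\mathcal{C}$, the pseudofunctor whose representing object is $\PShv(X)\bitensor \PShv(Y)$ in the sense of \eqref{equivalence:banmodcat-biclosed}.

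The main obstacle is not any single computation but the bicategorical bookkeeping. The bitensor product, the free cocompletion, and the various factorizations all carry comparison $2$-cells that are unique only up to isomorphism, so each uniqueness assertion above has to be upgraded to a statement about pseudofunctors, and the resulting Fubini equivalence \eqref{equivalence:fubini-free-banach-2modules} is pseudonatural rather than strict. There are also latent size issues---$\homset{\BanModCat}{\PShv(Y)}{\mathcal{C}}$ need not be locally small for arbitrary $\mathcal{C}$---which are harmless here since for discrete $Y$ this category is just $\mathcal{C}^{Y}$, but the compatibility of that identification with the general free cocompletion equivalence must still be checked. Modulo this bookkeeping every step reduces, exactly as in the proof of \eqref{isomorphism:l1-product}, to pointwise computation of colimits, Yoneda's lemma, and the closedness of $\Ban$.
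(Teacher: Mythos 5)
Your proposal is correct and follows essentially the route the paper itself indicates: the free cocompletion biadjunction \eqref{equivalence:free-cocompletion-biadjunction} combined with the categorification of the chain of adjunctions used to prove $\ell_{1}(X\times Y)\isomorphic \ell_{1}(X)\projotimes \ell_{1}(Y)$ in \eqref{isomorphism:l1-product}. Your additional direct verification via the external tensor bifunctor and the $\delta$-decomposition is the same density argument the paper deploys for theorem \ref{theorem:functor-ell-bitensor}, so it adds detail rather than a genuinely different method.
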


Coupling theorems \ref{theorem:cocontinuous-functors-free-product} and \ref{theorem:fubini-free-banach-2modules}, we have that the $2$-category of free Banach $2$-modules over sets, cocontinuous functors and contractive natural transformations with bitensor product \eqref{equivalence:fubini-free-banach-2modules} is \emph{biclosed}\footnote{There is a potential terminology clash here, as the term \emph{biclosed} as been used in the literature in a different sense -- basically, as the generalization of closedness to monoidal, possibly non-symmetric categories. Since we will make no use of the latter, there should be no confusion.}.

\subsection{Categorified measure theory: the discrete case}
\label{subsection:categorified-measure-theory-discrete-case}

In this subsection, we tackle categorified measure theory in the simplest case of discrete Boolean algebras and show that all the basic results of section \ref{section:measure-algebras-integrals} for ordinary integrals have straightforward categorified analogues proved by suitably categorifying the proofs -- the essential work was already done in subsections \ref{subsection:presheaf-categories} and \ref{subsection:free-banach-2spaces-sets}. But first, let us review the basic definitions.

\smallskip
Let $\Omega$ be a Boolean algebra and $\mathcal{A}$ a Banach $2$-space. A \emph{precosheaf} $\Omega\to \mathcal{A}$ is a simply a contractive functor $\mu\colon\Omega\to \mathcal{A}$. Let $\mathcal{E}$ be a partition of $E= \sup\mathcal{E}$. Then there is a unique map closing the triangle \ref{diagram:induced-precosheaf-map-finite-partitions1} where $\mu_{\mathcal{E}}$ denotes the coproduct $\sum_{F\in \mathcal{E}}\mu(F)$.
\begin{figure}[htbp]
    \begin{equation*}
    \xymatrix{
        \mu_{\mathcal{E}} \ar@{-->}[r]^{\varepsilon_{\mathcal{E}}} & \mu(E) \\
        \mu(E_{n}) \ar[u]^{i_{\mu(E_{n})}} \ar[ur]_{\mu_{E_{n}, E}} &
    }
    \end{equation*}
\caption{The map $\varepsilon_{\mathcal{E}}\colon \mu_{\mathcal{E}}\to \mu(E)$.}
\label{diagram:induced-precosheaf-map-finite-partitions1}
\end{figure}

A cosheaf is a precosheaf for which $\varepsilon_{\mathcal{E}}$ is an isomorphism for every partition $\mathcal{E}$, or equivalently, for which the cone $\parens{\mu_{E_{n}, E}}$ is a coproduct. More precisely:

\begin{definition} \label{definition:cosheaf}
A precosheaf $\mu$ is \emph{finitely additive} if $\varepsilon_{\mathcal{E}}$ is an isometric isomorphism for every finite partition. It is \emph{$\sigma$-additive} if $\varepsilon_{\mathcal{E}}$ is an isometric isomorphism for every countable partition and \emph{completely additive} if $\varepsilon_{\mathcal{E}}$ is an isometric isomorphism for every partition.
\end{definition}

In this subsection only, the unadorned term \emph{cosheaf} will mean \emph{$\sigma$-additive precosheaf}. The fact that the Boolean algebra $\subsets{X}$ is atomic will allow us to bypass the more delicate analytical issues that arise when we consider $\sigma$-additive cosheaves. As remarked in the introduction, we view cosheaves as \emph{categorified measures}. The category of $\mathcal{A}$-valued cosheaves will be denoted by $\CoShv(\Omega, \mathcal{A})$. It can be proved that $\mu$ is a cosheaf iff it satisfies the \emph{dual patching condition} for countable covers.

\smallskip
The cosheaves of more importance are the completely additive ones while the ones coming from concrete situations (e.g.~the cosheaves $E\mapto \Integrable_{1}(E, \mu)$ for a measure algebra $\pair{\Omega}{\mu}$) are $\sigma$-additive. The gap between $\sigma$-additivity and complete additivity\footnote{Completely additive functionals are treated in \cite[chapter 2, section 6]{fremlin:measure-theory32002}. They are the functionals of importance for the abstract Radon-Nikodym theorem.} is bridged by the same technical condition used in the ordinary measure-theoretic situation: the \emph{countable chain condition}.

\begin{definition} \label{definition:ccc}
A Boolean algebra $\Omega$ satisfies the \emph{countable chain condition} (or has ccc for short) if every partition $\mathcal{E}$ is countable\footnote{Partitions are anti-chains, so it would probably be more reasonable to call the condition in \ref{definition:ccc}, the \emph{countable antichain condition} or cac. But the terminology is entrenched and besides, it is not difficult to prove that if $\Omega$ is $\sigma$-complete then ccc is equivalent to the fact that there are no uncountable chains in $\Omega$.}.
\end{definition}

The ccc condition can be seen as the purely Boolean algebraic version of total $\sigma$-finiteness. A Boolean algebra that is $\sigma$-complete and has ccc is automatically order complete. If $\Omega$ is $\sigma$-complete and the measure $\mu$ is $\sigma$-additive and non-degenerate then $\Omega$ has ccc, so that the ccc condition is automatically satisfied in the most important class of measure algebras.

\smallskip
Now, let us consider Boolean algebras of the form $\subsets{X}$ for a set $X$. Since we want the ccc condition on $\subsets{X}$, $X$ must be a countable set. In this case, the Boolean algebra $\subsets{X}$ is complete, ccc and purely atomic. We have an obvious isomorphism,
\begin{equation*}
    \atoms(\subsets{X})\isomorphic X
\end{equation*}
identifying the atoms of $\subsets{X}$ with the elements of $X$. This implies that for every $E\subseteq X$ we have the equality
\begin{equation*}
    E= \sup\set{F\in \atoms(\subsets{X})\colon F\subseteq E}
\end{equation*}

The next theorem should now be clear.

\begin{theorem}[Radon-Nikodym] \label{theorem:discrete-categorified-rn}
The functor $\mu\mapto \parens{x\mapto \mu(x)}$ establishes an isomorphism
\begin{equation} \label{isomorphism:discrete-categorified-rn}
    \homset{\CoShv}{\subsets{X}}{\mathcal{A}} \isomorphic \homset{\BanCat}{X}{\mathcal{A}}
\end{equation}
\end{theorem}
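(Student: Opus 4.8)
The plan is to show that $\CoShv(\subsets{X}, \mathcal{A})$ is the free Banach $2$-space on the discrete category $\atoms(\subsets{X}) \isomorphic X$, so that the restriction-to-atoms functor becomes the inverse of the free-cocompletion equivalence of Theorem \ref{theorem:free-cocompletion}. Since $X$ is countable, $\subsets{X}$ is complete, ccc and purely atomic, and (as $\sigma$-additive cosheaves on a ccc algebra are automatically completely additive) a $\sigma$-additive precosheaf $\mu\colon\subsets{X}\to\mathcal{A}$ is one for which $\varepsilon_{\mathcal{E}}$ is an isometric isomorphism for \emph{every} partition $\mathcal{E}$. First I would set up the two functors: the restriction functor $R\colon \CoShv(\subsets{X},\mathcal{A})\to \homset{\BanCat}{X}{\mathcal{A}}$ sending $\mu$ to $x\mapto\mu(\{x\})$, and in the other direction the functor sending $F\colon X\to\mathcal{A}$ to the precosheaf $E\mapsto \sum_{x\in E}F(x)$, using that $\mathcal{A}$ has all (small) coproducts because it is a Banach $2$-space.

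Next I would verify that $E\mapsto\sum_{x\in E}F(x)$ is genuinely a cosheaf: given a partition $\mathcal{E}$ of $E$, the comparison map $\varepsilon_{\mathcal{E}}\colon \sum_{G\in\mathcal{E}}\bigl(\sum_{x\in G}F(x)\bigr)\to \sum_{x\in E}F(x)$ is the canonical isomorphism reindexing a coproduct along the partition $E=\bigsqcup_{G\in\mathcal{E}}G$ of the index set, hence an isometric isomorphism; this is exactly the categorified $\sigma$-additivity, the analogue of Theorem \ref{theorem:direct-integral-commutes-infinite-sums} and Theorem \ref{theorem:L2-countably-additive}. Functoriality on morphisms $\subsets{X}\to\subsets{X}$ of the Boolean algebra is the observation that an inclusion $E\subseteq E'$ induces the evident coproduct injection, and contractivity is immediate since coproduct injections and the induced maps between coproducts of contractions are contractions. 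I would then check the two composites are naturally isometrically isomorphic to the identities: $R$ applied to $E\mapsto\sum_{x\in E}F(x)$ gives $x\mapsto \sum_{y\in\{x\}}F(y)\isomorphic F(x)$ by Schur/Yoneda as in \eqref{isomorphism:schur-lemma}; conversely, for a cosheaf $\mu$, the complete additivity applied to the atomic decomposition $E=\sup\{F\in\atoms(\subsets X): F\subseteq E\}$ gives $\mu(E)\isomorphic \sum_{x\in E}\mu(\{x\})$ naturally in $E$, which is precisely the reconstruction statement. This is the categorified form of the fact recalled just before the theorem, that $E=\sup\{F\in\atoms:F\subseteq E\}$.

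The main obstacle — and the reason this is more than a triviality — is the word \emph{isomorphism} rather than \emph{equivalence} in \eqref{isomorphism:discrete-categorified-rn}, in contrast with the biadjunction \eqref{equivalence:free-cocompletion-biadjunction}. In the general free-cocompletion theorem the factorization through $\PShv(\mathcal{A})$ is only determined up to unique isometric $2$-cell, so one gets an equivalence; here one wants an honest isomorphism of hom-$2$-categories (or at least of the relevant categories). To get this I would exploit the strictness available in the purely discrete, atomic situation: choosing the coproduct $\sum_{x\in E}F(x)$ as a \emph{specified} colimit makes $F\mapsto(E\mapsto\sum_{x\in E}F(x))$ a strict functor, and the reconstruction $\mu\isomorphic(E\mapsto\sum_x\mu(\{x\}))$, while a priori only up to isomorphism, can be arranged to be an identity if one normalizes cosheaves to take the specified coproduct values on each $E$ — i.e. one works with the replete but strictified model. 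Alternatively, and perhaps cleaner, one notes that the content of the theorem is really the statement that \emph{$\delta$-like bundles are dense projective generators} of $\CoShv(\subsets{X},\mathcal{A})$ together with Schur's lemma, so the isomorphism is the evident bijection on objects (cosheaves $\leftrightarrow$ families, via atoms) and the end-formula computation $\homset{\CoShv}{\mu}{\nu}\isomorphic\prod_{x\in X}\homset{\mathcal{A}}{\mu(\{x\})}{\nu(\{x\})}\isomorphic\homset{\homset{\BanCat}{X}{\mathcal{A}}}{R\mu}{R\nu}$ on hom-spaces, the latter being exactly \eqref{isomorphism:hom-spaces-bundle} transported across $X\isomorphic\atoms(\subsets X)$. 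I would present the proof in the second form, deriving the hom-space isomorphism from the already-established machinery of subsections \ref{subsection:presheaf-categories} and \ref{subsection:free-banach-2spaces-sets} and only then remarking that because $\subsets X$ is atomic the object-level correspondence is a genuine bijection, so the whole thing upgrades from equivalence to isomorphism.
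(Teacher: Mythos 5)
Your proposal is correct and is essentially the argument the paper intends: the paper offers no proof beyond remarking that $\subsets{X}$ is complete, ccc and purely atomic with $\atoms(\subsets{X})\isomorphic X$, so a cosheaf is determined by its atom values and any family $X\to\mathcal{A}$ is freely reconstructed by coproducts over atoms, which is exactly what you spell out (including ccc giving complete additivity from $\sigma$-additivity). Your extra care about ``isomorphism'' versus ``equivalence'' goes beyond the paper, which is itself loose on this point and immediately passes to the equivalence \eqref{equivalence:discrete-categorified-rn}; your second, hom-space-based resolution is the safer one, since the strictification of chosen coproducts cannot literally make the round trip on cosheaves an identity.
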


Using the biadjunction \ref{equivalence:free-cocompletion-biadjunction}, we have that,
\begin{equation} \label{equivalence:discrete-categorified-rn}
    \homset{\CoShv}{\subsets{X}}{\mathcal{A}}\equivalent \homset{\BanModCat}{\PShv(X)}{\mathcal{A}}
\end{equation}
and taking into account theorem \ref{theorem:bundles-presheaves}, it follows that the universal cosheaf
\begin{equation*}
    \characteristic\colon \subsets{X}\to \PShv(X)
\end{equation*}
is given by $x\mapto \delta_{x}$ and thus by equivalence \eqref{equivalence:discrete-categorified-rn}, $\characteristic$ is just the Yoneda embedding. If $\mu\colon \subsets{X}\to \mathcal{A}$ is a cosheaf, then it factors via $\characteristic$ as $\int\differential{\mu}\colon \PShv(X)\to \mathcal{A}$ by taking the left Kan extension $\Lan_{\characteristic}\mu$. These constructions identify the category of measurable\footnote{Since every measurable is integrable in the categorified setting, we will use the two adjectives indistinguishably.} functors $\IBundle(\subsets{X})$ as the Banach $2$-space $\PShv(X)\isomorphic \Bundle(X)$. As expected, this Banach $2$-space does not depend on the cosheaf $\mu$, only on the set $X$. The coend formula \eqref{isomorphism:left-kan-extension-coend} for left Kan extensions implies that,
\begin{equation} \label{isomorphism:coend-formula-integral-functor}
    \integral{X}{\xi}{\mu}\isomorphic \sum_{x\in X}\xi_{x}\otimes \mu(x)
\end{equation}
which is what was to be expected from the fact that in atomic Boolean algebras, integrals are nothing but sums. In particular, the functor $\xi\mapto \integral{X}{\xi}{\mu}$ is cocontinuous.

\smallskip
Now that we have constructed the integral functor $\xi\mapto \integral{X}{\xi}{\mu}$ we can talk about the \emph{indefinite integral}. This is just the cosheaf
\begin{equation*}
    \subsets{X}\to \mathcal{A}\colon E\mapto \integral{E}{\xi}{\mu}\defequal \sum_{x\in E}\xi_{x}\otimes \mu(x)
\end{equation*}

We have constructed the integral of scalar (that is, $\Ban$-valued) bundles against vector measures. Following the path taken with measure algebras and the construction of vector integrals, the vector integral should be constructed via a bitensor product. Define the category $\ell(X, \mathcal{A})$ to be $\homset{\BanCat}{X}{\mathcal{A}}$. If $\mathcal{A}$ is the category $\Ban$ of Banach spaces this is just another notation for $\PShv(X)\isomorphic \Bundle(X)$.

\begin{theorem} \label{theorem:functor-ell-bitensor}
The bifunctor
\begin{equation}\label{functor:universal-bicocontinuous}
    \pair{\xi}{a}\mapto \xi\otimes a\colon x\mapto \xi_{x}\otimes a
\end{equation}
establishes a natural equivalence,
\begin{equation} \label{equivalence:functor-ell-bitensor}
    \ell(X, \mathcal{A})\equivalent \ell(X)\bitensor \mathcal{A}
\end{equation}
\end{theorem}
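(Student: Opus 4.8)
The plan is to show that the bifunctor $\psi\colon\ell(X)\otimes\mathcal{A}\to\ell(X,\mathcal{A})$, $\pair{\xi}{a}\mapto\parens{x\mapto\xi_{x}\otimes a}$, is bicocontinuous and universal among bicocontinuous bifunctors out of $\ell(X)\otimes\mathcal{A}$; by the universal property of the bitensor product (diagram \ref{diagram:universal-property-bitensor}) this at once establishes that $\ell(X)\bitensor\mathcal{A}$ exists and that $\psi$ realizes it, so that $\ell(X)\bitensor\mathcal{A}\equivalent\ell(X,\mathcal{A})$, pseudonaturally in $X$ and $\mathcal{A}$. First I would dispatch bicocontinuity, which is routine: colimits in $\ell(X)\isomorphic\PShv(X)\isomorphic\Bundle(X)$ and in $\ell(X,\mathcal{A})\isomorphic\homset{\BanCat}{X}{\mathcal{A}}$ are computed fibrewise, and with one argument fixed $\psi$ is fibrewise a tensoring functor --- $-\otimes a$ on $\Ban$, respectively $\xi_{x}\otimes-$ on $\mathcal{A}$ --- each cocontinuous because the projective tensor product is a left adjoint. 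Along the way I would also record the $\Ban$-bilinearity $\psi(V\otimes\xi,a)\isomorphic\psi(\xi,V\otimes a)\isomorphic V\otimes\psi(\xi,a)$ for $V\in\Ban$, which holds because tensoring by $V$ is a weighted colimit and is therefore preserved by $\psi$ in each variable.

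The substantive step is universality. Given a Banach $2$-space $\mathcal{C}$ and a bicocontinuous $\varphi\colon\ell(X)\otimes\mathcal{A}\to\mathcal{C}$, I would set
\begin{equation*}
    \universal{\varphi}\family{\xi}{x}\defequal\sum_{x\in X}\varphi(\delta_{x},\xi_{x}),
\end{equation*}
with $\delta_{x}$ the Dirac bundles of subsection \ref{subsection:free-banach-2spaces-sets}, and check that $\universal{\varphi}$ is cocontinuous (each $\varphi(\delta_{x},-)$ is cocontinuous and the countable coproduct commutes with colimits). That $\universal{\varphi}\psi\isomorphic\varphi$ then follows from the canonical decomposition \eqref{isomorphism:canonical-decomposition-bundle}, $\xi\isomorphic\sum_{x}\xi_{x}\otimes\delta_{x}$, cocontinuity of $\varphi$ in its first slot, and the bilinearity above, which together give $\varphi(\xi,a)\isomorphic\sum_{x}\varphi(\xi_{x}\otimes\delta_{x},a)\isomorphic\sum_{x}\varphi(\delta_{x},\xi_{x}\otimes a)=\universal{\varphi}(\psi(\xi,a))$. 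Uniqueness up to a unique isometric isomorphism comes from the matching decomposition in $\ell(X,\mathcal{A})$: every object $\family{\xi}{x}$ is the fibrewise coproduct $\sum_{x}\psi(\delta_{x},\xi_{x})$ of objects in the image of $\psi$, so a cocontinuous functor on $\ell(X,\mathcal{A})$ is pinned down by its restriction along $\psi$. This is precisely diagram \ref{diagram:universal-property-bitensor}.

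There is a more conceptual packaging I would run alongside as a sanity check, namely the chain of pseudonatural equivalences
\begin{equation*}
    \homset{\BanModCat}{\ell(X)\bitensor\mathcal{A}}{\mathcal{C}}\equivalent\homset{\BanModCat}{\ell(X)}{\homset{\BanModCat}{\mathcal{A}}{\mathcal{C}}}\equivalent\homset{\BanCat}{X}{\homset{\BanModCat}{\mathcal{A}}{\mathcal{C}}}\equivalent\homset{\BanModCat}{\ell(X,\mathcal{A})}{\mathcal{C}}
\end{equation*}
obtained from the biclosed structure \eqref{equivalence:banmodcat-biclosed}, the free-cocompletion biadjunction \eqref{equivalence:free-cocompletion-biadjunction} for the free Banach $2$-space $\ell(X)\isomorphic\PShv(X)$ on the discrete $X$, and the $\ell(X,\mathcal{A})$-version of the identification of a cocontinuous functor with its restrictions along the extension-by-zero inclusions; bicategorical Yoneda then extracts the equivalence. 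The main obstacle --- and the reason I would keep the concrete computation in hand --- is exactly this bookkeeping: ensuring every intermediate equivalence is pseudonatural in $\mathcal{C}$ so that Yoneda applies, verifying that the equivalence it produces is canonically isomorphic to the named bifunctor $\psi$, and clearing the size caveats flagged in section \ref{section:banach-2spaces} --- $\ell(X,\mathcal{A})$ is a genuine locally small cocomplete Banach $2$-space, and the coproducts $\sum_{x\in X}$ above are legitimate, precisely because the ccc hypothesis in force here makes $X$ (countable, hence) small.
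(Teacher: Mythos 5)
Your proposal is correct and follows essentially the same route as the paper's own proof: bicocontinuity of the bifunctor, density of the subcategory of delta-like objects $\delta_{x}\otimes a$ via the canonical decomposition \eqref{isomorphism:canonical-decomposition-ell-functor}, the definition $\universal{\varphi}(\xi)\isomorphic\sum_{x\in X}\varphi(\delta_{x},\xi_{x})$, and uniqueness read off from that formula. Your explicit bilinearity step $\varphi(\xi_{x}\otimes\delta_{x},a)\isomorphic\varphi(\delta_{x},\xi_{x}\otimes a)$ (tensors as weighted colimits preserved in each variable) usefully spells out what the paper leaves as ``straightforward colimit manipulations,'' and the biclosedness/free-cocompletion chain is a reasonable consistency check but not needed.
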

\begin{proof}
We merely sketch the proof. Start by noticing that since tensors are cocontinuous in each variable, \eqref{functor:universal-bicocontinuous} is bicocontinuous. Now, similarly to the case of $\ell(X)$, we introduce a subcategory of delta-like bundles that is dense in $\ell(X, \mathcal{A})$. Let $X\in X$ and $a\in \mathcal{A}$. Define:
\begin{equation} \label{equation-definition:delta-bundles-ell-functor}
    \delta_{x}\otimes a(y)\defequal
    \begin{cases}
        a &\text{if $y= x$,} \\
        \zero &\text{otherwise.}
    \end{cases}
\end{equation}

The canonical decomposition in $\ell(X, \mathcal{A})$ takes the form
\begin{equation} \label{isomorphism:canonical-decomposition-ell-functor}
    \xi\isomorphic \sum_{x\in X}\delta_{x}\otimes \xi_{x}
\end{equation}

Since $X$ is discrete, the end-formula for $\hom$-spaces of functor categories yields that $\homset{\ell}{\xi}{\zeta}\isomorphic \prod_{x\in X}\homset{\mathcal{A}}{\xi_{x}}{\zeta_{x}}$. These two facts put together, imply that the full subcategory of \eqref{equation-definition:delta-bundles-ell-functor} is dense in $\ell(X, \mathcal{A})$. Now, let $\varphi\colon \ell(X)\otimes \mathcal{A}\to \mathcal{B}$ be a bicocontinuous bifunctor. Define the functor $\universal{\varphi}\colon \ell(X, \mathcal{A})\to \mathcal{B}$ on elementary tensors $\delta_{x}\otimes a$ by $\varphi(\delta_{x}, a)$ and extend by density to the whole $\ell(X, \mathcal{A})$:
\begin{equation} \label{isomorphism:linearization-biconcotinuous}
    \universal{\varphi}(\xi)\isomorphic \sum_{x\in X}\varphi(\delta_{x}, \xi_{x})
\end{equation}

Straightforward colimit manipulations show that $\universal{\varphi}$ is cocontinuous. It is also clear that $\universal{\varphi}i\isomorphic \varphi$ and that any such functor would have to be isomorphic (up to a unique isomorphism) to $\universal{\varphi}$ by \eqref{isomorphism:linearization-biconcotinuous}.
\end{proof}

If $\mu\colon \subsets{X}\to \Ban$ is a cosheaf we define the integral functor $\ell(X, \mathcal{A})\to \mathcal{A}$ to be
\begin{equation} \label{isomorphism:vector-integral-functor}
    \integral{X}{\xi}{\mu}\defequal \sum_{x\in X}\mu(x)\otimes \xi_{x}
\end{equation}

We have now the $2$-diagram of figure \ref{2diagram:naturality-discrete-integral-functor} where $T\colon \mathcal{A}\to \mathcal{B}$ is a cocontinuous functor.
\begin{figure}[htbp]
    \begin{equation*}
    \xymatrix{
        \ell(X)\bitensor \mathcal{A} \ar[rrr]^{\id_{\ell(X)}\bitensor T} \ar[dd]_{\equivalent} \ar[dr]^{\int_{X}\differential{\mu}\bitensor \id_{\mathcal{A}}} &  &  & \ell(X)\bitensor
        \mathcal{B} \ar[dd]^{\equivalent} \ar[dl]_{\int_{X}\differential{\mu}\bitensor \id_{\mathcal{B}}} \\
          & \mathcal{A} \ar[r]^{T} & \mathcal{B} & \\
        \ell(X, \mathcal{A}) \ar[rrr]_{\covariant{T}} \ar[ur]^{\int_{X}\differential{\mu}} &  &  &
        \ell(X, \mathcal{B}) \ar[ul]_{\int_{X}\differential{\mu}}
    }
    \end{equation*}
\caption{Naturality of $\int_{X}\differential{\mu}$.}
\label{2diagram:naturality-discrete-integral-functor}
\end{figure}

Commutativity of \ref{2diagram:naturality-discrete-integral-functor} means that between every pair of parallel paths there is a canonical $2$-cell isomorphism between them and that \ref{2diagram:naturality-discrete-integral-functor} filled with these $2$-cells is commutative. The proof of all this is a relatively straightforward but somewhat tedious chore, and since we will (eventually) generalize everything to other Boolean algebras $\Omega$ we concentrate our attention in a small piece of \ref{2diagram:naturality-discrete-integral-functor} and leave the rest to be handled by the interested reader. For the most part, they are the direct categorification of the computations with simple functions needed to establish theorem \ref{theorem:l1-tensor-product}.

\smallskip
Let $\xi\in \ell(X, \mathcal{A})$ be an $\mathcal{A}$-valued bundle. Then by \eqref{isomorphism:canonical-decomposition-ell-functor}, $\xi\isomorphic \sum_{x\in X}\delta_{x}\otimes \xi_{x}$ and we have:
\begin{align*}
    T\parens{\integral{X}{\xi}{\mu}} &\isomorphic T\parens{\integral{X}{\parens{\sum_{x\in X}\delta_{x}\otimes \xi_{x}}}{\mu}} \displaybreak[0]\\
        &\isomorphic T\parens{\sum_{x\in X}\mu(x)\otimes \xi_{x}} \displaybreak[0]\\
        &\isomorphic \sum_{x\in X}\mu(x)\otimes T\xi_{x} \displaybreak[0]\\
        &\isomorphic \integral{X}{\parens{\sum_{x\in X}\delta_{x}\otimes T\xi_{x}}}{\mu} \displaybreak[0]\\
        &\isomorphic \integral{X}{\covariant{T}(\xi)}{\mu}
\end{align*}

This chain of isomorphisms provides the unique isometric isomorphism filling the lower trapezoid in \ref{2diagram:naturality-discrete-integral-functor} and shows that cocontinuous functors commute with integrals. In other words, the contortions of \cite{yetter:measurable-categories2005} to define the right notion of functor between categories of integrable functors are not needed.

\smallskip
Fubini's theorem now follows from the fact that $\ell$ is a left biadjoint to the unit ball $2$-functor $\BanCat\to \Cat$. The argument is just the categorification of the argument used above to establish the isometric isomorphism $\ell_{1}(X)\otimes \ell_{1}(Y)\isomorphic \ell_{1}(X\times Y)$. The isomorphism between iterated integrals follows likewise.

\section{Categorified measures and integrals}
\label{section:categorified-measures-integrals}

With the basic notions of Banach $2$-spaces in hand and categorified measure theory for the discrete case clarified, we justify in this final section some of the principles advanced in section \ref{section:towards-categorified-measure-theory} for categorified measure theory. Full details will be given in a future paper (\cite{rodrigues:categorified-measure-theory}). We will be concerned not so much with the direct categorification of the results of section \ref{section:measure-algebras-integrals}, but in the new phenomena that arise in the categorified setting. The former was already covered in subsection \ref{subsection:categorified-measure-theory-discrete-case} for discrete Boolean algebras $\subsets{X}$ and the details for the more general case do not differ by much once the basic constructions are understood. We will convey only their flavor and perforce many things will be left unsaid.

\subsection{Banach sheaves}
\label{subsection:banach-sheaves}


By principles \ref{principle:category-measurable-bundles} and \ref{principle:measurable-bundles-sheaves}, the objects that we put under the integral sign are sheaves for a suitable Grothendieck topology on $\Omega$. The general definition of Grothendieck topology can be seen in \cite[chapter III]{maclane-moerdijk:sheaves-geometry-logic1992} or in \cite[chapter 3, section 2]{borceux:handbook-categorical-algebra31994}. Some considerable simplifications are possible in the case of Boolean algebras (more generally, lattices). Let $\Omega$ be a Boolean algebra viewed as a category with objects the elements of $\Omega$ and an arrow $E\to F$ iff $E\subseteq F$. Since there is at most one arrow between any two objects, specifying a Grothendieck topology in $\Omega$ is easy as a \emph{sieve} in $\Omega$ is just a downward closed subset, that is, subsets $\mathcal{I}\subseteq \Omega$ such that if $E\in \mathcal{I}$ and $F\subseteq E$ then $F\in \mathcal{I}$. But when computing suprema of subsets in lattice, there is no loss of generality if we pass from a subset $U$ to the sieve $\sieve(U)$ generated by it, so that we can specify the covering sieves by simply saying what are the covering families.

\begin{definition} \label{definition:cover-boolean-algebras}
We say that a family $\mathcal{V}$ of elements of $\Omega$ \emph{covers} $E\in \Omega$ if
\begin{equation} \label{inequality:cover-boolean-algebras}
    E= \sup\mathcal{V}
\end{equation}

A family $\mathcal{V}$ that covers $E$ is, logically enough, called a \emph{cover} of $E$.
\end{definition}

For \eqref{inequality:cover-boolean-algebras} to make sense we have to assume that the supremum on the right-hand side exists. We could do without such completeness hypothesis if we were more careful in wording our definitions. There is little to be gained from such extra generality however, so here and elsewhere, whenever needed, we simply assume the existence of all the required suprema.

\smallskip
There are three Grothendieck topologies available, depending on the cardinality of the allowed covers.

\begin{definition} \label{definition:finite-countable-complete-topologies}
The \emph{finite Grothendieck topology} on a Boolean algebra is the topology generated by the finite covers. The \emph{countable or $\sigma$-Grothendieck topology} (on a $\sigma$-complete Boolean algebra) is the topology generated by the countable covers and the \emph{complete Grothendieck topology} (on a complete Boolean algebra) is the one generated by arbitrary cardinality covers.
\end{definition}

As the reader may suspect, for measure theory the topology of more interest on a Boolean algebra is the $\sigma$-topology. The complete topology is the best for categorial reasons that should be obvious; the category of sheaves for this topology is the category of sheaves for $\Omega$ as a locale. The gap between the countable and the complete topologies is bridged by the countable chain condition \ref{definition:ccc}. The finite topology, the topology of interest in this section, roughly corresponds to finite additivity and will be used to build a universal home for categorified measure theory just as $\Integrable_{\infty}(\Omega)$ is the universal home for all measures; its importance will become apparent by the end of the subsection.


\smallskip
Before proceeding, two other aspects should be mentioned. We will study sheaves locally or one Boolean algebra at a time, but many of these sheaves (e.g.~$\Omega\mapto \Integrable_{\infty}(\Omega)$) are sheaves on the site\footnote{This site, as well as the sites of the slice categories $\BoolTop\slice X$, are all non-small, so strictly speaking we would have to trim $\BoolTop$ to a suitable small subcategory.} of the distributive category $\BoolTop$ of Boolean spaces with the standard, or open cover Grothendieck topology. This too, is a well known theme of algebraic geometry where it goes under the name of ``petit'' and ``gros'' topoi. On the other hand, a functor like the $\Integrable_{1}$-functor of section \ref{subsection:bochner-integral} is a cosheaf not on $\BoolTop$ but on the site $\Meas$ of measure algebras. We are being stingy on the details, in part because both these aspects are best understood in the context of fibrations (in the categorial sense), starting with the fibration $\Meas\to \Bool$ given by $\pair{\Omega}{\mu}\mapto \Omega$. Fibrations are a powerful tool to tackle these and other constructions like the several Banach $2$-space stacks floating around, but we have been dodging even mentioning them so as not to raise the categorial requirements bar for the paper. For the reader interested in the theory of fibrations, we refer him to \cite[chapter 8]{borceux:handbook-categorical-algebra21994} and \cite{streicher:fibred-categories-benabou}.

\smallskip
Please keep in mind that functor, presheaf, natural transformation, colimit, etc.~is \emph{always} understood in the $\Banc$-enriched sense. Since we have a topology on $\Omega$ we can define sheaves with values in any category $\mathcal{A}$. More specifically, we make the following definition.

\begin{definition} \label{definition:banach-sheaf-ba}
Let $\mathcal{A}$ be a finitely complete Banach $2$-space. A presheaf $\xi\colon \Omega\to \mathcal{A}$ is a \emph{Banach sheaf} if it is a sheaf for the finite topology.
\end{definition}

Sheaves for the other two topologies can also be defined, but we must require more completeness from the codomain $\mathcal{A}$. The next theorem works for any of the three topologies of definition \ref{definition:finite-countable-complete-topologies}. The reader can fish out the details of the proof from \cite[section 10]{rezk:fibrations-homotopy-colimits-simplicial-sheaves1998}.

\begin{theorem} [Sheaf condition]\label{theorem:sheaf-condition}
A presheaf $\xi\colon \Omega\to \mathcal{A}$ is a sheaf iff for every partition $\mathcal{E}$ of $E\in \Omega$, the canonical map filling diagram \ref{diagram:sheaf-condition} is an isometric isomorphism, in other words, the cone $\parens{p_{E, F}}$ with $F\in \mathcal{E}$, is a product.
\begin{figure}[htbp]
    \begin{equation*}
    \xymatrix{
        & \prod_{F\in \mathcal{E}}\xi(F) \ar[d]^{p_{\xi(F)}} \\
        \xi(E) \ar[r]_{p_{E, F}} \ar@{-->}[ur] & \xi(F)
    }
    \end{equation*}
\caption{The sheaf condition.}
\label{diagram:sheaf-condition}
\end{figure}
\end{theorem}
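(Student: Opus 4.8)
The plan is to reduce the sheaf axiom to a generating family of covers, treat the two implications separately, and let a disjointification trick do the real work. Throughout, write $\kappa$ for the cardinality bound of the topology under consideration (finite, countable, or unrestricted), so that a sieve covers $E$ precisely when it contains a family $\mathcal{V}$ with $\cardinality{\mathcal{V}}\le \kappa$ and $\sup \mathcal{V}= E$. First I would invoke the standard reduction of the sheaf axiom to a basis (\cite[chapter III]{maclane-moerdijk:sheaves-geometry-logic1992}): $\xi$ is a sheaf if and only if, for every such $\mathcal{V}= \family{V}{i}$, every matching family $\family{s}{i}$ (with $s_{i}\in \xi(V_{i})$ and $\restriction{s_{i}}{V_{i}\wedge V_{j}}= \restriction{s_{j}}{V_{i}\wedge V_{j}}$ for all $i, j$) equals $\parens{\restriction{s}{V_{i}}}$ for a unique $s\in \xi(E)$, this correspondence being a bijection onto $\xi(E)$ that preserves the supremum norm — all relevant products and limits existing in $\mathcal{A}$ by the completeness postulated for the topology at hand. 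Since the empty family covers $0= \sup \emptyset$, any sheaf has $\xi(0)$ terminal; and the product condition of the theorem, applied to the empty partition of $0$, likewise forces $\xi(0)$ terminal, a fact I would use freely below.

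The implication ``sheaf $\Rightarrow$ product condition'' is then near-trivial. An admissible partition $\mathcal{E}$ of $E$ is an admissible cover, and for distinct $F, F'\in \mathcal{E}$ one has $F\wedge F'= 0$, so $\xi(F\wedge F')$ is terminal; hence a ``matching family over $\mathcal{E}$'' is just an unconstrained element of $\prod_{F\in \mathcal{E}}\xi(F)$, and the sheaf axiom for $\mathcal{E}$ reads exactly as the assertion that the canonical map $\xi(E)\to \prod_{F\in \mathcal{E}}\xi(F)$ of diagram \ref{diagram:sheaf-condition} is an isometric isomorphism.

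For the converse I would fix an admissible cover $\mathcal{V}= \family{V}{i}$, well-order the index set, and disjointify by setting $W_{i}\defequal V_{i}\wedge \neg\sup_{j< i}V_{j}$ (the displayed suprema exist by the choice of $\kappa$, and $\wedge$ distributes over them). Then $\set{W_{i}\colon W_{i}\neq 0}$ is an admissible partition of $E$ with $W_{i}\subseteq V_{i}$, and $\set{V_{i}\wedge W_{j}\colon j}$ is an admissible partition of each $V_{i}$; so the product hypothesis supplies isometric isomorphisms $\xi(E)\isomorphic \prod_{j}\xi(W_{j})$ and $\xi(V_{i})\isomorphic \prod_{j}\xi(V_{i}\wedge W_{j})$. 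Given a matching family $\family{s}{i}$ for $\mathcal{V}$, restriction along $V_{i}\wedge W_{j}\subseteq V_{i}\wedge V_{j}$ gives $\restriction{s_{i}}{V_{i}\wedge W_{j}}= \restriction{s_{j}}{V_{i}\wedge W_{j}}$; the elements $t_{j}\defequal \restriction{s_{j}}{W_{j}}$ then assemble, with no constraints since the $W_{j}$ are disjoint, into a unique $s\in \xi(E)$, and comparing restrictions to each $V_{i}\wedge W_{j}$ — both $\restriction{s}{V_{i}}$ and $s_{i}$ restrict there to $\restriction{s_{j}}{V_{i}\wedge W_{j}}$ — forces $\restriction{s}{V_{i}}= s_{i}$; uniqueness is immediate since any competitor $s'$ has $\restriction{s'}{W_{j}}= \restriction{s_{j}}{W_{j}}= t_{j}$. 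The bijection so obtained is isometric because restrictions are contractions and $\norm{s}= \sup_{j}\norm{\restriction{s}{W_{j}}}\le \sup_{i}\norm{\restriction{s}{V_{i}}}\le \norm{s}$, products in $\mathcal{A}$ carrying the $\ell^{\infty}$-norm on hom-objects.

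The hard part is this last implication: one must see that the product condition, which only speaks about \emph{partitions}, propagates to the sheaf axiom for \emph{arbitrary} admissible covers. The transfinite disjointification, bolstered by the auxiliary partitions $\set{V_{i}\wedge W_{j}\colon j}$ of the individual $V_{i}$, is precisely the device that closes this gap; the basis reduction and the norm bookkeeping are routine, and passing between the three topologies changes nothing beyond the cardinality of the products being formed in $\mathcal{A}$.
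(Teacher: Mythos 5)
Your argument is correct, and since the paper's own ``proof'' of theorem \ref{theorem:sheaf-condition} consists of a pointer to Rezk's notes, what you have written is essentially the self-contained argument that citation is meant to stand in for. The two load-bearing steps are exactly the ones you isolate: the reduction of the sheaf condition from covering sieves to the generating covers of bounded cardinality, and the disjointification $W_{i}= V_{i}\wedge \neg\sup_{j< i}V_{j}$ together with the auxiliary partitions $\set{V_{i}\wedge W_{j}}_{j}$ of each $V_{i}$ (legitimate because in a Boolean algebra $\wedge$ distributes over whatever suprema exist); this is where Booleanness enters, and it is precisely why the product-over-partitions criterion characterizes sheaves here while it would fail on a general distributive lattice or frame, where covers need not refine to partitions. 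Two points deserve to be made explicit in a clean write-up. First, the basis reduction you quote from Mac Lane--Moerdijk is stated there for $\Set$-valued sheaves; since $\xi$ takes values in a Banach $2$-space $\mathcal{A}$ you need its $\Banc$-enriched analogue, namely that the limit over the sieve generated by an admissible cover $\family{V}{i}$ agrees with the equalizer of the two canonical maps $\prod_{i}\xi(V_{i})\to \prod_{i, j}\xi(V_{i}\wedge V_{j})$ --- formal, but it is the statement actually being used. Second, the element-and-norm bookkeeping is literal only for $\mathcal{A}= \Ban$; for general $\mathcal{A}$ it should be read through generalized elements, i.e.~applied to the representables $\homset{\mathcal{A}}{a}{\xi(-)}$, with ``isometric isomorphism'' meaning an isomorphism that is contractive with contractive inverse; your estimate $\norm{s}= \sup_{j}\norm{\restriction{s}{W_{j}}}\leq \sup_{i}\norm{\restriction{s}{V_{i}}}\leq \norm{s}$ then does exactly this at the level of hom-spaces, using that the enriched products carry the bounded sup-norm universal property of theorem \ref{theorem:existence-co-products-ban}. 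With those glosses your proof is complete, uniformly for the finite, countable and complete topologies, and the edge case $\xi(0)$ terminal (via the empty cover of $0$) is handled consistently in both directions.
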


The Stone equivalence between the category of Boolean algebras and the dual of the category of Boolean spaces was proven very useful in subsection \ref{subsection:stone-space-ba-weak-integrals}, so it is natural to try to find the topological equivalent of sheaves as defined in \ref{definition:banach-sheaf-ba}. Let us investigate this matter more closely.


\smallskip
Sheaves in a topological space $X$ with values in a category $\mathcal{A}$ such as a Banach $2$-space are defined by simply copying the usual definition (see \cite[chapter II, section 1]{maclane-moerdijk:sheaves-geometry-logic1992}): they are presheaves $\xi\colon \open(X)\to \mathcal{A}$ satisfying the \emph{patching condition}, that is, for any open cover $\cover{U}{i}$ of $U$ diagram \ref{diagram:patching-condition-equalizer} is an equalizer. The left arrow is the map $s\mapto \parens{p_{U, U_{i}}(s)}$ and the parallel pair on the right is $\family{s}{i}\mapto \parens{p_{U_{i}, U_{i}\cap U_{j}}(s_{i})}$ and $\family{s}{i}\mapto \parens{p_{U_{j}, U_{i}\cap U_{j}}(s_{j})}$.
\begin{figure}[htbp]
    \begin{equation*}
    \xymatrix{
        \xi(U) \ar[r] & \prod_{i}\xi(U_{i}) \ar[r] \ar@<1ex>[r] & \prod_{i, j}\xi(U_{i}\cap U_{j})
    }
    \end{equation*}
\label{diagram:patching-condition-equalizer}
\caption{Equalizer patching condition.}
\end{figure}

The patching condition for sheaves of Banach spaces on $X$ translates into the following two requirements for every open cover $\cover{U}{i}$ of $U\subseteq X$:
\begin{enumerate}
  \item \label{enum:uniqueness-condition}
  \textbf{Uniqueness of patching:} For every $s\in \xi(U)$ we have the equality,
  \begin{equation*}
    \norm{s}= \sup\set{\norm{s_{i}}}
  \end{equation*}
  with $s_{i}= p_{U, U_{i}}(s)$.

  \item \label{enum:patching-condition-bounded-families}
  \textbf{Existence of patching for bounded families:} For every family $\family{s}{i}, i\in I$ with $s_{i}\in \xi(U_{i})$ such that,
  \begin{equation*}
    p_{U_{j}, U_{j}\cap U_{k}}(s_{j})= p_{U_{k}, U_{j}\cap U_{k}}(s_{k})
  \end{equation*}
  and $\sup\set{\norm{s_{i}}\colon i\in I}< \infty$, then there is $s\in \xi(U)$ such that $p_{U, U_{i}}(s)= s_{i}$.
\end{enumerate}

The conditions \eqref{enum:uniqueness-condition} and \eqref{enum:patching-condition-bounded-families} for a sheaf of Banach spaces are similar to the patching conditions for sheaves of sets and such algebraic objects as groups or linear spaces. We must not be fooled by the similarities however, because the presence of the boundedness hypothesis in \eqref{enum:patching-condition-bounded-families} implies for example, that if we apply the underlying functor $\Ban\to \Vect$ we do \emph{not} obtain a sheaf of linear spaces because the underlying functor is not continuous. More directly relevant to our purposes, is the fact that if we analyze the construction of the \'{e}tale space of a sheaf, the \emph{etalification}, and rerun it in the case of Banach-space valued presheaves, then it will \emph{not} produce the associated sheaf or \emph{sheafification} functor. This has led N.~Auspitz in \cite{auspitz:qsheaves-banach-spaces1975} (see also \cite{banaschewski:sheaves-banach-spaces1977}) to strengthen the patching condition to the so-called \emph{approximation condition}.


\smallskip
If $\xi$ is a presheaf on $X$ then the \emph{stalk} of $\xi$ at $x\in X$ is the Banach space,
\begin{equation} \label{def-equation:stalk-presheaf}
    \stalk{\xi}{x}\defequal \Colim_{U\in \mathcal{F}_{x}} \xi(U)
\end{equation}
where $\mathcal{F}_{x}$ is the open neighborhood filter base of $x\in X$. If $\iota_{U, x}\colon \xi(U)\to \stalk{\xi}{x}$ is the universal cone and $s\in \xi(U)$ then we will denote by $s_{x}$, the \emph{germ of $s$ at $x$}, the element $\iota_{U, x}(s)$. We construct a presheaf $\etalification{\xi}$ by putting,
\begin{equation} \label{functor:etalification}
    U\mapto \etalification{\xi}(U)\defequal \prod_{x\in U}\stalk{\xi}{x}
\end{equation}
with the obvious restriction maps. There is a natural presheaf map $\eta_{\xi}\colon \xi\to \etalification{\xi}$ given by $s\in \xi(U)\mapto \etalification{s}= \family{s}{x}$.

\smallskip
Note that $\prod_{x\in U}\stalk{\xi}{x}$ is the bounded section space of the bundle $\coprod_{x\in U}\stalk{\xi}{x}\to U$ and in fact, we will see below that it almost (but not quite) gives the etalification of a presheaf of Banach spaces.


\begin{definition} \label{definition:banach-sheaf-top}
Let $X$ be a topological space which we will assume compact Hausdorff. A presheaf $\xi$ of Banach spaces is a \emph{Banach sheaf} if it satisfies the following two conditions for every open cover $\cover{U}{i}$ of $U\subseteq X$:
\begin{enumerate}
  \item \label{def-enum:banach-sheaf-uniqueness}
  \textbf{Uniqueness of patching:} For every $s\in \xi(U)$ we have the equality,
  \begin{equation*}
    \norm{s}= \sup\set{\norm{s_{i}}}
  \end{equation*}
  with $s_{i}= p_{U, U_{i}}(s)$.

  \item \label{def-enum:banach-sheaf-approximation}
  \textbf{Approximation condition:} If $t\in \etalification{\xi}(U)$ is such that for every $\epsilon > 0$ and every $x\in U$ there is an open neighborhood $V\subseteq U$ of $x$ and an $s\in \xi(V)$ such that
  \begin{equation*}
    \norm{p_{U, V}(t) - \etalification{s}}<\epsilon
  \end{equation*}
  then $t= \etalification{\sigma}$ for some $\sigma\in \xi(U)$.
\end{enumerate}
\end{definition}

If $\family{s}{i}$ is a family satisfying the hypothesis of the patching condition \eqref{enum:patching-condition-bounded-families} and we put $\epsilon = 1/n$ in the approximation condition \eqref{def-enum:banach-sheaf-approximation}, then taking the limit $n\converges \infty$ we see that a Banach sheaf satisfies the bounded patching condition \eqref{enum:patching-condition-bounded-families}. The approximation condition is a completeness (in the Cauchy metric space sense) requirement, stating that if a family $\family{s}{i}$ can be \emph{locally approximated} than it can be patched up. In \cite{auspitz:qsheaves-banach-spaces1975} it is shown that the presheaf of bounded analytic functions on the unit disk of the complex plane satisfies the uniqueness and bounded patching conditions but not the approximation condition.

\smallskip
For the proof of the next theorem we refer the reader to \cite{banaschewski:injective-banach-sheaves1977}.

\begin{theorem} \label{theorem:almost-etalification}
Let $X$ be a compact Hausdorff space and $\family{B}{x}$ a family of Banach spaces. Then the presheaf
\begin{equation}\label{functor:family-spaces-banach-sheaf}
    U\mapto \prod_{x\in U}B_{x}
\end{equation}
is a Banach sheaf.
\end{theorem}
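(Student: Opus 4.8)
The plan is to verify the two clauses of Definition \ref{definition:banach-sheaf-top} directly for the presheaf $\xi$ given by $U\mapto \prod_{x\in U}B_{x}$, exploiting the fact that the restriction maps are the obvious coordinate projections and that the stalks are computed along the neighbourhood filter. First I would identify the stalk $\stalk{\xi}{x}$: since every neighbourhood $U$ of $x$ contributes the factor $B_{x}$ and the restriction maps to smaller neighbourhoods act as the identity on that coordinate, the canonical maps $\xi(U)\to B_{x}$, $\family{b}{y}\mapto b_{x}$ exhibit $B_{x}$ as the filtered colimit $\Colim_{U\in \mathcal{F}_{x}}\xi(U)$; hence $\stalk{\xi}{x}\isomorphic B_{x}$ and the germ of a section $s=\family{b}{y}\in \xi(U)$ at $x$ is simply its $x$-th coordinate $b_{x}$. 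Consequently $\etalification{\xi}(U)\isomorphic \prod_{x\in U}B_{x}\isomorphic \xi(U)$ and the natural map $\eta_{\xi}\colon \xi\to \etalification{\xi}$ is an isometric isomorphism — this is the key structural observation and it makes the approximation condition tractable.

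Next I would check the \textbf{uniqueness of patching}. Given an open cover $\cover{U}{i}$ of $U$ and $s=\family{b}{x}\in \xi(U)$, the restriction $s_{i}=p_{U, U_{i}}(s)$ is $\family{b}{x}$ restricted to $U_{i}$, whose sup-norm is $\sup\set{\norm{b_{x}}\colon x\in U_{i}}$. Since the $U_{i}$ cover $U$, taking the supremum over $i$ recovers $\sup\set{\norm{b_{x}}\colon x\in U}=\supnorm{s}$, which is exactly the required equality. For the \textbf{approximation condition}, I would take $t\in \etalification{\xi}(U)$, which by the identification above is just an element $\family{c}{x}\in\prod_{x\in U}B_{x}$ with $\sup_{x\in U}\norm{c_{x}}<\infty$ (boundedness is built into membership of $\etalification{\xi}(U)$ via the supremum norm). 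The hypothesis says that for each $x\in U$ and $\epsilon>0$ there is a neighbourhood $V$ and $s\in \xi(V)$ with $\supnorm{p_{U, V}(t)-\etalification{s}}<\epsilon$; unwinding the coordinate description, this forces $\norm{c_{x}-b^{s}_{x}}<\epsilon$ at the point $x$ in particular, but since $\etalification{s}=s$ under our identification and $p_{U,V}(t)$ is the coordinatewise restriction, the section $s$ we are handed already has its coordinates within $\epsilon$ of those of $t$ on all of $V$. The upshot is that $t$ itself, viewed as a section $\sigma\in\xi(U)=\prod_{x\in U}B_{x}$, satisfies $\etalification{\sigma}=t$; no genuine limiting or gluing is needed because the etalification of this particular presheaf is the presheaf itself.

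The one place where care is required — and what I expect to be the main (though mild) obstacle — is bookkeeping the difference between the abstract colimit defining $\stalk{\xi}{x}$ in the $\Banc$-enriched sense and the naive coordinate projection: one must confirm that the colimit cocone is the coordinate-evaluation cocone, i.e.\ that no completion or quotient intervenes, which is where the filteredness of the neighbourhood base and the isometric nature of the transition maps are used. Once that is pinned down, the remaining verifications are the routine sup-norm manipulations indicated above. I would also remark, as the theorem's context suggests, that this shows the etalification of a presheaf of the special form \eqref{functor:family-spaces-banach-sheaf} is already a sheaf, which is precisely the phenomenon one needs when running the sheafification construction of subsection \ref{subsection:banach-sheaves}; a short closing sentence to that effect ties the result back to the discussion preceding it.
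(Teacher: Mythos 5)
Your verification of the uniqueness-of-patching clause is fine, but the structural claim on which the rest of your argument rests is false: the stalk of $\xi\colon U\mapto \prod_{y\in U}B_{y}$ at a non-isolated point $x$ is \emph{not} $B_{x}$, and consequently $\etalification{\xi}$ is not isomorphic to $\xi$. The stalk is the $\Banc$-colimit $\Colim_{U\in\mathcal{F}_{x}}\prod_{y\in U}B_{y}$, whose transition maps are the coordinate projections --- contractive surjections, not the isometries you invoke --- and this colimit is a space of \emph{germs}: one takes the algebraic direct limit, equips the germ of $s$ with the seminorm $\inf_{V\ni x}\sup_{y\in V}\norm{s(y)}$, quotients by the null space and completes. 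Evaluation at $x$ does not identify this with $B_{x}$, because a section vanishing at $x$ but of norm $1$ on every punctured neighbourhood of $x$ has a nonzero germ of norm $1$. Concretely, take $X=[0,1]$, $B_{y}=\field$ for all $y$, $x=0$, and $s(y)=1$ for $y\neq 0$, $s(0)=0$: its germ at $0$ is distinct from both $0$ and the germ of the constant section $1$, so $\stalk{\xi}{0}$ is not one-dimensional. (This is consistent with the paper's later discussion of ultraproducts, where the stalks of exactly these sheaves over $\Stone(\subsets{X})$ are identified with germ-type spaces, not with the fibers.) Since $\etalification{\xi}(U)=\prod_{x\in U}\stalk{\xi}{x}$ is genuinely larger than $\xi(U)$, your reduction of the approximation condition to a tautology ("no gluing is needed because the etalification is the presheaf itself") does not go through, and the approximation condition --- which is the entire content of the theorem --- is left unproven. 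Note also that the paper itself does not argue this directly; it cites Banaschewski's work on Banach sheaves for the proof.

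A repair along your lines is possible but must keep the germs in play. There is a canonical contraction $\evaluation_{x}\colon\stalk{\xi}{x}\to B_{x}$ sending the germ of $s$ to $s(x)$ (well defined since sections with equal germs agree near $x$, contractive since $\norm{s(x)}\leq\sup_{y\in V}\norm{s(y)}$ for every $V$, hence extending to the completion). Given $t\in\etalification{\xi}(U)$ satisfying the approximation hypothesis, define $\sigma\in\xi(U)$ by $\sigma(x)\defequal\evaluation_{x}(t_{x})$; boundedness is immediate. To see $\etalification{\sigma}=t$, fix $x$ and $\epsilon>0$, choose $V\ni x$ and $s\in\xi(V)$ with $\norm{t_{y}-s_{y}}<\epsilon$ in $\stalk{\xi}{y}$ for all $y\in V$; applying $\evaluation_{y}$ gives $\norm{\sigma(y)-s(y)}\leq\epsilon$ for all $y\in V$, hence the germs of $\sigma$ and $s$ at $x$ differ by at most $\epsilon$, and so $\norm{\text{germ}_{x}(\sigma)-t_{x}}\leq 2\epsilon$; letting $\epsilon\to 0$ gives $\text{germ}_{x}(\sigma)=t_{x}$. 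That is the argument your proposal is missing.
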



Theorem \ref{theorem:almost-etalification} implies that the presheaf \eqref{def-equation:stalk-presheaf} is a Banach sheaf. It \emph{almost} gives the correct sheaf of sections (and thus the correct notion of ``\'{e}tale bundle of Banach spaces''). In order to get it, we have to cut down the monad $\xi\mapto \etalification{\xi}$ on the category of presheaves to its idempotent part by taking the equalizer of the parallel pair
\begin{equation*}
    \eta_{\etalification{\xi}}, \etalification{\eta_{\xi}}\colon \etalification{\xi}\to \etalification{\etalification{\xi}}
\end{equation*}

The existence of the functor $\xi\mapto \etalification{\xi}$ implies that the category $\Shv(X)$ of Banach sheaves on $X$ is reflective in $\PShv(X)$ and thus a complete Banach $2$-space. Obviously enough, the reflector $\PShv(X)\to \Shv(X)$ will be called the sheafification functor. The category of Banach sheaves has been given several alternative descriptions. Starting with the etalification \ref{functor:etalification}, it can be shown that it is equivalent to a certain category of bundles of Banach spaces. Another important description takes as starting point the following theorem (also lifted from \cite{auspitz:qsheaves-banach-spaces1975}), that describes the sheafification of constant presheaves.

\begin{theorem} \label{theorem:sheafification-constant-presheaves}
Let $B$ be a Banach space. The sheafification of the constant presheaf $U\mapto B$ is the presheaf,
\begin{equation} \label{functor:sheafification-constant-presheaves}
    U\mapto \CBounded(U, B)
\end{equation}
where $\CBounded(U, B)$ is the Banach space of bounded continuous functions $U\to B$.
\end{theorem}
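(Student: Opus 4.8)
The plan is to exhibit a presheaf map $\eta\colon \Continuous_{b}(-, B)\to ?$ comparing the constant presheaf $\underline{B}\colon U\mapto B$ with the presheaf $U\mapto \CBounded(U, B)$ and then verify the two properties that characterize sheafification: (i) $U\mapto \CBounded(U, B)$ is a Banach sheaf in the sense of definition \ref{definition:banach-sheaf-top}, and (ii) the canonical map from the constant presheaf into it is \emph{locally an isometry at the level of germs}, so that it induces an isomorphism on all stalks, which by the universal property of the reflector $\PShv(X)\to \Shv(X)$ forces it to be the sheafification. The comparison map is the obvious one: a scalar (really, vector) $b\in B$ is sent to the constant function with value $b$, giving $B\to \CBounded(U, B)$, natural in $U$, and these assemble into $\underline{B}\to \CBounded(-, B)$.

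First I would check the sheaf axioms for $\xi\defequal \CBounded(-, B)$. The uniqueness of patching \eqref{def-enum:banach-sheaf-uniqueness} is immediate: for a bounded continuous $f\colon U\to B$ and an open cover $\cover{U}{i}$ we have $\supnorm{f}= \sup_{x\in U}\norm{f(x)}= \sup_{i}\sup_{x\in U_{i}}\norm{f(x)}= \sup_{i}\supnorm{\restriction{f}{U_{i}}}$. For the approximation condition \eqref{def-enum:banach-sheaf-approximation} one proceeds much as in theorem \ref{theorem:almost-etalification}: the stalk of $\CBounded(-, B)$ at $x$ is canonically $B$ (every germ of a continuous $B$-valued function at $x$ is determined by its value at $x$, since two functions agreeing near $x$ have the same value there, and conversely constants realize every element of $B$), so $\etalification{\xi}(U)\isomorphic \prod_{x\in U}B= \Bounded(U, B)$, and the etalification map $\eta_{\xi}\colon \CBounded(U, B)\to \Bounded(U, B)$ is just the inclusion. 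An element $t\in \Bounded(U, B)$ satisfying the hypothesis of \eqref{def-enum:banach-sheaf-approximation} is, by definition, a bounded function that is locally uniformly approximable by continuous functions; a standard $\epsilon/2$ argument (cover $U$, approximate on a neighborhood of each point, and use the uniqueness of germs to see the approximants agree where they overlap up to $\epsilon$) shows $t$ is itself continuous, hence $t= \etalification{\sigma}$ with $\sigma= t\in \CBounded(U, B)$. Thus $\xi$ is a Banach sheaf.

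Next I would identify $\xi$ as the sheafification of $\underline{B}$. Since $\Shv(X)$ is reflective in $\PShv(X)$ (as noted after theorem \ref{theorem:almost-etalification}, via the idempotent-completion of the etalification monad), it suffices to produce a presheaf map $\underline{B}\to \xi$ whose target is a sheaf and which becomes an isomorphism after sheafification; and a map between presheaves becomes an isomorphism after sheafification precisely when it is an isomorphism on all stalks. The stalk of $\underline{B}$ at $x$ is $\Colim_{U\in \mathcal{F}_{x}} B\isomorphic B$ (a filtered colimit of a constant diagram over the neighborhood filter base, all transition maps identities), and the stalk of $\xi$ at $x$ is $B$ as computed above; under these identifications the comparison map $\underline{B}\to \xi$ induces the identity $B\to B$ on each stalk. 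Hence $\xi$ is (isometrically) the sheafification of $\underline{B}$.

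The main obstacle is the approximation condition: one must be careful that the boundedness hypothesis in \eqref{def-enum:banach-sheaf-approximation} is genuinely used (it is what keeps us inside $\CBounded$ rather than merely $\Continuous$) and that the ``local uniform approximation implies continuity'' step is done with the correct quantifiers — the approximating $s\in \xi(V)$ lives on a \emph{neighborhood} $V$ of $x$, not on a fixed cover, so one is really proving that a function which is a uniform limit of continuous functions on each member of an \emph{arbitrarily fine} family of neighborhoods is continuous, which is the classical fact that a locally uniform limit of continuous functions is continuous. Everything else is a routine verification of universal properties, and the isometry statement is built into the fact that both the uniqueness-of-patching axiom and the stalk computation are isometric.
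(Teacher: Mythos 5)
Your verification that $\xi= \CBounded(-, B)$ is a Banach sheaf is essentially sound: uniqueness of patching is the trivial supremum computation, and your three-epsilon argument does handle the approximation condition correctly even though the approximating section lives on an $\epsilon$-dependent neighborhood (this quantifier structure is exactly why the bounded analytic presheaf fails the condition while bounded continuous functions do not, so it was right to flag it). But there are two genuine gaps. First, the stalk identification: the germ of a continuous function at $x$ is \emph{not} determined by its value at $x$ in the ordinary sheaf-theoretic sense, and your justification ("two functions agreeing near $x$ have the same value there") only shows the evaluation map is well defined, not injective. What actually saves the claim is that the stalk \eqref{def-equation:stalk-presheaf} is a colimit computed in $\Banc$, where germs of colimit-seminorm zero are identified: by continuity $\inf_{V\ni x}\supnorm{\restriction{(s-t)}{V}}= \norm{s(x)- t(x)}$, so evaluation is an isometric isomorphism $\stalk{\xi}{x}\isomorphic B$ and $\etalification{\xi}(U)\isomorphic \Bounded(U, B)$. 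Without this norm argument the identification on which your whole approximation-condition check rests is unsupported.

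Second, and more seriously, your final step invokes the principle that a presheaf map is inverted by sheafification precisely when it is an isomorphism on all stalks. That is standard for set- or module-valued sheaves, but in the Banach-enriched setting it is exactly the kind of statement the paper warns cannot be taken for granted (the etalification is \emph{not} the sheafification); one would have to prove both that the unit of the reflection $\PShv(X)\to \Shv(X)$ is a stalkwise isometric isomorphism and that a map of Banach sheaves which is stalkwise an isometric isomorphism is an isomorphism — the surjectivity-on-sections half of the latter genuinely uses the approximation condition and a patching argument you do not supply. Note the paper itself gives no proof (the theorem is quoted from Auspitz), but its own description of the sheafification as the equalizer of $\eta_{\etalification{\xi}}, \etalification{\eta_{\xi}}\colon \etalification{\xi}\to \etalification{\etalification{\xi}}$ yields a direct argument that bypasses your missing lemma: with $\etalification{\underline{B}}(U)\isomorphic \Bounded(U, B)$, the equalizer condition at a bounded family $t$ says that for every $x$ the germ of $t$ at $x$ agrees with the germ of the constant family with value $t(x)$, i.e.
\begin{equation*}
    \inf_{V\ni x}\sup_{y\in V}\norm{t(y)- t(x)}= 0,
\end{equation*}
which is precisely norm continuity of $t$ at $x$; hence the sheafification is $U\mapto \CBounded(U, B)$ on the nose, isometrically. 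Either carry out that computation or prove the stalkwise-detection lemma for Banach sheaves; as written, the last step of your proposal is an appeal to ordinary sheaf theory in a setting where it is not available for free.
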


Note that for a compact Hausdorff space $X$, the global section space of \eqref{functor:sheafification-constant-presheaves} is just $\Continuous(X, B)$. This latter space is isometrically isomorphic to the \emph{injective tensor product}\footnote{See \cite[chapter 3]{ryan:introduction-tensor-product-banach-spaces2002} for the definition and basic properties of the injective tensor product of Banach spaces.}:
\begin{equation}\label{isomorphism:sheafification-injective-tensor-product}
    \Continuous(X, B)\isomorphic \Continuous(X)\injotimes B
\end{equation}

The category $\Shv(X)$ is a symmetric monoidal closed category. The tensor product is given by the sheafification of the pointwise tensor product structure in $\PShv(X)$. This means that the sheafification is a monoidal functor and thus takes monoids into monoids. Since the presheaf $U\mapto \Real$ is a monoid in $\PShv(X)$, and the initial monoid at that, it follows that its sheafification $U\mapto \CBounded(U)$ is a monoid in $\Shv(X)$ that acts in a canonical way on every sheaf. This leads to the description of Banach sheaves as sheaf modules over this sheaf algebra satisfying a \emph{local convexity condition}.

\smallskip
These two equivalences (and more) can be found in \cite{hoffman-keimel:sheaf-theoretical-concepts-analysis1979}. However, the most striking result is the equivalence of $\Shv(X)$ with the category of \emph{internal Banach spaces in the topos of (set-valued) sheaves on $X$} for a suitable, intuitionistically valid definition of Banach space (see \cite{burden-mulvey:banach-spaces-categories-sheaves1979}). This is not the place to dwell on these matters; the next theorem (\cite[proposition 7]{banaschewski:sheaves-banach-spaces1977} in new garbs) should suffice to underline the importance of these remarks.

\begin{theorem} \label{theorem:banach-sheaves-finite-topology}
Let $\Omega$ be a Boolean algebra and $X= \Stone(\Omega)$ its Stone space. If $\xi$ is a presheaf in $\Stone(\Omega)$ then
\begin{equation}\label{functor:finite-topology-sheaf}
    \invimage{\eta}(\xi)\colon E\mapto \xi(\eta(E))
\end{equation}
is a presheaf on $\Omega$. The functor $\xi\mapto \invimage{\eta}(\xi)$ establishes an equivalence between the category $\Shv(X)$ of Banach sheaves on $\Stone(\Omega)$ and the category $\Shv(\Omega)$ of sheaves on $\Omega$ for the finite topology.
\end{theorem}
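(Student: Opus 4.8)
The plan is to identify the mechanism behind Theorem \ref{theorem:banach-sheaves-finite-topology}: the Stone equivalence is only finitely order-continuous, so the unit isomorphism $\eta\colon\Omega\to\clopen(\Stone(\Omega))$ matches covers on the $\Omega$-side with \emph{finite} clopen covers on the topological side, which is exactly why the finite Grothendieck topology on $\Omega$ is the one that corresponds to Banach sheaves on $\Stone(\Omega)$. First I would record the key compactness fact, already used in subsection \ref{subsection:stone-space-ba-weak-integrals}: if $\sequence{U}{n}$ is any family of clopens with $U=\bigcup_n U_n$ clopen (hence compact), then finitely many of the $U_n$ already cover $U$, and if moreover the family is a partition then all but finitely many $U_n$ are empty. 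Thus every open cover of a clopen $U\subseteq\Stone(\Omega)$ \emph{by clopens} refines to a finite clopen cover, and $\eta$ carries finite partitions of $E\in\Omega$ bijectively to finite clopen partitions of $\eta(E)$.

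Next I would set up the comparison functor. Given a Banach sheaf $\xi$ on $\Stone(\Omega)$, define $\invimage{\eta}(\xi)\colon\Omega\to\mathcal{A}$ by $E\mapsto\xi(\eta(E))$, using that $\eta$ is a Boolean algebra isomorphism onto $\clopen(\Stone(\Omega))$ and that every clopen of $\Stone(\Omega)$ is of the form $\eta(E)$. This is plainly a contractive functor. I would then check it lands in $\Shv(\Omega)$ for the finite topology: by Theorem \ref{theorem:sheaf-condition} it suffices to verify that for each finite partition $\mathcal{E}$ of $E$ the cone $\parens{p_{E,F}}_{F\in\mathcal{E}}$ is a product in $\mathcal{A}$; but $\set{\eta(F)\colon F\in\mathcal{E}}$ is a finite clopen partition of $\eta(E)$, and since a Banach sheaf satisfies the uniqueness-of-patching condition \eqref{def-enum:banach-sheaf-uniqueness} and the bounded patching condition \eqref{enum:patching-condition-bounded-families} (which, for a \emph{finite} cover, carry no boundedness content and say precisely that $\xi(\eta(E))\isomorphic\prod_{F\in\mathcal{E}}\xi(\eta(F))$), the required isometric isomorphism holds. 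Conversely, given $\zeta\in\Shv(\Omega)$ for the finite topology, I would produce a Banach sheaf on $\Stone(\Omega)$ by first defining a presheaf on the clopens via $\eta(E)\mapsto\zeta(E)$ — this already satisfies uniqueness and patching for finite clopen covers, hence, by the compactness fact above, for \emph{all} clopen covers — and then extending to all open sets and enforcing the approximation condition \eqref{def-enum:banach-sheaf-approximation} by composing with the sheafification reflector $\PShv(\Stone(\Omega))\to\Shv(\Stone(\Omega))$, whose existence was established via the etalification construction and Theorem \ref{theorem:almost-etalification}. The point is that the clopens form a basis of $\Stone(\Omega)$ closed under finite intersection, so a Banach sheaf is determined by its values on a basis together with the sheaf axioms, and the density of $\clopen(\Stone(\Omega))$ in $\Baire(\Stone(\Omega))$ for the measure metric (cf. the footnote to Theorem \ref{theorem:ba-mr-stone-space}) is the topological shadow of this.

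Finally I would verify that the two constructions are mutually quasi-inverse. In one direction, $\invimage{\eta}$ applied to a Banach sheaf $\xi$, then re-extended, recovers $\xi$ on clopens on the nose, and agrees with $\xi$ on all opens because both are Banach sheaves restricting to the same presheaf on the basis of clopens — invoking the uniqueness part of the universal property of sheafification, equivalently the fact that cocontinuous (here: reflective) extensions over the dense inclusion of the basis are unique up to unique isometric isomorphism (the argument around \eqref{isomorphism:cocontinuous-after-dense}). In the other direction, $\zeta\mapsto(\text{Banach sheaf})\mapsto\invimage{\eta}$ returns $\zeta$ on each $E\in\Omega$ since $\eta(E)$ is clopen and the sheafification does not alter values on the basis when the presheaf already satisfies the finite patching conditions there. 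Naturality in $\xi$ (and hence the equivalence of categories) is then a formal consequence of naturality of $\eta$ and of the sheafification unit.

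I expect the main obstacle to be the extension-and-approximation step: showing that a presheaf on $\Omega$ satisfying only the \emph{finite} patching condition, when transported to the clopen basis of $\Stone(\Omega)$ and sheafified, yields a genuine Banach sheaf whose restriction to clopens is unchanged. The delicate points are (i) that the approximation condition \eqref{def-enum:banach-sheaf-approximation} is genuinely stronger than bounded patching — as Auspitz's bounded-analytic-functions counterexample shows — so one must argue that enforcing it via sheafification does not disturb the values on the compact-open sets, which is where the compactness of clopens (every local approximation over a clopen is eventually exact on a finite refinement) does the real work; and (ii) keeping the isometry bookkeeping straight, since in $\BanCat$ isomorphism means isometric isomorphism and the uniqueness-of-patching condition is precisely the isometry statement $\norm{s}=\sup_i\norm{s_i}$, which must be checked to survive both the transport along $\eta$ and the sheafification. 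Everything else — functoriality, the product reformulation of the sheaf condition, quasi-inverseness on the basis — is routine given Theorems \ref{theorem:sheaf-condition}, \ref{theorem:almost-etalification} and the Stone duality \eqref{equivalence:stone-duality}.
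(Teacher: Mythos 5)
Your proposal is correct and follows essentially the same route as the paper's proof: restriction along $\eta$ in one direction, and in the other, transport to the clopen basis followed by the compactness argument (every clopen cover of a clopen refines to a finite one) to upgrade the finite patching condition to the bounded-patching and approximation conditions, then unique extension from the basis to all of $\Stone(\Omega)$. The only cosmetic difference is that you realize the extension step via the sheafification reflector and verify it leaves the clopen values untouched, whereas the paper invokes the sheaves-on-a-basis comparison result (extended to Banach sheaves); these amount to the same argument, and you correctly locate the real work in the compactness of clopens and the isometry bookkeeping.
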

\begin{proof}
Since $\eta$ is a Boolean algebra isomorphism it is clear that \eqref{functor:finite-topology-sheaf} is a sheaf for the finite topology. We briefly describe the weak inverse: if $\xi$ is a sheaf on $\Omega$ for the finite topology, since $\eta$ is a Boolean algebra isomorphism $\Omega\isomorphic \clopen(X)$, we have a presheaf $\clopen(X)\to \Ban$ given by
\begin{equation*}
    E\mapto \xi(\inverse{\eta}(E))
\end{equation*}

This presheaf satisfies the finite patching condition. To see that it satisfies the bounded patching condition note that if $\set{\eta(E_{i})}$ is an open cover of $\eta(E)$, by compactness it has a finite open subcover $\set{\eta(E_{i_{j}})}$. The fact that it is an open subcover implies that the ideal base $\set{\eta(E_{i_{j}})}$ is cofinal in the ideal base $\set{\eta(E_{i})}$ and thus
\begin{equation*}
    \xi(\inverse{\eta}(E))\isomorphic \Lim_{j}\xi(\inverse{\eta}(E_{i_{j}}))\isomorphic \Lim_{i}(\inverse{\eta}(E_{i}))
\end{equation*}

In fact, the same compactness argument also yields that $E\mapto \xi(\inverse{\eta}(E))$ satisfies the approximation condition \eqref{def-enum:banach-sheaf-approximation}. By the well-known result that a sheaf is uniquely determined by its values on a base for the topology (see \cite[chapter 2,, section 1, theorem 3]{maclane-moerdijk:sheaves-geometry-logic1992} -- the result readily extends to Banach sheaves), we obtain a Banach sheaf on $\Stone(\Omega)$.
\end{proof}

Theorem \ref{theorem:banach-sheaves-finite-topology} together with the remarks preceding it, places the sheaf-half of categorified measure theory within well-trod territory: internal Banach space theory in sheaf topoi. We have available powerful results such as Gelfand-Naimark duality (see \cite{banaschewski-mulvey:globalization-gelfand-duality-theorem2006}), the beginnings of measure theory (see the delightful \cite{jackson:sheaf-theoretic-approach-measure-theory2006}), etc.~Things get even more interesting if we realize that if $\Omega$ is order complete (for example, if it is a $\sigma$-measure algebra satisfying the ccc condition) then the topos $\Shv(\Omega)$ satisfies the internal version of the axiom of choice (see \cite[chapter 7, theorem 7.3]{barr-wells:toposes-triples-theories1983}). By a theorem of Diaconescu, the topos $\Shv(\Omega)$ is Boolean and therefore \emph{classical analysis is valid} in $\Shv(\Omega)$.

\smallskip
The internal language of a topos, and in general the logical aspects of topos theory, is fearsome black wizardry that I am not competent to deal with, so in the rest of the paper we will have to proceed using cruder, more primitive methods.

\smallskip
To finish off this subsection, here is an example of a Banach sheaf that should give warm fuzzy feelings to all Banach-space theorists out there. Recall that the forgetful functor $\CHaus\to \Set$ that to a compact Hausdorff space associates the underlying set has a left adjoint $\Free$ given on $X\in \Set$ by the Stone-Cech compactification of $X$ endowed with the discrete topology. Its universal property is depicted in diagram \ref{diagram:free-compact-hausdorff} for the case of bounded functions $f\colon X\to \field$.
\begin{figure}[htbp]
    \begin{equation*}
    \xymatrix{
        \Free(X) \ar@{-->}[r]^{\universal{f}} & \field \\
        X \ar[u] \ar[ur]_{f} &
    }
    \end{equation*}
\caption{The free compact Hausdorff space $\Free(X)$.}
\label{diagram:free-compact-hausdorff}
\end{figure}

On the other hand there is a map
\begin{equation*}
    X\to \Stone(\subsets{X})
\end{equation*}
that to $x\in X$ associates the principal filter generated by the singleton atom $\set{x}\in \subsets{X}$. By discreteness of $X$, this map is continuous and thus there is a unique map closing the triangle \ref{diagram:homeomorphism-stone-cech-stone-space-atomic-ba}. It can be shown that the dashed map is bijective and thus an homeomorphism.
\begin{figure}[htbp]
    \begin{equation*}
    \xymatrix{
        \Free(X) \ar@{-->}[r] & \Stone(\subsets{X}) \\
        X \ar[u] \ar[ur] &
    }
    \end{equation*}
\caption{The homeomorphism $\Free(X)\isomorphic \Stone(\subsets{X})$.}
\label{diagram:homeomorphism-stone-cech-stone-space-atomic-ba}
\end{figure}

Since the dashed map of \ref{diagram:homeomorphism-stone-cech-stone-space-atomic-ba} is an homeomorphism, the map $X\to \Stone(\subsets{X})$ satisfies the same universal property of diagram \ref{diagram:free-compact-hausdorff}. Coupling theorem \ref{theorem:ba-mr-stone-space} with the Riesz representation theorem, we have the isometric isomorphism
\begin{equation*}
    \badditive(\subsets{X})\isomorphic \dual{\Continuous(\Stone(X))}
\end{equation*}

The right-hand side of this isomorphism, tells us that we can obtain a large collection of measures on $\subsets{X}$ by taking point mass Dirac measures $\delta_{\widehat{x}}$ for $\widehat{x}$ a point of $\Stone(\subsets{X})$, or an ultrafilter of $\subsets{X}$. By definition of the induced map on Stone-Cech compactifications, we have,
\begin{equation*}
    \delta_{\widehat{x}}\universal{f}= \universal{f}(\widehat{x})= \lim_{\widehat{x}}f
\end{equation*}
where the right-hand side is the limit of $f$ with respect to the ultrafilter $\widehat{x}$. Since $\delta_{\widehat{x}}$ is a measure we can speak of a.e.~equality of functions and from,
\begin{equation*}
    \integral{\Free(X)}{\abs{\universal{f}}}{\delta_{\widehat{x}}}= \delta_{\widehat{x}}\abs{\universal{f}}= \lim_{\widehat{x}}\abs{f}
\end{equation*}
we have that $\universal{f}\aeequal 0$ iff $\lim_{\widehat{x}}\abs{f}= 0$.

\smallskip
Let us generalize. Since $X$ with the discrete topology is completely regular, it follows that the the inclusion $X\to \Free(X)$ is an open map with dense range and thus it establishes an equivalence
\begin{equation}\label{equivalence:sheaves-stone-cech-bundles-discrete}
    \Shv(\Stone(\subsets{X}))\equivalent \Shv(X)
\end{equation}

But by discreteness of $X$ the Banach $2$-space on the right-hand side is isomorphic to $\Bundle(X)$. This means that a Banach sheaf $\xi$ on $\Stone(\subsets{X})$ amounts to an $X$-family $\family{B}{x}$ of Banach spaces. Given a Banach sheaf $\xi= \family{B}{x}$ on $X$, the stalk $\stalk{\xi}{\widehat{x}}$ at an ultrafilter point $\widehat{x}\in \Stone(\subsets{X})$ is the colimit $\Colim_{U\in \mathcal{U}_{\widehat{x}}}\prod_{x\in U\cap X}B_{x}$ but this colimit is precisely the subspace of sections $f\in \prod_{x\in X}B_{x}$ of the global section space such that
\begin{equation*}
    \lim_{\widehat{x}}\norm{f}= 0
\end{equation*}

Fixing an ultrafilter $\widehat{x}\in \Stone(\subsets{X})$, the \emph{ultraproduct $\prod_{\widehat{x}}B_{x}$ of the family $\xi= \family{B}{x}$} is the quotient
\begin{equation} \label{equation-definition:ultraproducts}
    \prod_{\widehat{x}}B_{x}\defequal \parens{\prod_{x\in X}B_{x}}\quotient \stalk{\xi}{\widehat{x}}
\end{equation}

The ultraproduct \eqref{equation-definition:ultraproducts} could be further described in terms of direct images, but the preceding comments should indicate that this important tool of Banach space theory is but a special case of a sheaf-theoretic construction.

\subsection{Cosheaves and inverters}
\label{subsection:cosheaves-inverters}

Cosheaves on Boolean algebras were defined in subsection \ref{subsection:categorified-measure-theory-discrete-case}. Cosheaves have been studied in connection with Borel-Moore homology (\cite[chapter V]{bredon:sheaf-theory1997}). The link with distributions on topoi in the sense of F.~W.~Lawvere was uncovered in \cite{pitts:product-change-base-toposes1985}. I first stumbled upon this type of distributions when plodding through the first chapter of \cite{bunge-funk:singular-coverings-toposes2006} and refer the reader to it for the references to the original F.~W.~Lawvere's papers. I was not able to access them, but F.~W.~Lawvere's ideas on space and quantity are mentioned briefly in \cite[section 7]{lawvere:taking-categories-seriously2005}. This paper, in conjunction with \cite{lawvere:metric-spaces-generalized-logic-closed-categories2002}, is a veritable gold mine of ideas ready for the plunder and a highly recommended reading\footnote{It would not be too far off the mark, if I had titled this paper ``Taking seriously ``Taking categories seriously''''.}.

\smallskip
Recall theorem \ref{theorem:universal-property-L1} expressing the universal property of $\Integrable_{1}(\Omega, \mu)$. For every Lipschitz finitely additive map $\nu\colon \Omega\to B$ there is a unique bounded linear map $\Integrable_{1}(\Omega, \mu)\to B$ such that triangle \ref{diagram:universal-property-L1II} is commutative.
\begin{figure}[htbp]
    \begin{equation*}
    \xymatrix{
        \Integrable_{1}(\Omega, \mu) \ar@{-->}[r]^-{\int\differential\nu} & B \\
        \Omega \ar[u]^-{\characteristic} \ar[ur]_{\nu} &
    }
    \end{equation*}
\caption{Universal property of $\Integrable_{1}(\Omega, \mu)$.}
\label{diagram:universal-property-L1II}
\end{figure}

Categorifying, we want a Banach $2$-space $\Integrable(\Omega)$ such that the functor $\nu\mapto \int\differential\nu$ establishes an equivalence
\begin{equation}\label{equivalence:cosheaves-integrable-sheaves}
    \CoShv(\Omega, \mathcal{A})\equivalent \homset{\BanModCat}{\Integrable(\Omega)}{\mathcal{A}}
\end{equation}

In order to construct such a gadget, let us go back to the construction of the space $\simple(\Omega)$ of simple functions on $\Omega$ depicted diagrammatically in \ref{diagram:construction-simple-functions}.
\begin{figure}[htbp]
    \begin{equation*}
    \xymatrix{
        \Free(\Omega) \ar@{-->}[rr] \ar[dr]^{\pi} & & A \\
          & \simple(\Omega) \ar@{-->}[ur] & \\
          & \Omega \ar[luu]^{\iota} \ar[u]_{\characteristic} \ar@/_/[ruu]_{\mu} &
    }
    \end{equation*}
\caption{Construction of $\simple(\Omega)$.}
\label{diagram:construction-simple-functions}
\end{figure}

Diagram \ref{diagram:construction-simple-functions} tells us that $\simple(\Omega)$ is obtained by taking the free linear space $\Free(\Omega)$ and then the \emph{coequalizer} that kills the subspace $\iota(E\cup F) - \iota(E) - \iota(F)$ for all pairwise disjoint $E, F\in \Omega$. An appropriate modification of this construction inside the category of Banach spaces produces $\Integrable_{\infty}(\Omega)$ directly.

\smallskip
Now categorify. Let $\mu\colon \Omega\to \mathcal{A}$ be a cosheaf (to fix ideas, for the finite topology). As seen in subsection \ref{subsection:presheaf-categories}, the free Banach $2$-space on $\Omega$ is $\PShv(\Omega)$ and the induced map corresponding to the top right arrow of \ref{diagram:construction-simple-functions} is the left Kan extension $\Lan_{\Yoneda}\mu$. The category of integrable bundles $\IBundle(\Omega)$ will be a suitable quotient of $\PShv(\Omega)$ that when composed with Yoneda yields the universal cosheaf $\characteristic$.

\smallskip
In order to guess what this quotient is let $\sequence{E}{n}$ be a partition of $E\in \Omega$. The Yoneda functor would be a cosheaf if the cone $\Yoneda(E_{n})\to \Yoneda(E)$ was a coproduct, that is, $\Yoneda(E)\isomorphic \sum_{n}\Yoneda(E_{n})$. But on this hypothesis, by Yoneda lemma we have for every presheaf $\xi$
\begin{align*}
    \xi(E) &\isomorphic \homset{\PShv}{\Yoneda(E)}{\xi} \\
        &\isomorphic \homset{\PShv}{\sum_{n}\Yoneda(E_{n})}{\xi} \\
        &\isomorphic \prod_{n}\homset{\PShv}{\Yoneda(E_{n})}{\xi} \\
        &\isomorphic \prod_{n}\xi(E_{n})
\end{align*}

By theorem \ref{theorem:sheaf-condition} this means that $\xi$ is a sheaf so that our quotient is the \emph{sheafification functor}. Another way to look at the sheafification and that shows more clearly in what sense it is a categorification of the coequalizer $\Free(\Omega)\to \simple(\Omega)$ is to note that there is parallel pair of functors and a $2$-cell that we depict as in diagram \ref{2diagram:2cell-partitions-sup}.
\begin{figure}[htbp]
    \begin{equation*}
    \xymatrix{
        \exponential{\mathcal{A}}{\opposite{\Omega}} \rtwocell_{\Phi}^{\Psi}{\eta} & \exponential{\mathcal{A}}{\opposite{\partitions(\Omega)}}
    }
    \end{equation*}
\caption{The $2$-cell $\eta_{\mathcal{E}}\colon \xi(\sup\mathcal{E})\to \prod_{F\in \mathcal{E}}\xi(E)$.}
\label{2diagram:2cell-partitions-sup}
\end{figure}

The functor $\Psi$ sends the presheaf $\xi\colon \opposite{\Omega}\to \mathcal{A}$ to the functor $\mathcal{E}\mapto \xi(\sup\mathcal{E})$ and the functor $\Phi$ assigns to $\xi$ the functor $\mathcal{E}\mapto \prod_{F\in \mathcal{E}}\xi(E)$. The inner $2$-cell $\eta$ is the map closing triangle \ref{diagram:natural-eta-sheaves}.
\begin{figure}[htbp]
    \begin{equation*}
    \xymatrix{
          & \prod_{n}\xi(E_{n}) \ar[d] \\
        \xi(E) \ar[r] \ar@{-->}[ur]^{\eta_{\xi}} & \xi(E_{n})
    }
    \end{equation*}
\caption{The natural map $\eta_{\mathcal{E}}$.}
\label{diagram:natural-eta-sheaves}
\end{figure}

To force every presheaf $\xi$ to become a sheaf we have to make the natural map $\eta_{\xi}$ an isomorphism, or in other words, we have to \emph{invert the $2$-cell $\eta$}. Note the categorification pattern: instead of imposing an equality (take the coequalizer), we invert a $2$-cell. Quotients of categories that invert given $2$-cells are known as \emph{coinverters} and are one of a special class of finite colimits peculiar to $2$-categories. We simply state the definition next and refer the reader to \cite[section 4]{kelly:elementary-observations-2categorical-limits1989} for more information on this type of $2$-categorial colimits.


\begin{definition} \label{definition:coinverter}
Let $\mathcal{A}$ be a $2$-category and $\eta\colon f\to g\colon a\to b$ a $2$-cell. The coinverter of $\eta$ is a $1$-cell $\pi\colon b\to c$ such that $p\eta$ is an isomorphism and $\pi$ is universal among such $1$-cells. That is, every $1$-cell $h\colon b\to d$ such that $h\eta$ is an isomorphism factors uniquely through $\pi$ as in diagram \ref{2diagram:universal-property-coinverter},
\begin{figure}[htbp]
    \begin{equation*}
    \xymatrix{
        a \rtwocell_{g}^{f}{\eta} & b \ar[r]^{\pi} \ar[dr]_{h} & c \ar@{-->}[d]^{\universal{h}} \\
          &  & d
    }
    \end{equation*}
\caption{Universal property of the coinverter $\pi$.}
\label{2diagram:universal-property-coinverter}
\end{figure}
and the map $h\mapto \universal{h}$ establishes an \emph{isomorphism between the categories of such $1$-cells}.
\end{definition}

Coinverters or \emph{localizations}\footnote{Localization usually means something stronger: a \emph{finitely continuous} coinverter. Contrary to the category of sets or categories of algebraic objects like $\Vect$, finite limits do not commute with filtered colimits in $\Banc$. What does happen is that $\Banc$ is $\omega_{1}$-locally presentable with $\omega_{1}$ the first uncountable ordinal (see \cite[chapter 5]{borceux:handbook-categorical-algebra21994}, especially example 5.2.2.e). It is also true that filtered colimits commute with finite products, a result that is useful for Banach sheaves by theorem \ref{theorem:banach-sheaves-finite-topology}. This is proposition 5 of \cite{borceux:higher-order-sheaves-banach-modules1982}.} are hard to construct\footnote{The most famous example of a coinverter may very well be the coinversion of the homotopy functor by the weak equivalences in homotopy theory (see \cite{gabriel-zisman:calculus-fractions-homotopy-theory1967}). In a sense, the whole technology ranging from the calculus of fractions to Quillen model categories, has come to life precisely to handle coinverter constructions.}, but it is well known that localizations of presheaf categories $\PShv(\mathcal{A})$ correspond to Grothendieck topologies on $\mathcal{A}$, so that sheaf categories should be the solution for the birepresentation \eqref{equivalence:cosheaves-integrable-sheaves}.

\smallskip
Denote by $r$ the sheafification reflection $\PShv(\Omega)\to \Shv(\Omega)$. Precomposing with Yoneda we obtain a functor $\characteristic\colon \Omega\to \Shv(\Omega)$.

\begin{theorem} \label{theorem:integrables-sheaves}
Precomposition with $\characteristic\colon \Omega\to \Shv(\Omega)$ establishes a natural equivalence
\begin{equation}\label{equivalence:integrables-sheaves}
    \homset{\BanModCat}{\Shv(\Omega)}{\mathcal{A}}\equivalent \CoShv(\Omega, \mathcal{A})
\end{equation}
\end{theorem}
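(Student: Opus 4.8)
The plan is to chain together the universal properties already established. By the free cocompletion theorem \ref{theorem:free-cocompletion} (specifically the biadjunction \eqref{equivalence:free-cocompletion-biadjunction}), precomposition with the Yoneda embedding $\Yoneda\colon \Omega\to \PShv(\Omega)$ gives a natural equivalence
\begin{equation*}
    \homset{\BanModCat}{\PShv(\Omega)}{\mathcal{A}}\equivalent \homset{\BanCat}{\Omega}{\mathcal{A}}
\end{equation*}
that is, cocontinuous functors out of the free Banach $2$-space correspond to arbitrary contractive functors, i.e.\ to precosheaves. So the work is to cut this down: on the left replace $\PShv(\Omega)$ by its localization $\Shv(\Omega)$, and on the right carve out the cosheaves from the precosheaves, and check the two cuts match up under the equivalence.

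First I would invoke the general fact (cited in the remarks before the statement) that the sheafification reflection $r\colon \PShv(\Omega)\to \Shv(\Omega)$ is a localization, so that cocontinuous functors $\Shv(\Omega)\to \mathcal{A}$ correspond under precomposition with $r$ precisely to those cocontinuous functors $T\colon \PShv(\Omega)\to \mathcal{A}$ that invert the canonical $2$-cell $\eta$ of diagram \ref{2diagram:2cell-partitions-sup} — equivalently, that send each comparison map $\sum_{F\in\mathcal{E}}\Yoneda(F)\to \Yoneda(\sup\mathcal{E})$ to an isomorphism, for $\mathcal{E}$ ranging over the covers of the finite topology. (Here one uses that $\Shv(\Omega)$ is a reflective, hence complete, Banach $2$-space, so it \emph{is} a Banach $2$-space and $\homset{\BanModCat}{\Shv(\Omega)}{\mathcal{A}}$ makes sense, and that the reflector is cocontinuous.) Composing, $\homset{\BanModCat}{\Shv(\Omega)}{\mathcal{A}}$ is equivalent to the full subcategory of $\homset{\BanCat}{\Omega}{\mathcal{A}}$ consisting of those contractive functors $\mu$ whose associated cocontinuous extension $\Lan_{\Yoneda}\mu$ inverts those comparison maps.

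The crux is then to identify this subcategory with $\CoShv(\Omega,\mathcal{A})$, i.e.\ to show that $\Lan_{\Yoneda}\mu$ sends $\sum_{F\in\mathcal{E}}\Yoneda(F)\to \Yoneda(\sup\mathcal{E})$ to an isomorphism if and only if $\mu$ is finitely additive in the sense of definition \ref{definition:cosheaf}. Since $\Lan_{\Yoneda}\mu$ is cocontinuous and $\Lan_{\Yoneda}\mu\circ\Yoneda\isomorphic\mu$, it carries that comparison map to $\sum_{F\in\mathcal{E}}\mu(F)\to \mu(\sup\mathcal{E})$, which is exactly the map $\varepsilon_{\mathcal{E}}$ of diagram \ref{diagram:induced-precosheaf-map-finite-partitions1}; by theorem \ref{theorem:sheaf-condition} this being an isometric isomorphism for all finite $\mathcal{E}$ is precisely the cosheaf condition for the finite topology. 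Conversely, any cosheaf $\mu$ inverts these maps, so $\Lan_{\Yoneda}\mu$ factors (essentially uniquely) through $r$. Finally one checks the composite equivalence, $\mu\mapsto \int\differential\mu = \Lan_{\characteristic}\mu$ where $\characteristic = r\circ\Yoneda$, is natural in $\Omega$ and $\mathcal{A}$, which follows from naturality of \eqref{equivalence:free-cocompletion-biadjunction} and of sheafification (this is the routine bookkeeping — the pseudofunctoriality issues being the same as those flagged after theorem \ref{theorem:free-cocompletion}). The main obstacle is the one just addressed: matching the localizing $2$-cell of the sheaf topology with the additivity defect $\varepsilon_{\mathcal{E}}$ of a precosheaf, so that ``$\Lan_{\Yoneda}\mu$ inverts $\eta$'' and ``$\mu$ is a cosheaf'' are literally the same condition; everything else is formal.
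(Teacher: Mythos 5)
Your proposal is correct and follows essentially the same route as the paper: the free cocompletion biadjunction of theorem \ref{theorem:free-cocompletion} combined with the key identification that $\mu$ is a cosheaf precisely when $\Lan_{\Yoneda}\mu$ inverts the localizing $2$-cell, i.e.\ factors through the sheafification $r$, which is exactly the ``crucial result'' the paper's proof rests on. You merely make explicit the coinverter/localization bookkeeping (and the matching of the inverted comparison maps with $\varepsilon_{\mathcal{E}}$, really the dual of theorem \ref{theorem:sheaf-condition} together with definition \ref{definition:cosheaf}) that the paper delegates to its cited references.
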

\begin{proof}
The crucial result is that $\mu\colon \Omega\to \mathcal{A}$ is a cosheaf iff the left Kan extension $\Lan_{\Yoneda}\mu$ factors uniquely (up to inner isomorphism $2$-cells) through the sheafification $r$ as in diagram \ref{diagram:factorization-cosheaves}.
\begin{figure}[htbp]
    \begin{equation*}
    \xymatrix{
        \PShv(\Omega) \ar[rr]^{\Lan_{\Yoneda}\mu} \ar[dr]^{r} & & \mathcal{A} \\
          & \Shv(\Omega) \ar@{-->}[ur] & \\
          & \Omega \ar[luu]^{\Yoneda} \ar[u]_{\characteristic} \ar@/_/[ruu]_{\mu} &
    }
    \end{equation*}
\caption{Factorization of cosheaves via $\Shv(\Omega)$.}
\label{diagram:factorization-cosheaves}
\end{figure}
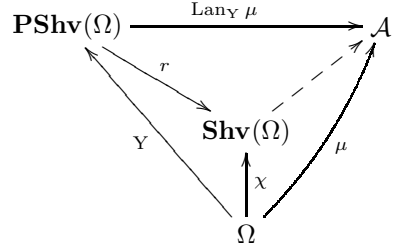

Once that is done, equivalence \ref{equivalence:integrables-sheaves} follows from the results of subsection \ref{subsection:presheaf-categories}, theorem \ref{theorem:free-cocompletion} in particular. The remarks above already hinted at how to construct the factorization; for more details, the reader is advised the consult the refences given at the beginning of the subsection.
\end{proof}

Theorem proves that $\characteristic\colon \Omega\to \Shv(\Omega)$ is the universal cosheaf: every other cosheaf factors uniquely through it. Although $\characteristic$ is a cosheaf, for each $E\in \Omega$, $\characteristic(E)$ is a sheaf. Though limits $\Shv(\Omega)$ are formed pointwise, colimits in $\Shv(\Omega)$ are \emph{not}. Instead, one forms the colimit in $\PShv(\Omega)$ and sheafifies it. Because of this, the sheaves $\characteristic(E)$ are \emph{not} projective. On the other hand, since sheafification is a left adjoint, it is cocontinuous and it is well-known that the composition of a dense functor with a left adjoint is dense. The following coend calculation proves this:
\begin{align*}
    \xi &\isomorphic ri(\xi) \\
        &\isomorphic r\parens{\int^{E\in \Omega}\homset{\PShv}{\Yoneda(E)}{i\xi}\otimes \Yoneda(E)} \\
        &\isomorphic r\parens{\int^{E\in \Omega}\homset{\Shv}{\characteristic(E)}{\xi}\otimes \Yoneda(E)} \\
        &\isomorphic \int^{E\in \Omega}\homset{\Shv}{\characteristic(E)}{\xi}\otimes \characteristic(E)
\end{align*}

Since the last term is precisely the left Kan extension $\Lan_{\characteristic}\characteristic$ by the coend formula, density of $\characteristic$ follows from the already cited theorem \cite[chapter 5, theorem 5.1]{kelly:enriched-category-theory2005}.


\smallskip
Given the resemblance of the universal properties of $\Integrable_{\infty}(\Omega)$ and $\Integrable_{1}(\Omega, \mu)$ one could ask what exactly have we categorified. In the ordinary decategorified setting, the difference between $\Integrable_{\infty}(\Omega)$ and $\Integrable_{1}(\Omega, \mu)$ arises from the fact that boundedness and summability are non-vacuous concepts. These distinctions disappear in the categorified setting since \emph{all} small colimits exist, but there is still one substantial difference between $\Integrable_{\infty}(\Omega)$ and $\Integrable_{1}(\Omega, \mu)$ worth noticing. The former is a Banach algebra and the corresponding universal measure $\characteristic$ is spectral. We will now see that the universal cosheaf $\characteristic$ provided by theorem \ref{theorem:integrables-sheaves} is spectral.

\smallskip
In the first place, intersection $\cap$ makes of $\Omega$ a monoidal category\footnote{Even more, since $\Omega$ is a Boolean algebra, it is cartesian closed with products given by $\cap$. The cartesian closed structure of $\Omega$ is also good in the sense that it makes of $\Omega$ a \emph{compact closed} category -- for example, see \cite{freyd-yetter:braided-compact-closed-categories-applications-low-dimensional-topology1989}. This good duality structure is another thing that gets lost in the passage from finite to infinite-dimensional spaces.}. The presheaf category $\PShv(\Omega)$ is monoidal with the pointwise tensor product\footnote{For the knowledgeable reader, we note that the Day convolution on $\PShv(\Omega)$ reduces to the pointwise tensor product. One less monoidal structure to worry about, which is always a good thing. This follows from the fact that $\Omega$ is a lattice with only one arrow between any two objects.} and it is easy to see that $\Yoneda(E\cap F)\isomorphic \Yoneda(E)\otimes \Yoneda(F)$. Since the monoidal structure in $\Shv(\Omega)$ is the sheafification of the pointwise tensor product it follows that:
\begin{equation} \label{isomorphism:characteristic-monoidal}
    \characteristic(E\cap F)\isomorphic \characteristic(E)\otimes \characteristic(F)
\end{equation}

Theorem \ref{theorem:banach-sheaves-finite-topology} can now be seen as the categorification of theorem \ref{theorem:stone-measurable-continuous-map}. The proof of \ref{theorem:banach-sheaves-finite-topology} does not look like a categorification of the proof of \ref{theorem:stone-measurable-continuous-map}, but we \emph{can} categorify this proof! Let $X$ be the Stone space of $\Omega$ and for a clopen $\eta(E)$ of $X$ consider the presheaf on $\open(X)$ given by
\begin{equation} \label{functor:characteristic-sheaves-stone-space}
    U\mapto \prod_{x\in \eta(E)\cap U}\field
\end{equation}

By theorem \ref{theorem:almost-etalification} it is a Banach sheaf. More importantly, we can readily recognize it as the sheafification of the representable of $E\in \clopen(X)\subseteq \open(X)$ and isomorphic to $U\mapto \CBounded(U\cap E)$. Since the representables are dense and the clopens form a basis of $X$, the sheaves \eqref{functor:characteristic-sheaves-stone-space} form a dense subcategory of $\Shv(\Stone(\Omega))$. This is the categorification of the Stone-Weierstrass theorem. Theorem \ref{theorem:banach-sheaves-finite-topology} now drops out by noting that $E\mapto \parens{U\mapto \CBounded(U\cap E)}$ is a spectral cosheaf, taking its direct integral via theorem \ref{theorem:integrables-sheaves}, and then using coend calculus to prove that it is fully-faithful and density of the $U\mapto \CBounded(U\cap E)$ to prove essential surjectiveness.

\smallskip
By theorem \ref{theorem:integrables-sheaves} a cosheaf $\mu$ will factor uniquely via the full subcategory of the characteristic sheaves $\characteristic(E)$.

\begin{definition} \label{definition:simple-sheaves-subcategory}
Define the category $\simple(\Omega)$ of \emph{simple sheaves} to be the full subcategory of $\characteristic(E)$ with $E\in \Omega$.
\end{definition}

The universal cosheaf $\characteristic\colon \Omega\to \Shv(\Omega)$ factors uniquely through the inclusion $\simple(\Omega)\to \Shv(\Omega)$, therefore no confusion should arise if we denote the inclusion $\Omega\to \simple(\Omega)$ also by $\characteristic$. Density of $\simple(\Omega)$ together with the unique factorization of \ref{theorem:integrables-sheaves} yields the following important theorem.

\begin{theorem} \label{theorem:sheaves-presheaves-simples}
Consider the left Kan extensions $\Lan_{\characteristic}\mu$ as in diagram \ref{diagram:sheaves-presheaves-simples}.
\begin{figure}[htbp]
    \begin{equation*}
    \xymatrix{
        \Shv(\Omega) \ar[r]^-{\int\differential\mu} & \mathcal{A} \\
        \simple(\Omega) \ar@/^/[u]^{i} \ar@{-->}[ur]^{\Lan_{\characteristic}\mu} & \\
        \Omega \ar[u]^{\characteristic} \ar[uur]_{\mu} &
    }
    \end{equation*}
\caption{The factorization $\Lan_{\characteristic}\mu$.}
\label{diagram:sheaves-presheaves-simples}
\end{figure}

The functor $\mu\mapto \Lan_{\characteristic}\mu$ establishes an equivalence between Banach $2$-spaces
\begin{equation}\label{equivalence:sheaves-presheaves-simples}
    \homset{\BanCat}{\simple(\Omega)}{\mathcal{A}}\equivalent \CoShv(\Omega, \mathcal{A})
\end{equation}
\end{theorem}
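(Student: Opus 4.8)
The plan is to derive the statement from Theorem~\ref{theorem:integrables-sheaves} together with the density of $\simple(\Omega)$ in $\Shv(\Omega)$. Write $j\colon \Omega\to \simple(\Omega)$ for the functor denoted $\characteristic$ in the statement and $i\colon \simple(\Omega)\to \Shv(\Omega)$ for the (fully faithful) inclusion, so that $ij$ is the universal cosheaf $\characteristic\colon \Omega\to \Shv(\Omega)$ of Theorem~\ref{theorem:integrables-sheaves}. First I would observe that the assignment $\mu\mapto \Lan_{j}\mu$ is precisely the composite of the inverse equivalence $\mu\mapto \int\differential\mu=\Lan_{ij}\mu$ of Theorem~\ref{theorem:integrables-sheaves} with restriction along $i$. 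Indeed, since $\simple(\Omega)$ is a \emph{full} subcategory of $\Shv(\Omega)$ we have $\homset{\simple(\Omega)}{jE}{a}\isomorphic \homset{\Shv}{\characteristic(E)}{ia}$ for $a\in \simple(\Omega)$; comparing the coend formula \eqref{isomorphism:left-kan-extension-coend} for $\Lan_{j}\mu$ with the restriction along $i$ of the same formula for $\Lan_{ij}\mu$ then yields a natural isomorphism $\Lan_{j}\mu\isomorphic \parens{\int\differential\mu}\composition i$. Hence it is enough to show that restriction along $i$, $\pullback{i}\colon \homset{\BanModCat}{\Shv(\Omega)}{\mathcal{A}}\to \homset{\BanCat}{\simple(\Omega)}{\mathcal{A}}$, is an equivalence.

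Full faithfulness of $\pullback{i}$ is the easy half and is a consequence of density. The coend presentation established just after Theorem~\ref{theorem:integrables-sheaves}, namely $\xi\isomorphic \int^{E\in \Omega}\homset{\Shv}{\characteristic(E)}{\xi}\otimes \characteristic(E)$, exhibits every Banach sheaf as a canonical colimit of objects of $\simple(\Omega)$, so $\simple(\Omega)$ is dense in $\Shv(\Omega)$. By \eqref{isomorphism:cocontinuous-after-dense} a cocontinuous functor on $\Shv(\Omega)$ is determined up to canonical isomorphism by its restriction to $\simple(\Omega)$; running the same pointwise-colimit argument on the components of a contractive natural transformation shows that $\pullback{i}$ is moreover a bijection on $2$-cells. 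Thus $\pullback{i}$ is fully faithful.

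The real content, and the step I expect to be the main obstacle, is the essential surjectivity of $\pullback{i}$: every contractive functor $G\colon \simple(\Omega)\to \mathcal{A}$ must be isomorphic to the restriction of a cocontinuous functor on $\Shv(\Omega)$, equivalently (using the identification above) $G\isomorphic \Lan_{j}\mu$ for the cosheaf $\mu\defequal G\composition j=G\characteristic$. Two things need checking: that $\mu$ really is a cosheaf, i.e.\ that $G$ carries every coproduct cone $\parens{\characteristic(E_{n})\to \characteristic(E)}$ of $\simple(\Omega)$ to a coproduct in $\mathcal{A}$; and that $\Lan_{j}\mu$ recovers $G$ on \emph{all} of $\simple(\Omega)$ and not merely on the image of $j$ (observe that $j$ is faithful but not full, so $\Lan_{j}\mu\composition j\isomorphic \mu$ is not automatic, although it does hold here because the two functors of Theorem~\ref{theorem:integrables-sheaves} are mutually inverse). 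The delicate point is that colimits in $\Shv(\Omega)$ are sheafified presheaf colimits, so the $\characteristic(E)$ fail to be projective and the free-cocompletion Theorem~\ref{theorem:free-cocompletion} cannot be quoted directly; I would attack this by transporting everything back along the sheafification reflection $r\colon \PShv(\Omega)\to \Shv(\Omega)$, using the etalification description of $\Shv(\Omega)$ from subsection~\ref{subsection:banach-sheaves} to control the sheafified colimits, and then grinding through the coend calculus. Should this resist a fully general proof, the honest fallback is to replace $\homset{\BanCat}{\simple(\Omega)}{\mathcal{A}}$ by its full subcategory of finitely additive functors --- those sending the above coproduct cones to coproducts --- after which the equivalence falls out formally from Theorem~\ref{theorem:integrables-sheaves} and the density of $\simple(\Omega)$.
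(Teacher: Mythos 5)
Your positive steps are essentially the paper's own (very terse) justification --- the sentence preceding the theorem invokes only ``density of $\simple(\Omega)$ plus the unique factorization of Theorem~\ref{theorem:integrables-sheaves}'', which is exactly your identification $\Lan_{\characteristic}\mu\isomorphic\parens{\int\differential\mu}\composition i$ together with the full-faithfulness-via-density half, and those parts of your proposal are correct. But the step you single out as the main obstacle is a genuine gap, and your first-choice plan for it (transporting along the sheafification and grinding through coends) cannot succeed, because essential surjectivity onto all of $\homset{\BanCat}{\simple(\Omega)}{\mathcal{A}}$ is false once $\Omega$ is infinite. Take $\Omega=\subsets{\Natural}$, $\mathcal{A}=\Ban$, and let $G$ send $\characteristic(E)$ to $\Integrable_{\infty}(E)$, acting on $\homset{\simple}{\characteristic(E)}{\characteristic(F)}\isomorphic\Integrable_{\infty}(E\cap F)$ by multiplication; this is the restriction to $\simple(\Omega)$ of the global-section functor $\xi\mapsto\xi(\Omega)$, which is continuous but not cocontinuous, and it is a perfectly good contractive functor. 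If $G$ were isomorphic to some $\Lan_{\characteristic}\mu\isomorphic\parens{\int\differential\mu}\composition i$, then for $E=\set{1,\dotsc,n}$ cocontinuity of $\int\differential\mu$ and finite additivity of the universal cosheaf give $\parens{\int\differential\mu}\characteristic(E)\isomorphic\sum_{k\leq n}\parens{\int\differential\mu}\delta_{k}$ isometrically, a space isometric to $\ell_{1}^{n}$ since each $\parens{\int\differential\mu}\delta_{k}$ is one-dimensional (being isomorphic to $G(\delta_{k})\isomorphic\field$); an isomorphism in the functor Banach category is uniformly bounded with uniformly bounded inverse, so comparing with $G(\characteristic(E))=\ell_{\infty}^{n}$ would bound $d(\ell_{1}^{n},\ell_{\infty}^{n})$ independently of $n$, contradicting the fact that this Banach--Mazur distance grows like $\sqrt{n}$. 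The root cause is the one you half-suspected: the finite-partition cocones of $\simple(\Omega)$ are not absolute colimits for $\Banc$-enrichment (the injection/projection identities survive any contractive functor, but the isometric $\ell_{1}$-norm of the coproduct does not), so an arbitrary contractive functor need not send them to coproducts, and no amount of coend calculus will force it to.

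Consequently your ``honest fallback'' is not a fallback but the statement that is actually provable, and the one the paper's later machinery supports: restrict the left-hand side to the full subcategory of functors carrying each cocone $\parens{\characteristic(E_{n})\to\characteristic(E)}$ to a coproduct. For such $G$ the composite $\mu\defequal G\characteristic$ is a cosheaf, and $G\isomorphic\Lan_{\characteristic}\mu$ does follow, but not purely from density along the non-full $\characteristic$: one argues that additivity determines $G$ on the injections and projections, hence on all simple morphisms $\sum_{n}k_{n}\characteristic(E_{n})$, and contractivity plus density of these in $\homset{\simple}{\characteristic(E)}{\characteristic(F)}\isomorphic\Integrable_{\infty}(E\cap F)$ finishes the job --- this is precisely the integration-of-simple-morphisms construction of Definition~\ref{definition:integral-cosheaf-map} and Theorem~\ref{theorem:integral-cosheaf-map}. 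Be aware that the paper offers no argument whatsoever for essential surjectivity (its one-line proof only delivers your uniqueness half), so the gap you located lies in the paper's statement rather than in your reading of it; with the additivity restriction in place, the equivalence then falls out of Theorem~\ref{theorem:integrables-sheaves} and density exactly as you propose.
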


In particular, $\CoShv(\Omega, \mathcal{A})$ is complete if $\mathcal{A}$ is. In view of theorem \ref{theorem:sheaves-presheaves-simples} it pays off to have an idea of what sort of category is $\simple(\Omega)$. The first thing to notice is that $\simple(\Omega)$ is self-dual. To see this, we first need to recall the adjunction between presheaves and precosheaves. If $\mathcal{A}$ is a small Banach category, there is a functor $\LAdjoint\colon \PShv(\mathcal{A})\to \opposite{\PCoShv(\mathcal{A})}$ given on objects by
\begin{equation}\label{map:ladjoint-isbell-conjugation}
    \xi\mapto \parens{a\mapto \homset{\PShv}{\xi}{\Yoneda^{a}}}
\end{equation}

Similarly, there is a functor $\RAdjoint\colon \opposite{\PCoShv(\mathcal{A})}\to \PShv(\mathcal{A})$ given on objects by
\begin{equation}\label{map:radjoint-isbell-conjugation}
    \mu\mapto \parens{a\mapto \homset{\PCoShv}{\mu}{\Yoneda_{a}}}
\end{equation}

The next theorem shows that this pair of functors is part of an adjunction. This adjunction is called Isbell conjugation in \cite[section 7]{lawvere:taking-categories-seriously2005}. Since the paper does not supply a proof we do.

\begin{theorem} [Isbell conjugation]\label{theorem:isbell-conjugation}
Let $\mathcal{A}$ be a small category. There is a natural isomorphism:
\begin{equation} \label{isomorphism:isbell-conjugation}
    \homset{\opposite{\PCoShv}}{\LAdjoint\xi}{\mu}\isomorphic \homset{\PShv}{\xi}{\RAdjoint\mu}
\end{equation}
\end{theorem}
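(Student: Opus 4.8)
The plan is to reduce both sides of \eqref{isomorphism:isbell-conjugation} to one and the same iterated end by repeated use of the end formula for enriched hom-spaces \eqref{isomorphism:end-formula-hom-functor-categories}, the closedness of $\Ban$ under the projective tensor product, and the Fubini theorem for ends, finishing with two applications of the Yoneda lemma. Since $\opposite{\PCoShv}$ merely reverses arrows, $\homset{\opposite{\PCoShv}}{\LAdjoint\xi}{\mu}\isomorphic\homset{\PCoShv}{\mu}{\LAdjoint\xi}$, so the first step is to expand the right side. Applying \eqref{isomorphism:end-formula-hom-functor-categories} in the category of precosheaves, then the definition \eqref{map:ladjoint-isbell-conjugation} of $\LAdjoint$, and then \eqref{isomorphism:end-formula-hom-functor-categories} again in the category of presheaves for $\homset{\PShv}{\xi}{\Yoneda^{a}}$, I get
\[
    \homset{\PCoShv}{\mu}{\LAdjoint\xi}\isomorphic\int_{a\in\mathcal{A}}\homset{\Ban}{\mu(a)}{\int_{b\in\mathcal{A}}\homset{\Ban}{\xi(b)}{\homset{\mathcal{A}}{b}{a}}}.
\]
Because $\homset{\Ban}{\mu(a)}{-}$ is a right adjoint (to $-\tensor\mu(a)$) it preserves the inner end, so this is the double end $\int_{a}\int_{b}\homset{\Ban}{\mu(a)}{\homset{\Ban}{\xi(b)}{\homset{\mathcal{A}}{b}{a}}}$.

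Next I would symmetrize the integrand. By the tensor--hom adjunction and the symmetry of $\tensor$ in $\Ban$, the integrand is isometrically isomorphic, naturally in $a$ and $b$, to $\homset{\Ban}{\mu(a)\tensor\xi(b)}{\homset{\mathcal{A}}{b}{a}}$ and hence to $\homset{\Ban}{\xi(b)}{\homset{\Ban}{\mu(a)}{\homset{\mathcal{A}}{b}{a}}}$. The Fubini theorem for ends then licenses interchanging the two ends, and pulling the (now inner) $a$-end back inside the hom out of $\xi(b)$ gives
\[
    \int_{b\in\mathcal{A}}\homset{\Ban}{\xi(b)}{\int_{a\in\mathcal{A}}\homset{\Ban}{\mu(a)}{\homset{\mathcal{A}}{b}{a}}}.
\]
Recognising the inner end as $\homset{\PCoShv}{\mu}{\Yoneda_{b}}$, with $\Yoneda_{b}$ the representable precosheaf $a\mapto\homset{\mathcal{A}}{b}{a}$, the definition \eqref{map:radjoint-isbell-conjugation} identifies it with $\RAdjoint\mu(b)$; one final application of \eqref{isomorphism:end-formula-hom-functor-categories} turns $\int_{b}\homset{\Ban}{\xi(b)}{\RAdjoint\mu(b)}$ into $\homset{\PShv}{\xi}{\RAdjoint\mu}$. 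Chaining the displayed isomorphisms yields \eqref{isomorphism:isbell-conjugation}; naturality in both $\xi$ and $\mu$ is automatic, being inherited from the naturality of the end formula, the tensor--hom adjunction, Fubini and Yoneda at each step, and smallness of $\mathcal{A}$ is exactly what guarantees all the ends in sight exist as genuine objects of $\Ban$.

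The only real friction I anticipate is the variance bookkeeping: one must check that the integrands really are $\Banc$-functors $\opposite{\mathcal{A}}\times\mathcal{A}\to\Ban$ so that each end is well-formed, and that the interchange of ends and the passage of an end through $\homset{\Ban}{W}{-}$ are legitimate in the $\Banc$-enriched setting rather than just for ordinary categories. None of this requires a new idea; it is the same package of enriched (co)end manipulations used in the proof of theorem \ref{theorem:left-kan-extension-coend}, transported verbatim from \cite[chapter IX]{maclane:categories-working-mathematician1971}, and it goes through because $\Banc$ is complete and closed symmetric monoidal.
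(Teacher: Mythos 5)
Your proposal is correct and is essentially the paper's own proof: the same end-calculus computation (end formula for hom of functor categories, continuity of $\homset{\Ban}{-}{-}$, the tensor--hom adjunction with symmetry of $\otimes$, Fubini interchange of ends, and Yoneda/the definitions of $\LAdjoint$ and $\RAdjoint$), merely run from the left-hand side to the right-hand side instead of the reverse. The variance and existence caveats you flag are handled exactly as you anticipate, by smallness of $\mathcal{A}$ and completeness and closedness of $\Banc$.
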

\begin{proof}
The calculus of ends affords a nifty proof of \eqref{isomorphism:isbell-conjugation}. Starting with the right-hand side, using the end formula \ref{isomorphism:end-formula-hom-functor-categories}, cocontinuity, interchange of ends and symmetry of $\otimes$ we have:
\begin{align*}
    \homset{\PShv}{\xi}{\RAdjoint\mu} &\isomorphic \int_{a\in \mathcal{A}}\homset{\Ban}{\xi(a)}{\RAdjoint\mu(a)} \displaybreak[0]\\
        &\isomorphic \int_{a\in \mathcal{A}}\homset{\Ban}{\xi(a)}{\homset{\PCoShv}{\mu}{\Yoneda_{a}}} \displaybreak[0]\\
        &\isomorphic \int_{a\in \mathcal{A}}\homset{\Ban}{\xi(a)}{\int_{b\in \mathcal{A}}\homset{\Ban}{\mu(b)}{\homset{\mathcal{A}}{a}{b}}} \displaybreak[0]\\
        &\isomorphic \int_{a\in \mathcal{A}}\int_{b\in \mathcal{A}}\homset{\Ban}{\xi(a)}{\homset{\Ban}{\mu(b)}{\homset{\mathcal{A}}{a}{b}}} \displaybreak[0]\\
        &\isomorphic \int_{a\in \mathcal{A}}\int_{b\in \mathcal{A}}\homset{\Ban}{\xi(a)\otimes \mu(b)}{\homset{\mathcal{A}}{a}{b}} \displaybreak[0]\\
        &\isomorphic \int_{a\in \mathcal{A}}\int_{b\in \mathcal{A}}\homset{\Ban}{\mu(b)\otimes \xi(a)}{\homset{\mathcal{A}}{a}{b}} \displaybreak[0]\\
        &\isomorphic \int_{a\in \mathcal{A}}\int_{b\in \mathcal{A}}\homset{\Ban}{\mu(b)}{\homset{\Ban}{\xi(a)}{\homset{\mathcal{A}}{a}{b}}} \displaybreak[0]\\
        &\isomorphic \int_{b\in \mathcal{A}}\int_{a\in \mathcal{A}}\homset{\Ban}{\mu(b)}{\homset{\Ban}{\xi(a)}{\homset{\mathcal{A}}{a}{b}}} \displaybreak[0]\\
        &\isomorphic \int_{b\in \mathcal{A}}\homset{\Ban}{\mu(b)}{\int_{a\in \mathcal{A}}\homset{\Ban}{\xi(a)}{\homset{\mathcal{A}}{a}{b}}} \displaybreak[0]\\
        &\isomorphic \int_{b\in \mathcal{A}}\homset{\Ban}{\mu(b)}{\homset{\PShv}{\xi}{\Yoneda^{b}}} \displaybreak[0]\\
        &\isomorphic \int_{b\in \mathcal{A}}\homset{\Ban}{\mu(b)}{\LAdjoint\xi(b)} \displaybreak[0]\\
        &\isomorphic \homset{\PCoShv}{\mu}{\LAdjoint\xi}
\end{align*}
\end{proof}

From Isbell conjugacy, the next theorem follows.

\begin{theorem} \label{theorem:simples-dual-simples}
The Isbell adjunction \eqref{isomorphism:isbell-conjugation} descends to an equivalence
\begin{equation}\label{equivalence:}
    \simple(\dual{\Omega})\equivalent \dual{\simple(\Omega)}
\end{equation}
\end{theorem}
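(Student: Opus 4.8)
The plan is to obtain the equivalence by restricting the Isbell conjugation adjunction of Theorem~\ref{theorem:isbell-conjugation} to the dense subcategories of simple sheaves. First I would apply Theorem~\ref{theorem:isbell-conjugation} with $\mathcal{A}=\Omega$, producing the adjunction $\LAdjoint\dashv\RAdjoint$ with $\LAdjoint\colon\PShv(\Omega)\to\opposite{\PCoShv(\Omega)}$. The key observation is that $\PCoShv(\Omega)=[\Omega,\Ban]$ is \emph{literally} the presheaf $2$-space $\PShv(\dual\Omega)$, so $\LAdjoint$ is a functor $\PShv(\Omega)\to\opposite{\PShv(\dual\Omega)}$; and by the Yoneda and co-Yoneda lemmas applied to the defining formulas~\eqref{map:ladjoint-isbell-conjugation} and~\eqref{map:radjoint-isbell-conjugation}, $\LAdjoint$ carries the representable presheaf at $E\in\Omega$ to the representable presheaf at $E\in\dual\Omega$, $\RAdjoint$ carries it back, and the unit and counit are isometric isomorphisms on representables. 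Thus, even before any sheafification, Isbell conjugation exhibits the image of $\Yoneda\colon\Omega\to\PShv(\Omega)$ as the dual of the image of $\Yoneda\colon\dual\Omega\to\PShv(\dual\Omega)$; the $\opposite{(-)}$ in the codomain of $\LAdjoint$ is precisely the origin of the $\dual{(-)}$ in the statement.

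Next I would push this through the finite-topology sheafification. Since $\LAdjoint$ is a left adjoint it is cocontinuous, and $\Shv(-)\hookrightarrow\PShv(-)$ is the localization that inverts the $2$-cells of diagram~\ref{2diagram:2cell-partitions-sup} attached to finite partitions (subsection~\ref{subsection:cosheaves-inverters}); I would check that $\LAdjoint$ and $\RAdjoint$ carry these two localizations into one another, so that they restrict to an adjunction between $\Shv(\Omega)$ and $\opposite{\Shv(\dual\Omega)}$ sending the simple sheaf $\characteristic_\Omega(E)=r\Yoneda^E$ to $\characteristic_{\dual\Omega}(E)$ and back, with unit and counit still isometric isomorphisms. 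The most transparent way to see this is via the concrete model of the simple sheaves extracted around Theorem~\ref{theorem:banach-sheaves-finite-topology}: on objects $E\in\Omega$ one has $\characteristic(E)(F)\isomorphic\Continuous(\Stone(E\wedge F))$, a description manifestly symmetric in $E$ and $F$, with composition given by restriction to $\Stone(E\wedge F\wedge G)$, multiplication, and extension by zero. Restricting the Isbell adjunction to the full dense subcategories $\simple(\Omega)\subseteq\Shv(\Omega)$ and $\simple(\dual\Omega)\subseteq\Shv(\dual\Omega)$ (density as established following Theorem~\ref{theorem:integrables-sheaves}) then yields $\simple(\dual\Omega)\equivalent\dual{\simple(\Omega)}$; using the symmetric model one can even upgrade this to an isomorphism of Banach categories, the identity on objects, with commutativity of multiplication in $\Continuous(\Stone(E\wedge F\wedge G))$ supplying the reversal of composition.

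The main obstacle I expect is exactly the sheafification step: showing that Isbell conjugation respects the finite Grothendieck topology, i.e. that $\LAdjoint$ of a finite-topology sheaf on $\Omega$ is (the op of) a finite-topology sheaf on $\dual\Omega$. This is not formal, because in a general Banach $2$-space finite products and finite coproducts differ isometrically, so the sheaf condition on $\Omega$ (a product condition) and the corresponding condition on $\dual\Omega$ (which for precosheaves is a coproduct condition) are not literally dual. What rescues the argument is that on the simple sheaves \emph{both} conditions hold at once --- the product side by the sheaf axiom of Theorem~\ref{theorem:sheaf-condition}, the coproduct side by the universal property of the cosheaf $\characteristic$ in Theorem~\ref{theorem:integrables-sheaves} --- so the asserted duality is genuinely a statement about the dense subcategory $\simple$, where the Stone-space picture makes everything explicit; one should not expect $\Shv(\Omega)$ itself to be self-dual. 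A secondary bookkeeping point is keeping straight the three ops in play (presheaf versus precosheaf, the codomain $\opposite{(-)}$ of $\LAdjoint$, and the passage from $\Omega$ to $\dual\Omega$) so that they compose to exactly the single $\dual{(-)}$ of the statement.
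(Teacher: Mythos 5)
Your concrete fallback (the symmetric model of the simple sheaves) is essentially the only viable mechanism here --- the paper itself offers nothing beyond the one-line appeal to Isbell conjugacy --- but the step in which you ``push the Isbell adjunction through sheafification'' fails as stated. Every simple sheaf has $\characteristic(E)(0)\isomorphic\Integrable_{\infty}(E\cap 0)=\zero$, and for any presheaf $\xi$ with $\xi(0)=\zero$ every presheaf morphism $\tau\colon\xi\to\Yoneda^{F}$ vanishes: for $G\subseteq F$ the structure map $\Yoneda^{F}(G)\to\Yoneda^{F}(0)$ is the identity of $\field$, so naturality along $0\subseteq G$ forces $\tau_{G}$ to factor through $\xi(0)=\zero$, while for $G\not\subseteq F$ the component is zero anyway. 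Hence, by the defining formula \eqref{map:ladjoint-isbell-conjugation}, $\LAdjoint(\characteristic(E))=\zero$: the presheaf-level conjugate annihilates exactly the objects you want to keep, the unit and counit are nowhere near invertible on them, and no composition of $\LAdjoint$, $\RAdjoint$ with the two localizations can send $\characteristic_{\Omega}(E)$ to $\characteristic_{\dual{\Omega}}(E)$. (Even the sheaf-level conjugate $F\mapsto\homset{\Shv}{\characteristic(E)}{\characteristic(F)}\isomorphic\Integrable_{\infty}(E\cap F)$, with its extension-by-zero structure maps, is not a simple sheaf on $\dual{\Omega}$.) So whatever ``descends'' has to be re-derived at the sheaf level, via the hom computations of theorems \ref{theorem:sheafification-representables} and \ref{theorem:morphism-spaces-constant-line-sheaves}, which is what your Stone-space model does; it is not obtained by transporting the presheaf adjunction along the reflections.

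There is also an error in the object bookkeeping you flag as ``secondary.'' The identity-on-objects comparison that the symmetric homs $\Integrable_{\infty}(E\cap F)$ and commutativity of multiplication give you is an isomorphism $\simple(\Omega)\isomorphic\dual{\simple(\Omega)}$, i.e.\ self-duality of $\simple(\Omega)$ --- not the statement of the theorem. Applying theorem \ref{theorem:morphism-spaces-constant-line-sheaves} to $\dual{\Omega}$ (where meets are joins of $\Omega$ and the principal ideal below $E$ in $\dual{\Omega}$ is the ideal below $\complement{E}$ in $\Omega$) gives $\homset{\simple(\dual{\Omega})}{\characteristic(E)}{\characteristic(F)}\isomorphic\Integrable_{\infty}(\complement{E}\cap\complement{F})$, whereas $\homset{\dual{\simple(\Omega)}}{\characteristic(E)}{\characteristic(F)}\isomorphic\Integrable_{\infty}(E\cap F)$; taking $E=F$ an atom shows these differ, so there is no identity-on-objects equivalence $\simple(\dual{\Omega})\equivalent\dual{\simple(\Omega)}$, and in particular your claimed assignment $\characteristic_{\Omega}(E)\mapsto\characteristic_{\dual{\Omega}}(E)$ cannot be full and faithful. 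The equivalence of the statement must act by complementation on objects: you need to compose your self-duality with the complementation-induced equivalence $\simple(\Omega)\equivalent\simple(\dual{\Omega})$, which is exactly the separate ingredient the paper introduces right after this theorem (diagram \ref{diagram:ba-dual-kan-extension-equivalence}). With that correction your concrete argument does the job; as written, it proves the combined self-duality while conflating it with the ``Isbell half'' and resting on a descent claim that is false.
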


The final ingredient we need is that the complementation map $\icomplement\colon \Omega\to \opposite{\Omega}$ is a Boolean algebra isomorphism between $\Omega$ and its dual Boolean algebra $\opposite{\Omega}$. By taking the left Kan extension as in square \ref{diagram:ba-dual-kan-extension-equivalence} we obtain an equivalence $\PShv(\Omega)\equivalent \PCoShv(\Omega)$.
\begin{figure}[htbp]
    \begin{equation*}
    \xymatrix{
        \PShv(\Omega) \ar@{-->}[rr]^{\Lan_{\Yoneda_{\Omega}}\Yoneda_{\opposite{\Omega}}\icomplement} & & \PCoShv(\Omega) \\
        \Omega \ar[u]^{\Yoneda_{\Omega}} \ar[rr]_{\icomplement} & & \dual{\Omega} \ar[u]_{\Yoneda_{\opposite{\Omega}}}
    }
    \end{equation*}
\caption{The equivalence $\PShv(\Omega)\equivalent \PCoShv(\Omega)$.}
\label{diagram:ba-dual-kan-extension-equivalence}
\end{figure}

It is now easy to see that this equivalence descends to an equivalence
\begin{equation}\label{equivalence:simples-self-dual}
    \simple(\Omega)\equivalent \simple(\opposite{\Omega})
\end{equation}

Next, we describe the sheaves $\characteristic(E)$. Since $\characteristic(E)$ is the sheafification of $\Yoneda(E)$ and by definition we have the isomorphism\footnote{Recall our notational practice of identifying ordinary categories with their free Banach categories.},
\begin{equation*}
    \Omega(F, E)\isomorphic
    \begin{cases}
        \field & \text{if $F\subseteq E$,} \\
        \zero & \text{otherwise.}
    \end{cases}
\end{equation*}
the representable $\Yoneda(E)$ is isomorphic the sheafification\footnote{A word of caution is necessary here. By theorem \ref{theorem:sheafification-constant-presheaves} and the isometric isomorphism \eqref{isomorphism:sheafification-injective-tensor-product} the sheafification of the constant presheaf $E\mapto B$ is $E\mapto \Integrable_{\infty}(E)\injotimes B$. This Banach space is \emph{strictly smaller} than the Banach space $\Integrable_{\infty}(E, B)$ considered in subsection \ref{subsection:failure-radon-nikodym} if $B$ is infinite-dimensional, since the functions in $\Integrable_{\infty}(E)\injotimes B$ have \emph{compact range}. A typical example of a strongly measurable function not in $\Integrable_{\infty}(E)\injotimes B$ is built by picking a bounded sequence with no convergent subsequences. A more sophisticated one goes like this: pick a separable reflexive space $B$. Than its unit ball $\ball(B)$ is compact for the weak topology; take the corresponding Borel structure. Since the inclusion $\ball(B)\to B$ is weakly continuous, by the Pettis theorem \ref{theorem:pettis} it is strongly measurable. But of course, its range is very far from being compact.} of $E\mapto \field$ precomposed with the Boolean algebra map $F\mapto E\cap F$.

\begin{theorem} \label{theorem:sheafification-representables}
For each $E\in \Omega$, the sheaf $\characteristic(E)$ is isomorphic to
\begin{equation*}
    F\mapto \Integrable_{\infty}(E\cap F)
\end{equation*}
\end{theorem}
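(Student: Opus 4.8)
The statement unpacks the definition of $\characteristic(E)$ as the sheafification of the representable $\Yoneda(E)$, so the plan is to compute that sheafification explicitly and identify it with $F\mapto \Integrable_{\infty}(E\cap F)$. First I would record the two facts already in hand: by Yoneda, $\Yoneda(E)(F)=\homset{\Omega}{F}{E}$ is $\field$ if $F\subseteq E$ and $\zero$ otherwise, so that $\Yoneda(E)$ is the constant presheaf $G\mapto \field$ on the down-set of $E$ \emph{pulled back along the Boolean algebra morphism $F\mapto E\cap F$}; concretely $\Yoneda(E)\isomorphic (\text{const}_{\field})\composition (E\cap -)$ as presheaves on $\Omega$. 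The substance of the argument is then that sheafification for the finite topology commutes with this pullback along $F\mapto E\cap F$ (which is nothing but restriction of presheaves along the slice inclusion $\Omega\slice E\embeds\Omega$, an operation that preserves and reflects the finite covering sieves), reducing us to computing the sheafification of the constant presheaf $\field$ on $\Omega\slice E$.

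Second, I would invoke theorem \ref{theorem:sheafification-constant-presheaves} together with the isometric isomorphism \eqref{isomorphism:sheafification-injective-tensor-product}: the sheafification of the constant presheaf $U\mapto B$ on a Stone space is $U\mapto \CBounded(U,B)$, and for $B=\field$ and $U$ clopen this is $\Continuous(U)\isomorphic\Integrable_{\infty}(\clopen(U))$ by theorem \ref{theorem:stone-measurable-continuous-map}. Transporting through the finite-topology equivalence of theorem \ref{theorem:banach-sheaves-finite-topology}, the sheafification of the constant presheaf $\field$ on $\Omega$ is $G\mapto\Integrable_{\infty}(G)$ (the footnote attached to the theorem statement in fact already asserts exactly this). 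Composing with the pullback along $F\mapto E\cap F$ and using that $\Integrable_{\infty}$ is functorial in Boolean algebras (the universal property of theorem \ref{theorem:universal-property-Linfty-algebra} makes $\Omega\mapto\Integrable_{\infty}(\Omega)$ a functor, and $\clopen(\eta(E)\cap -)$ corresponds to the slice map) gives $\characteristic(E)\isomorphic\bigl(F\mapto\Integrable_{\infty}(E\cap F)\bigr)$, with the restriction maps on the right-hand side being $\Integrable_{\infty}$ applied to the inclusions $E\cap F'\subseteq E\cap F$.

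Third, I would verify the two loose ends: that $F\mapto\Integrable_{\infty}(E\cap F)$ genuinely \emph{is} a sheaf for the finite topology — this is immediate from theorem \ref{theorem:sheaf-condition} and the fact that for a finite partition $\family{F}{n}$ of $F$ the sets $E\cap F_{n}$ partition $E\cap F$, so $\Integrable_{\infty}(E\cap F)\isomorphic\prod_{n}\Integrable_{\infty}(E\cap F_{n})$ by $\sigma$-additivity (here finite additivity) of the construction $\Integrable_{\infty}$ on disjoint unions — and that the canonical comparison map $\Yoneda(E)\to\bigl(F\mapsto\Integrable_{\infty}(E\cap F)\bigr)$ exhibits the target as the sheafification, i.e.\ is initial among maps from $\Yoneda(E)$ to sheaves. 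The latter follows from the universal property once one knows the target is a sheaf, by the uniqueness clause in the definition of the reflector $r\colon\PShv(\Omega)\to\Shv(\Omega)$.

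\textbf{Main obstacle.} The one point requiring genuine care is the interchange ``sheafification commutes with pullback along $F\mapto E\cap F$.'' This is the statement that the functor $(E\cap -)^{\ast}\colon\PShv(\Omega)\to\PShv(\Omega\slice E)$ is a morphism of sites in the right direction — equivalently, that restriction along the essential geometric surjection $\Omega\slice E\to\Omega$ preserves the Grothendieck topology and hence descends to sheaf categories compatibly with the reflectors. For the finite topology on a Boolean algebra this is elementary (covering families are finite partitions, and a finite partition of $F$ pulls back to a finite partition of $E\cap F$, and conversely every finite partition of $E\cap F$ extends by $F\setminus E$ to one of $F$), but it should be stated cleanly since the whole identification hinges on it. Everything else is bookkeeping with the previously established universal properties.
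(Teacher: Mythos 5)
Your proposal is correct and follows essentially the same route as the paper: identify $\characteristic(E)$ as the sheafification of $\Yoneda(E)$, recognize $\Yoneda(E)$ as the constant presheaf $\field$ pulled back along $F\mapsto E\cap F$, and then combine theorems \ref{theorem:stone-measurable-continuous-map}, \ref{theorem:sheafification-constant-presheaves} and \ref{theorem:banach-sheaves-finite-topology}. The paper leaves the compatibility of sheafification with the restriction along $F\mapsto E\cap F$ and the finite-topology sheaf check implicit; your spelling these out is just a more careful rendering of the same argument.
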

\begin{proof}
Combine the preceding observations with theorems \ref{theorem:stone-measurable-continuous-map}, \ref{theorem:sheafification-constant-presheaves} and \ref{theorem:banach-sheaves-finite-topology}.
\end{proof}

The next step is the computation of morphism spaces $\homset{\Shv}{\xi}{\zeta}$ where $\xi$ and $\zeta$ are constant sheaves. This is perfectly doable, but the more important case is contained in the next theorem and its proof already contains the basic ingredients of the more general case.

\begin{theorem} \label{theorem:morphism-spaces-constant-line-sheaves}
If $E, F\in \Omega$ then the sheaf $\homset{\Shv}{\characteristic(E)}{\characteristic(F)}$ is isomorphic to
\begin{equation*}
    G\mapto \Integrable_{\infty}(E\cap F\cap G)
\end{equation*}
\end{theorem}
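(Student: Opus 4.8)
The plan is to compute the internal hom $\homset{\Shv}{\characteristic(E)}{\characteristic(F)}$ by reducing to the description of $\characteristic(E)$ and $\characteristic(F)$ furnished by theorem \ref{theorem:sheafification-representables}, and then to exploit the compactness machinery of the Stone space already used in theorem \ref{theorem:banach-sheaves-finite-topology}. First I would recall that the internal hom sheaf is, by definition of the closed structure on $\Shv(\Omega)$, the presheaf
\begin{equation*}
    G\mapto \homset{\Shv}{\characteristic(E)\restriction G}{\characteristic(F)\restriction G}
\end{equation*}
where $\restriction G$ denotes restriction along the slice $\Omega\slice G\to \Omega$; since everything in sight is built from representables via sheafification, it suffices to identify this value for $G= 1$ and then transport the identification along the Boolean algebra isomorphism $H\mapsto G\cap H$, exactly as in the proof of theorem \ref{theorem:sheafification-representables}. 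So the heart of the matter is the case $G= 1$: to show $\homset{\Shv}{\characteristic(E)}{\characteristic(F)}\isomorphic \Integrable_{\infty}(E\cap F)$.

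Next I would pass through the Stone space. By theorem \ref{theorem:banach-sheaves-finite-topology} the category $\Shv(\Omega)$ is equivalent to $\Shv(\Stone(\Omega))$, and under this equivalence $\characteristic(E)$ corresponds to the Banach sheaf $U\mapto \CBounded(U\cap \eta(E))$ on $X= \Stone(\Omega)$ — this is the content of the categorified Stone--Weierstrass remarks preceding definition \ref{definition:simple-sheaves-subcategory}, together with theorem \ref{theorem:sheafification-constant-presheaves} and the isometric isomorphism \eqref{isomorphism:sheafification-injective-tensor-product}. A morphism of Banach sheaves $\characteristic(E)\to \characteristic(F)$ is determined by its effect on global sections once one knows it is compatible with restrictions; since $\eta(E)$ and $\eta(F)$ are clopen, a section over $U$ of the target supported where it can be nonzero forces every such morphism to factor through restriction to $\eta(E)\cap \eta(F)= \eta(E\cap F)$. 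Concretely, a natural contractive family $\CBounded(U\cap \eta(E))\to \CBounded(U\cap \eta(F))$ is, by naturality and the local-to-global principle (sheaf on a base, \cite[chapter 2, section 1, theorem 3]{maclane-moerdijk:sheaves-geometry-logic1992}), the same as multiplication by a fixed element of $\CBounded(\eta(E\cap F))= \Continuous(\eta(E\cap F))$ — because a natural transformation between these module sheaves over the structure sheaf $U\mapsto \CBounded(U)$ is multiplication by a global section of the internal hom of the modules, which is again a constant-sheaf-type object on $\eta(E\cap F)$. Invoking theorem \ref{theorem:stone-measurable-continuous-map} to rewrite $\Continuous(\eta(E\cap F))\isomorphic \Integrable_{\infty}(E\cap F)$ finishes the $G= 1$ case, and the slice argument of the first paragraph then yields the stated presheaf $G\mapto \Integrable_{\infty}(E\cap F\cap G)$. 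One checks it is a sheaf for the finite topology directly (finite products are preserved, using corollary \ref{corollary:Linfty-continuous} and the obvious decomposition $E\cap F\cap G= \bigsqcup_{n}E\cap F\cap G_{n}$ for a finite partition $\sequence{G}{n}$ of $G$), or alternatively notes that internal homs of sheaves are automatically sheaves.

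The main obstacle I anticipate is making rigorous the claim that a natural transformation between the module sheaves $\characteristic(E)$ and $\characteristic(F)$ is exactly ``multiplication by a bounded continuous scalar function on the overlap'' — that is, pinning down the internal hom of two constant-type Banach sheaves and verifying the multiplicativity descends correctly. The cleanest route is probably to use the description of $\Shv(X)$ as internal Banach spaces in the topos $\Shv(X)$ of set-valued sheaves (theorem \ref{theorem:banach-sheaves-finite-topology} and the surrounding discussion, via \cite{burden-mulvey:banach-spaces-categories-sheaves1979}): internally, $\characteristic(E)$ and $\characteristic(F)$ become one-dimensional (where supported), so the internal space of bounded operators between them is one-dimensional over the overlap, i.e.~the internal scalar field restricted to $\eta(E\cap F)$, whose externalization is $\Continuous(\eta(E\cap F))$. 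A purely external argument avoiding topos-internal reasoning is also available but is more laborious, essentially re-deriving a vector-valued Stone--Weierstrass-type statement; either way the computation is routine once theorems \ref{theorem:stone-measurable-continuous-map}, \ref{theorem:sheafification-constant-presheaves} and \ref{theorem:banach-sheaves-finite-topology} are in hand.
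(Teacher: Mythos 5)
Your proposal is correct and opens the same way the paper does --- by transporting everything to $\Stone(\Omega)$ via theorem \ref{theorem:banach-sheaves-finite-topology} and theorem \ref{theorem:sheafification-representables}, so that $\characteristic(E)$ becomes $U\mapto \CBounded(U\cap \eta(E))$ --- but you justify the key step differently. The paper disposes of that step by citing ``a variation of the Banach--Stone theorem,'' i.e.\ the classical multiplier/weighted-composition characterization of operators between $\Continuous(K)$-spaces; you instead propose the Burden--Mulvey internal picture (internally the characteristic sheaves are the scalar field supported on clopens, so the internal hom is the scalar field on the overlap, externalizing to $\Continuous(\eta(E\cap F))\isomorphic \Integrable_{\infty}(E\cap F)$ by theorem \ref{theorem:stone-measurable-continuous-map}). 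That buys a conceptually cleaner, logic-flavoured argument at the price of leaning on the internal-Banach-space equivalence, whereas the paper's route stays inside classical functional analysis. Two small repairs: (i) your intermediate assertion that a sheaf morphism is automatically an $\CBounded$-module morphism is exactly the point at issue and is not given for free, but you do not need it --- since $\eta(E)$ is clopen, the Banach-space stalks of $\characteristic(E)$ are $\field$ at points of $\eta(E)$ (the colimit norm of a germ collapses to the value at the point) and $\zero$ elsewhere, so naturality alone forces any sheaf morphism to act stalkwise by scalars $\lambda_{x}$, and $\lambda=\tau(\characteristic(\eta(E)))$ exhibits it as multiplication by a bounded continuous function on $\eta(E\cap F)$; this is precisely the ``variation of Banach--Stone'' the paper has in mind. (ii) The map $H\mapto G\cap H$ is not a Boolean algebra isomorphism of $\Omega$ but a retraction onto the principal ideal $\ideal(G)$; the reduction to the case $G=1$ should be phrased as applying the global computation to the Boolean algebra $\ideal(G)$ with top element $G$, which is what your slice argument amounts to and gives $G\mapto \Integrable_{\infty}(E\cap F\cap G)$ as claimed.
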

\begin{proof}
The proof proceeds by passing to the Stone space $\Stone(\Omega)$ and then using (a variation of) the Banach-Stone theorem (see \cite[chapter 6, section 2]{conway:course-functional-analysis1990}).
\end{proof}

Theorems \ref{theorem:sheafification-representables} and \ref{theorem:morphism-spaces-constant-line-sheaves} give us a fairly accurate picture of $\simple(\Omega)$ as little more than a categorial disguise of the Banach algebra $\Integrable_{\infty}(\Omega)$. The morphism spaces $\homset{\simple}{E}{F}$ are the sheaves $\Integrable_{\infty}(E\cap F)$ and composition is given by multiplication in the Banach algebra. Note that theorem \ref{theorem:morphism-spaces-constant-line-sheaves} implies the symmetry of $\homset{\simple}{E}{F}$ in its arguments.

\smallskip
Theorem \ref{theorem:sheaves-presheaves-simples} tells us that $\mu\colon \Omega\to \mathcal{A}$ is a cosheaf iff it factors through $\characteristic\colon \Omega\to \simple(\Omega)$. This factorization has a striking description in terms of integration against a spectral measure -- this will be described in subsection \ref{subsection:spectral-measure-cosheaf}. For now, we just note that theorem \ref{theorem:sheaves-presheaves-simples} lands us in a situation analogous to that observed at the end of subsection \ref{subsection:free-banach-2spaces-sets} with sets $X$ replaced by the Banach categories $\simple(\Omega)$. In particular, we are able to prove a Fubini theorem, characterize the cocontinuous functors
\begin{equation*}
    \Integrable(\Omega)\to \Integrable(\Sigma)
\end{equation*}
either as distributors $\opposite{\simple(\Omega)}\otimes \simple(\Sigma)\to \Ban$ or as cosheaves on $\Omega\otimes \Sigma$, etc. It also allows us to explain why D.~Yetter's definition of measurable functors in \cite{yetter:measurable-categories2005} as a certain measurable bundle over the product $X\times Y$ is essentially correct. This is done in full in \cite{rodrigues:categorified-measure-theory}, but one last theorem is worth mentioning. Recall that a category is \emph{Cauchy complete} if every idempotent splits.

\begin{theorem} \label{theorem:simple-cauchy-complete}
The category $\simple(\Omega)$ is Cauchy complete.
\end{theorem}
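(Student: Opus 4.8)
The plan is to exhibit $\simple(\Omega)$ as a retract (in an appropriate sense) of a category we already know to be Cauchy complete, and then invoke the standard fact that a retract of a Cauchy complete $\Banc$-enriched category is Cauchy complete. The obvious candidate ambient category is $\Shv(\Omega)$: by Theorem~\ref{theorem:sheaves-presheaves-simples}, $\Shv(\Omega)\equivalent \CoShv(\Omega,\Ban)$ and, being a reflective subcategory of the presheaf category $\PShv(\Omega)$, it is a complete Banach $2$-space. A complete (in fact merely cocomplete, or even merely idempotent-complete) Banach category is Cauchy complete, since idempotents split via their images, which are computed as (co)equalizers. So $\Shv(\Omega)$ is Cauchy complete, and $\simple(\Omega)$ sits inside it as the full subcategory on the objects $\characteristic(E)$, $E\in\Omega$. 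The task therefore reduces to showing: if $e\colon \characteristic(E)\to \characteristic(E)$ is an idempotent in $\simple(\Omega)$, the object $F$ through which it splits in $\Shv(\Omega)$ is again (isomorphic to) some $\characteristic(E')$.

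First I would pin down the idempotents concretely using Theorem~\ref{theorem:morphism-spaces-constant-line-sheaves}: the endomorphism \emph{algebra} $\homset{\Shv}{\characteristic(E)}{\characteristic(E)}$ evaluated globally is $\Integrable_{\infty}(E)$, with composition given by multiplication in this commutative Banach algebra. An idempotent endomorphism $e$ is thus an idempotent element of $\Integrable_{\infty}(E)$, and by Theorem~\ref{theorem:stone-measurable-continuous-map} (the isomorphism $\Integrable_{\infty}(E)\isomorphic \Continuous(\Stone(E))$, where $\Stone(E)=\eta(E)$ is the clopen of $\Stone(\Omega)$ cut out by $E$) the idempotents of $\Integrable_{\infty}(E)$ are exactly the characteristic functions $\characteristic(U)$ of clopen subsets $U\subseteq \eta(E)$. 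By Stone duality each such clopen is $\eta(E')$ for a unique $E'\in\Omega$ with $E'\subseteq E$. So $e=\characteristic(E')$ for $E'\le E$.

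Next I would compute the splitting of this particular idempotent. The claim is that $e=\characteristic(E')\colon \characteristic(E)\to \characteristic(E)$ (viewing $\characteristic(E')$ as multiplication by that idempotent) splits through $\characteristic(E')$, with the two factors being the images under $\characteristic$ of the inclusion $E'\hookrightarrow E$ and of the partial identity $E\to E'$ (the arrow of $\simple(\Omega)$ corresponding to $\characteristic(E')\in \Integrable_{\infty}(E\cap E')=\Integrable_{\infty}(E')$). That the composite one way is the identity of $\characteristic(E')$ and the other way is $e$ is then a one-line computation in the algebras $\Integrable_{\infty}(E')$ and $\Integrable_{\infty}(E)$, using functoriality of $\characteristic$ and the multiplicativity of composition from Theorem~\ref{theorem:morphism-spaces-constant-line-sheaves}. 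Since splittings of idempotents are unique up to canonical isomorphism, this shows the $\Shv(\Omega)$-splitting of $e$ lands in $\simple(\Omega)$. An alternative, perhaps cleaner, packaging avoids ever leaving $\simple(\Omega)$: one checks directly that the candidate cone above \emph{is} an idempotent splitting inside $\simple(\Omega)$, so Cauchy completeness holds without reference to $\Shv(\Omega)$ at all; but having $\Shv(\Omega)$ available makes the existence half free and reduces the work to the identification of the splitting object.

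The main obstacle is the correct handling of the case where the idempotent acts not on a single $\characteristic(E)$ but more generally --- i.e.\ making sure ``every idempotent in $\simple(\Omega)$ splits in $\simple(\Omega)$'' is genuinely reduced to the endomorphism case, which it is since idempotents are by definition endomorphisms, so this is a non-issue; the real subtlety is instead the passage through the Stone space and the Banach--Stone-type identification of idempotent elements of $\Integrable_{\infty}(E)$ with clopens, i.e.\ making sure that \emph{norm-one} idempotents (and we are in the isometric category $\Banc$) correspond exactly to characteristic functions of clopens and not to some larger class --- this is where the totally disconnectedness of $\Stone(\Omega)$ and the $C^{\ast}$-algebra structure of $\Integrable_{\infty}(\Omega)$ (footnoted after Theorem~\ref{theorem:universal-property-Linfty-algebra}) do the essential work. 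Once that identification is secure, everything else is bookkeeping with coends and the already-established equivalences.
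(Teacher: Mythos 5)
Your proof is correct and takes essentially the same route as the paper's one-line proof: pass to the Stone space, identify the endomorphisms of $\characteristic(E)$ with the commutative algebra $\Integrable_{\infty}(E)\isomorphic \Continuous(\eta(E))$, note that its idempotents are exactly the characteristic functions of clopens $\eta(E')$ with $E'\subseteq E$, and split them through $\characteristic(E')$ (equivalently, the retract is $\Integrable_{\infty}(E')$). If anything, your write-up is a bit more careful than the paper's sketch, since you only use that the retracts cut out by multiplication idempotents are of the form $\Integrable_{\infty}(E')$, rather than the stronger blanket claim that every complemented subspace of $\Integrable_{\infty}(\Omega)$ has this form.
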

\begin{proof}
Once again this is provable by passage to the Stone space $\Stone(\Omega)$.  Retracts of Banach spaces are complemented subspaces and complemented subspaces of $\Integrable_{\infty}(\Omega)$ are of the form $\Integrable_{\infty}(E)$ for $E\subseteq \Omega$.
\end{proof}

Cauchy completeness is just one strand in a web of connections involving distributors, Morita equivalence and Tannaka-Krein reconstruction via the center of categories\footnote{The \emph{center} of a category $\mathcal{A}$ is the commutative monoid of natural transformations $\id_{\mathcal{A}}\to \id_{\mathcal{A}}$. Since $\characteristic$ is dense, the center of $\Shv(\Omega)$ is isomorphic to the center of $\simple(\Omega)$ and by theorem \ref{theorem:morphism-spaces-constant-line-sheaves}, isomorphic to $\Integrable_{\infty}(\Omega)$.}. We refer the reader to \cite[chapter 5, section 8]{kelly:enriched-category-theory2005}, \cite[chapter 7, sections 8 and 9]{borceux:handbook-categorical-algebra11994} and above all, to \cite{lindner:morita-equivalences-enriched-categories1974} that contains many important results in the context of enriched categories that are directly useful for categorified measure theory. For Tannaka-Krein reconstruction, see \cite{joyal-street:introduction-tannaka-duality-quantum-groups1991}.

\subsection{The cosheafification functor}
\label{subsection:cosheafification-functor}

It is easy to see that $\CoShv(\Omega, \mathcal{A})$ is a Banach $2$-space with colimits computed pointwise, that is, the inclusion
\begin{equation} \label{map:cosheaf-inclusion-precosheaf}
    i\colon \CoShv(\Omega, \mathcal{A})\to \power{\mathcal{A}}{\Omega}
\end{equation}
is cocontinuous. In the case where $\mathcal{A}$ is the base category $\Ban$, $\CoShv(\Omega)$ has a small dense subcategory and by the adjoint functor theorem the inclusion has a right adjoint called the \emph{cosheafification} functor. In this section, we construct the cosheafification more directly, by taking appropriate limits of Banach spaces\footnote{On the assumption that $\mathcal{A}$ is complete, the limit construction of the cosheafification also yields a right adjoint to \eqref{map:cosheaf-inclusion-precosheaf}.}.

\smallskip
Recall that the \emph{variation} of an additive map $\mu$ is given by the (possibly infinite) quantity
\begin{equation} \label{equation:variation-additive-mapping}
    \variation{\mu}(E)= \sup\set{\sum_{F\in\mathcal{E}}\norm{\nu(F)}\colon \mathcal{E}\in \partitions(E)}
\end{equation}

The cosheafification functor can be seen a categorification of \eqref{equation:variation-additive-mapping}. More precisely, for every precosheaf $\mu$, define $\cosheafification{\mu}(E)$ as the limit of the functor $\partitions(E)\to \mathcal{A}$, that is
\begin{equation} \label{map:cosheafification}
    E\mapto \Lim_{\mathcal{E}\in \partitions(E)}\sum_{F\in \mathcal{E}}\mu(F)
\end{equation}

In other words, an element $\theta\in \cosheafification{\mu}(E)$ is a \emph{cogerm}, or a compatible choice of elements $\theta_{\mathcal{E}}\in \sum_{F\in \mathcal{E}}\mu(F)$.

\begin{theorem} \label{theorem:cosheafification}
The functor \eqref{map:cosheafification} is a right adjoint to the inclusion functor.
\end{theorem}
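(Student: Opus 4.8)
The plan is to verify the adjunction $i \dashv \cosheafification{(-)}$ directly from the pointwise limit formula \eqref{map:cosheafification}, by exhibiting a natural isomorphism
\begin{equation*}
    \homset{\power{\mathcal{A}}{\Omega}}{i\mu}{\nu}\isomorphic \homset{\CoShv(\Omega, \mathcal{A})}{\mu}{\cosheafification{\nu}}
\end{equation*}
for every cosheaf $\mu$ and every precosheaf $\nu$. First I would check that $\cosheafification{\nu}$ is actually a cosheaf: for a partition $\mathcal{E}$ of $E$, one must show the canonical map $\sum_{F\in\mathcal{E}}\cosheafification{\nu}(F)\to \cosheafification{\nu}(E)$ is an isometric isomorphism, equivalently that the cone $(\cosheafification{\nu}(F)\to\cosheafification{\nu}(E))_{F\in\mathcal{E}}$ is a coproduct. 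The point is that $\partitions(E)$ is filtered (as noted in subsection \ref{subsection:banach-algebra-linfty}), and a partition of $E$ refining $\mathcal{E}$ is the same thing as a choice of partition of each $F\in\mathcal{E}$; this lets one interchange the limit over $\partitions(E)$ with the finite coproduct $\sum_{F\in\mathcal{E}}$ and recognize $\cosheafification{\nu}(E)$ as $\sum_{F\in\mathcal{E}}\Lim_{\mathcal{F}\in\partitions(F)}\sum_{G\in\mathcal{F}}\nu(G)$. Here one uses that in $\Banc$ finite products coincide with finite coproducts and that, while filtered colimits do not commute with all finite limits, the relevant compatibility holds because we are only interchanging a \emph{limit} with a finite \emph{coproduct = product} — and limits commute with limits. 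Functoriality of $\cosheafification{(-)}$ in the precosheaf argument is routine from the universal property of the limit.

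Next I would construct the unit and counit. The counit $\varepsilon_{\nu}\colon i\cosheafification{\nu}\to \nu$ at $E$ is the projection $\Lim_{\mathcal{E}\in\partitions(E)}\sum_{F\in\mathcal{E}}\nu(F)\to \sum_{F\in\{E\}}\nu(F)= \nu(E)$ coming from the trivial partition $\{E\}\in\partitions(E)$, which is the terminal object is not, but is a legitimate index; naturality in $E$ follows because the transition maps of the $\partitions$-diagram are exactly the cosheaf-comparison maps. The unit $\eta_{\mu}\colon \mu\to i\cosheafification{i\mu}$ at $E$ sends $\mu(E)$ into $\Lim_{\mathcal{E}\in\partitions(E)}\sum_{F\in\mathcal{E}}\mu(F)$ via the cone whose $\mathcal{E}$-component is the inverse of the isometric isomorphism $\varepsilon_{\mathcal{E}}\colon \sum_{F\in\mathcal{E}}\mu(F)\isomorphic \mu(E)$ of definition \ref{definition:cosheaf} — this is precisely where the hypothesis that $\mu$ \emph{is} a cosheaf is used, and it is what makes $\eta_{\mu}$ an isomorphism, so that the adjunction restricts to the claimed (co)reflection. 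Then the two triangle identities reduce, after unwinding, to the statement that $\varepsilon_{\{E\}}\circ \varepsilon_{\mathcal{E}}^{-1} = \id$ composed appropriately, which is immediate from the cone compatibility.

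Equivalently, and perhaps more cleanly, I would bypass the explicit (co)unit bookkeeping and argue representably: a precosheaf map $f\colon i\mu\to\nu$ is a compatible family $f_E\colon \mu(E)\to\nu(E)$; since $\mu$ is a cosheaf, $\mu(E)\isomorphic \sum_{F\in\mathcal{E}}\mu(F)$ for each $\mathcal{E}\in\partitions(E)$, and so $f_E$ determines and is determined by a compatible family of maps $\mu(E)\to \sum_{F\in\mathcal{E}}\nu(F)$ indexed by $\partitions(E)$ — that is, by a single map $\mu(E)\to \cosheafification{\nu}(E)$, the maps being compatible as $\mathcal{E}$ varies precisely because the $f_F$ are compatible under the precosheaf structure of $\nu$. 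Checking that this bijection is isometric uses the sheaf-condition norm identity (uniqueness of patching, as in the conditions following theorem \ref{theorem:L2-countably-additive}) on the cosheaf $\mu$, and that it is natural in both variables is a diagram chase. I expect the main obstacle to be the interchange argument in the first paragraph: one must be careful that the filtered limit $\Lim_{\partitions(E)}$ genuinely commutes with the finite biproduct $\sum_{F\in\mathcal{E}}$ in $\Banc$ with \emph{isometric} comparison maps, and that refinements of partitions of $E$ are correctly identified with tuples of partitions of the blocks — a cofinality statement about the poset $\partitions(E)$ that needs a short but honest verification. Everything else is formal enriched-categorical manipulation of the kind already deployed throughout section \ref{section:banach-2spaces}.
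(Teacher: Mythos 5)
Your route is genuinely different from the paper's. The paper dispatches this theorem in two lines: it declares the formula to be the dual of the fact that $E\mapsto \Colim_{\mathcal{E}\in \partitions(E)}\prod_{F\in \mathcal{E}}\mu(F)$ is the sheafification and refers to \cite[section 10]{rezk:fibrations-homotopy-colimits-simplicial-sheaves1998} for details, whereas you verify the adjunction directly, either by unit/counit (the counit being the projection to the component of the trivial partition $\set{E}$, the unit being the cone of the inverses $\inverse{\varepsilon_{\mathcal{E}}}$, which is exactly where cosheaf-ness of $\mu$ enters) or by the equivalent hom-set bijection. What your approach buys is a self-contained argument that makes visible precisely where each hypothesis is used; what the paper's buys is brevity and the explicit parallel with sheafification. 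Both hinge on the same nontrivial point: that $\cosheafification{\nu}$ as defined by \eqref{map:cosheafification} is actually a cosheaf, so that the right adjoint lands in $\CoShv(\Omega,\mathcal{A})$ at all.

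It is exactly there that your justification, as written, is wrong. You reduce cosheaf-ness of $\cosheafification{\nu}$ to interchanging the limit over $\partitions(E)$ with the finite coproduct $\sum_{F\in\mathcal{E}}$, and you justify the interchange by saying that in $\Banc$ finite products coincide with finite coproducts, so that one is merely commuting limits with limits. That is false in $\Banc$, and the paper insists on it in section \ref{section:from-hilbert-banach-spaces}: the finite coproduct carries the sum norm, the finite product the supremum norm, and they are not isometrically isomorphic; since definition \ref{definition:cosheaf} demands that $\varepsilon_{\mathcal{E}}$ be an \emph{isometric} isomorphism, the merely topological biproduct identification available in $\Ban$ does not suffice. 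The interchange you need is nonetheless true for $\mathcal{A}= \Ban$, but it has to be checked by hand: your cofinality observation is correct (partitions refining $\mathcal{E}$ form a cofinal subposet isomorphic to $\prod_{F\in\mathcal{E}}\partitions(F)$), and then one computes both sides as spaces of bounded compatible families, the isometry coming down to the fact that the supremum over a product of directed sets of a sum of terms each depending on one coordinate is the sum of the suprema. This is precisely the decategorified computation carried out in the proof of theorem \ref{theorem:cosheaf-bounded-variation} (the equality $\variation{\nu}(E)= \sum_{n}\variation{\nu_{n}}(E_{n})$), and it generalizes verbatim. Note also that this verification is specific to limits of Banach spaces; for a general codomain $\mathcal{A}$ the commutation of codirected limits with finite coproducts is not automatic, which is consistent with the paper's footnote requiring completeness of $\mathcal{A}$. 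Finally, the sentence describing the counit (``which is the terminal object is not'') is garbled; what you mean is that $\set{E}$ is the coarsest partition, so the limit has a projection onto its component, and that projection is the counit.
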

\begin{proof}
The fact that \eqref{map:cosheafification} indeed gives the cosheafification functor, is a straightforward dualization of the fact that
\begin{equation*}
    E\mapto \Colim_{\mathcal{E}\in \partitions(E)}\prod_{F\in \mathcal{E}}\mu(F)
\end{equation*}
yields the sheafification functor. For the latter, the details can be glimpsed from the already cited \cite[section 10]{rezk:fibrations-homotopy-colimits-simplicial-sheaves1998}.
\end{proof}

The universal property of the cosheafification $\cosheafification{\mu}$ is depicted in diagram \ref{diagram:universal-property-cosheafification}. Equivalently, we have the natural isomorphism,
\begin{equation} \label{adjunction:cosheafification-inclusion}
    \homset{\PCoShv}{i(\nu)}{\mu}\isomorphic \homset{\CoShv}{\nu}{\cosheafification{\mu}}
\end{equation}

\begin{figure}[htbp]
    \begin{equation*}
    \xymatrix{
            & \cosheafification{\mu} \ar[d]^{\varepsilon} \\
        \nu \ar[r]_{\tau} \ar@{-->}[ur] & \mu
    }
    \end{equation*}
\caption{Universal property of the cosheafification.}
\label{diagram:universal-property-cosheafification}
\end{figure}

Since $\Omega$ has a top element, also denoted by $\Omega$, it follows that the colimit functor
\begin{equation*}
    \Colim_{E\in \Omega}\mu(E)\isomorphic \mu(\Omega)
\end{equation*}
is just the evaluation of $\mu$ at $\Omega$. By definition of colimit, this yields the adjunction,
\begin{equation} \label{adjunction:evaluation-constant-presheaf}
    \homset{\Ban}{\mu(\Omega)}{B}\isomorphic \homset{\PCoShv}{\mu}{\diagonal B}
\end{equation}
where $\diagonal$ is the diagonal functor associating to each Banach space $B$ the corresponding constant precosheaf $E\mapto B$. Composing the adjunctions \eqref{adjunction:evaluation-constant-presheaf} and \eqref{adjunction:cosheafification-inclusion} we obtain the important result that evaluation is left adjoint to the constant cosheaf:
\begin{equation} \label{adjunction:evaluation-constant-cosheaf}
    \homset{\Ban}{\mu(\Omega)}{B}\isomorphic \homset{\CoShv}{\mu}{\cosheafification{B}}
\end{equation}

If $\mu$ is the cosheaf and we take the cosheaf
\begin{equation*}
    E\mapto \parens{\int\xi\differential{\mu}}E\defequal \int_{E}\xi\differential{\mu}\isomorphic \int_{\Omega}\characteristic(E)\otimes \xi\differential{\mu}
\end{equation*}
given by the indefinite integral of a sheaf, then its evaluation at $\Omega$ is simply its total direct integral and adjunction \eqref{adjunction:evaluation-constant-cosheaf} reads as:
\begin{equation} \label{adjunction:direct-integral-constant-cosheaf}
    \homset{\Ban}{\int_{\Omega}\xi\differential{\mu}}{B}\isomorphic \homset{\CoShv}{\int\xi\differential{\mu}}{\cosheafification{B}}
\end{equation}

Given these adjunctions and the fact that for a measure $\mu$ and a constant sheaf $\characteristic(E)$, $\int_{\Omega}\xi\differential{\mu}$ is the space $\Integrable_{1}(E, \mu)$, it is important to identify the class of constant cosheaves. This is done in the next subsection.

\subsection{The spectral measure of a cosheaf}
\label{subsection:spectral-measure-cosheaf}

In this subsection, we describe the fundamental construction of the \emph{spectral measure of a cosheaf}. The construction depends strongly on the fact that Banach $2$-spaces as we have defined them have linear spaces of morphisms and the arguments with direct sums in additive categories go through to some extent. We will restrict ourselves to finitely additive precosheaves, that is, cosheaves for the finite Grothendieck topology. All the essential ideas are already contained in this case including the description of the factorization $\simple(\Omega)\to \mathcal{A}$ as the integral against the spectral measure. The $\sigma$-additive case needs more analysis and measure theory and will be treated in the forthcoming \cite{rodrigues:categorified-measure-theory}.

\smallskip
Let $\mu$ be a cosheaf $\Omega\to \mathcal{A}$ and $F\subseteq E$ an element of $\Omega$. Since $\set{F, E\relcomplement F}$ is a partition of $E$, the cone $\parens{\mu_{F, E}, \mu_{E\relcomplement F, E}}$ is a coproduct. We can now define the \emph{cosheaf projections} as the unique arrows filling diagram \ref{diagram:projection-cosheaf}.
\begin{figure}[htbp]
    \begin{equation*}
    \xymatrix{
          & \mu(E\relcomplement F) & \\
        \mu(F) \ar[r]_{\mu_{F, E}} \ar[ur]^{0} \ar[dr]_{\id_{\mu(F)}} & \mu(E) \ar@{-->}[u]_-{p_{E, E\relcomplement F}} \ar@{-->}[d]_{p_{E, F}} & \mu(E\relcomplement F) \ar[l]^{\mu_{E\relcomplement F , E}} \ar[dl]^{0} \ar[ul]_{\id_{\mu(E\relcomplement F)}} \\
          & \mu(F) &
    }
    \end{equation*}
\caption{Cosheaf projections.}
\label{diagram:projection-cosheaf}
\end{figure}

It can be proved that $p_{E, F}$ is a presheaf on $\Omega$. In fact, putting (note the capitalization),
\begin{equation*}
    P_{E}\defequal \mu_{E, 1}p_{1, E}
\end{equation*}
we have:

\begin{theorem} \label{theorem:cosheaf-associated-spectral-measure}
The map $E\mapto P_{E}$ is a spectral measure on $\mu(\Omega)$. The induced Banach algebra morphism,
\begin{equation*}
    \Integrable_{\infty}(\Omega)\to \homset{\Ban}{\mu(\Omega)}{\mu(\Omega)}
\end{equation*}
provided by theorem \ref{theorem:universal-property-Linfty-algebra} is an isometry.
\end{theorem}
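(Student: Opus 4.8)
The plan is to verify, in order, three things: that each $P_{E}$ is a contractive idempotent endomorphism of $\mu(\Omega)$; that the family $E\mapsto P_{E}$ is finitely additive and multiplicative, hence a bounded spectral measure on $\mu(\Omega)$; and finally that the Banach algebra morphism it induces via Theorem~\ref{theorem:universal-property-Linfty-algebra} is an isometry.

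For the first point, recall that for $F\subseteq E$ the cone $(\mu_{F,E},\mu_{E\relcomplement F,E})$ exhibits $\mu(E)$ as the coproduct $\mu(F)\oplus\mu(E\relcomplement F)$. Coproducts $\sum_{i}a_{i}$ in a Banach $2$-space behave like $\ell_{1}$-sums: their universal isomorphism $\homset{\mathcal{A}}{\sum_{i}a_{i}}{b}\isomorphic\prod_{i}\homset{\mathcal{A}}{a_{i}}{b}$, with the right side sup-normed, is an \emph{isometry}, from which one reads off that each injection is an isometric embedding admitting a contractive retraction. Hence $\mu_{F,E}$ is an isometric embedding and $p_{E,F}$ — the map induced by the contractive cone $(\id_{\mu(F)},0)$ — is a contraction with $p_{E,F}\mu_{F,E}=\id$. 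In particular $p_{1,E}$ is a contractive retraction of $\mu_{E,1}$, so $\norm{P_{E}}\le 1$ and $P_{E}^{2}=\mu_{E,1}(p_{1,E}\mu_{E,1})p_{1,E}=P_{E}$.

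For the second point, the workhorse is the identity $p_{G,F}=p_{E,F}\,p_{G,E}$ for $F\subseteq E\subseteq G$, which I would establish by testing both sides against the injections of the coproduct decomposition of $\mu(G)$ coming from a partition refining $\{F,E\relcomplement F,G\relcomplement E\}$ and appealing to uniqueness; the companion identity $p_{1,E}\,\mu_{F,1}=\mu_{E\cap F,E}\,p_{F,E\cap F}$ is proved the same way. Granting these, $P_{0}=0$ and $P_{1}=\id$ are immediate, disjoint additivity $P_{E\cup F}=P_{E}+P_{F}$ follows from $\mu_{E,E\cup F}p_{E\cup F,E}+\mu_{F,E\cup F}p_{E\cup F,F}=\id_{\mu(E\cup F)}$ (uniqueness for the coproduct $\mu(E\cup F)=\mu(E)\oplus\mu(F)$) together with $p_{1,E}=p_{E\cup F,E}p_{1,E\cup F}$, and multiplicativity drops out by substitution and the composition identity, $P_{E}P_{F}=\mu_{E,1}p_{1,E}\mu_{F,1}p_{1,F}=\mu_{E\cap F,1}p_{F,E\cap F}p_{1,F}=\mu_{E\cap F,1}p_{1,E\cap F}=P_{E\cap F}$. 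Thus $E\mapsto P_{E}$ is a bounded, spectral, finitely additive measure $\Omega\to\homset{\Ban}{\mu(\Omega)}{\mu(\Omega)}$, and Theorem~\ref{theorem:universal-property-Linfty-algebra} furnishes a contractive Banach algebra morphism $\Phi$ with $\Phi(\characteristic(E))=P_{E}$.

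For the last point, by density of simple elements and continuity of $\Phi$ it suffices to compute $\norm{\Phi(f)}$ for $f=\sum_{n}k_{n}\characteristic(E_{n})$ in the canonical form of Lemma~\ref{lemma:canonical-representation-simples}, with $\sequence{E}{n}$ a finite partition. Finite additivity of $\mu$ gives an isometric isomorphism $\mu(\Omega)\isomorphic\bigoplus_{n}\mu(E_{n})$ under which $P_{E_{n}}$ is the idempotent onto the $n$-th summand, so $\Phi(f)=\sum_{n}k_{n}P_{E_{n}}$ becomes the diagonal operator $\bigoplus_{n}k_{n}\id_{\mu(E_{n})}$ on an $\ell_{1}$-type coproduct, whose norm is $\sup\set{\abs{k_{n}}\colon\mu(E_{n})\neq\zero}$. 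This agrees with $\supnorm{f}=\max_{n}\abs{k_{n}}$ exactly when the coefficient of largest modulus sits on a nonzero summand, which is where a hypothesis is silently needed: one must take the cosheaf $\mu$ to be \emph{non-degenerate}, $\mu(E)=\zero\iff E=0$, or equivalently replace $\Omega$ by its quotient by the ideal $\set{E\colon\mu(E)=\zero}$ (which alters neither $\mu(\Omega)$ nor the values of $\mu$ on the quotient), exactly as one passes to a non-degenerate measure algebra in the scalar theory. I expect the two genuine obstacles to be precisely these: organizing the coproduct diagram chases of the second step so that the compatibility identities come out cleanly, and recognizing in the third step that the isometry claim tacitly rests on non-degeneracy and that it is the $\ell_{1}$-character of colimits in a Banach $2$-space that collapses the operator norm to the supremum norm.
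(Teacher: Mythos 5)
Your proposal is correct and follows essentially the route the paper itself indicates (its proof is only the remark that this is ``a series of routine diagram manipulations with coproducts in Banach $2$-spaces''): the isometric $\ell_{1}$/sup-norm universal property of $\Banc$-enriched coproducts gives the contractive retractions $p_{E,F}$, the idempotency, additivity and multiplicativity of $E\mapto P_{E}$, and then collapses the norm of $\sum_{n}k_{n}P_{E_{n}}$ to $\max\abs{k_{n}}$ on simple elements, exactly as you argue. Your remark that the isometry claim tacitly requires the cosheaf to be non-degenerate (if $\mu(E)=\zero$ for some $E\neq 0$ then $\characteristic(E)$ is killed, so one must assume $\mu(E)=\zero\implies E=0$ or pass to the quotient by the null ideal) is a genuine and correct refinement of the statement, which the paper leaves implicit.
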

\begin{proof}
The proof is a series of routine diagram manipulations with coproducts in Banach $2$-spaces.
\end{proof}

Since $\mathcal{A}$ is cocomplete it has tensors, and the Banach algebra morphism induced by the spectral measure is equivalent to an arrow
\begin{equation} \label{map:spectral-action}
    \mu\colon \Integrable_{\infty}(\Omega)\otimes \mu(\Omega)\to \mu(\Omega)
\end{equation}
satisfying the axioms of a \emph{monoid action}, that is, we have the commutative diagrams \ref{diagram:unital-action} and \ref{diagram:associative-action} for the action map \eqref{map:spectral-action}, where the symbol $\cdot$ denotes the multiplication map in $\Integrable_{\infty}(\Omega)$.
\begin{figure}[htbp]
    \begin{equation*}
    \xymatrix{
        \field\otimes \mu(\Omega) \ar[r]^{\isomorphic} \ar[d]_{\id\otimes \id_{\mu(\Omega)}} & \mu(\Omega) \ar[d]^{\id_{\mu(\Omega)}} \\
        \Integrable_{\infty}(\Omega)\otimes \mu(\Omega) \ar[r]_-{\mu} & \mu(\Omega)
    }
    \end{equation*}
\caption{Unital law for the action $\mu\colon\Integrable_{\infty}(\Omega)\otimes \mu(\Omega)\to \mu(\Omega)$.}
\label{diagram:unital-action}
\end{figure}

\begin{figure}[htbp]
    \begin{equation*}
    \xymatrix{
        (\Integrable_{\infty}(\Omega)\projotimes
        \Integrable_{\infty}(\Omega))\otimes \mu(\Omega) \ar[rr]^{\isomorphic} \ar[d]_{\cdot\otimes 1_{\mu(\Omega)}} &  &
        \Integrable_{\infty}(\Omega)\otimes (\Integrable_{\infty}(\Omega)\otimes
        \mu(\Omega)) \ar[d]^{1_{\Integrable_{\infty}(\Omega)}\otimes \mu} \\
        \Integrable_{\infty}(\Omega)\otimes \mu(\Omega) \ar[dr]_{\mu} &  &
        \Integrable_{\infty}(\Omega)\otimes \mu(\Omega) \ar[dl]^{\mu} \\
          & \mu(\Omega) &
    }
    \end{equation*}
\caption{Associative law for the action $\mu\colon\Integrable_{\infty}(\Omega)\otimes \mu(\Omega)\to \mu(\Omega)$.}
\label{diagram:associative-action}
\end{figure}

Theorem \ref{theorem:cosheaf-associated-spectral-measure} places very strong constraints on the $\Integrable_{\infty}(\Omega)$-actions coming from cosheaves. This is not the place to provide a detailed exposition, suffice to say that the precise characterization of these actions is tied to a notion dual to that of \emph{local convexity}, a notion briefly mentioned in subsection \ref{subsection:banach-sheaves} in connection with the characterization of Banach sheaves as certain $\Continuous(X)$-Banach modules. It can be seen as a generalization to $\Continuous(X)$-Banach modules of the Banach lattice conditions involved in the Riesz-Kakutani duality. Riesz-Kakutani duality is discussed on any book on Banach lattices. Possible references are \cite{lacey:isometric-theory-classical-banach-spaces1974}, \cite[chapter 5]{fremlin:measure-theory32002}, \cite{luxemburg-zaanen:riesz-spacesI1971} and \cite{zaanen:riesz-spacesII1983}.

\smallskip
We need a more conceptual way to look at the spectral measure. By theorem \ref{theorem:cosheaf-associated-spectral-measure} it corresponds to an algebra morphism
\begin{equation} \label{map:spectral-measure-monoid-map}
    \Integrable_{\infty}(\Omega)\to \homset{\Ban}{\mu(\Omega)}{\mu(\Omega)}
\end{equation}

The sheaf $E\mapto \Integrable_{\infty}(E)$ is a monoid in $\Shv(\Omega)$. The presence of $\Omega$ on both the domain and the codomain of the spectral measure map, leads us to suspect that the right-hand side of \eqref{map:spectral-measure-monoid-map} can be made into a sheaf in such a way that \eqref{map:spectral-measure-monoid-map} is a monoid sheaf map. Let $\mu$ and $\nu$ be two cosheaves and $\tau\colon \mu\to \nu$ a cosheaf map. If $E\in \Omega$ and $\mathcal{E}$ is a partition of $E$ we have the commutative diagram \ref{diagram:induced-cosheaf-maps} for each $E_{n}\in \mathcal{E}$.
\begin{figure}[htbp]
    \begin{equation*}
    \xymatrix{
        \mu_{\mathcal{E}} \ar[rrr]^{\tau_{\mathcal{E}}} \ar[dr]_{\varepsilon_{\mathcal{E}}} &  &  & \nu_{\mathcal{E}} \ar[dl]^{\varepsilon_{\mathcal{E}}} \\
          & \mu(E) \ar[r]^{\tau_{E}} & \nu(E) & \\
        \mu(E_{n}) \ar[rrr]_{\tau_{E_{n}}} \ar[uu]^{i_{\mu(E_{n})}} \ar[ur]_{\mu_{E_{n}, E}} &  &  & \nu(E_{n}) \ar[uu]_{i_{\nu(E_{n})}} \ar[ul]^{\nu_{E_{n}, E}}
    }
    \end{equation*}
\caption{Induced cosheaf maps.}
\label{diagram:induced-cosheaf-maps}
\end{figure}

Since the maps $\varepsilon_{\mathcal{E}}$ are isomorphisms, it follows that,
\begin{equation*}
    \norm{\tau_{E}}= \max\set{\norm{\tau_{E_{n}}}\colon E_{n}\in \mathcal{E}}
\end{equation*}
and thus, the map $\tau_{E}\mapto \parens{\tau_{E_{n}}}$ is an isometric isomorphism. In other words, diagram \ref{diagram:induced-cosheaf-maps} provides us with projections,
\begin{equation*}
    \homset{\Ban}{\mu(E)}{\nu(E)}\to \homset{\Ban}{\mu(E_{n})}{\nu(E_{n})}
\end{equation*}
giving a sheaf $E\mapto \homset{\Ban}{\mu(E)}{\nu(E)}$. A more abstract way of seeing this is by using the end formula \eqref{isomorphism:end-formula-hom-functor-categories} and noting that
\begin{align*}
    \homset{\CoShv}{\mu}{\nu} &\isomorphic \homset{\PCoShv}{i\mu}{i\nu} \\
        &\isomorphic \int_{E\in \Omega}\Ban(i\mu(E), i\nu(E))
\end{align*}

By the preceding argument, this presheaf is actually a sheaf (for the appropriate Grothendieck topology) and since the composition map
\begin{equation*}
    \homset{\CoShv}{\mu}{\nu}\otimes \homset{\CoShv}{\nu}{\lambda}\to \homset{\CoShv}{\mu}{\lambda}
\end{equation*}
is a presheaf map, we have:

\begin{theorem} \label{theorem:cosheaves-enriched-sheaves}
Let $\mathcal{A}$ be a Banach $2$-space. The category $\CoShv(\Omega, \mathcal{A})$ is a $\Shv(\Omega)$-enriched category.
\end{theorem}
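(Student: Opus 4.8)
The plan is to upgrade the ordinary-category structure on $\CoShv(\Omega, \mathcal{A})$ to a $\Shv(\Omega)$-enriched structure by assembling, for each pair of cosheaves $\mu, \nu$, the hom-presheaf $E\mapsto \homset{\Ban}{\mu(E)}{\nu(E)}$ (or more precisely, for a general target $\mathcal{A}$, the internal-hom presheaf $E\mapsto \homset{\mathcal{A}}{\mu(E)}{\nu(E)}$ with $\mathcal{A}$ replaced by a suitable closed structure if one is available — for the base case $\mathcal{A} = \Ban$ this is literally the hom-space), verifying it is a sheaf for the finite topology, and then checking that composition and identities are sheaf maps satisfying the enriched-category axioms. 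Since $\Shv(\Omega)$ is a symmetric monoidal closed category (it is a reflective localization of the monoidal $\PShv(\Omega)$), it is a legitimate base for enrichment, so the task reduces to producing the hom-objects and the structure maps.

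First I would fix cosheaves $\mu, \nu\colon \Omega\to \mathcal{A}$ and define the candidate hom-presheaf by restriction: $\mathcal{H}(\mu,\nu)(E)$ is the Banach space of natural transformations between the restricted (pre)cosheaves $\restriction{\mu}{\Omega/E}$ and $\restriction{\nu}{\Omega/E}$, which by the end formula \eqref{isomorphism:end-formula-hom-functor-categories} is $\int_{F\subseteq E}\homset{\mathcal{A}}{\mu(F)}{\nu(F)}$, with the obvious restriction maps $\mathcal{H}(\mu,\nu)(E)\to \mathcal{H}(\mu,\nu)(E')$ for $E'\subseteq E$. The key step is the sheaf condition, and this is exactly the content of the computation already displayed in the run-up to the theorem: given a finite partition $\mathcal{E}$ of $E$, the cones $\parens{\mu_{E_n, E}}$ and $\parens{\nu_{E_n, E}}$ are coproducts (since $\mu, \nu$ are cosheaves), and diagram \ref{diagram:induced-cosheaf-maps} shows that a morphism $\tau_E\colon \mu(E)\to \nu(E)$ is determined by and assembled from its components $\tau_{E_n}$, with $\norm{\tau_E} = \max\set{\norm{\tau_{E_n}}\colon E_n\in\mathcal{E}}$. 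This is precisely the statement, via theorem \ref{theorem:sheaf-condition}, that the cone $\parens{p_{E, E_n}}$ exhibits $\mathcal{H}(\mu,\nu)(E)$ as a product $\prod_{E_n\in\mathcal{E}}\mathcal{H}(\mu,\nu)(E_n)$ — hence $\mathcal{H}(\mu,\nu)$ is a sheaf for the finite topology (and for the finer topologies when $\mathcal{A}$ has the requisite completeness, since the same argument applies to countable and arbitrary partitions).

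Next I would produce the enriched-category data. The identity is the sheaf map $\characteristic(\Omega)\to \mathcal{H}(\mu,\mu)$ — more invariantly, the unit of $\Shv(\Omega)$ maps to $\mathcal{H}(\mu,\mu)$ picking out the identity transformation over each $E$ — and composition is the sheaf map $\mathcal{H}(\mu,\nu)\otimes \mathcal{H}(\nu,\lambda)\to \mathcal{H}(\mu,\lambda)$ obtained by sheafifying the evident pointwise composition presheaf map $E\mapsto \parens{\homset{\mathcal{A}}{\mu(E)}{\nu(E)}\otimes \homset{\mathcal{A}}{\nu(E)}{\lambda(E)}\to \homset{\mathcal{A}}{\mu(E)}{\lambda(E)}}$; since both source and target are already sheaves, this presheaf map is automatically a morphism in $\Shv(\Omega)$ and no sheafification is needed. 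The associativity and unit coherence diagrams for the $\Shv(\Omega)$-enrichment then hold because they hold pointwise in $\mathcal{A}$ — each is an identity of natural transformations that can be checked section by section, where it reduces to the associativity and unit laws of composition in $\mathcal{A}$. Finally, one checks that the ordinary (underlying) category of this $\Shv(\Omega)$-enriched category recovers $\CoShv(\Omega,\mathcal{A})$ by evaluating hom-sheaves at the top element $\Omega$: $\mathcal{H}(\mu,\nu)(\Omega) = \homset{\CoShv}{\mu}{\nu}$, which follows from the end formula together with the isomorphism $\homset{\CoShv}{\mu}{\nu}\isomorphic \homset{\PCoShv}{i\mu}{i\nu}$ displayed just above the theorem.

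The main obstacle is the one delicate point lurking behind "the internal-hom presheaf": for a general Banach $2$-space $\mathcal{A}$ there need not be an internal hom $\homset{\mathcal{A}}{-}{-}$ landing in $\Ban$, so the hom-objects $\mathcal{H}(\mu,\nu)(E)$ live in $\Ban$ only because $\CoShv(\Omega,\mathcal{A})$ is itself $\Banc$-enriched (every Banach category is), and the end $\int_{F\subseteq E}\homset{\mathcal{A}}{\mu(F)}{\nu(F)}$ should be read as the $\Banc$-enriched hom of restricted functors, which exists as a Banach space since $\Omega/E$ is small. Once this is parsed correctly — i.e. we are enriching the $\Banc$-enriched category $\CoShv(\Omega,\mathcal{A})$ further over $\Shv(\Omega)$, compatibly with its $\Banc$-enrichment via the canonical $\Banc\to \Shv(\Omega)$ sending $B$ to the constant sheaf — everything else is the routine verification sketched above, and the substantive analytic input (that partitions give genuine product decompositions of hom-spaces, with the sup-norm formula) has already been extracted in diagram \ref{diagram:induced-cosheaf-maps}.
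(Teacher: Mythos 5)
Your proposal is correct and takes essentially the paper's own route: the hom-sheaf assembled from the partition/product decomposition with the max-norm formula of diagram \ref{diagram:induced-cosheaf-maps}, pointwise composition and identities, and recovery of $\CoShv(\Omega,\mathcal{A})$ by evaluating at the top element; moreover your reading of the sections over $E$ as the natural families $\int_{F\subseteq E}\homset{\mathcal{A}}{\mu(F)}{\nu(F)}$ (rather than bare operator spaces, which would admit off-diagonal blocks and violate the product condition) is the correct interpretation of the paper's $E\mapsto \homset{\Ban}{\mu(E)}{\nu(E)}$. One small wording fix: the pointwise tensor of two sheaves need not itself be a sheaf, so the composition morphism in $\Shv(\Omega)$ is the one induced on the sheafified tensor via the universal property of sheafification (the target being a sheaf), not a map for which ``no sheafification is needed.''
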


Since $\Integrable_{\infty}$ is the sheafification of the constant presheaf $E\mapto \field$, it follows that $\Integrable_{\infty}$ is initial in the category of monoids in $\Shv(\Omega)$. By theorem \ref{theorem:cosheaves-enriched-sheaves}, for each cosheaf $\mu$, the sheaf $\homset{\CoShv}{\mu}{\mu}$ is a monoid in $\Shv(\Omega)$ and thus there is a unique monoid map,
\begin{equation*}
    \Integrable_{\infty}\to \homset{\CoShv}{\mu}{\mu}
\end{equation*}
which of course, is just the spectral measure map of theorem \ref{theorem:cosheaf-associated-spectral-measure}.

\smallskip
Whenever we have a category $\mathcal{A}$ enriched in another category $\mathcal{B}$, the first question to answer is if $\mathcal{A}$ has $\mathcal{B}$-tensors. The next theorem could be called the fundamental theorem of categorified measure theory and says that $\CoShv(\Omega, \mathcal{A})$ has all $\Shv(\Omega)$-tensors (and thus it is cocomplete as a $\Shv(\Omega)$-enriched category) and that these are just the indefinite integral cosheaves. It provides the correct formulation of the abstract Radon-Nikodym property characterizing direct integrals via a universal property.

\begin{theorem} \label{theorem:cosheaves-sheaf-tensors-integral}
There is a natural isometric isomorphism
\begin{equation} \label{isomorphism:cosheaves-sheaf-tensors-integral}
    \homset{\CoShv}{\int\xi\differential{\mu}}{\nu}\isomorphic \homset{\Shv}{\xi}{\homset{\CoShv}{\mu}{\nu}}
\end{equation}
\end{theorem}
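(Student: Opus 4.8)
The plan is to identify both sides of \eqref{isomorphism:cosheaves-sheaf-tensors-integral} with a common third object, namely the space of morphisms out of the universal cosheaf, exploiting the biadjunction of theorem \ref{theorem:integrables-sheaves} and the density of $\characteristic\colon\Omega\to\Shv(\Omega)$. First I would reduce to the case where $\xi$ is a simple sheaf $\characteristic(E)$. Indeed, by theorem \ref{theorem:sheaves-presheaves-simples} every sheaf $\xi$ is a canonical colimit over $\simple(\Omega)$ via the coend presentation $\xi\isomorphic\int^{E\in\Omega}\homset{\Shv}{\characteristic(E)}{\xi}\otimes\characteristic(E)$ established at the end of subsection \ref{subsection:cosheaves-inverters}; since the indefinite integral $\int(-)\differential\mu=\Lan_{\characteristic}\mu$ is cocontinuous and $\homset{\Shv}{-}{\homset{\CoShv}{\mu}{\nu}}$ sends colimits to limits, both sides of \eqref{isomorphism:cosheaves-sheaf-tensors-integral} are computed as the same end over $\Omega$ once we know the isomorphism on the $\characteristic(E)$. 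So the real content is the identification, natural in $E$ and $\nu$,
\begin{equation*}
    \homset{\CoShv}{\textstyle\int\characteristic(E)\differential{\mu}}{\nu}\isomorphic \homset{\Shv}{\characteristic(E)}{\homset{\CoShv}{\mu}{\nu}}.
\end{equation*}

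For the left-hand side, I would use that $\int\characteristic(E)\differential\mu$ is by definition the cosheaf $F\mapsto\int_{\Omega}\characteristic(F)\otimes\characteristic(E)\differential\mu\isomorphic\int_{\Omega}\characteristic(E\cap F)\differential\mu$, using \eqref{isomorphism:characteristic-monoidal} and theorem \ref{theorem:morphism-spaces-constant-line-sheaves} (or the spectral-measure description: $\int\characteristic(E)\differential\mu$ is the cosheaf obtained by applying the projection $P_E$ of theorem \ref{theorem:cosheaf-associated-spectral-measure} to $\mu$). For the right-hand side, theorem \ref{theorem:morphism-spaces-constant-line-sheaves} gives $\homset{\Shv}{\characteristic(E)}{-}$ as ``restriction to $E$'' composed with global sections, so that $\homset{\Shv}{\characteristic(E)}{\homset{\CoShv}{\mu}{\nu}}$ is the space of cosheaf maps $\mu\restriction_{E}\to\nu\restriction_{E}$. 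One then checks that a cosheaf morphism $\int\characteristic(E)\differential\mu\to\nu$ is precisely a compatible family of maps $\mu(E\cap F)\to\nu(F)$, which by the projection formula for $\nu$ (diagram \ref{diagram:projection-cosheaf}) and finite additivity is the same as a family $\mu(E\cap F)\to\nu(E\cap F)$, i.e.\ a cosheaf map between the restrictions. The naturality and the isometry of the bijection follow from the isometry statements already recorded in theorem \ref{theorem:integrables-sheaves} and theorem \ref{theorem:cosheaves-enriched-sheaves} (the norm equality $\norm{\tau_E}=\max_n\norm{\tau_{E_n}}$).

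A cleaner and more conceptual route, which I would actually present, is to invoke theorem \ref{theorem:cosheaves-enriched-sheaves}: $\CoShv(\Omega,\mathcal{A})$ is a $\Shv(\Omega)$-enriched category, and the assertion is exactly that it has $\Shv(\Omega)$-tensors, with the tensor of $\mu$ by $\xi$ being $\int\xi\differential\mu$. Since $\Shv(\Omega)$ is the free cocompletion of $\simple(\Omega)$ and $\simple(\Omega)$ generates $\Shv(\Omega)$ under (sheafified) colimits, it suffices to construct tensors by the representables $\characteristic(E)$ and check the defining universal property; tensoring by a general $\xi$ is then forced by cocontinuity, and $\int\xi\differential\mu=\Lan_{\characteristic}\mu$ evaluated appropriately delivers the correct object because the coend formula \eqref{isomorphism:left-kan-extension-coend} is precisely the colimit-of-tensors formula. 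The universal property for $\characteristic(E)$ is the displayed isomorphism above, proved as in the previous paragraph.

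The main obstacle is the bookkeeping that the object $\int\xi\differential\mu$ — defined as a left Kan extension, hence a coend in $\mathcal{A}$ — really is the $\Shv(\Omega)$-tensor, i.e.\ that the enrichment of theorem \ref{theorem:cosheaves-enriched-sheaves} is \emph{compatible} with the coend presentation. Concretely one must verify that the $\Shv(\Omega)$-valued hom $\homset{\CoShv}{\int\xi\differential\mu}{\nu}$, computed via the end formula \eqref{isomorphism:end-formula-hom-functor-categories} pointwise on $\Omega$, commutes with the coend over $\simple(\Omega)$ defining $\int\xi\differential\mu$; this is an interchange-of-(co)ends argument, legitimate because ends are limits and the pointwise hom sheaves are built from $\homset{\Ban}{-}{-}$, which turns colimits in the first variable into limits. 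Once that interchange is justified the rest is the density argument of subsection \ref{subsection:presheaf-categories} run one more time, together with the already-established isometry statements, so no genuinely new analysis is needed beyond what theorem \ref{theorem:banach-sheaves-finite-topology} and theorem \ref{theorem:morphism-spaces-constant-line-sheaves} supply about the simple sheaves.
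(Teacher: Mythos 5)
The paper never proves this theorem --- it is stated as the ``fundamental theorem'' and all details are deferred to the announced follow-up paper --- so there is no argument of the author's to compare yours against; judged on its own terms, your strategy is the natural one and is essentially sound. Reducing to $\xi= \characteristic(E)$ by density of $\characteristic$, cocontinuity of $\xi\mapto \int\xi\differential{\mu}$ and the end formula for sheaf homs is exactly the ``tensors are built on representables and propagated by cocontinuity'' argument, and your key computation is right: $\int\characteristic(E)\differential{\mu}$ is the cosheaf $F\mapto \mu(E\cap F)$, and a cosheaf map $\parens{F\mapto \mu(E\cap F)}\to \nu$ corresponds isometrically to a cosheaf map $\restriction{\mu}{E}\to \restriction{\nu}{E}$ (compose with the isometric inclusions $\nu_{E\cap F, F}$; naturality at $E\cap F\subseteq F$ shows the two assignments are mutually inverse).

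The one step you should not wave through is the evaluation of the right-hand side at $\characteristic(E)$. Theorem \ref{theorem:morphism-spaces-constant-line-sheaves} does not say that $\homset{\Shv}{\characteristic(E)}{-}$ is ``restriction to $E$ plus global sections''; what you need is $\homset{\Shv}{\characteristic(E)}{\zeta}\isomorphic \zeta(E)$ (sheafification $\dashv$ inclusion together with enriched Yoneda, since $\characteristic(E)$ is the sheafification of the representable) \emph{plus} a precise identification of the value at $E$ of the hom sheaf $\homset{\CoShv}{\mu}{\nu}$. Here the description in theorem \ref{theorem:cosheaves-enriched-sheaves}, read literally as $E\mapto \homset{\Ban}{\mu(E)}{\nu(E)}$, would make the statement false: take $\Omega= \subsets{\set{1, 2}}$, $\mu= \nu= \parens{E\mapto \ell_{1}(E)}$ and $\xi= \characteristic(\Omega)$; then the left-hand side is $\homset{\CoShv}{\mu}{\nu}$, which consists only of the ``diagonal'' families (two-dimensional), while $\homset{\Ban}{\mu(\Omega)}{\nu(\Omega)}$ is all $2\times 2$ matrices. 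So the enriched hom must be taken to be $E\mapto \homset{\CoShv}{\restriction{\mu}{E}}{\restriction{\nu}{E}}$, equivalently the end $\int_{F\subseteq E}\homset{\Ban}{\mu(F)}{\nu(F)}$ --- which is what your computation implicitly uses --- and a complete proof has to verify that this presheaf is a sheaf for the finite topology and that its sections over $E$ are exactly the cosheaf maps of the restrictions, since that is where the real content of the adjunction sits. With that made explicit, and with the cocontinuity of $\xi\mapto \int\xi\differential{\mu}$ justified by the (sheafified) monoidal structure and pointwise colimits of cosheaves, your density/interchange argument and the isometry bookkeeping (via theorems \ref{theorem:integrables-sheaves} and \ref{theorem:cosheaves-enriched-sheaves}) go through as you describe.
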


We end this subsection with two basic applications of the spectral measure construction: the description of the factorization $\simple(\Omega)\to \mathcal{A}$ for a cosheaf $\mu$ and the characterization of the constant cosheaves.

\smallskip
Let $\mu$ be a cosheaf of Banach spaces. We want to define a functor on the category  $\simple(\Omega)$ and with values on $\Ban$ that on objects is given by $\characteristic(E)\mapto \mu(E)$\footnote{The construction can be generalized in a straightforward fashion to a cosheaf $\Omega\to \mathcal{A}$.}. In order to understand what is $\mu(f)$ for a function $f\in \Integrable_{\infty}(E\cap F)\colon E\to F$ let us start by taking $\mu$ to be the $\Integrable_{1}(E, \mu)$-cosheaf induced by a finite, positive measure $\mu$. Thus, $\mu(f)$ is now a map
\begin{equation*}
    \mu(f)\colon \Integrable_{1}(E)\to \Integrable_{1}(F)
\end{equation*}

We can now hazard a guess for $\mu(f)$ as a composite of the form displayed in diagram \ref{diagram:mu-map}.
\begin{figure}[htbp]
    \begin{equation*}
    \xymatrix{
        \Integrable_{1}(E) \ar@{-->}[r] \ar[d]_{p_{E, E\cap F}} & \Integrable_{1}(F) \\
        \Integrable_{1}(E\cap F) \ar[r] & \Integrable_{1}(E\cap F) \ar[u]_{\mu_{E\cap F, F}}
    }
    \end{equation*}
\caption{The map $\mu(f)$.}
\label{diagram:mu-map}
\end{figure}

The outer terms just fiddle with the domain of the functions of $\Integrable_{1}(E)$ in order to make them land in $\Integrable_{1}(F)$. The inner term is given by composing $g\in \Integrable_{1}(E\cap F)$ with a map $\field\to \field$ obtained by some sort of integral $\int_{E\cap F}f$. Let $f$ be a simple function $E\cap F\to \field$ of the form,
\begin{equation*}
    f= \sum_{n}\characteristic(E_{n})k_{n}
\end{equation*}
with $\family{E}{n}$ a finite partition of $E\cap F$ and $k_{n}$ base field elements. Let $g\in \Integrable_{1}(E\cap F)$ be a simple function of the form $\sum_{m}\characteristic(F_{m})l_{m}$ with $\family{F}{m}$ a finite partition of $E\cap F$ and $l_{m}\in \field$. Since we expect that $\int f(g)= \parens{\int f}g$ we have
\begin{equation*}
\begin{split}
    \parens{\int f}g &= \int f(g) \\
        &= \sum_{n, m}\characteristic(E_{n}\cap F_{m})k_{n}(l_{m}) \\
        &= \sum_{n, m} P_{E_{n}}(\characteristic(F_{m}))k_{n}(l_{m}) \\
        &= \parens{\sum_{n}P_{E_{n}}k_{n}}\parens{\sum_{m}\characteristic(F_{m})l_{m}}
\end{split}
\end{equation*}

But the term $\sum_{n}P_{E_{n}}k_{n}$ is just the integral of $f$ against the spectral cosheaf measure. Plugging in the outer morphisms of diagram \ref{diagram:mu-map}, we arrive at the following definition.

\begin{definition} \label{definition:integral-cosheaf-map}
Let $\mu$ be a cosheaf of Banach spaces and $f\colon E\cap F\to \field$ a simple function of the form $\sum_{n}\characteristic(E_{n})k_{n}$ (or a \emph{simple morphism} $\characteristic(E)\to \characteristic(F)$). Then the induced map $\int f\differential{\mu}$ is
\begin{equation} \label{equation-definition:integral-cosheaf-map}
    \int f\differential{\mu}\defequal \sum_{n}k_{n}\parens{\mu_{E_{n}, F}p_{E, E_{n}}}
\end{equation}
\end{definition}

That definition \ref{definition:integral-cosheaf-map} is indeed functorial is the essential content of the next theorem.

\begin{theorem} \label{theorem:integral-cosheaf-map}
Let $\mu$ be a cosheaf of Banach spaces and $f\colon \characteristic(E)\to \characteristic(F)$ a simple morphism of characteristic sheaves. If $\int f\differential{\mu}$ is defined as in \eqref{equation-definition:integral-cosheaf-map} then:
\begin{equation*}
    \norm{\int f\differential{\mu}}= \supnorm{f}
\end{equation*}

Furthermore, if $g$ is a simple morphism $\characteristic(F)\to \characteristic(G)$ then:
\begin{align*}
    \int gf\differential{\mu} &= \int g\differential{\mu}\int f\differential{\mu} \\
    \int 1_{\characteristic(E)}\differential{\mu} &= \id_{\mu(E)}
\end{align*}
\end{theorem}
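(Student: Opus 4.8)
The plan is to verify the three claimed identities by reducing everything to the algebraic properties of the spectral measure $E\mapsto P_E$ established in Theorem \ref{theorem:cosheaf-associated-spectral-measure}, together with the coproduct manipulations in the Banach $2$-space $\mathcal{A}$ that define the cosheaf projections $p_{E,F}$ in diagram \ref{diagram:projection-cosheaf}. First I would record the basic compatibility relations among the three families of maps in play: the structure maps $\mu_{F,E}$ (for $F\subseteq E$), the projections $p_{E,F}$, and the spectral projections $P_E=\mu_{E,1}p_{1,E}$. The key elementary facts, each a routine diagram chase with coproducts, are: $p_{E,F}\,\mu_{F,E}=\id_{\mu(F)}$ when $F\subseteq E$; $p_{F,G}\,p_{E,F}=p_{E,G}$ when $G\subseteq F\subseteq E$; $\mu_{E_n,F}\,p_{E,E_n}$ depends only on $E_n\subseteq E\cap F$ and is $0$ when $E_n$ is disjoint from $F$; and most importantly the multiplicativity $P_{E_n}P_{F_m}=P_{E_n\cap F_m}$, which is exactly the spectral (multiplicative) property packaged in Theorem \ref{theorem:cosheaf-associated-spectral-measure}. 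I would also note the identity $\mu_{E_n,F}\,p_{E,E_n}=\mu_{E\cap F,F}\,P_{E_n}^{E\cap F}\,p_{E,E\cap F}$ expressing the summand map through the spectral projection on $\mu(E\cap F)$; this is the abstract shadow of the computation with simple functions carried out just before Definition \ref{definition:integral-cosheaf-map}.

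Next I would treat the norm equality $\norm{\int f\,\differential\mu}=\supnorm{f}$. Writing $f=\sum_n k_n\characteristic(E_n)$ with $\family{E}{n}$ a finite partition of $E\cap F$, the operator $\int f\,\differential\mu=\sum_n k_n(\mu_{E_n,F}p_{E,E_n})$ factors, via the outer morphisms of diagram \ref{diagram:mu-map}, as $\mu_{E\cap F,F}\circ(\sum_n k_n P_{E_n}^{E\cap F})\circ p_{E,E\cap F}$. Here $p_{E,E\cap F}$ is a coproduct projection (hence a coisometry, norm $\le 1$, and a split epi), $\mu_{E\cap F,F}$ is a coproduct injection (hence an isometry), and $\sum_n k_n P_{E_n}^{E\cap F}$ is the image of the simple function $f$ under the isometric Banach-algebra morphism $\Integrable_\infty(E\cap F)\to\homset{\Ban}{\mu(E\cap F)}{\mu(E\cap F)}$ supplied by Theorem \ref{theorem:cosheaf-associated-spectral-measure}, so its norm is $\supnorm{f}=\max_n\abs{k_n}$. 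Composing an isometry after a coisometry after nothing changes the norm on the relevant summand; the only point needing a short argument is that $p_{E,E\cap F}$ restricted to the range of the adjoint injection is a genuine isometry onto $\mu(E\cap F)$, which follows because $\{E\cap F, E\setminus F\}$ is a partition of $E$ and the corresponding cone is a coproduct, so the norm on $\mu(E)$ is the max of the two component norms.

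Then I would check functoriality. For the unit law, when $f=1_{\characteristic(E)}$ we have $E\cap E=E$, the canonical representation is the trivial partition $\{E\}$ with $k=1$, and $\int 1\,\differential\mu=\mu_{E,E}p_{E,E}=\id_{\mu(E)}$. For composition, take simple morphisms $f=\sum_n k_n\characteristic(E_n)\colon\characteristic(E)\to\characteristic(F)$ and $g=\sum_m l_m\characteristic(F_m)\colon\characteristic(F)\to\characteristic(G)$, with $\family{E}{n}$ a partition of $E\cap F$ and $\family{F}{m}$ a partition of $F\cap G$. One computes $(\int g\,\differential\mu)(\int f\,\differential\mu)=\sum_{n,m} l_m k_n\,(\mu_{F_m,G}p_{F,F_m})(\mu_{E_n,F}p_{E,E_n})$ and, pushing the middle $\mu_{E_n,F}$ past $p_{F,F_m}$ using the spectral-projection identity above, the inner composite collapses to $\mu_{E_n\cap F_m,G}p_{E,E_n\cap F_m}$ with scalar $l_m k_n$; since $gf=\sum_{n,m} l_m k_n\characteristic(E_n\cap F_m)$ and $\family{E_n\cap F_m}{n,m}$ is precisely the common refinement partitioning $E\cap F\cap G$ (terms with $E_n\cap F_m\not\subseteq G$ contributing zero by disjointness), this is exactly $\int gf\,\differential\mu$. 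The main obstacle is the bookkeeping in this last computation: one must verify that the scalar product $l_m(k_n)$ pairs up correctly and that summing over the refinement of two different partitions of $E\cap F$ produces the canonical representation of $gf$ up to the null terms — this is the categorified analogue of the simple-function manipulation preceding Definition \ref{definition:integral-cosheaf-map}, and is the step where the multiplicativity $P_{E_n}P_{F_m}=P_{E_n\cap F_m}$ does the real work. Everything else is diagram-chasing in a cocomplete Banach category and poses no difficulty.
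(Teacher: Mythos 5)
Your plan is essentially the proof the paper intends: the paper's own argument is a one-line deferral to ``routine manipulations with the associated spectral measure of a cosheaf,'' and your reduction of all three identities to the spectral projections $P_{E}=\mu_{E,1}p_{1,E}$, their multiplicativity, the transitivity relations $p_{F,G}p_{E,F}=p_{E,G}$ and $p_{F,F_m}\mu_{E_n,F}=\mu_{E_n\cap F_m,F_m}p_{E_n,E_n\cap F_m}$, and the isometry statement of theorem \ref{theorem:cosheaf-associated-spectral-measure} applied to the restricted cosheaf on the principal ideal $\ideal(E\cap F)$, is exactly that; your collapse of $(\mu_{F_m,G}p_{F,F_m})(\mu_{E_n,F}p_{E,E_n})$ to $\mu_{E_n\cap F_m,G}\,p_{E,E_n\cap F_m}$ checks out and recovers $\int gf\differential{\mu}$ up to zero terms. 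One justification in your norm argument is wrong, although the step it supports survives: coproducts in $\Banc$ are $\ell_{1}$-type sums, so the cosheaf condition makes the norm on $\mu(E)\isomorphic \mu(E\cap F)\oplus\mu(E\relcomplement F)$ the \emph{sum}, not the max, of the component norms. What you actually need (and also assert) is only that $\mu_{E\cap F,E}$ is an isometric section of $p_{E,E\cap F}$: precomposing $\int f\differential{\mu}$ with it yields $\mu_{E\cap F,F}\composition\bigl(\sum_{n}k_{n}P^{E\cap F}_{E_{n}}\bigr)$, which gives the lower bound $\norm{\int f\differential{\mu}}\geq \supnorm{f}$, while contractivity of $p_{E,E\cap F}$ and isometry of $\mu_{E\cap F,F}$ give the upper bound; the max formula is correct only for the middle diagonal operator acting on the $\ell_{1}$-sum $\bigoplus_{n}\mu(E_{n})$, which is where $\supnorm{f}$ enters. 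Two small housekeeping points you gesture at but should state: the value of $\int f\differential{\mu}$ is independent of the chosen representation (refinement invariance follows from $\id_{\mu(E_{n})}=\sum_{j}\mu_{H_{j},E_{n}}p_{E_{n},H_{j}}$), and, exactly as in theorem \ref{theorem:cosheaf-associated-spectral-measure} itself, the equality $\norm{\int f\differential{\mu}}=\supnorm{f}$ (rather than $\leq$) tacitly requires the cosheaf to be non-degenerate, i.e.\ $\mu(E_{n})\neq\zero$ for nonzero $E_{n}$, a caveat shared with the paper's statement.
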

\begin{proof}
The proof is a series of routine manipulations with the associated spectral measure of a cosheaf. Full details in \cite{rodrigues:categorified-measure-theory}.
\end{proof}

Since the simple functions are dense in $\Integrable_{\infty}$, the functor $\simple(\Omega)\to \Ban$ can now be constructed in the obvious manner.

\smallskip
On to the task of characterizing the constant cosheaves. The poset of partitions $\partitions(E)$ is filtered and by construction of limits of Banach spaces, an element of $\Lim_{\mathcal{E}\in \partitions(E)}\sum_{F\in \mathcal{E}}\mu(F)$ is a compatible family $\lambda_{\mathcal{E}}\in \sum_{F\in \mathcal{E}}B$ with:
\begin{equation*}
    \sup\set{\sum_{F\in \mathcal{E}}\norm{\lambda_{F}}\colon \mathcal{E}\in \partitions(E)}< \infty
\end{equation*}

This suggests that the cosheafification of $B$ is the space of measures of \emph{bounded variation}. It is indeed, but let us define the term ``bounded variation'' first.

\begin{definition} \label{definition:bounded-variation}
The map $\nu$ has \emph{bounded variation} if for every $E\in \Omega$, its variation $\variation{\nu}(E)$ as defined in \eqref{equation:variation-additive-mapping} is finite.
\end{definition}

If $\nu$ has bounded variation, then its variation $\variation{\nu}(E)$ is a positive, \emph{finitely additive} measure on $\Omega$ that dominates $\nu$ with Lipschitz norm $1$. Denote by $\bvadditive(\Omega, B)$ the space of additive maps $\Omega\to B$ of bounded variation with the \emph{total variation} norm $\variation{\nu}(\Omega)$.

\begin{theorem} \label{theorem:bvadditive-complete}
The space $\bvadditive(\Omega, B)$ is complete.
\end{theorem}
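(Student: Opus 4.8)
The plan is to run the classical completeness argument for a space of vector measures: extract a setwise limit of a Cauchy sequence, check that this limit is finitely additive of bounded variation, and then upgrade setwise convergence to convergence in the total variation norm.

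First I would take a Cauchy sequence $\family{\nu}{k}$ in $\bvadditive(\Omega, B)$ and fix $E\in \Omega$. From $\norm{\nu_{k}(E) - \nu_{l}(E)}\leq \variation{\nu_{k} - \nu_{l}}(E)\leq \variation{\nu_{k} - \nu_{l}}(\Omega)$ the sequence $(\nu_{k}(E))_{k}$ is Cauchy in $B$, hence convergent by completeness of $B$; write $\nu(E)$ for its limit. Passing to the limit in $\nu_{k}(0) = 0$ and in $\nu_{k}(E\cup F) = \nu_{k}(E) + \nu_{k}(F)$ for disjoint $E, F\in \Omega$ shows that $\nu\colon \Omega\to B$ is finitely additive.

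The crux is the simultaneous verification that $\nu$ has bounded variation and that $\nu_{k}$ converges to $\nu$ in norm. Fix $\epsilon > 0$ and choose $N$ with $\variation{\nu_{k} - \nu_{l}}(\Omega)< \epsilon$ whenever $k, l\geq N$. Fix $k\geq N$ and a finite partition $\mathcal{E}\in \partitions(\Omega)$; for every $l\geq N$ we have $\sum_{F\in \mathcal{E}}\norm{\nu_{k}(F) - \nu_{l}(F)}\leq \variation{\nu_{k} - \nu_{l}}(\Omega)< \epsilon$, and since $\mathcal{E}$ is finite we may let $l\to \infty$ term by term to obtain $\sum_{F\in \mathcal{E}}\norm{\nu_{k}(F) - \nu(F)}\leq \epsilon$. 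Taking the supremum over $\mathcal{E}\in \partitions(\Omega)$ gives $\variation{\nu_{k} - \nu}(\Omega)\leq \epsilon$ for all $k\geq N$. In particular $\nu_{k} - \nu$ is of bounded variation, so $\nu = \nu_{k} - (\nu_{k} - \nu)$ belongs to $\bvadditive(\Omega, B)$, and the last inequality reads $\norm{\nu_{k} - \nu}\leq \epsilon$, which is precisely convergence of $\family{\nu}{k}$ to $\nu$ in the total variation norm.

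I do not expect a genuine obstacle: the partitions entering the variation are finite, so the term-by-term passage to the limit is elementary, and the only point requiring care is the order of quantifiers (fix $k$, then send $l\to \infty$, then take the supremum over partitions) so that the bound on $\variation{\nu_{k} - \nu}(\Omega)$ is uniform in the partition. An alternative, shorter route becomes available once one accepts that $\bvadditive(\Omega, B)$ is the evaluation at the top element $\Omega$ of the cosheafification of the constant precosheaf $B$, that is, $\variation{\cdot}(\Omega)$-isometric to $\Lim_{\mathcal{E}\in \partitions(\Omega)}\sum_{F\in \mathcal{E}}B$ through the limit formula \eqref{map:cosheafification}; this exhibits it as a limit in $\Banc$ of the complete spaces $\sum_{F\in \mathcal{E}}B$, hence complete. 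Either way the argument is routine, and I would present the direct version since the cosheafification identification is only foreshadowed at this point in the text.
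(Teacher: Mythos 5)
Your argument is correct and is exactly the routine finite-additivity argument the paper's one-line proof (``Straightforward, since only finite additivity is involved'') alludes to: pointwise limits, term-by-term passage to the limit over finite partitions with the quantifiers in the right order, and monotonicity of the variation to get boundedness on every $E\in \Omega$ from the bound at $\Omega$. Your alternative remark, identifying $\bvadditive(\Omega, B)$ with $\Lim_{\mathcal{E}\in \partitions(\Omega)}\sum_{F\in \mathcal{E}}B$, matches the cosheafification picture the surrounding text sketches, so both routes are consistent with the paper.
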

\begin{proof}
Straightforward, since only finite additivity is involved.
\end{proof}

To understand functoriality of $\bvadditive$ on $\Omega$ start by identifying each $E\in \Omega$ with the principal ideal $\ideal(E)$ it generates. Each $\ideal(E)$ is a Boolean algebra with unit $E$ and the inclusion $\ideal(E)\to \Omega$ is a ring morphism that does \emph{not} preserve the unit (and thus, it is not a Boolean algebra morphism). If $E\subseteq F$, there is a Boolean algebra projection,
\begin{equation*}
    p_{F, E}\colon \ideal(F)\to \ideal(E)
\end{equation*}
given by intersection: $G\subseteq F\mapto G\cap E$. The presheaf $E\mapto \ideal(E)$ is a sheaf of Boolean algebras. The map
\begin{equation*}
    \bvadditive(E, B)\to \bvadditive(F, B)
\end{equation*}
is now given by pullback, that is, $\nu\mapto \parens{\pullback{p_{F, E}}\nu\colon G\mapto \nu(E\cap G)}$.

\begin{theorem} \label{theorem:cosheaf-bounded-variation}
Let $\Omega$ be a Boolean algebra and $B$ a Banach space. Then the precosheaf $E\mapto \bvadditive(E, B)$ is a cosheaf for the finite Grothendieck topology.
\end{theorem}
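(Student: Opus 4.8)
The plan is to verify the cosheaf condition of Definition \ref{definition:cosheaf} head-on: for every finite partition $\mathcal{E} = \{E_1, \dotsc, E_n\}$ of an element $E \in \Omega$, the canonical comparison map $\varepsilon_{\mathcal{E}}\colon \sum_{k} \bvadditive(E_k, B) \to \bvadditive(E, B)$ is an isometric isomorphism. Unwinding the precosheaf structure maps described just before the statement (pullback along the projections $p_{E, E_k}\colon \ideal(E) \to \ideal(E_k)$), the cone leg $\bvadditive(E_k, B) \to \bvadditive(E, B)$ sends $\nu_k$ to $G \mapsto \nu_k(E_k \cap G)$, so $\varepsilon_{\mathcal{E}}$ sends a family $(\nu_k)_k$ to the additive map $\nu \defequal \parens{G \mapsto \sum_k \nu_k(E_k \cap G)}$ on $\ideal(E)$. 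That this map has bounded variation and that each pullback leg is a contraction (for $\mathcal{G} \in \partitions(F)$ the family $\set{E \cap G : G \in \mathcal{G}}$ is a partition of $E$, whence $\variation{\pullback{p_{F,E}}\nu}(F) \le \variation{\nu}(E)$) is the routine bounded-variation bookkeeping that also certifies $E \mapsto \bvadditive(E, B)$ as a $\Banc$-enriched functor on $\Omega$; I would dispose of it quickly, in the spirit of Theorem \ref{theorem:bvadditive-complete}, since only finite additivity is involved.

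First I would exhibit the inverse. Given $\nu \in \bvadditive(E, B)$, restrict along the inclusions $\ideal(E_k) \subseteq \ideal(E)$ to obtain $\nu_k \defequal \restriction{\nu}{\ideal(E_k)}$; since $\variation{\nu_k}(E_k) \le \variation{\nu}(E) < \infty$, this is a bona fide element of $\bvadditive(E_k, B)$, so $\nu \mapsto (\nu_k)_k$ is well defined. The two assignments are mutually inverse by a one-line computation: finite additivity of $\nu$ together with disjointness of the $E_k$ gives $\sum_k \nu_k(E_k \cap G) = \nu\bigl(\bigcup_k (E_k \cap G)\bigr) = \nu(G)$ for all $G \subseteq E$, and conversely restricting $G$ to $\ideal(E_j)$ annihilates every term with $k \neq j$. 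Hence $\varepsilon_{\mathcal{E}}$ is a bijection.

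The heart of the matter, and the step I expect to require the most care, is that $\varepsilon_{\mathcal{E}}$ is norm-preserving, i.e.\ the identity $\variation{\nu}(E) = \sum_{k=1}^{n} \variation{\nu_k}(E_k)$, since the coproduct $\sum_k \bvadditive(E_k, B)$ carries the norm $(\nu_k)_k \mapsto \sum_k \variation{\nu_k}(E_k)$. For the inequality $\ge$, take partitions $\mathcal{F}_k \in \partitions(E_k)$; their disjoint union is a partition of $E$, so $\sum_k \sum_{F \in \mathcal{F}_k} \norm{\nu(F)} \le \variation{\nu}(E)$, and passing to the supremum (a supremum of a finite sum of independent suprema) yields $\ge$. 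For $\le$, given $\mathcal{F} \in \partitions(E)$ refine it to $\mathcal{F}' \defequal \set{F \cap E_k : F \in \mathcal{F},\ 1 \le k \le n}$; from $\nu(F) = \sum_k \nu(F \cap E_k)$ and the triangle inequality, $\sum_{F \in \mathcal{F}} \norm{\nu(F)} \le \sum_{F, k} \norm{\nu(F \cap E_k)} = \sum_k \sum_{F} \norm{\nu_k(F \cap E_k)} \le \sum_k \variation{\nu_k}(E_k)$, and taking the supremum over $\mathcal{F}$ gives $\le$. Combining, $\varepsilon_{\mathcal{E}}$ is an isometric isomorphism, which is exactly the finite-additivity clause of Definition \ref{definition:cosheaf}; since a finite cover of $E$ can always be disjointified to a partition without changing its supremum, this holds for every finite cover, and therefore $E \mapsto \bvadditive(E, B)$ is a cosheaf for the finite Grothendieck topology. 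Notably no completeness or $\sigma$-additivity hypothesis on $\Omega$ is used anywhere.
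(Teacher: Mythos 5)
Your argument is correct and is essentially the paper's own proof: both establish that the comparison map is a linear bijection via restriction/summation and then verify the isometry $\variation{\nu}(E)=\sum_{k}\variation{\nu_{k}}(E_{k})$ by the same two partition estimates (your use of the triangle inequality on an arbitrary partition is just a cosmetic variant of the paper's cofinality-of-refinements step).
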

\begin{proof}
Fix a finite partition $\family{E}{n}$ of $E\in \Omega$. The inclusions $E_{n}\subseteq E$ induce inclusion maps $\additive(E_{n}, B)\to \additive(E, B)$. If $\nu\colon E\to B$ is an additive map and $F\subseteq E$, then the pullback of $\nu$ via the projection is,
\begin{equation*}
    \nu_{n}(F)= \nu(F\cap E_{n})
\end{equation*}
and we see that $\nu(F)= \sum_{n}\nu_{n}(F)$ so that the induced map
\begin{equation*}
    \bigoplus_{n}\additive(E_{n}, B)\to \additive(E, B)
\end{equation*}
is a linear isomorphism. To prove that this map descends to an isometric isomorphism on the spaces of bounded variation maps, let $\mathcal{F}_{n}$ be a finite partition of $E_{n}$. Then $\mathcal{F}= \bigcup_{n}\mathcal{F}_{n}$ is a finite partition of $E$ and
\begin{equation*}
    \sum_{n}\sum_{F\in \mathcal{F}_{n}}\norm{\nu_{n}(F)}= \sum_{F\in \mathcal{F}}\norm{\nu(F)}
\end{equation*}
so that $\sum_{n}\variation{\nu_{n}}(E_{n})\leq \variation{\nu}(E)$. On the other hand, if $\mathcal{F}$ is a partition of $E$ that refines $\mathcal{E}$, putting $\mathcal{F}_{n}= \set{F\in \mathcal{F}\colon F\subseteq E_{n}}$, then $\mathcal{F}_{n}$ is a partition of $E_{n}$ and
\begin{equation*}
    \sum_{F\in \mathcal{F}}\norm{\nu(F)}= \sum_{n}\sum_{F\in \mathcal{F}_{n}}\norm{\nu_{n}(F)}\leq \sum_{n}\variation{\nu_{n}}(E_{n})
\end{equation*}

Since the set of finite partitions refining $\mathcal{E}$ is cofinal in $\partitions(E)$, we have that $\variation{\nu}(E)\leq \sum_{n}\variation{\nu_{n}}(E_{n})$, so that the induced map
\begin{equation*}
    \sum_{n}\bvadditive(E_{n}, B)\to \bvadditive(E, B)
\end{equation*}
is an isometric isomorphism and $E\mapto \bvadditive(E, B)$ is an additive precosheaf on $\Omega$.
\end{proof}

Theorem \ref{theorem:cosheaf-bounded-variation} implies that not every cosheaf is of the form $\Integrable_{1}(\Omega, \mu, B)$ for a measure algebra $(\Omega, \mu)$ and a Banach space $B$. Just take $\bvadditive(\Omega, B)$ with $B$ a Banach space \emph{without} the Radon-Nikodym property (e.g.~$\Integrable_{1}(\Omega, \mu)$ for $(\Omega, \mu)$ an atomless $\sigma$-additive measure algebra).

\smallskip
Finally, we can state the characterization of the constant cosheaves as the spaces of bounded variation measures. In essence, it is an application of the spectral measure construction.

\begin{theorem} \label{theorem:constant-cosheaf-bvadditive}
Let $\theta$ be a cosheaf and $\tau\colon \theta\to B$ be a map between $\theta$ and the constant precosheaf $E\mapto B$. Then there is a unique map closing the triangle \ref{diagram:universal-property-bvadditive}.
\begin{figure}[htbp]
    \begin{equation*}
    \xymatrix{
            & \bvadditive(E, B) \ar[d]^-{\evaluation_{E}} \\
        \theta(E) \ar[r]_{\tau_{E}} \ar@{-->}[ur] & B
    }
    \end{equation*}
\caption{Universal property of $\bvadditive(\Omega, B)$.}
\label{diagram:universal-property-bvadditive}
\end{figure}
\end{theorem}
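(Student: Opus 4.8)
The plan is to identify the precosheaf $E\mapto\bvadditive(E,B)$ with the \emph{cosheafification of the constant precosheaf} $\diagonal B$ (for the finite Grothendieck topology), so that the triangle asserted in the theorem becomes the universal property of the cosheafification adjunction \eqref{adjunction:cosheafification-inclusion} specialized to $\mu=\diagonal B$, with $\evaluation_E$ the $E$-component of the counit. By Theorem \ref{theorem:cosheafification}, $\cosheafification{\diagonal B}$ is the cosheaf
\begin{equation*}
    E\mapto\Lim_{\mathcal{E}\in\partitions(E)}\sum_{F\in\mathcal{E}}B,
\end{equation*}
where the transition map attached to a refinement $\mathcal{F}$ of a partition $\mathcal{E}$ is the map $\sum_{F'\in\mathcal{F}}B\to\sum_{F\in\mathcal{E}}B$ sending $(\beta_{F'})_{F'}$ to $\bigl(\sum_{F'\subseteq F}\beta_{F'}\bigr)_{F\in\mathcal{E}}$, a contraction for the coproduct norm $\sumnorm{\cdot}$ (Theorem \ref{theorem:existence-co-products-ban}) by the triangle inequality. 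Since limits in $\Banc$ are closed subspaces of supremum-normed products, an element of this limit is a compatible family $(\lambda_{\mathcal{E}})_{\mathcal{E}\in\partitions(E)}$, $\lambda_{\mathcal{E}}=(\lambda_{\mathcal{E},F})_{F\in\mathcal{E}}$, with $\sup_{\mathcal{E}}\sum_{F\in\mathcal{E}}\norm{\lambda_{\mathcal{E},F}}<\infty$.

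First I would produce the isometric isomorphism of this limit with $\bvadditive(E,B)$. Going forwards, a bounded-variation additive map $\nu$ on the principal ideal $\ideal(E)$ yields the family $(\nu|_{\mathcal{E}})_{\mathcal{E}}$ with $\nu|_{\mathcal{E}}=(\nu(F))_{F\in\mathcal{E}}$; compatibility of this family is precisely finite additivity of $\nu$, and $\sup_{\mathcal{E}}\sum_{F\in\mathcal{E}}\norm{\nu(F)}=\variation{\nu}(E)$ by the very definition of variation \eqref{equation:variation-additive-mapping}, so the assignment lands in the limit and is an isometry. Going backwards, from a compatible bounded family one defines $\nu(F):=\lambda_{\mathcal{E},F}$ using any partition $\mathcal{E}$ of $E$ having $F$ as a block; well-definedness and finite additivity follow from compatibility together with the fact that any two partitions have a common refinement, and $\variation{\nu}(E)$ is exactly the supremum norm of the family. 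These two assignments are mutually inverse.

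Next I would check that these isomorphisms are natural in $E$, intertwining the cosheaf structure map of $\cosheafification{\diagonal B}$ with that of $\bvadditive(-,B)$ from Theorem \ref{theorem:cosheaf-bounded-variation} (both induced by the Boolean projections $\ideal(F)\to\ideal(E)$, $G\mapto G\cap E$), and that under the isomorphism the cone-component at the least partition $\set{E}$ of $\partitions(E)$ is the evaluation $\evaluation_E\colon\nu\mapto\nu(E)$, which composed with $\varepsilon_{\set{E}}=\id_B$ is the $E$-component of the counit of \eqref{adjunction:cosheafification-inclusion}. Granting this, the theorem follows: for a cosheaf $\theta$ and a precosheaf map $\tau\colon\theta\to\diagonal B$, the adjunction supplies a unique cosheaf map $\universal{\tau}\colon\theta\to\cosheafification{\diagonal B}\isomorphic\bvadditive(-,B)$ with $\evaluation_E\universal{\tau}_E=\tau_E$ for every $E$, which is the triangle. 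Concretely this map is $\universal{\tau}_E(x)=\bigl(F\mapto\tau_F(p_{E,F}x)\bigr)$, where $p_{E,F}\colon\theta(E)\to\theta(F)$ is the cosheaf projection (diagram \ref{diagram:projection-cosheaf}, Theorem \ref{theorem:cosheaf-associated-spectral-measure}); one can alternatively verify directly that this formula yields a bounded cosheaf map solving the problem, using the presheaf identity $p_{F,G}p_{E,F}=p_{E,G}$, the relation $p_{E',G}\theta_{E,E'}=\theta_{E\cap G,G}p_{E,E\cap G}$ between projections and the structure maps $\theta_{E,E'}$ of $\theta$, naturality of $\tau$, and the isometric coproduct decomposition $\theta(E)\isomorphic\sum_{F\in\mathcal{E}}\theta(F)$ (which gives $\sum_{F\in\mathcal{E}}\norm{p_{E,F}x}=\norm{x}$, hence finiteness of $\variation{\universal{\tau}_E(x)}(E)$ and the bound $\norm{\universal{\tau}_E}\le\sup_{F}\norm{\tau_F}$).

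The main difficulty is bookkeeping rather than conceptual. One must run the limit over the filtered poset $\partitions(E)$ in its ``inverse-limit'' direction correctly, confirm that the contractions and the supremum norm assemble to give precisely the total-variation norm (so that the isomorphism is isometric, as required in $\BanCat$), and carry out the several diagram chases with cosheaf injections and projections. It is worth noting that, in contrast with the classical setting, no Radon--Nikodym-type input enters: finite additivity suffices throughout, which is exactly why the categorified statement comes out clean.
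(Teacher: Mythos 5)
Your proposal is correct, and it reorganizes the argument in a way that differs mildly but usefully from the paper's own sketch. The paper proves the theorem by a direct construction: from the spectral measure of \ref{theorem:cosheaf-associated-spectral-measure} (i.e.~the cosheaf projections) it defines the comparison map $x\mapto \parens{F\mapto \tau_{F}(P_{F}x)}$, checks that it is a contractive precosheaf map, and obtains bounded variation from the isometric decomposition $\theta(E)\isomorphic \sum_{F\in \mathcal{E}}\theta(F)$ --- exactly the formula and estimate you give in your ``concrete'' paragraph (note that the paper's $P_{F}$ is literally an endomorphism of $\theta(\Omega)$, so your $p_{E,F}$ is the correct reading of what is intended for $x\in\theta(E)$); uniqueness is not addressed there at all. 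You instead take as primary the identification of $E\mapto \bvadditive(E, B)$ with $\cosheafification{\parens{\diagonal B}}$, i.e.~with the limit $\Lim_{\mathcal{E}\in \partitions(E)}\sum_{F\in \mathcal{E}}B$ of theorem \ref{theorem:cosheafification}, via the isometry $\nu\mapto \parens{\nu(F)}_{F\in\mathcal{E}}$ whose norm is precisely $\variation{\nu}(E)$, and then read the triangle off the adjunction \eqref{adjunction:cosheafification-inclusion}, with $\evaluation_{E}$ as the counit component at the trivial partition. This is exactly the route the paper only gestures at in the remark preceding definition \ref{definition:bounded-variation} (``this suggests that the cosheafification of $B$ is the space of measures of bounded variation''), so your version buys a genuine uniqueness statement and makes that informal remark into a proof, at the cost of the extra bookkeeping you acknowledge (direction of the inverse system, naturality in $E$ of the identification, matching the counit with evaluation); the paper's direct construction is shorter and stays entirely within the spectral-measure machinery of the subsection. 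Since both arguments ultimately rest on the same two facts --- additivity of $F\mapto \tau_{F}(p_{E,F}x)$ from naturality of $\tau$, and the isometric coproduct decomposition giving $\sum_{F\in\mathcal{E}}\norm{p_{E,F}x}= \norm{x}$ --- your proposal subsumes the paper's proof rather than contradicting it.
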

\begin{proof}
Let us sketch the construction of the associated map. Since $\theta$ is a cosheaf, by theorem \ref{theorem:cosheaf-associated-spectral-measure} there is an associated spectral measure $E\mapto P_{E}$. Let $\mu\in \theta(E)$. Then we have a finitely additive map given by:
\begin{equation} \label{map:induced-constant-cosheaf-map}
    F\mapto \tau_{F}(P_{F}\mu)
\end{equation}

The map that to $\mu$ associates \eqref{map:induced-constant-cosheaf-map} is easily seen to be a precosheaf contractive map. It lands on the space of bounded variation maps because the spectral measure has \emph{pointwise bounded variation},
\begin{equation*}
    \norm{\mu}= \norm{\sum_{F\in \mathcal{E}}P_{F}(\mu)}= \sum_{F\in \mathcal{E}}\norm{P_{F}(\mu)}
\end{equation*}
where the norm equalities come from the fact that $\theta(E)\isomorphic \sum_{F\in \mathcal{E}}\theta(F)$ and the fact that the inclusion cosheaf maps $\mu_{F, E}$ are \emph{isometries}.
\end{proof}

\bibliographystyle{amsalpha}
\bibliography{bibliography}
\end{document}